\documentclass{article}

\usepackage[margin=1in]{geometry}
\usepackage{amsmath,mathtools,amssymb,amsthm}
\usepackage[all]{xy}
\usepackage{enumerate,paralist}
\usepackage{color}
 \usepackage{setspace}

\theoremstyle{definition}
\newtheorem*{theorem*}{Theorem}
\newtheorem{theorem}{Theorem}
\newtheorem{proposition}[theorem]{Proposition}
\newtheorem{lemma}[theorem]{Lemma}
\newtheorem{corollary}[theorem]{Corollary}
\newtheorem{definition}[theorem]{Definition}
\newtheorem{remark}[theorem]{Remark}
\newtheorem{conjecture}[theorem]{Conjecture}
\numberwithin{theorem}{section} 					
\numberwithin{equation}{section}		 			

\newcommand*{\ilim}[1]{\mathop{\varinjlim}\limits_{#1}}		
\newcommand*{\plim}[1]{\mathop{\varprojlim}\limits_{#1}}		

\newcommand*{\Ker}[0]{\text{Ker}}							
\newcommand*{\pro}[0]{\text{pro-}}							
\newcommand*{\ab}[0]{\text{ab}}							
\newcommand*{\sep}[0]{\text{sep}}							
\newcommand*{\op}[0]{\text{op}}							
\newcommand*{\cl}[0]{\text{cl}}								
\newcommand*{\pr}[0]{\text{pr}}							
\newcommand*{\Hom}[0]{\text{Hom}}							
\newcommand*{\Isom}[0]{\text{Isom}}							
\newcommand*{\Aut}[0]{\text{Aut}}							
\newcommand*{\Out}[0]{\text{Out}}							
\newcommand*{\Sect}[0]{\text{Sect}}							
\newcommand*{\GSect}[0]{\Sect^{\text{geom}}}					
\newcommand*{\QSect}[0]{\text{QSect}}						


\begin{document}

\title{The geometrically  $m$-step solvable Grothendieck conjecture\\ for affine hyperbolic curves over finitely generated fields}
\author{Naganori Yamaguchi\thanks{Affiliation: Research Institute for Mathematical Sciences, Kyoto University, e-mail: naganori@kurims.kyoto-u.ac.jp}}
\date{}
\maketitle

\begin{abstract}
In this paper, we present some new results on the geometrically  $m$-step solvable Grothendieck conjecture in
anabelian geometry. Specifically, we show the (weak bi-anabelian and strong bi-anabelian) geometrically  $m$-step solvable Grothendieck conjecture(s) for affine hyperbolic curves over fields finitely generated over the prime field. First of all, we show the conjecture over  finite fields. Next, we  show the geometrically  $m$-step solvable version of the Oda-Tamagawa good reduction criterion for hyperbolic curves.   Finally, by using these two results, we show the conjecture over fields finitely generated over the prime field.
\end{abstract}

\tableofcontents


\addcontentsline{toc}{section}{Introduction}
\section*{Introduction}
\hspace{\parindent}
In this introduction, we use  the following notation. Let $m$ be an integer greater than or equal to $1$. Let $k$ be a field. Set $p:=\text{ch}(k)(\geq 0)$. Let  $i=1$, $2$. Let  $X_{i}$ be a  proper, smooth, geometrically connected scheme of  relative dimension one over $k$ (we call such a scheme a  proper, smooth curve over $k$)  and $E_{i}$ a   closed subscheme of $X_{i}$ which is finite, \'{e}tale over $k$. Let  $g_{i}$ be the genus of $X_{i}$ and   $r_{i}$   the degree of $E_{i}$ over $k$. Set $U_{i}:=X_{i}-E_{i}$. We say that $U_{i}$  is  $hyperbolic$ if  $2-2g_{i}-r_{i}<0$. For a scheme $S$ (satisfying suitable conditions), we write $\pi_{1}(S,\overline{s})$ for the \'{e}tale fundamental group of $S$ and write $\pi_{1}^{\text{tame}}(S,\overline{s})$ for the tame fundamental group of $S$, where $\overline{s}$ stands for a geometric point of $S$. In the rest of this introduction,  we always fix a geometric point $\overline{s}$ and  write  $\pi_{1}(S)$, $\pi_{1}^{\text{tame}}(S)$ instead of   $\pi_{1}(S,\overline{s})$, $\pi_{1}^{\text{tame}}(S,\overline{s})$, respectively.\par
When $k$ is a field finitely generated over $\mathbb{Q}$ and  $U_{1}$ is hyperbolic,  we have a fundamental conjecture called  (the relative, weak bi-anabelian form of) the Grothendieck conjecture, which predicts:  If a $G_{k}$-isomorphism $\pi_{1}(U_{1})\xrightarrow{\sim}\pi_{1}(U_{2})$  exists, then  a $k$-isomorphism $U_{1}\xrightarrow{\sim}U_{2}$ exists. This conjecture was completely solved by \cite{Na1990-411}, \cite{Ta1997}, and \cite{Mo1999}. \par
Next, we consider variants of the Grothendieck conjecture by replacing  $\pi_{1}(U_{1})$ and $\pi_{1}(U_{2})$ with quotients of these profinite groups. In this paper, we  mainly consider various (geometrically) $m$-step solvable quotients.   Let $\ell$ be a prime.  For any profinite group $G$, we write $G^{[m]}$ for  the $m$-step derived subgroup of $G$ and $G^{\text{pro-}\ell'}$ for the maximal pro-prime-to-$\ell$ quotient of $G$. (We also write $G^{\text{pro-}0'}:=G$.) We define  $G^{m}:=G/G^{[m]}$, 
\begin{eqnarray*}
\pi_{1}^{\text{tame}}(U_{i})^{(m)}&:=&\pi_{1}^{\text{tame}}(U_{i})/\pi_{1}^{\text{tame}}(U_{i,k^{\text{sep}}})^{[m]}, \text{ and}\\
\pi_{1}^{\text{tame}}(U_{i})^{(m,\text{pro-}\ell')}&:=&\pi_{1}^{\text{tame}}(U_{i})/\text{Ker}(\pi_{1}^{\text{tame}}(U_{i,k^{\text{sep}}}) \rightarrow \pi_{1}^{\text{tame}}(U_{i,k^{\text{sep}}})^{m,\text{pro-}\ell'}).
\end{eqnarray*}
 For any  integer $n\in\mathbb{Z}_{\geq 0}$ satisfying  $m> n$, we write   $\Isom^{(m)}_{G_{k}}(\pi_{1}^{\text{tame}}(U_{1})^{(m-n)},\pi_{1}^{\text{tame}}(U_{2})^{(m-n)})$  for the image of the natural map \[\Isom_{G_{k}}(\pi_{1}^{\text{tame}}(U_{1})^{(m)},\pi_{1}^{\text{tame}}(U_{2})^{(m)})\rightarrow \Isom_{G_{k}}(\pi_{1}^{\text{tame}}(U_{1})^{(m-n)},\pi_{1}^{\text{tame}}(U_{2})^{(m-n)}).\] 
We also define $\pi_{1}(U_{i})^{(m)}$, $\pi_{1}(U_{i})^{(m,\text{pro-}\ell')}$, and $\Isom^{(m)}_{G_{k}}(\pi_{1}(U_{1})^{(m-n)},\pi_{1}(U_{2})^{(m-n)})$ by replacing $\pi_{1}^{\text{tame}}(U_{i})$ with $\pi_{1}(U_{i})$ in the above. \par
Let $\text{Sch}_{k}^{\text{geo.red.}}$ be the category of all geometrically reduced schemes over $k$.  When $p>0$, we  define $\text{Sch}_{k,\textbf{Fr}^{-1}}^{\text{geo.red.}}$ as the category  obtained by localizing  $\text{Sch}_{k}^{\text{geo.red.}}$ with respect to all relative Frobenius morphisms of geometrically reduced schemes over $k$.  We write $\mathfrak{S}_{k}$ for the category $\text{Sch}_{k}^{\text{geo.red.}}$ (resp. the category  $\text{Sch}_{k,\textbf{Fr}^{-1}}^{\text{geo.red.}}$) when $p=0$ (resp. when $p>0$). Note that the following equivalence  holds.
\begin{equation*}
 U_{1}\xrightarrow{\sim} U_{2}\text{ in }\mathfrak{S}_{k}\Longleftrightarrow\begin{cases}  U_{1}\xrightarrow[k]{\sim}U_{2}& (p=0) \\
U_{1}(n_{1})\xrightarrow[k]{\sim}U_{2}(n_{2})\text{ for some }n_{1},n_{2}\in \mathbb{Z}_{\geq 0}& (p>0)
\end{cases}
\end{equation*}
Here, $U_{i}(n_{i})$  stands for  the $n_{i}$-th Frobenius twist of $U_{i}$ over $k$. 
\par
In this paper, we consider  the following variants of the Grothendieck conjecture. 
\begin{conjecture}[The (relative, geometrically)  $m$-step solvable Grothendieck  conjecture]\label{conj1} Assume that $m\geq 2$,  that $k$ is a field finitely generated over the prime field, and that  $U_{1}$ is hyperbolic.  
\begin{enumerate}[(1)]
\item ($\text{W}_{m,U_{1},U_{2}}$: Weak bi-anabelian form) 
\begin{equation*}
\pi_{1}^{\text{tame}}(U_{1})^{(m)}\xrightarrow[G_k]{\sim} \pi_{1}^{\text{tame}}(U_{2})^{(m)}\ \Longleftrightarrow U_{1}\xrightarrow{\sim} U_{2}\text{ in }\mathfrak{S}_{k}
\end{equation*}
\item ($\text{S}_{m,n,U_{1},U_{2}}$: Strong bi-anabelian form)
 Assume that  $U_{1,\overline{k}}$ does not descend to a curve over $\overline{\mathbb{F}}_{p}$ when $p>0$.  Let $n\in\mathbb{Z}_{\geq 0}$ be an integer satisfying $m>n$. Then the following natural map  is bijective.
\begin{equation*}
\text{Isom}_{\mathfrak{S}_{k}}(U_1,U_2)\rightarrow \Isom^{(m)}_{G_{k}}(\pi_{1}^{\text{tame}}(U_{1})^{(m-n)},\pi_{1}^{\text{tame}}(U_{2})^{(m-n)})/\text{Inn}(\pi_{1}^{\text{tame}}(U_{2,k^{\text{sep}}})^{m-n})
\end{equation*}
\end{enumerate}
\end{conjecture}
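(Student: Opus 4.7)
The direction $\Leftarrow$ in (1) is immediate from functoriality of $\pi_{1}^{\text{tame}}$, so the content is $\Rightarrow$ in (1) and the bijectivity in (2). The plan is to combine the two auxiliary results announced in the abstract---the conjecture over finite fields and the $m$-step solvable analogue of the Oda--Tamagawa good reduction criterion---via a specialization argument from $k$ to finite residue fields of a model.

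First I would spread out: choose a finitely generated normal integral subring $R\subseteq k$ with $\mathrm{Frac}(R)=k$ over which $X_{1},X_{2}$ and the divisors $E_{1},E_{2}$ admit smooth proper models and \'{e}tale relative divisor models, giving smooth hyperbolic relative curves $\mathcal{U}_{1},\mathcal{U}_{2}$ over $\mathrm{Spec}(R)$; after further shrinking $\mathrm{Spec}(R)$, I may assume that the given $G_{k}$-equivariant isomorphism of the $(m)$-tame quotients extends to a $\pi_{1}(\mathrm{Spec}(R))$-equivariant isomorphism. For each closed point $v\in\mathrm{Spec}(R)$ the residue field $k(v)$ is finite, and a choice of decomposition group at $v$ yields a $G_{k(v)}$-equivariant isomorphism between the $(m)$-tame quotients attached to the generic fibers. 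Since $\mathcal{U}_{1}$ has good reduction at $v$, the $m$-step solvable Oda--Tamagawa criterion forces $\mathcal{U}_{2}$ to have good reduction at $v$ as well, and yields a $G_{k(v)}$-equivariant isomorphism between the $(m)$-tame quotients of the special fibers $\mathcal{U}_{1,v}$ and $\mathcal{U}_{2,v}$. Applying the finite field case of (1) then provides an isomorphism $\mathcal{U}_{1,v}\xrightarrow{\sim}\mathcal{U}_{2,v}$ in $\mathfrak{S}_{k(v)}$ for every such $v$.

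The main obstacle will be to glue this pointwise family into a single isomorphism $U_{1}\xrightarrow{\sim}U_{2}$ in $\mathfrak{S}_{k}$. My plan is to use the strong bi-anabelian part of the finite field case to make each fiberwise isomorphism canonical as an element of $\Isom_{\mathfrak{S}_{k(v)}}(\mathcal{U}_{1,v},\mathcal{U}_{2,v})$, and then to show that these canonical specializations are compatible in families, coming from a single section of the relative Isom scheme $\underline{\Isom}_{R}(\mathcal{U}_{1},\mathcal{U}_{2})$ over a dense open subscheme of $\mathrm{Spec}(R)$. A spreading-out argument combined with the finiteness (indeed, unramifiedness) of the Isom functor of hyperbolic curves should then produce such a section, and hence an isomorphism in $\mathfrak{S}_{k}$. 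The hypothesis in (2) that $U_{1,\overline{k}}$ does not descend to $\overline{\mathbb{F}}_{p}$ will be used precisely at this step, to rule out the Frobenius-twist ambiguities that would otherwise obstruct uniqueness of the gluing.

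Once (1) is established, the bijectivity in (2) follows formally. Surjectivity is exactly what the gluing argument above provides, applied to an arbitrary class in the target. Injectivity reduces, by composing two geometric isomorphisms inducing the same coset, to showing that any element of $\Aut_{\mathfrak{S}_{k}}(U_{1})$ whose action on $\pi_{1}^{\text{tame}}(U_{1})^{(m-n)}$ is inner must be trivial; this uses the finiteness of the automorphism group of a hyperbolic curve together with the injectivity provided by the finite field case of (2) applied to each specialization $\mathcal{U}_{1,v}$, which forces the induced automorphism of each special fiber to be trivial.
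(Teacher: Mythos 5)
You are proposing a proof of what the paper states only as a conjecture; the paper itself does not prove it in this generality, and your sketch, which essentially reproduces the paper's strategy for its partial results (Theorem \ref{ddd} plus Theorem \ref{edx}/Corollary \ref{caseq} plus the specialization-and-moduli argument of section \ref{sectionfinitelygene}), cannot deliver the statement as written. The first concrete gap is quantitative: the group-theoretic specialization step (Corollary \ref{goodcor}) recovers from $\pi_{1}^{\text{tame}}(U_{i})^{(m)}$ only the $(m-2)$-step solvable quotient $\pi_{1}^{\text{tame}}(\mathcal{U}_{i,v})^{(m-2)}$ of the special fiber, and the finite-field input (Theorem \ref{ddd}) itself needs $m-2\geq 2$ or $3$ and, for the strong form, $n-2\geq 2$; so this route only gives $m\geq 4$ or $5$ and $n\geq 4$, never $m\geq 2$ and arbitrary $n\geq 0$ as in the conjecture. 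The second gap is the affineness requirement: the finite-field reconstruction of the function field runs through class field theory, where the identification $K(U)^{\times}\xrightarrow{\sim}K(\Pi_{U}^{(m)})^{\times}$ uses that $U$ is affine, so proper hyperbolic curves (allowed in the conjecture) are not covered by your fiberwise step. Third, the non-isotriviality hypothesis is needed in the paper even for the weak form when $p>0$ (it is what makes Lemma \ref{example4.7} and the dimension argument via Lemma \ref{dzetaslem} work, controlling the Frobenius-twist ambiguity), whereas the conjecture's part (1) has no such hypothesis and your plan invokes it only in part (2); the isotrivial case would require a genuinely new idea.

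Beyond these hypotheses, the step you flag as "the main obstacle" is indeed where the real work lies, and your proposed mechanism is underdeveloped: the paper does not glue via sections of a relative Isom scheme but rigidifies with a level $N$ structure and an ordering of the cusps, maps to the fine moduli scheme $\mathcal{M}_{g,r}[N]$, and applies Stix's rigidity result (Lemma \ref{stxlemma1}) to the two classifying morphisms $\zeta_{1},\zeta_{2}$, which only coincide up to a power of Frobenius; pinning down that power uniformly in $v$ (the argument with $\alpha_{s}$, $\beta_{s}$ and Lemma \ref{dzetaslem}) and checking compatibility of the fiberwise isomorphisms with the canonical ones from Theorem \ref{finGCweak} (conditions (iii)(iv) there, Lemma \ref{prop1.14}, Lemma \ref{lemmaaaaa}) is precisely the content you would have to supply. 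Your injectivity argument for (2) via finiteness of automorphism groups is plausible but the paper instead uses center-freeness of $\Pi_{U}^{(m)}$ and faithfulness of the action on $\overline{\Pi}_{U}^{1}/N$ (Proposition \ref{center}, Lemma \ref{injelemma4-2}), again needing non-isotriviality in characteristic $p$ to control Frobenius twists. In short: your outline matches the paper's method for its theorems, but as a proof of the conjecture it has genuine gaps at the $m$ and $n$ bounds, the affineness and non-isotriviality hypotheses, and the gluing/rigidity step.
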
\noindent

\begin{remark}\label{remstr2}
Let $m'\in\mathbb{Z}_{\geq 2}$ be an integer satisfying $m'\geq m$. Then $\text{W}_{m,U_{1},U_{2}}$ implies $\text{W}_{m',U_{1},U_{2}}$. Hence we want to prove $\text{W}_{m,U_{1},U_{2}}$  for as small $m$ as possible.  The best expected result is for  $m = 2$. As for  $\text{S}_{m,n,U_{1},U_{2}}$,  the best expected result is for  $(m,n) = (2,0)$. 
\end{remark}

The following three theorems are  all the previous results  that the author knows about the weak  bi-anabelian and  strong bi-anabelian form of the  $m$-step solvable Grothendieck conjectures for hyperbolic curves. 

\begin{theorem}[cf. \cite{Na1990-405} Theorem A]\label{thmnakamura}
Assume that  $m\geq 2$ and  that  $k$  is  an  algebraic number field  satisfying one of the following conditions (a)-(b).
\begin{enumerate}[(a)]
\item $k$ is a quadratic field $\neq  \mathbb{Q}(\sqrt{2})$.
\item There exists a prime ideal $\mathfrak{p}$ of $O_{k}$ unramified in $k/\mathbb{Q}$ such that  $|O_k/\mathfrak{p}|=2$ (e.g., $k=\mathbb{Q}$).
\end{enumerate}
Let  $\lambda_{i}$ be an element of $k-\{0,1\}$ and  set $\Lambda_{i}:=\{0,1,\infty,\lambda_{i}\}$ for each $i=1,2$.  Then the following holds.
\begin{equation*}
\pi_{1}(\mathbb{P}^{1}_{k}-\Lambda_{1})^{(m)}\xrightarrow[G_{k}]{\sim} \pi_{1}(\mathbb{P}^{1}_{k}-\Lambda_{2})^{(m)}\iff  \mathbb{P}^{1}_{k}-\Lambda_{1}\xrightarrow[k]{\sim}\mathbb{P}^{1}_{k}-\Lambda_{2}
\end{equation*}
\end{theorem}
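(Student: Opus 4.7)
The $(\Leftarrow)$ direction is formal: any $k$-isomorphism $U_1 \xrightarrow{\sim} U_2$ induces a $G_k$-equivariant isomorphism $\pi_1(U_1) \xrightarrow{\sim} \pi_1(U_2)$, which descends to the geometrically $m$-step solvable quotients. For the nontrivial $(\Rightarrow)$ direction, my plan is to reduce to the meta-abelian case $m=2$ (by Remark \ref{remstr2}, the conjecture for $m$ implies the conjecture for $m' \geq m$, and the given isomorphism $\pi_1(U_1)^{(m)} \xrightarrow{\sim}_{G_k} \pi_1(U_2)^{(m)}$ pushes forward to an isomorphism $\pi_1(U_1)^{(2)} \xrightarrow{\sim}_{G_k} \pi_1(U_2)^{(2)}$), and then reconstruct the $\lambda_i$ (up to the $S_3$-symmetries of the cross-ratio) from the meta-abelian Galois representation.

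The first step is to recover the cuspidal structure. The geometric abelianization $\pi_1(U_{i,\overline k})^{\mathrm{ab}}$ is a free $\hat{\mathbb Z}$-module of rank $3$, and I would identify, inside it, the inertia subgroups at the four $k$-rational cusps $\{0,1,\infty,\lambda_i\}$ as the distinguished procyclic subgroups on which $G_k$ acts via the cyclotomic character, subject to the single relation that their sum vanishes. After passing to a fixed prime $\ell$ and choosing compatible topological generators of these inertia groups (well-defined up to units), the meta-abelian extension
\[
1 \rightarrow \pi_1(U_{i,\overline k})^{[1]}/\pi_1(U_{i,\overline k})^{[2]} \rightarrow \pi_1(U_{i,\overline k})^{(2)} \rightarrow \pi_1(U_{i,\overline k})^{\mathrm{ab}} \rightarrow 1
\]
becomes a source of explicit Kummer-type cocycles on $G_k$.

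The key arithmetic step is then to extract from these cocycles the Kummer characters $\kappa_{\lambda_i}, \kappa_{1-\lambda_i} \colon G_k \rightarrow \hat{\mathbb Z}(1)$ associated to the elements $\lambda_i, 1-\lambda_i \in k^{\times}$. The guiding principle (essentially the meta-abelian part of Nakamura's calculation of the pro-$\ell$ outer Galois action on $\pi_1(\mathbb P^1 - \{0,1,\infty,\lambda\})$) is that the commutator pairing between inertia generators at two cusps, evaluated modulo the third, produces precisely the Kummer class of the ratio of the corresponding cross-differences. Once the pair $(\kappa_{\lambda_i}, \kappa_{1-\lambda_i})$ is read off from $\pi_1(U_i)^{(2)}$ in a $G_k$-isomorphism-invariant way, the assumption gives an equality of such pairs in $H^1(G_k, \hat{\mathbb Z}(1))$, hence, via the Kummer isomorphism $k^{\times} \otimes \hat{\mathbb Z} \xrightarrow{\sim} H^1(G_k,\hat{\mathbb Z}(1))$, an equality $\lambda_2 = \sigma(\lambda_1)$ for some element $\sigma$ of the cross-ratio $S_3$-group acting on $k - \{0,1\}$.

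The main obstacle is the very last step: showing that the congruence recovered in $H^1(G_k,\hat{\mathbb Z}(1))$ lifts to an honest equality in $k^\times$ modulo the $S_3$-symmetry, rather than only up to some ``roots of unity'' ambiguity that could create false coincidences $\kappa_{\lambda_1} = \kappa_{\lambda_2}$ with $\lambda_1 \neq \sigma(\lambda_2)$. This is where the hypotheses (a) and (b) on $k$ must enter, presumably via a Chebotarev-type argument: hypothesis (b) (a prime of residue field $\mathbb F_2$ unramified over $\mathbb Q$) provides a Frobenius element whose action on $\mu_\ell$ for small $\ell$ separates the $S_3$-orbits, and hypothesis (a) (quadratic fields other than $\mathbb Q(\sqrt 2)$) is, I expect, the list of additional fields for which the same separation can be verified directly, with $\mathbb Q(\sqrt 2)$ excluded because in it the $2$-adic Kummer theory is too degenerate. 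Finally, any $S_3$-symmetry of the branch set $\{0,1,\infty,\lambda_i\}$ is realized by an element of $\mathrm{PGL}_2(k)$, so $\sigma(\lambda_1) = \lambda_2$ yields the required $k$-isomorphism $U_1 \xrightarrow{\sim} U_2$.
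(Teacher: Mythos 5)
First, a point of orientation: Theorem \ref{thmnakamura} is not proved in this paper at all; it is quoted as a previous result from \cite{Na1990-405}, so the only meaningful comparison is with Nakamura's original argument, which your sketch resembles in outline (work at the meta-abelian level, extract Kummer classes attached to differences of cusps, finish with an arithmetic argument in $k^{\times}$ using (a)--(b)). Your $(\Leftarrow)$ direction and the reduction to $m=2$ are fine. The first genuine gap is your identification of the cuspidal inertia subgroups inside $\pi_{1}(U_{i,\overline{k}})^{\mathrm{ab}}$: since $g=0$ and all four cusps are $k$-rational, Lemma \ref{wflemma} shows that $\pi_{1}(U_{i,\overline{k}})^{\mathrm{ab}}\cong\hat{\mathbb{Z}}(1)^{\oplus 3}$ is an \emph{isotypic} $G_{k}$-module of weight $-2$, so \emph{every} procyclic (saturated rank-one) subgroup carries the cyclotomic action, and the relation ``the four generators sum to zero'' is satisfied by countless quadruples. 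Your criterion therefore does not single out the inertia subgroups at all. Recovering cuspidal inertia is exactly the hard part and genuinely requires the meta-abelian layer (compare Proposition \ref{inertiareco}, which for this reason only reconstructs inertia in $\Pi^{(m-n)}$ with $n\geq 1$ from $\Pi^{(m)}$); without it, the ``commutator pairing between inertia generators'' you invoke has no group-theoretic meaning.

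The second gap is that the two arithmetically substantive steps are asserted rather than proved. The claim that the commutator structure of lifts of inertia produces the Kummer classes $\kappa_{\lambda_{i}},\kappa_{1-\lambda_{i}}$ is precisely Nakamura's main computation, so citing it as a ``guiding principle'' leaves the core of the proof empty. More seriously, your account of where hypotheses (a)--(b) enter is a guess, and the obstacle you name is not the real one: for a number field $\bigcap_{n}(k^{\times})^{n}=\{1\}$, so the Kummer map $k^{\times}\rightarrow H^{1}(G_{k},\hat{\mathbb{Z}}(1))$ is injective and an equality $\kappa_{\lambda_{1}}=\kappa_{\lambda_{2}}$ already forces $\lambda_{1}=\lambda_{2}$ --- no Chebotarev/roots-of-unity separation is needed for that. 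The actual difficulty, and the place where (a)--(b) are used in \cite{Na1990-405}, is that a $G_{k}$-isomorphism of the meta-abelian quotients does not hand you the classes $\kappa_{\lambda_{i}}$, $\kappa_{1-\lambda_{i}}$ on the nose but only a partially normalized system of such data (reflecting the indeterminacy in the choice of inertia generators and of the matching of cusps), and the hypotheses on $k$ (e.g.\ a degree-one unramified prime above $2$, where $\mathbb{P}^{1}(\mathbb{F}_{2})=\{0,1,\infty\}$ forces $\lambda$ to specialize to a cusp; the quadratic case handled separately, with $\mathbb{Q}(\sqrt{2})$ excluded) are what allow one to resolve this indeterminacy and pin $\lambda_{2}$ down inside the $S_{3}$-orbit of $\lambda_{1}$. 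Until you specify exactly which invariant your procedure recovers and prove that (a)--(b) eliminate the residual ambiguity, the $(\Rightarrow)$ direction is not established.
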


\begin{theorem}[cf. \cite{Mo1999} Theorem A${'}$]\label{thmmochizuki}
 Assume that $m\geq 5$, that $k$ is a field finitely generated over the prime field, and that   $U_{1}$ is  hyperbolic. Let $n\in \mathbb{Z}_{\geq 3}$ be an integer satisfying $m> n$.     Then the following natural map is bijective.
\begin{equation*}
\Isom_{k}(U_1,U_2)\rightarrow \Isom^{(m)}_{G_{k}}(\pi_{1}(U_{1})^{(m-n)},\pi_{1}(U_{2})^{(m-n)})/\text{Inn}(\pi_{1}(U_{2,k^{\text{sep}}})^{m-n})
\end{equation*}
 In particular, the following holds. 
 \begin{equation*}
\pi_{1}(U_{1})^{(m)}\xrightarrow[G_k]{\sim} \pi_{1}(U_{2})^{(m)}\ \Longleftrightarrow U_{1}\xrightarrow[k]{\sim} U_{2}
\end{equation*}
\end{theorem}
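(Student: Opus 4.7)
My plan is to combine Nakamura's weight-filtration methods with Mochizuki's $p$-adic Hodge-theoretic reconstruction, specialized to the geometrically $m$-step solvable setting. The map in question is a map of sets, so I would first address injectivity, which is the easier direction: any $k$-isomorphism $f\colon U_1\xrightarrow{\sim}U_2$ induces a canonical outer $G_k$-isomorphism on the full \'{e}tale fundamental groups, which descends to every geometrically solvable level, and the finiteness of the automorphism group of a hyperbolic curve ensures that $f$ is recoverable already from its induced outer isomorphism at a sufficiently low level of the derived series.

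For surjectivity I would proceed in two stages. First, I would reconstruct the cuspidal decomposition groups inside $\pi_1(U_{i,k^{\sep}})^{m-n}$ in a purely group-theoretic way, using the $G_k$-action together with weight and ramification data visible in the geometric abelianization. The loss of $n\geq 3$ derived steps in passing from the level-$m$ datum to the level-$(m-n)$ datum should be precisely what makes this reconstruction possible: one derived step is needed to access inertia via pro-cyclic subgroups, a second to use the geometric abelianization as a Tate module carrying Galois weights, and a third to carry out a Nakamura-style cuspidalization argument that distinguishes cuspidal from ordinary decomposition groups.

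Second, once cusps and their decomposition groups are located, I would invoke Mochizuki's strategy to pass from the group-theoretic data to an honest scheme-theoretic isomorphism. Since $k$ is finitely generated over the prime field, spreading out and specialization would let me reduce either to the case that $k$ is a number field (where $p$-adic Hodge theory applies directly) or to a finite field (where Frobenius eigenvalue and weight arguments suffice). A descent/patching argument would then glue these fiberwise reconstructions into a morphism over the generic point; here the hypothesis $m\geq 5$ (so that $m-n\geq 2$ with further derived headroom) should be what ensures that the gluing data is rigid.

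The main obstacle I anticipate is this final patching step: showing that a family of compatible outer isomorphisms of solvable quotients over specializations genuinely arises from a single $k$-morphism of schemes requires coherence of base points and of group-theoretic reconstructions across many closed fibers simultaneously. The cleanest route I can see is to first establish the analogue of Conjecture \ref{conj1} over finite fields, then develop a solvable version of the Oda--Tamagawa good reduction criterion, and only then deduce the case of a general finitely generated $k$ by a specialization/descent argument along suitable models.
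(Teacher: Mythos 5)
The statement you are trying to prove is not established by any new argument in this paper: it is quoted as a previous result, namely Mochizuki's Theorem A$'$ in \cite{Mo1999}, and Mochizuki's proof is a $p$-adic Hodge-theoretic argument for hyperbolic curves over sub-$p$-adic (hence characteristic-zero) fields; it works directly over the base field with Hodge--Tate/crystalline information attached to the pro-$p$ data and never passes through finite fields, a good-reduction criterion, or specialization. Consequently the central step of your surjectivity argument --- ``invoke Mochizuki's strategy to pass from the group-theoretic data to an honest scheme-theoretic isomorphism'' --- is circular as a proof of this particular theorem: the theorem essentially \emph{is} Mochizuki's result, and all of the substance is hidden inside that black box, while your suggestion to ``reduce to a finite field where Frobenius eigenvalue and weight arguments suffice'' has no counterpart in the actual proof.

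The concrete fallback route in your final paragraph (first the conjecture over finite fields, then an $m$-step solvable Oda--Tamagawa criterion, then specialization/descent) is exactly the strategy of this paper, but it is used here to prove Theorems \ref{ddd} and \ref{ccc}, which are genuinely different from, and in the relevant respects weaker than, the statement at hand, and the discrepancies are not removable by minor tweaks. The finite-field input (Theorem \ref{ddd}) requires $U_{1}$ to be \emph{affine}: the multiplicative monoid $K(U_{1})$ is recovered via class field theory only in the affine case, and additivity is recovered via Lemma \ref{Talem}, which needs at least three cusps; the proper case over finite fields is not available by this method, whereas the present statement allows $r_{1}=0$. Moreover the specialization step costs two derived steps (Corollary \ref{caseq} only recovers $\pi_{1}^{\text{tame}}(\mathfrak{U}_{s})^{(m-2)}$ from $\pi_{1}^{\text{tame}}(U)^{(m)}$) and the finite-field strong result needs $n\geq 2$, so this route yields only $n\geq 4$ (Theorem \ref{ccc}(2)), not $n\geq 3$; it also works with tame fundamental groups, needs the non-isotriviality hypothesis, and in positive characteristic produces an isomorphism in $\mathfrak{S}_{k}$ (i.e.\ only up to Frobenius twists) rather than a $k$-isomorphism. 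Finally, your injectivity argument is incomplete: finiteness of the automorphism group of a hyperbolic curve does not by itself show that an automorphism inducing the identity on a truncated fundamental group is trivial; one needs faithfulness at a finite level, e.g.\ rigidity of automorphisms acting trivially on the level-$N$ structure and on the cusps, as in Lemma \ref{injelemma4-2} (which rests on Serre's theorem). So as written the proposal does not prove the theorem: one must either reproduce Mochizuki's $p$-adic Hodge-theoretic argument at the solvable level, or settle for the affine, tame, $n\geq 4$, Frobenius-twisted version that the finite-field route actually delivers.
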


\begin{remark} More generally,   in \cite{Mo1999} Theorem A${'}$, Mochizuki proved a  certain Hom-version of the strong bi-anabelian form of the $m$-step solvable Grothendieck conjecture for hyperbolic curves over sub-$\ell$ adic fields (i.e., subfields of a finitely generated extension  field of $\mathbb{Q}_{\ell}$) for any prime $\ell$. 
\end{remark}

\begin{theorem}[cf. \cite{Ya2020} Theorem 2.4.1]\label{thmnyamaguchi}
Assume that $m\geq 3$, that $k$ is a field finitely generated over the prime field, that   $U_{1}$ is hyperbolic, and that $g_{1}=0$.  When  $p> 0$, assume that  the curve $X_{1,\overline{k}}-S$ does not  descend to a curve over  $\overline{\mathbb{F}}_{p}$  for each $S\subset E_{1, \overline{k}}\ \text{with}\ |S|=4$. Then the following holds.
\[
\pi_{1}(U_{1})^{(m,\text{pro-}p')}\xrightarrow[G_k]{\sim} \pi_{1}(U_{2})^{(m,\text{pro-}p')}\iff U_{1}\xrightarrow{\sim}U_{2}\text{ in }\mathfrak{S}_{k}
\]
\end{theorem}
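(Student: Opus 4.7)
The easy direction is formal: an isomorphism $U_{1}\xrightarrow{\sim}U_{2}$ in $\mathfrak{S}_{k}$ yields a $G_{k}$-isomorphism $\pi_{1}(U_{1})^{(m,\text{pro-}p')}\xrightarrow{\sim}\pi_{1}(U_{2})^{(m,\text{pro-}p')}$ because the relative Frobenius acts invertibly on the pro-prime-to-$p$ quotient of $\pi_{1}$. The content lies in the converse, so fix a $G_{k}$-isomorphism $\phi$ at the $(m,\text{pro-}p')$-level.

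The plan is to reconstruct $U_{2}$ from $\phi$ in three passes. First, reading off the $G_{k}$-module structure of the abelianization at level $1$, I would recover the genus and the number of cusps by separating the cuspidal permutation piece of rank $r_{i}-1$ from the Tate-module-of-the-Jacobian piece of rank $2g_{i}$ via their distinct Frobenius weights at places of good reduction (available because $k$ is finitely generated). This forces $g_{2}=0$ and $r_{2}=r_{1}$, and, after rigidifying three Galois orbits of cusps, reduces the problem to the case $X_{1}=X_{2}=\mathbb{P}^{1}_{k}$.

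Second, and this is the main step, I would recover the cuspidal inertia subgroups of $\pi_{1}(U_{i,k^{\text{sep}}})^{(m,\text{pro-}p')}$ together with a $G_{k}$-equivariant bijection $E_{1}(\overline{k})\leftrightarrow E_{2}(\overline{k})$. Because $g_{i}=0$, the abelianization is the augmentation kernel of $\bigoplus_{e\in E_{i}(\overline{k})}\hat{\mathbb{Z}}(1)^{(p')}\twoheadrightarrow \hat{\mathbb{Z}}(1)^{(p')}$, so the cuspidal inertia subgroups already appear as $G_{k}$-stable submodules at level $1$. The non-descent hypothesis (when $p>0$) guarantees a positive density of places of residue characteristic $\neq p$ of good reduction, at which Frobenius weights distinguish inertia from non-inertia. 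The $(m\geq 3)$-step solvable structure, through the commutator/Magnus-type pairing visible at levels $2$ and $3$, then promotes the abelian-level matching of inertia orbits into a genuine $G_{k}$-equivariant bijection of cusps.

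Third, once $E_{1}(\overline{k})\leftrightarrow E_{2}(\overline{k})$ is matched, the two-step nilpotent quotient of $\pi_{1}^{(m,\text{pro-}p')}$ (available because $m\geq 3$) computes cohomology of cyclic covers of $\mathbb{P}^{1}$ ramified over four-element subsets of cusps, and hence recovers the cross-ratios $[e_{1},e_{2},e_{3},e_{4}]\in k^{\times}$ as $G_{k}$-equivariant data. These cross-ratios pin down $E_{1}\subset \mathbb{P}^{1}_{k}$ up to $\mathrm{PGL}_{2}(k)$, yielding $U_{1}\xrightarrow{\sim}U_{2}$ in $\mathfrak{S}_{k}$ (in positive characteristic, the Frobenius-twist ambiguity is exactly what $\mathfrak{S}_{k}$ absorbs). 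The main obstacle is the second step: with only prime-to-$p$, bounded-depth solvable information, singling out cuspidal inertia requires the non-descent hypothesis to produce enough places of good reduction for the Frobenius-weight argument to succeed.
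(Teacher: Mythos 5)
First, a caveat: the paper does not actually prove Theorem \ref{thmnyamaguchi}; it is quoted from \cite{Ya2020} Theorem 2.4.1 as a known result, so the comparison below is with the machinery that the paper recalls from \cite{Ya2020} (and reuses in sections 1--2 and 4), which is how that proof runs.

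Your overall skeleton (recover $(g,r)$ from weights; recover cuspidal inertia and a $G_k$-equivariant bijection of cusps; recover cross-ratios Kummer-theoretically from the metabelian level; absorb Frobenius twists in $\mathfrak{S}_k$) is the right shape, but your second step contains a genuine gap. For $g=0$ the whole of $\overline{\Pi}_{U}^{1,\text{pro-}p'}$ is pure of weight $-2$ (Lemma \ref{wflemma}: it is the quotient of $\mathbb{Z}[E(k^{\sep})]\otimes\hat{\mathbb{Z}}^{\text{pro-}p'}(1)$ by the diagonal copy of $\hat{\mathbb{Z}}^{\text{pro-}p'}(1)$), so Frobenius weights cannot distinguish the inertia lines from arbitrary rank-one $G_k$-stable submodules at level $1$; and when $p>0$ a field finitely generated over $\mathbb{F}_p$ has no places of residue characteristic $\neq p$, so the "good reduction at places of residue characteristic $\neq p$" device you invoke is vacuous. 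The actual reconstruction of inertia -- and the reason the theorem needs $m\geq 3$ -- is not a level-$1$ statement promoted by a commutator pairing: it runs through open subgroups $H$ of the $m$-step solvable quotient (cofinality via Lemma \ref{gandr}), the weight filtration $W_{-2}(\overline{H}^{1})$ of the abelianizations of those open subgroups, and separatedness/limit arguments for inertia (Lemma \ref{inerrev}, Proposition \ref{inertiareco} here; \cite{Ya2020} subsections 1.2 and 1.4). Without some such argument your "abelian-level matching of inertia orbits" has no starting point.

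You also misplace the role of the non-descent hypothesis. It is not needed to single out cuspidal inertia (that reconstruction is purely group-theoretic and works even over finite fields); it enters at the cross-ratio comparison stage. From the prime-to-$p$ data one only recovers, for each $4$-tuple of cusps, the Kummer class of the cross-ratio up to a unit of $\hat{\mathbb{Z}}^{\text{pro-}p'}$ coming from the a priori unknown rescaling of inertia generators, and one must show this ambiguity is an integral power of $p$, uniformly in the $4$-tuple, so that it is exactly a Frobenius twist; this is where non-isotriviality of $X_{1,\overline{k}}-S$ for \emph{every} $4$-element subset $S$ is used (in the style of Lemma \ref{stxlemma1}, Lemma \ref{example4.7}, and \cite{St2002P} Corollary B.2.4), and it explains why the hypothesis is stated for all $S$ with $|S|=4$ rather than for $U_1$ alone. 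Finally, the reduction "to $X_1=X_2=\mathbb{P}^1_k$ after rigidifying three Galois orbits of cusps" is not available as stated: a genus-$0$ curve over $k$ need not be $\mathbb{P}^1_k$ and the cusps need not be $k$-rational, so one must pass to a finite separable extension splitting the cusps and then descend, keeping track of Frobenius twists in characteristic $p$; this descent step needs the injectivity/uniqueness statements your sketch omits.
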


 In this paper, we  give some new results on the weak  bi-anabelian and strong  bi-anabelian form of the  $m$-step solvable Grothendieck conjectures for hyperbolic curves over fields finitely generated over the prime field, by referring to the methods of \cite{Ta1997}  and \cite{St2002A} (and \cite{St2002P} in part).  First, we consider   the case that the base field is  finite (see section 2).

\begin{theorem}[Theorem \ref{finGCweak}, Corollary \ref{finGCcorinn}]\label{ddd}
For $i=1,$ $2$,   let $k_{i}$ be a finite field. Let $X'_{i}$ be a  proper, smooth curve over $k_{i}$, $E'_{i}$ a   closed subscheme of $X_{i}'$ which is finite, \'{e}tale over $k_{i}$. Let   $g'_{i}$ be the genus of $X'_{i}$ and   $r'_{i}$   the degree of $E'_{i}$ over $k_{i}$. Set $U'_{i}:=X'_{i}-E'_{i}$. Assume  that $U'_{1}$ is affine hyperbolic. 
\begin{enumerate}[(1)]
\item Assume that  $m$ satisfies
\begin{equation*}
\begin{cases}
\ \ m\geq 2 & \ \ (\text{if }r'_{1}\geq  3 \text{ and } (g'_{1},r'_{1})\neq (0,3), (0,4))\\
\ \ m\geq 3 & \ \ (\text{if }r'_{1}<3 \text{ or } (g'_{1},r'_{1})= (0,3), (0,4))
\end{cases}
\end{equation*} Then  the following holds.
\begin{equation*}
\pi_{1}^{\text{tame}}(U'_{1})^{(m)}\xrightarrow{\sim} \pi_{1}^{\text{tame}}(U'_{2})^{(m)}\iff U'_{1}\xrightarrow[\text{scheme}]{\sim}U'_{2}
\end{equation*}
\item  Assume that $m\geq 3$. Let $n\in\mathbb{Z}_{\geq 2}$ be an integer satisfying $m> n$. Then the  following natural map is bijective.
\begin{equation*}\label{mapddd2}
\Isom(U'_1,U'_2)\rightarrow \Isom^{(m)}(\pi_{1}^{\text{tame}}(U'_{1})^{(m-n)},\pi_{1}^{\text{tame}}(U'_{2})^{(m-n)})/\text{Inn}(\pi_{1}^{\text{tame}}(U'_{2})^{(m-n)})
\end{equation*}
Here,  $\Isom^{(m)}(\pi_{1}^{\text{tame}}(U'_{1})^{(m-n)},\pi_{1}^{\text{tame}}(U'_{2})^{(m-n)})$ stands for  the image of the map \\
$\Isom(\pi_{1}^{\text{tame}}(U'_{1})^{(m)},\pi_{1}^{\text{tame}}(U'_{2})^{(m)})\rightarrow \Isom(\pi_{1}^{\text{tame}}(U'_{1})^{(m-n)},\pi_{1}^{\text{tame}}(U'_{2})^{(m-n)})$, see Definition \ref{m+nletter}. 
 \end{enumerate}
\end{theorem}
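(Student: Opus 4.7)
The plan is to follow Tamagawa's strategy from \cite{Ta1997} for affine hyperbolic curves over finite fields, adapted to the $m$-step solvable tame quotient using ideas from \cite{St2002A}. The central goal is to reconstruct, purely group-theoretically from $\pi_{1}^{\text{tame}}(U'_i)^{(m)}$, the complete decomposition data at every closed point of $X'_i$ (distinguishing cuspidal inertia from non-cuspidal decomposition groups) together with their Frobenius elements, and then to recover $U'_i$ as a scheme via the class-field-theoretic reconstruction of a global function field from its local data.

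For part (1), I would proceed in four steps. First, extract the basic invariants from $\pi_{1}^{\text{tame}}(U'_i)^{(m)}$: the characteristic $p$ (via the pro-$p$ versus prime-to-$p$ part of the geometric abelianization), the cardinality $q'_i = |k'_i|$ (via Weil's bounds applied to the Frobenius action on the $\ell$-adic abelianization for some $\ell \neq p$), and the pair $(g'_i, r'_i)$ (from the rank and torsion of the geometric tame abelianization). Second, characterize the cuspidal inertia subgroups group-theoretically inside the geometric part as the procyclic subgroups whose image in the abelianization is prime-to-$p$ torsion, satisfying a product-of-cuspidal-generators relation that distinguishes them from other decomposition groups; this pins down $E'_i$ on the group-theoretic side. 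Third, for each non-cuspidal closed point $x$ of $U'_i$, identify its decomposition group inside $\pi_{1}^{\text{tame}}(U'_i)^{(m)}$ via a Lefschetz trace formula applied to $m$-step solvable cyclic coverings of $U'_i$, executing Tamagawa's separation of closed points through their splitting behaviour in sufficiently many accessible covers. The dichotomy on $m$ in the hypothesis reflects the metabelian threshold: when $r'_1 \geq 3$ and $(g'_1, r'_1) \notin \{(0,3),(0,4)\}$, the commutator of the geometric fundamental group already carries enough non-abelian content for this separation at $m = 2$, while in the degenerate cases one further solvable step is required. Fourth, having matched closed points and Frobenius elements compatibly, recover a scheme isomorphism $U'_1 \xrightarrow{\sim} U'_2$ by the Uchida--Tamagawa reconstruction of global function fields.

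For part (2), I would treat surjectivity and injectivity separately. For surjectivity, every element of the target lifts by definition to a class in $\Isom(\pi_{1}^{\text{tame}}(U'_1)^{(m)}, \pi_{1}^{\text{tame}}(U'_2)^{(m)})/\text{Inn}$, which by part (1) is induced by a scheme isomorphism; tracking the reconstruction shows this isomorphism produces the prescribed outer class at level $m - n$. For injectivity, if $\phi_1, \phi_2 \colon U'_1 \xrightarrow{\sim} U'_2$ have the same outer image, the automorphism $\alpha := \phi_2^{-1}\phi_1 \in \Aut(U'_1)$ acts as an inner automorphism on $\pi_{1}^{\text{tame}}(U'_1)^{(m-n)}$; using the hyperbolicity of $U'_1$ and the detection of decomposition groups from step 3 (applied at level $m - n \geq 1$, which is ensured by $m \geq 3$ and $n < m$), this forces $\alpha = 1$.

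The hardest step is step 3 of part (1): identifying decomposition groups at non-cuspidal closed points using only coverings available within $\pi_{1}^{\text{tame}}(U'_i)^{(m)}$. Tamagawa's original argument freely uses arbitrary étale cyclic covers to extract Frobenius eigenvalues, whereas here the available Kummer theory is restricted to covers whose Galois group is $m$-step solvable. The plan is to overcome this by working inductively through the graded pieces $\pi_{1}^{\text{tame}}(U'_{i,\overline{k}'_i})^{[j]}/\pi_{1}^{\text{tame}}(U'_{i,\overline{k}'_i})^{[j+1]}$ for $j < m$: each is a $G_{k'_i}$-module visible inside $\pi_{1}^{\text{tame}}(U'_i)^{(m)}$, and the Lefschetz counts of rational points in the successive abelian covers these parametrize should together suffice to separate every closed point of $U'_i$. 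The precise thresholds on $m$ in the hypothesis are the quantitative manifestation of exactly when this inductive separation succeeds.
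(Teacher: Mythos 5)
Your overall architecture is the paper's (and Tamagawa's): recover the numerical invariants, recover decomposition data at all closed points, rebuild $K(U'_i)^{\times}$ by class field theory, recover the addition, and conclude; so the plan is not wrong in outline. But two of the mechanisms you rely on in part (1) fail as stated. First, your internal characterization of cuspidal inertia (``procyclic subgroups whose image in the abelianization is prime-to-$p$ torsion, satisfying a product-of-cuspidal-generators relation'') does not work: in the geometric tame abelianization $\overline{\Pi}_{U}^{1}$ the inertia generators have infinite image (by Lemma \ref{wflemma} they span a free rank-one piece, or die when $r=1$), while in the arithmetic abelianization a huge supply of procyclic subgroups of $\overline{\Pi}_{U}^{m}$ has torsion or trivial image, and the surface-group relation is not an intrinsic condition inside an $m$-step solvable quotient. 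Even the weight filtration only gives the span $W_{-2}$, not the individual inertia lines: when several cusps lie in one Galois orbit (or are all rational) the module structure admits automorphisms mixing the lines. The paper never characterizes inertia internally; it proves that any isomorphism \emph{preserves} inertia by a limit argument over finite quotients using the cusp-counting result of \cite{Ta1999} Lemma 2.3 (Proposition \ref{inertiareco}), and this already needs $m\geq 2$. Second, separating closed points is not achieved by Lefschetz counts through the graded pieces: in the paper the trace formula only detects \emph{which} quasi-sections are geometric (Lemma \ref{7}); the fact that a geometric quasi-section (equivalently, a compatible family of decomposition groups) determines a unique point is proved by the Kummer-theoretic comparison of sections on $J_{X}$ resp.\ $\mathbb{G}_{m}$ (Lemma \ref{lem2.1s}) together with the weight argument $(\overline{\Pi}_{U}^{1}/I_{\tilde v'})^{G}=\{1\}$ (Lemma \ref{seplemmacase10}, Proposition \ref{sepprop}), and cusps are distinguished by the size of the fibres of the quasi-section map via $H^{1}(G_{k'},\hat{\mathbb{Z}}^{p'}(1))\cong k'^{\times}$ (Lemma \ref{reconstructiondecoabel}); none of these ingredients appears in your plan, and counting alone does not supply them. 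Relatedly, the thresholds on $m$ and the exclusions $(0,3),(0,4)$ do not come from the separation step, as you claim: they come from needing at least three cusps for the additivity lemma (Lemma \ref{Talem}) and $r\geq 5$ when $g=0$ for the $\mathbb{G}_{m}$-Kummer argument, the passage to a suitable cover inside the tower costing exactly one solvable step (Lemma \ref{gandr}).

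In part (2) the hard content is also glossed. For surjectivity, ``tracking the reconstruction'' must be made precise: one has to glue the field isomorphisms over a cofinal family of open subgroups compatibly (Lemma \ref{prop1.14}), produce an isomorphism of the towers $\tilde U'^{\,m-n}_{1}/U'_1\to\tilde U'^{\,m-n}_{2}/U'_2$, and verify that it induces exactly $\Phi^{m-n}$, which the paper does by comparing Frobenius elements of sections and using that $\Pi_{U'_1}^{(m-n)}$ is topologically generated by them (as $G_{k_1}\cong\hat{\mathbb{Z}}$); your sketch also never explains the hypothesis $n\geq 2$, which is precisely the cost of applying the weak result to the level-$n$ quotients of the open subgroups after passing to covers with $r\geq 3$. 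For injectivity, reducing to an automorphism $\alpha$ of $U'_1$ acting innerly and asserting that hyperbolicity plus decomposition-group detection ``forces $\alpha=1$'' skips the last genuine ingredient: fixing all closed points set-theoretically does not by itself give $\alpha=\mathrm{id}$ in positive characteristic; one needs Stix's rigidity (Lemma \ref{stxlemma1}, i.e.\ \cite{St2002P} Theorem 1.2.1) to conclude that $\alpha$ is a power of the absolute Frobenius and hence, being an automorphism, the identity --- this is the content of Lemma \ref{injectiveofPi}, from which the paper deduces Corollary \ref{finGCcorinn} via $\Aut(\tilde U'^{\,m-n}_{2}/U'_2)\cong\Pi_{U'_2}^{(m-n)}$.
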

\noindent Theorem \ref{ddd} is a completely new result and even the first result on the $m$-step solvable Grothendieck conjecture for hyperbolic curves over finite fields. Next, we consider  the case that $k$ is a field finitely generated over the prime field  (see section 4). 

\begin{theorem}[Theorem \ref{fingeneGCweak}, Corollary \ref{fingeneGCcorinn2}]\label{ccc}
Assume that $k$ is a field finitely generated over the prime field  and  that $U_{1}$ is  affine hyperbolic.  Assume that $U_{1,\overline{k}}$ does not  descend to a curve over  $\overline{\mathbb{F}}_{p}$  when $p>0$.  
\begin{enumerate}[(1)]
\item Assume that  $m$ satisfies
\begin{equation*}
\begin{cases}
\ \ m\geq 4 & \ \ (\text{if }r_{1}\geq  3 \text{ and } (g_{1},r_{1})\neq (0,3), (0,4))\\
\ \ m\geq 5 & \ \ (\text{if }r_{1}<3 \text{ or } (g_{1},r_{1})= (0,3), (0,4)).
\end{cases}
\end{equation*}
 Then  the following holds.
\begin{equation*}
\pi_{1}^{\text{tame}}(U_{1})^{(m)}\xrightarrow[G_{k}]{\sim} \pi_{1}^{\text{tame}}(U_{2})^{(m)}\Longleftrightarrow U_{1}\xrightarrow{\sim}U_{2}\text{ in }\mathfrak{S}_{k}
\end{equation*}
\item Assume that $m\geq 5$. Let $n\in\mathbb{Z}_{\geq 4}$ be an integer satisfying $m>n$. Then the following natural  map is bijective.
\begin{equation*}\label{map1234}
\Isom_{\mathfrak{S}_{k}}(U_1,U_2)\rightarrow \Isom^{(m)}_{G_{k}}(\pi_{1}^{\text{tame}}(U_{1})^{(m-n)},\pi_{1}^{\text{tame}}(U_{2})^{(m-n)})/\text{Inn}(\pi_{1}^{\text{tame}}(U_{2,k^{\text{sep}}})^{m-n})
\end{equation*}
  \end{enumerate}
\end{theorem}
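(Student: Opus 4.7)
The plan is to reduce Theorem~\ref{ccc} to the finite-field case (Theorem~\ref{ddd}) by a specialization-and-patching argument, using the geometrically $m$-step solvable Oda--Tamagawa good reduction criterion established in Section~3. This follows the general strategy of Tamagawa \cite{Ta1997} and its refinement by Stix \cite{St2002A}, executed here at the level of $m$-step solvable tame fundamental groups.

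For part~(1), given a $G_{k}$-isomorphism $\phi\colon\pi_{1}^{\text{tame}}(U_{1})^{(m)}\xrightarrow{\sim}\pi_{1}^{\text{tame}}(U_{2})^{(m)}$, I would first spread $k$ over a finitely generated normal integral subring $A\subset k$ so that $U_{1},U_{2}$ extend to smooth relative curves $\mathcal{U}_{1},\mathcal{U}_{2}$ over $\mathrm{Spec}(A)$. At a closed point $\mathfrak{s}\in\mathrm{Spec}(A)$ with finite residue field $\kappa(\mathfrak{s})$, the decomposition subgroup $D_{\mathfrak{s}}\subset G_{k}$ acts on $\phi$; the good reduction criterion applied via $D_{\mathfrak{s}}$ propagates good reduction of $\mathcal{U}_{1}$ at $\mathfrak{s}$ to that of $\mathcal{U}_{2}$ and induces an isomorphism between the $(m-2)$-step tame fundamental groups of the reductions $\mathcal{U}_{1,\mathfrak{s}},\mathcal{U}_{2,\mathfrak{s}}$. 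The two-step loss in derivation length here is precisely what raises the finite-field thresholds $m\geq 2,3$ of Theorem~\ref{ddd}(1) to $m\geq 4,5$. Theorem~\ref{ddd}(1) over $\kappa(\mathfrak{s})$ then supplies a dense family of fibrewise isomorphisms $\mathcal{U}_{1,\mathfrak{s}}\xrightarrow{\sim}\mathcal{U}_{2,\mathfrak{s}}$ in $\mathfrak{S}_{\kappa(\mathfrak{s})}$. To globalize, I would study the $A$-scheme $\mathrm{Isom}_{A}(\mathcal{U}_{1},\mathcal{U}_{2})$, which is quasi-finite and unramified over $A$ by rigidity of hyperbolic curves: its fibres over infinitely many $\mathfrak{s}$ are non-empty by the previous step, and the compatibility forced by the single global $\phi$ (the chosen fibrewise isomorphism must be the one that $\phi$ prescribes after specialization, made unique by Theorem~\ref{ddd}(2)) yields a section after possibly shrinking $\mathrm{Spec}(A)$, hence an isomorphism in $\mathfrak{S}_{k}$. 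The non-isotriviality hypothesis on $U_{1,\overline{k}}$ is used in characteristic $p>0$ to keep $\mathrm{Isom}_{A}$ well-controlled; without it an entire Frobenius orbit of candidates could obstruct descent.

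For part~(2), the same machinery is run while tracking outer $G_{k}$-isomorphism classes: given a class in $\Isom^{(m)}_{G_{k}}(\pi_{1}^{\text{tame}}(U_{1})^{(m-n)},\pi_{1}^{\text{tame}}(U_{2})^{(m-n)})/\text{Inn}(\pi_{1}^{\text{tame}}(U_{2,k^{\text{sep}}})^{m-n})$, Theorem~\ref{ddd}(2) over each $\kappa(\mathfrak{s})$ produces a unique fibrewise $\mathfrak{S}_{\kappa(\mathfrak{s})}$-isomorphism compatible with the specialization of the group datum; the patching argument of part~(1) then lifts this to a unique $k$-isomorphism, giving surjectivity. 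Injectivity is obtained by specializing two putative scheme isomorphisms with the same outer-class image and invoking the injectivity part of Theorem~\ref{ddd}(2). The threshold $n\geq 4$ emerges as the $n\geq 2$ demanded by Theorem~\ref{ddd}(2) plus the two derivation steps lost through the good reduction criterion upon passage from $k$ to $\kappa(\mathfrak{s})$. The main obstacle will be the globalization step: because $\phi$ determines scheme-theoretic isomorphisms only up to inner automorphism, the fibrewise choices have to be rigidified coherently across all specializations, which is exactly where the strong finite-field statement and the non-isotriviality hypothesis become indispensable.
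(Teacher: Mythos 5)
Your overall skeleton (spread out over an arithmetic base, use the $m$-step solvable good reduction criterion via Corollary \ref{goodcor} to obtain $(m-2)$-step solvable data on the closed fibres, apply the finite-field theorems, then globalize) is exactly the paper's strategy, and your accounting of the loss of two derivation steps and of the thresholds $m\geq 4,5$ and $n\geq 4$ is right. The genuine gap is in your globalization step. Knowing that $\mathrm{Isom}_{A}(\mathcal{U}_{1},\mathcal{U}_{2})$ is quasi-finite and unramified over $A$ and has non-empty fibres over infinitely many closed points does not yield a section, even after shrinking: a non-trivial finite \'etale cover can have points over every closed point without admitting a generic section, and the phrase ``the compatibility forced by the single global $\phi$'' is precisely the thing that has to be turned into mathematics. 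The paper does this by first rigidifying: after a Galois base change it orders the cusps and fixes a level $N$ structure (transported to $U_{2}$ via $\Phi$ using Proposition \ref{inertiareco}), so that the data are classified by morphisms $\zeta_{1},\zeta_{2}\colon S\to\mathcal{M}_{g,r}[N]$ into a \emph{fine, separated scheme} (Lemma \ref{moduli4}); the fibrewise isomorphisms from Theorem \ref{finGCweak}, shown to preserve the level structures and cusp orderings via Theorem \ref{finGCweak}(iii)(iv), give $\zeta_{1}(s)=\zeta_{2}(s)$ for all closed $s$, and then Lemma \ref{stxlemma1} (Stix) upgrades set-theoretic coincidence on closed points to $\zeta_{1}=\zeta_{2}$ up to a power of Frobenius, whence a global isomorphism by the fine moduli property and Galois descent. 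Your sketch contains no substitute for this rigidification-plus-Stix mechanism.

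A second, related omission is the Frobenius bookkeeping in characteristic $p$, which is the hardest technical point and is not covered by your remark that non-isotriviality keeps $\mathrm{Isom}_{A}$ ``well-controlled''. The isomorphisms supplied by Theorem \ref{ddd} over the finite residue fields are abstract scheme isomorphisms, a priori involving different Frobenius twists $(n_{1,s},n_{2,s})$ at different closed points; to get a single pair $(n_{1},n_{2})$ with $n_{1}n_{2}=0$ and the compatibility $(\dag)$ of Lemma \ref{stxlemma2fin}, the paper runs a weight/determinant argument on $\bigwedge^{\max}\overline{\Pi}^{1,\mathrm{pro}\text{-}p'}$ to show the twist $\alpha_{s}$ is independent of $s$, and then uses non-isotriviality together with Lemma \ref{dzetaslem} (that $\bigcap_{v} d_{v}\hat{\mathbb{Z}}=\{0\}$ on a positive-dimensional image in $\mathcal{M}_{g,r}[N]$) to pin down $n_{2}-n_{1}$. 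The same issue reappears in part (2): your surjectivity needs the fibrewise/level-by-level isomorphisms to be compatible along the tower $\tilde{U}^{m-n}_{i}$ with a \emph{uniform} twist (Lemma \ref{lemmaaaaa}), and you still must identify the induced map on $\Pi^{(m-n)}$ with $\Phi^{m-n}$, which the paper does by showing $\Pi^{(m-n)}_{U_{1}}$ is topologically generated by images of sections over open subgroups of $G_{k}$ (Hilbertianity of $k$); neither step appears in your proposal. (Your specialization route to injectivity could in principle replace the paper's Lemma \ref{injelemma4-2}, but it again hinges on the unproved globalization.) As written, then, the proposal is the right high-level plan with the decisive steps --- rigidification by level structures, the application of Lemma \ref{stxlemma1}, and the uniformization of Frobenius twists --- missing.
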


\noindent
The following is a summary of the new results contained in Theorem \ref{ccc} that are not covered by the previous results Theorem \ref{thmnakamura}, Theorem \ref{thmmochizuki}, and Theorem \ref{thmnyamaguchi}.

\begin{theorem*}[Summary of new results contained  in Theorem \ref{ccc}]\label{theoremtt}
Assume that  $k$ is finitely generated over the prime field and that   $U_{1}$ is   affine  hyperbolic. Assume that $U_{1,\overline{k}}$ does not  descend to a curve over  $\overline{\mathbb{F}}_{p}$ when $p>0$.
\begin{enumerate}[(1)]
\item We assume one of the following (a)-(d).
\begin{enumerate}[(a)]
\item $p=0$, $r_{1}\geq 3$, $g_{1}\geq 1$, and $m= 4$.
\item $p>0$, $r_{1}\geq 3$, $g_{1}\geq 1$, and $m\geq  4$.
\item $p>0$, $r_{1}< 3$, and $m\geq 5$.
\item $p>0$, $g_{1}=0$, $r_{1}\geq 5$, $m\geq 4$, and the curve  $X_{1,\overline{k}}-S$    descends to a curve over  $\overline{\mathbb{F}}_{p}$ for some $S\subset E_{1, \overline{k}}\ \text{with}\ |S|=4$.
\end{enumerate}
 Then  the following holds.
\begin{equation*}
\pi_{1}^{\text{tame}}(U_{1})^{(m)}\xrightarrow[G_{k}]{\sim} \pi_{1}^{\text{tame}}(U_{2})^{(m)}\Longleftrightarrow U_{1}\xrightarrow{\sim}U_{2}\text{ in }\mathfrak{S}_{k}
\end{equation*}
\item Assume that $p>0$ and that $m\geq 5$. Let $n\in\mathbb{Z}_{\geq 4}$ be an integer satisfying $m>n$. Then the following map is bijective.
\begin{equation*}
\Isom_{\mathfrak{S}_{k}}(U_1,U_2)\rightarrow \Isom^{(m)}_{G_{k}}(\pi_{1}^{\text{tame}}(U_{1})^{(m-n)},\pi_{1}^{\text{tame}}(U_{2})^{(m-n)})/\text{Inn}(\pi_{1}^{\text{tame}}(U_{2,k^{\text{sep}}})^{m-n})
\end{equation*}
  \end{enumerate}
\end{theorem*}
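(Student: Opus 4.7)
The plan is to deduce the Summary directly from Theorem \ref{ccc} by case analysis. The genuine mathematical content lives upstream in Theorem \ref{ccc} (that is, in Theorem \ref{fingeneGCweak} and Corollary \ref{fingeneGCcorinn2}); what remains for the Summary itself is to verify, case by case, that (i) the listed hypotheses are covered by Theorem \ref{ccc}, and (ii) they lie outside the scope of each of Theorems \ref{thmnakamura}, \ref{thmmochizuki}, and \ref{thmnyamaguchi}.

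For part (1) I would treat the subcases (a)--(d) in turn. In (a), (b), and (d) we have $r_{1} \geq 3$ with $(g_{1}, r_{1}) \neq (0,3), (0,4)$ (in (a) and (b) since $g_{1} \geq 1$, and in (d) since $r_{1} \geq 5$), so the first clause of Theorem \ref{ccc}(1) applies with the bound $m \geq 4$. In (c), $r_{1} < 3$ together with hyperbolicity forces $g_{1} \geq 1$, and the second clause of Theorem \ref{ccc}(1) applies with $m \geq 5$. For the novelty check, I would then run through each prior theorem. Theorem \ref{thmnakamura} is restricted to $\mathbb{P}^{1}_{k}$ minus four rational points over a restricted class of number fields, so it touches none of (a)--(d). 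Theorem \ref{thmmochizuki} assumes a $\pi_{1}$-isomorphism rather than a $\pi_{1}^{\text{tame}}$-isomorphism and requires $m \geq 5$; since in characteristic $p > 0$ the kernel of $\pi_{1} \twoheadrightarrow \pi_{1}^{\text{tame}}$ for an affine hyperbolic curve is nontrivial (wild Artin--Schreier covers), a $\pi_{1}^{\text{tame}}$-isomorphism is a genuinely weaker input, so Mochizuki does not cover (b)--(d), and (a) lies outside its range since there $m = 4 < 5$. Theorem \ref{thmnyamaguchi} requires $g_{1} = 0$ together with the non-descent of $X_{1,\overline{k}} - S$ for every $4$-point $S \subset E_{1,\overline{k}}$, and uses the $\pi_{1}^{(m,\text{pro-}p')}$-quotient; hence it cannot cover (b) or (c) (where $g_{1} \geq 1$), and its non-descent hypothesis directly contradicts the descent assumption in (d).

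For part (2), the conclusion is immediate from Theorem \ref{ccc}(2) specialised to $p > 0$. The novelty check is analogous: Theorem \ref{thmmochizuki} does provide the corresponding Isom-bijection, but with $\pi_{1}$ in place of $\pi_{1}^{\text{tame}}$ (and with the wider range $n \geq 3$), so in positive characteristic Mochizuki's result does not imply the tame bijection claimed here.

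I do not expect any substantive obstacle. The proof is essentially a bookkeeping verification that cleanly separates the new cases from the old; the only care required is keeping track of the distinctions between $\pi_{1}$, $\pi_{1}^{\text{tame}}$, and $\pi_{1}^{(m,\text{pro-}p')}$ when comparing with the prior results.
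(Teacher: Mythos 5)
Your proposal is correct and matches the paper's (implicit) argument: the Summary is obtained by specializing Theorem \ref{ccc} (i.e.\ Theorem \ref{fingeneGCweak} and Corollary \ref{fingeneGCcorinn2}) to the cases (a)--(d), and the remaining content is exactly the bookkeeping comparison with Theorems \ref{thmnakamura}, \ref{thmmochizuki}, and \ref{thmnyamaguchi}, keeping track of $\pi_{1}$ versus $\pi_{1}^{\text{tame}}$ versus the pro-$p'$ quotient and of the bounds on $m$. Your case-by-case verification (including that hyperbolicity with $r_{1}<3$ forces $g_{1}\geq 1$, and that the descent hypothesis in (d) excludes Theorem \ref{thmnyamaguchi}) is exactly what is needed.
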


Let us sketch the proofs of Theorem \ref{ddd} and Theorem \ref{ccc}.  For simplicity, we also write  $U_{i}$  for $U'_{i}$ (in Theorem \ref{ddd}).  
Roughly speaking, the proof of Theorem \ref{ddd} (resp.  Theorem \ref{ccc}) is based on \cite{Ta1997} sections 2, 4 (resp. \cite{Ta1997} sections 5, 6, and \cite{St2002A}). However, our proofs  differ from those in \cite{Ta1997} and  \cite{St2002A} in the following point, among other things.

\begin{itemize}
\item[(P)] We  need to replace various arguments in \cite{Ta1997} and \cite{St2002A} (that involve the full (tame) fundamental group $\pi_{1}^{\text{tame}}(U_{i})$) with new arguments that only involve the (geometrically) $m$-step solvable quotient $\pi_{1}^{\text{tame}}(U_{i})^{(m)}$. Further, we also need to have these new arguments for as small $m$ as possible. (See Remark \ref{remstr2}.)\par
\end{itemize}
\noindent
Let us divide the proofs into seven steps. In all steps, we need to treat carefully  the difficulties that come from (P).
\\
\ \\
\noindent\underline{The Sketch of Proof of Theorem \ref{ddd}}
\begin{itemize}
\item[(Step 1: contents in subsections 2.1 and 2.2)] We reconstruct   the $\pi_{1}^{\text{tame}}(U_{i} )^{(m-1)}$-set $\text{Dec }(\pi_{1}^{\text{tame}}(U_{i} )^{(m-1)})$  from $\pi_{1}^{\text{tame}}(U_{i})^{(m)}$  (see Proposition \ref{bidecoabel}). In this step,  we always face the difficulty that comes from (P) (for example,  when proving the separatedness of  decomposition groups of $\pi_{1}^{\text{tame}}(U_{i} )^{(m)}$ (see Lemma \ref{seplemmacase10} and Proposition \ref{sepprop}) and  when discussing how to get the result for the reconstruction of the $\pi_{1}^{\text{tame}}(U_{i} )^{(m-1)}$-set $\tilde{U_{i} }^{m-1 ,\text{cl}}$, where  $\tilde{U}^{m-1}_{i}$ is the maximal unramified covering of $U_{i}$ which  is tamely ramified outside of $U_{i}$ and a (geometrically)    $(m-1)$-step  solvable covering of $U_{i}$   (see Lemma \ref{reconstructiondecoabel})). 

\item[(Step 2: contents in subsection 2.3)] We reconstruct the curve $U_{i} $ from  $\pi_{1}^{\text{tame}}(U_{i} )^{(m)}$ and the $\pi_{1}^{\text{tame}}(U_{i} )^{(m-1)}$-set $\text{Dec }(\pi_{1}^{\text{tame}}(U_{i} )^{(m-1)})$. The basic plan is to reconstruct the multiplicative group and the addition of the function field $K(U_{i} )$.   For the first reconstruction,  we use   class field theory, and for the second reconstruction,  we use  Lemma \ref{Talem} (\cite{Ta1997} Lemma 4.7).  Thus, by using Step 1 and Step 2,  Theorem \ref{ddd}(1) follows. 

\item[(Step 3: contents in subsection 2.4)] In this step, we prove  Theorem \ref{ddd}(2). To prove the  injectivity,  we use Lemma \ref{stxlemma1} (\cite{St2002P} Theorem 1.2.1).  To prove the surjectivity, we use the results obtained in Step 1 and Step 2.  \hspace{0pt plus 1 filll}$\Box$
\end{itemize}
\ \\\ \\
\noindent\underline{The Sketch of Proof of Theorem \ref{ccc}}
\begin{itemize}
\item[(Step 4: contents in section 3)]  Let $R$ be a regular local ring, $s$ the closed point of $\text{Spec}(R)$, and $(X,E)$ a  hyperbolic curve over the function field $K:=K(R)$.  Set $U:=X-E$. Let $I$ be an inertia group of $G_{K}$ at $s$. To show Theorem \ref{ccc}, we need    Theorem \ref{ddd}(2) and the following   results on the $m$-step solvable version of the Oda-Tamagawa good reduction criterion (\cite{Ta1997} Theorem (5.3)). 
\end{itemize}
\begin{theorem}[Theorem \ref{2goodreductiontheorem}]\label{edx}
 Assume that $R$ is a discrete valuation ring and that $m\geq2$. Then  $(X,E)$ has good reduction at $s$ if and only if  the image of $I $ in $\Out(\pi_{1}^{\text{tame}}({U}_{K^{\text{sep}}})^{m,\pro \text{ch}(\kappa(s))'})$ is trivial.
\end{theorem}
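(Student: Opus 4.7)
The forward implication is classical and I would handle it by the standard specialization argument. If $(X,E)$ extends to a smooth proper model $(\mathcal{X},\mathcal{E})/R$ with complement $\mathcal{U}:=\mathcal{X}-\mathcal{E}$, the specialization theorem for the tame fundamental group provides a $G_{K}$-equivariant isomorphism $\pi_{1}^{\text{tame}}(U_{K^{\sep}})^{\pro \text{ch}(\kappa(s))'}\xrightarrow{\sim}\pi_{1}(\mathcal{U}_{\overline{s}})^{\pro \text{ch}(\kappa(s))'}$, on which the outer action of $I$ factors through the unramified quotient of $G_{K}$ and is hence trivial. This triviality descends to every $G_{K}$-equivariant quotient, in particular to the $m$-step solvable one.

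For the converse, my plan is to follow the proof of the classical Oda--Tamagawa criterion (\cite{Ta1997} Theorem 5.3), checking at each step that the argument goes through using only the $m$-step solvable geometric quotient as input; this is the instance of obstacle (P) from the introduction relevant to Step 4. First, by the stable reduction theorem for pointed curves, after a tame base change $K'/K$ (which replaces $I$ by an open subgroup of index prime to $p:=\text{ch}(\kappa(s))$ and therefore does not alter the hypothesis), I may assume $(X,E)$ has stable reduction $(\mathcal{X}',\mathcal{E}')$ over the integral closure $R'$ of $R$ in $K'$. It then suffices to show that the stable reduction is smooth.

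Suppose for contradiction that the special fiber has a node. Associated to this node is a vanishing cycle $\gamma\in\pi_{1}^{\text{tame}}(U_{K^{\sep}})^{\pro \text{ch}(\kappa(s))'}$, and a topological generator of the prime-to-$p$ part of the tame inertia acts on this pro-prime-to-$p$ group by the corresponding Dehn twist $\tau_{\gamma}$ (the Picard--Lefschetz formula). The crucial step, and what I expect to be the main obstacle, is to show that $\tau_{\gamma}$ has non-trivial image in $\Out(\pi_{1}^{\text{tame}}(U_{K^{\sep}})^{m,\pro \text{ch}(\kappa(s))'})$ for every $m\geq 2$; combined with the hypothesis, this yields the desired contradiction.

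For non-separating $\gamma$ the non-triviality is visible on the geometric abelianization through the intersection pairing, so the conclusion already follows at the $m=1$ level. For separating $\gamma$ the action on the abelianization $\pi_{1}^{\ab}$ vanishes, and one must work modulo the second derived subgroup: the explicit form of the Dehn twist expresses the induced map $\pi_{1}^{\ab}\to \pi_{1}^{[1]}/\pi_{1}^{[2]}$ in terms of the commutator pairing and the class of $\gamma$, and the hyperbolicity of $U$ combined with the non-triviality of $\gamma$ in the homology of one of the two subsurfaces separated by $\gamma$ forces this map to be non-zero even after taking the pro-prime-to-$p$ quotient. This handles $m=2$, and hence all $m\geq 2$ since the $m$-step quotient dominates the $2$-step quotient, completing the proof of the converse.
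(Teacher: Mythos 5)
Your strategy is the explicit vanishing-cycle/Dehn-twist route alluded to in the Remark following Theorem \ref{otgoodthm}, and it differs genuinely from the proof in the paper, which never computes with vanishing cycles: there, the triviality hypothesis is first converted, via the weight filtration (Lemma \ref{wflemma}) and the N\'eron--Ogg--Shafarevich criterion, into the existence of a semi-stable model whose special fiber has tree dual graph (Lemma \ref{11.3}); then an auxiliary degree-$\ell$ cyclic cover is constructed inside $\Pi_{U}^{(m,\text{pro-}\ell)}$ (this is exactly where $m\geq 2$ enters), shown to have the same tree property, and smoothness is deduced by the graph-theoretic final step of Tamagawa's proof; finally the imperfect-residue-field and descent problems are handled by Lemma \ref{lemmanag} and Lemma \ref{goodtamagawanag}.

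The gap in your proposal is precisely your ``crucial step'' for separating vanishing cycles, and the justification you give does not work. If $\gamma$ separates $U$ (this includes the genuinely affine degenerations in which points of $E$ collide, so that $\gamma$ encircles a subset of the cusps), then every cycle has algebraic intersection number $0$ with $\gamma$, the class of $\gamma$ in the homology of the piece it bounds may well vanish, and --- the essential point --- what must be excluded is that $\tau_{\gamma}$ becomes an \emph{inner} automorphism of the metabelian quotient: $\tau_{\gamma}$ is conjugation by $\gamma$ on one side and the identity on the other, and since $\gamma$ maps to $0$ in $\overline{\Pi}_{U}^{\text{ab}}$ its conjugation action on both $\overline{\Pi}_{U}^{\text{ab}}$ and $\overline{\Pi}_{U}^{[1]}/\overline{\Pi}_{U}^{[2]}$ is trivial; so non-innerness is a delicate statement about the module structure of $\overline{\Pi}_{U}^{[1]}/\overline{\Pi}_{U}^{[2]}$ (or must be extracted from weights), not something ``forced'' by hyperbolicity, and one must in fact treat a product of twists along several disjoint cycles, not a single one. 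The Remark following Theorem \ref{otgoodthm} records that the known explicit computations (Tamagawa's Remark (5.4) together with \cite{AMO1992}) yield only the $2$-step version when $r<2$ and the $3$-step version when $r\geq 2$; your claim that $m=2$ suffices for all separating cycles is exactly the point at issue and cannot be asserted without a real argument. Two further steps are also missing: stable reduction is in general only attained after a possibly wild extension (your ``tame base change'' claim is false, though harmless since triviality passes to subgroups), and smoothness of the stable model over $K'$ only gives \emph{potential} good reduction --- the descent to good reduction over $K$ itself, including the case of imperfect $\kappa(s)$ where the Deligne--Mumford comparison between stable reduction of $X$ and of $J_{X}$ is not available as stated, is a substantial part of the paper's proof (Lemmas \ref{lemmanag} and \ref{goodtamagawanag}) and is absent from yours.
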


\begin{corollary}[Corollary \ref{goodcor}]\label{caseq}
Assume that  $R$ is a  henselian regular local ring.    Let $(\mathfrak{X},\mathfrak{E})$ be a smooth model of $(X,E)$ over $\text{Spec}(R)$. Set $\mathfrak{U}:=\mathfrak{X}-\mathfrak{E}$.    Then  $\pi_{1}^{\text{tame}}(\mathfrak{U}_{s})^{(m-2)}\xleftarrow{\sim}\plim{H}\pi_{1}^{\text{tame}}(U)^{(m)}/H$ holds, where $H$ runs over  all open normal subgroups of $\pi_{1}^{\text{tame}}(U)^{(m)}$  satisfying (i) $ \pi_{1}^{\text{tame}}(U_{K^{\text{sep}}})^{[m-2]}/\pi_{1}^{\text{tame}}(U_{K^{\text{sep}}})^{[m]}\subset H$,   (ii) the image of $H$ in $G_{K}$ contains $I $, and  (iii) the image of $I $ in $\Out((H\cap \pi_{1}^\text{tame}(U_{K^{\text{sep}}}))^{2,\pro \text{ch}(\kappa(s))'})$ is trivial. 
\end{corollary}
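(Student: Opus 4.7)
The plan is to translate conditions (i)--(iii) on $H$ into geometric conditions on the connected tame Galois cover $V\to U$ associated with the preimage $\tilde H\subset \pi_{1}^{\text{tame}}(U)$: (i) will say that $V$ is geometrically $(m-2)$-step solvable, (ii) will say that the field of constants $L$ of $V$ is unramified over $K$ at $s$, and (iii), via Theorem \ref{edx}, will say that $V/L$ has good reduction. Under these conditions $V\to U$ extends to a tame \'etale Galois cover of the smooth model $\mathfrak{U}$, and the tame specialization isomorphism for the henselian local $R$ identifies such covers with tame Galois covers of $\mathfrak{U}_{s}$; passing to the inverse limit then gives the required identification with $\pi_{1}^{\text{tame}}(\mathfrak{U}_{s})^{(m-2)}$.

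In detail, (i) is equivalent to $H$ being the pullback of a unique open normal $\bar H\subset \pi_{1}^{\text{tame}}(U)^{(m-2)}$, so $\pi_{1}^{\text{tame}}(U)^{(m)}/H=\pi_{1}^{\text{tame}}(U)^{(m-2)}/\bar H$; let $V\to U$ be the tame Galois cover corresponding to $\tilde H$, with field of constants $L$. Condition (ii) says the image of $\bar H$ in $G_{K}$ contains $I$, equivalently $L/K$ is unramified at $s$, so (since $R$ is henselian regular local) $L$ corresponds to a finite connected \'etale extension $R\to R'$ with $R'$ henselian regular local. From (i) one has $\pi_{1}^{\text{tame}}(U_{K^{\text{sep}}})^{[m-2]}\subset \tilde H\cap \pi_{1}^{\text{tame}}(U_{K^{\text{sep}}})=\pi_{1}^{\text{tame}}(V_{L^{\text{sep}}})$, hence by derived-series monotonicity $\pi_{1}^{\text{tame}}(V_{L^{\text{sep}}})^{[2]}\supset\pi_{1}^{\text{tame}}(U_{K^{\text{sep}}})^{[m]}$; this lets one canonically identify the group in (iii) with $\pi_{1}^{\text{tame}}(V_{L^{\text{sep}}})^{2,\pro\text{ch}(\kappa(s))'}$, so (iii) is exactly the hypothesis of Theorem \ref{edx} applied to the hyperbolic curve $V$ over $L$. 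After reducing to the DVR case by purity of the good-reduction locus on $\text{Spec}(R')$, Theorem \ref{edx} yields that $V/L$ has good reduction over $R'$.

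Good reduction then produces a unique finite tame Galois \'etale cover $\mathfrak{V}\to\mathfrak{U}\times_{R}R'$, and composing with the finite \'etale map $\mathfrak{U}\times_{R}R'\to\mathfrak{U}$ yields a finite tame Galois \'etale cover $\mathfrak{V}\to\mathfrak{U}$. Since $R$ is henselian and $(\mathfrak{X},\mathfrak{E})$ is smooth, the tame specialization isomorphism $\pi_{1}^{\text{tame}}(\mathfrak{U})\xrightarrow{\sim}\pi_{1}^{\text{tame}}(\mathfrak{U}_{s})$ shows that such $\mathfrak{V}$ are the same as finite tame Galois covers of $\mathfrak{U}_{s}$. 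The assignment $H\mapsto \mathfrak{V}_{s}$ preserves the geometric $(m-2)$-step solvability, and one verifies cofinality in both directions: every geometrically $(m-2)$-step solvable finite tame Galois cover of $\mathfrak{U}_{s}$ lifts to a tame cover of $\mathfrak{U}$ whose restriction to $U$ arises from some $H$ verifying (i)--(iii). Taking the inverse limit over this cofinal system gives the isomorphism. The main obstacle is the content of the second paragraph---matching (iii) with the outer-Galois hypothesis of Theorem \ref{edx} for the intermediate cover $V$, and promoting Theorem \ref{edx} from its DVR formulation to the henselian regular local base $R'$ via purity of good reduction in codimension $\geq 2$.
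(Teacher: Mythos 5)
Your overall strategy is the same as the paper's: translate (i) into ``$H$ comes from the $(m-2)$-quotient'', (ii) into ``the constant field $L$ of the cover $V=U_{\tilde H}$ extends to a finite \'etale $R'/R$'', (iii) (via Lemma \ref{quotientlemma1.1}-type identification of $\overline H^{2,\pro p'}$ with $\overline{\Pi}_{V}^{2,\pro p'}$) into the hypothesis of Theorem \ref{edx} for $V/L$, then match the resulting covers with covers of $\mathfrak{U}_{s}$ over the henselian base and pass to the limit. The DVR step you assert (``good reduction of $V$ plus unramified constants $\Longleftrightarrow$ the cover extends to a tame \'etale cover of the smooth model'') is exactly what the paper takes from \cite{Ta1997} Lemma (5.5), and your cofinality/limit argument matches the paper's ``the second assertion follows from the first''.

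The one step I would not accept as written is the reduction from the henselian regular local ring to the DVR case ``by purity of the good-reduction locus on $\text{Spec}(R')$'', i.e.\ your claim that good reduction of $V$ at all codimension-one points of $\text{Spec}(R')$ already gives a smooth model of $V$ over $R'$ itself. That is a genuinely nontrivial purity theorem for families of pointed hyperbolic curves (delicate in particular for $g\leq 1$ with marked points), it is neither proved nor cited in the paper, and the paper's proof of Corollary \ref{goodcor} is organized precisely so as never to assert good reduction over a base of dimension $\geq 2$: it reformulates everything through condition (a) (``$H$ contains the kernel of the specialization map'', equivalently the cover extends \'etale over the \emph{given} smooth model $\mathfrak{U}\times_{R}R'$, which is regular), checks (a), (b), (c) only at the codimension-one points singled out by Lemma \ref{lemma20.3}, applies the DVR case (Lemma (5.5) of \cite{Ta1997} together with Theorem \ref{2goodreductiontheorem}) there, and then invokes Zariski--Nagata purity for \'etale covers of the regular scheme $\mathfrak{U}\times_{R}R'$. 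Your argument is repaired by reversing the order: first extend the cover $V\to U_{L}$ over $\mathfrak{U}\times_{R}R'$ (normalize in $K(V)$; \'etaleness in codimension one follows from the DVR case at height-one primes of $R'$, and then Zariski--Nagata purity kills the remaining branch locus), and only afterwards, if desired, read off good reduction of $V$ from the resulting model. With that substitution (and the small remark that $V$ is again hyperbolic, so Theorem \ref{edx} applies to it), your proof coincides with the paper's.
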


\begin{itemize}
\item[(Step 5: contents in subsection \ref{subsection4.1})] We investigate the properties of the category $\text{Sch}_{k,\textbf{Fr}^{-1}}^{\text{geo.red.}}$. In \cite{St2002A} and \cite{St2002P},  to extend the arguments in \cite{Ta1997}  sections 5, 6   to the  positive characteristic case,  the category obtained by localizing the category of varieties over $k$ with respect to all relative Frobenius morphisms of varieties over $k$ was introduced.  In this step, we need to consider not only varieties over $k$ but also arbitrary geometrically reduced $k$-schemes to apply the argument to $\tilde{U}^{m}_{i}$.

\item[(Step 6: contents in subsection \ref{subsectionfingeneweak})] Fix an isomorphism $\alpha: \pi_{1}^{\text{tame}}(U_{1})^{(m)}\xrightarrow{\sim}\pi_{1}^{\text{tame}}(U_{2})^{(m)}$ ($m\geq 5$).  By Galois descent theory, we only need to consider the case that the Jacobian variety of  $X_{1}$ has a level $N$ structure and $E_{i}$ consists of  $k$-rational points.   Let  $S$  be an integral regular scheme of finite type over $\text{Spec}(\mathbb{Z})$ with function field $k$. 
 By replacing $S$ with a suitable open subscheme  if necessary, we may assume that  there exists a smooth  curve $(\mathcal{X}_{i},\mathcal{E}_{i})$ over $S$ whose generic fiber is isomorphic to (and identified with) $(X_{i},E_{i})$.  Let  $\zeta_{i}: S\rightarrow \mathcal{M}_{g,r}[N]$ be  the morphism  classifying   $(\mathcal{X}_{i},\mathcal{E}_{i})$ (with a suitable ordering of $\mathcal{E}_{i}$ and  a suitable level $N$ structure).  First, we show the claim: $\zeta_{1}$ and $\zeta_{2}$ coincide (up to composition with a power of the absolute Frobenius of $S$ when $p>0$) (see Lemma \ref{stxlemma2fin}).  By Lemma \ref{stxlemma1} (\cite{St2002P} Theorem 1.2.1),  it is sufficient to show that   $\zeta_{1}$ and $\zeta_{2}$  coincide set-theoretically. This is shown by the contents of Step 3 and  Step 4.  Hence the claim  follows. By using the claim, Theorem \ref{ccc}(1) follows.

\item[(Step 7: contents in subsection \ref{subsectionfingenestrong})]In this step, we prove Theorem \ref{ccc}(2).  To prove  the injectivity,  we use the center-freeness of $\pi_{1}(U_{i})^{(m)}$ (Proposition \ref{center}).  To prove the surjectivity, we use the result proved in Step 6.  In this proof,  we must  be careful about the  number of Frobenius twists (see the proof of  Lemma \ref{lemmaaaaa}).\hspace{0pt plus 1 filll}$\Box$
\end{itemize}


\addcontentsline{toc}{section}{Notation}
\section*{Notation}
In the rest of  this paper, we use the following notation.

\begin{enumerate}[(a)]

	\item \label{mdef}
We fix  an integer $m\in\mathbb{Z}_{\geq1}$. Remark that $m$ is {\bf always greater than or equal to ${\bf1}$} by definition.  

	\item \label{centralizer}
Let  $G$ be a profinite group.  Then we write $H\overset{\text{op}}\subset  G$ (resp. $H\overset{\text{cl}}\subset G$)  if   $H$ is an open (resp. a closed) subgroup of $G$. We define  $Z(G)$ as the center of $G$ and define  $Z_{G}(H)$ as the centralizer of $H$ in $G$ for any $H\overset{\text{cl}}\subset G$.

	\item \label{mstepsolvabledef}
Let  $G$ be a profinite group. Let  $w\in\mathbb{Z}_{\geq0}$ be an integer.   Then we write  $\overline{[G,G]} $ for the closed subgroup of $G$ which is (topologically) generated by the commutator subgroup of $G$. We set $G^{[0]}:=G$ and $G^{[w]}:=\overline{[G^{[w-1]},G^{[w-1]}]}$ ($w\geq 1$). The group  $G^{w}:=G/G^{[w]}$ is called the maximal $w$-step solvable quotient of $G$.  Let    $\Sigma$ be a set of primes.  We write  $G^{\Sigma}$ for the maximal pro-$\Sigma$ quotient of $G$.  We set  $G^{w,\Sigma}:=(G^{w})^{\Sigma}$. For   a prime $\ell$, we write ``$\pro\ell$" (resp.``$\pro\ell'$") instead of  ``$\Sigma$"  when $\Sigma=\{\ell\}$ (resp. $\Sigma$ is the set of all primes  different from $\ell$). 
	\item 
Let $S$ be a  scheme. We denote by $S^{\cl}$ the set of all closed points of $S$.
	
	\item 
Let  $k$ be a field.  Then we write  $\overline{k}$ for an algebraic  closure of $k$ and   $k^{\text{sep}}$ for the  maximal
separable extension of $k$ contained in $\overline{k}$. We set $G_{k}:=\text{Gal}(k^{\text{sep}}/k)$. When $k$ is a finite field, we write $\text{Fr}_{k}\in G_{k}$ for the Frobenius element of $k$.

	\item \label{notationcorvesovert}
Let $S$ be a scheme, $\mathcal{X}$ a scheme over $S$, $\mathcal{E}$ a (possibly empty) closed subscheme of $\mathcal{X}$, and  $(g,r)$ a pair of non-negative integers.  Then we say that the pair $(\mathcal{X},\mathcal{E})$ is a $smooth$ $curve$ ($of$ $type$ $(g,r)$) over $S$ if the following conditions hold.
\begin{itemize}
\item  $\mathcal{X}$ is  smooth, proper, and of  relative dimension one over $S$.
\item  For any geometric point $\overline{s}$   of  $S$,  the  geometric fiber $\mathcal{X}_{\overline{s}}$  at $\overline{s}$ is connected and  satisfies $\text{dim}(H^{1}(\mathcal{X}_{\overline{s}},\mathcal{O}_{\mathcal{X}_{\overline{s}}}))=g$.
\item  The composite of $\mathcal{E}\hookrightarrow  \mathcal{X}\rightarrow S$ is finite, \'{e}tale and of degree $r$. 
\end{itemize}
If there is no risk of confusion, we also call  the complement $\mathcal{U}:=\mathcal{X}-\mathcal{E}$ a  smooth curve over $S$ (of type $(g,r)$).  We  write $g(\mathcal{U})$ and $r(\mathcal{U})$ for  $g$ and $r$, respectively.  We say that a smooth curve $\mathcal{U}$ of type $(g,r)$ is  $hyperbolic$ if  $2-2g-r<0$ (in other words, $(g,r)\neq (0,0),(0,1),(0,2),(1,0)$).

\end{enumerate}

\noindent In the following  (\ref{notaionoverlineH})-(\ref{notationtate}), let $(X,E)$ be a smooth curve  over  a field $k$, $U:=X-E$,  $K(U_{k^{\sep}})$ the function field of $U_{k^{\sep}}$, $\Omega$ an algebraically closed field  containing $K(U_{k^{\sep}})$, and   $\overline{\eta}:\text{Spec}(\Omega)\rightarrow U_{k^{\sep}}(\rightarrow U)$ the corresponding  geometric point.   Let $\Sigma$ be a set of primes.

\begin{enumerate}[(a)]
\setcounter{enumi}{6}
	\item \label{notaionoverlineH}
We set
\begin{equation*}
\Pi_{U}:=\pi_1^{\text{tame}}(U,\overline{\eta})\ \ \ \text{and}\  \ \ \overline{\Pi}_{U}:=\pi_1^{\text{tame}}(U_{k^{\sep}},\overline{\eta}).
\end{equation*}
Let $G$ be a quotient of $\Pi_{U}$,  defined  by a surjection   $\rho:\Pi_{U}\twoheadrightarrow G$. Let   $H$ be an closed subgroup of $G$.    Let    $w\in\mathbb{Z}_{\geq0}$ be an integer.  Then we set 
\[\overline{H}:= H\cap \rho(\overline{\Pi}_{U}),\ 
\ H^{(\Sigma)}:=H/\Ker (\overline{H}\twoheadrightarrow\overline{H}^{\Sigma}),\ \ H^{(w)}:=H/\overline{H}^{[w]}, \ \ \text{and}\ \ H^{(w,\Sigma)}:=H/\Ker (\overline{H}\twoheadrightarrow\overline{H}^{w,\Sigma}).
\]
 For   a prime $\ell$, we write ``$\pro\ell$" (resp.``$\pro\ell'$") instead of  ``$\Sigma$"  when $\Sigma=\{\ell\}$ (resp. $\Sigma$ is the set of all primes different from $\ell$). 

	\item \label{surfacegroup}
Let  $(g,r)$ be a pair of non-negative integers. We write $\Pi_{g,r}$ for the group

\begin{equation}\label{equation2.1}
\left\langle \alpha_1,\cdots,\alpha_g,\beta_1,\cdots,\beta_g,\sigma_1,\cdots,\sigma_r\middle|\prod_{i=1}^{g}[\alpha_i,\beta_i]\prod_ {j=1}^{r}\sigma_j=1\right\rangle, 
\end{equation}
and  $\hat{\Pi}_{g,r}$ for the profinite completion of  $\Pi_{g,r}$.  Assume that   $\Sigma$   contains  a prime different from $\text{ch}(k)$. Set  $\Sigma^{\dag}:=\Sigma-\{\text{ch}(k)\}$. Then the  existence of surjections $ \hat{\Pi}^{\Sigma}_{g,r}\twoheadrightarrow \overline{\Pi}_{U}^{\Sigma}\twoheadrightarrow \hat{\Pi}^{\Sigma^{\dag}}_{g,r}$ (see \cite{SGA1}) implies   the following equivalences (see \cite{Ya2020}).
\begin{equation}\label{trivialcong2}
\overline{\Pi}^{m,\Sigma}_{U}\text{ is not trivial}\Leftrightarrow  (g,r)\neq(0,0),(0,1)
\end{equation}
\begin{equation}\label{trivialcong}
\overline{\Pi}^{m,\Sigma}_{U}\text{ is not abelian}\Leftrightarrow  m\geq 2 \text{ and }(g,r)\neq(0,0),(0,1),(1,0),(0,2).
\end{equation}

	\item \label{univcovers}
We define  $\tilde{\mathcal{K}}(U)\subset\Omega$ (resp. $\tilde{\mathcal{K}}(U)^{\Sigma}\subset\Omega$)  as the maximal tamely ramified Galois (resp. pro-$\Sigma$ Galois) extension  of  $K(U_{k^{\sep}})$ in $\Omega$ unramified on $U$. We write $\tilde{U}\ (=\tilde{U}^{U})$ and $\tilde{X}\ (=\tilde{X}^{U})$ (resp. $\tilde{U}^{\Sigma}\ (=\tilde{U}^{U,\Sigma})$ and $\tilde{X}^{\Sigma}\ (=\tilde{X}^{U,\Sigma})$) for  the integral closures of $U$ and $X$ in $\tilde{\mathcal{K}}(U)$ (resp. $\tilde{\mathcal{K}}(U)^{\Sigma}$), respectively. We denote $\tilde{X}-\tilde{U}$ (resp. $\tilde{X}^{\Sigma}-\tilde{E}^{\Sigma}$)  by $\tilde{E}\ (=\tilde{E}^{U})$ (resp. $\tilde{E}^{\Sigma}\ (=\tilde{E}^{U,\Sigma})$). Let $G$ be a quotient of $\Pi_{U}$,  defined  by a surjection   $\rho:\Pi_{U}\twoheadrightarrow G$.  Let $H$  be a closed subgroup  of  $G$. We write $U_{H}:=\rho^{-1}(H)\backslash\tilde{U}$,   $X_{H}:=\rho^{-1}(H)\backslash\tilde{X}$ and $E_{H}:=\rho^{-1}(H)\backslash\tilde{E}$. For   a prime $\ell$, we write ``$\pro\ell$" (resp.``$\pro\ell'$") instead of  ``$\Sigma$"  when $\Sigma=\{\ell\}$ (resp. $\Sigma$ is the set of all primes different from $\ell$). 

	\item \label{univcoversms}
  Let  $w\in\mathbb{Z}_{\geq0}$ be an integer.  Then we define $\tilde{\mathcal{K}}^{w}(U)$ (resp. $\tilde{\mathcal{K}}^{w,\Sigma}(U)$)  as the maximal tamely ramified $w$-step   (resp.  pro-$\Sigma$ $w$-step)  solvable Galois extension of $K(U_{k^{\sep}})$ in $\tilde{\mathcal{K}}(U)$.  We write $\tilde{U}^{w}$  and  $\tilde{X}^{w}$ (resp.  $\tilde{U}^{w,\Sigma}$   and  $\tilde{X}^{w,\Sigma}$) for the integral closures of $U$ and $X$ in $\tilde{\mathcal{K}}^{w}(U)$ (resp.   $\tilde{\mathcal{K}}^{w,\Sigma}(U)$). We denote  $\tilde{X}^{w}- \tilde{U}^{w}$ (resp.  $\tilde{X}^{w,\Sigma} -\tilde{U}^{w,\Sigma}$) by $\tilde{E}^{w}$ (resp.  $\tilde{E}^{w,\Sigma}$). For   a prime $\ell$, we write ``$\pro\ell$" (resp.``$\pro\ell'$") instead of  ``$\Sigma$"  when $\Sigma=\{\ell\}$ (resp. $\Sigma$ is the set of all primes different from $\ell$).      \par

	\item \label{notationfundgroup} 
Let $Z$ be a normal integral scheme, $K(Z)$ the function field of $Z$, and $L$ a Galois  extension of $K(Z)$. Then we write $\tilde{Z}^{L}$  for the integral closure of $Z$ in $L$. Let $\tilde{v}\in (\tilde{Z}^{L})^{\text{cl}}$ be a closed point. Then  we define $D_{\tilde{v}}:=D_{\tilde{v},\text{GaL}(L/K(Z))}$  (resp. $I_{\tilde{v}}:=I_{\tilde{v},\text{GaL}(L/K(Z))}$) as  the  subgroup $\{\gamma\in \text{Gal}(L/K(Z))\mid \gamma(\tilde{v})=\tilde{v}\}$ (resp. $\{\gamma\in \text{Gal}(L/K(Z))\mid \gamma(\tilde{v})=\tilde{v},  \gamma \text{ acts trivially on }\kappa(\tilde{v})\}$) of Gal$(L/K(Z))$.  We call it the decomposition group (resp. inertia group) at $\tilde{v}$.  We define $\text{Dec}(\text{Gal}(L/K(Z)))$ (resp.  $\text{Iner}(\text{Gal}(L/K(Z)))$) as the $\text{Gal}(L/K(Z))$-set of all decomposition  groups (resp. inertia groups) of $\text{Gal}(L/K(Z))$. We write $I_{ \text{Gal}(L/K(Z))}$  for  the subgroup of $\text{Gal}(L/K(Z))$  (topologically) generated by all inertia groups.    For  $w\in\mathbb{Z}_{\geq0}$, we define  $\tilde{X}^{X,w}:=X_{I_{\overline{\Pi}_{U}^{w}}}$.

	\item \label{notationtate}
 Let $A$ be a semi-abelian variety over $k$.   Then we  write  $T_{\Sigma}(A)$ for the  pro-$\Sigma$ Tate module of $A$.  We write $T(A)$ instead of $T_{\Sigma}(A)$ when  $\Sigma$ is the set of all primes.  For a prime $\ell$, we write $T_{\ell}(A)$ (resp. $T_{\ell'}(A)$) instead of $T_{\Sigma}(A)$ when  $\Sigma=\{\ell\}$  (resp. $\Sigma$ is the set of all primes different from $\ell$).    We write $J_{X}$ for the Jacobian variety of $X$.

	\item \label{eee}
Let  $S_i$ be a  scheme and    $T_{i}$  a scheme over $S_{i}$ for $i=1$, $ 2$.   Then we define  $\text{Isom}(T_{1}/S_{1},T_{2}/S_{2})$ as the set
\[
\left\{(\tilde{F},F)\in \Isom(T_{1},T_{2})\times  \Isom(S_{1},S_{2})\middle|\vcenter{
\xymatrix{
T_{1}\ar[r]^{\tilde{F}}\ar[d]& T_{2}\ar[d]\\
S_{1}\ar[r]^{F} & S_{2}
} }\text{ is commutative.}
\right\}.
\]
	\item \label{homnotation2}
 Let  $k$ be a field and $L$  an extension of $k$. Let  $S_{i}$   be  a scheme over $k$,  $T_{i}$   a scheme over $L$, and   $T_{i}\rightarrow S_{i}$ a morphism over $k$ for $i=1$, $ 2$.   Then we define  $\text{Isom}_{L/k}(T_{1}/S_{1},T_{2}/S_{2})$ as the set
\[
\left\{(\tilde{F},F)\in \Isom_{L}(T_{1},T_{2})\times  \Isom_{k}(S_{1},S_{2})\middle|\vcenter{
\xymatrix{
T_{1}\ar[r]^{\tilde{F}}\ar[d]& T_{2}\ar[d]\\
S_{1}\ar[r]^{F} & S_{2}
} }\text{ is commutative.}
\right\}.
\]

\item\label{Frobeniusdef}
For  a scheme $S$ over $\mathbb{F}_p$, we write $\text{Fr}_{S}:S\rightarrow S$ for the morphism with the identity map on the underlying topological space and the $p$-th power endomorphism on the structure sheaf  and   call it the  $absolute$ $Frobenius$  $morphism$ $of$ $S$.   For  a scheme $T$ over $S$, we consider the following commutative diagram.
\begin{equation*}
\vcenter{
\xymatrix@C=46pt{
T\ar@/^10pt/[rrd]^{\text{Fr}_{T}}\ar@/_10pt/[rdd]	\ar@{.>}[rd]|{\text{Fr}_{T/S}}&&\\
&	T(1) \ar@{.>}[r] \ar@{.>}[d]&T\ar[d]\\
&	S\ar[r]^{\text{Fr}_{S}}&S
}
}
\end{equation*}
Here,  $T(1):=T\times_{S,\text{Fr}_{S}} S$. Let $n\in\mathbb{Z}$ be an non-negative integer. We set $T(0):=T$ and $T(n):=T(n-1)(1)$ for $n\geq 1$.   We call  $T(n)$  the ($n\text{-}th$) $Frobenius$ $twist$ $of$ $T$ over $S$.  The morphism $\text{Fr}^{n}_{T/S}:T\rightarrow T(n)$ induced by the universality of the fiber product  is called the $ (n\text{-}th)\ relative\ Frobenius\ morphism$ $of$ $T$ $over$ $S$. 
\end{enumerate}
\section*{Acknowledgments}
The author would like to  acknowledge my advisor Professor Akio Tamagawa     for his great help and  suggestions.  The author  would like to thank  Ippei Nagamachi.   Without his result  Lemma \ref{lemmanag}, we would have had difficulty getting  Theorem \ref{2goodreductiontheorem} when $\kappa(s)$ is imperfect.


\numberwithin{theorem}{section}

\section{Basic results on $\Pi_{U}^{(m)}$}\label{sectionbasicresults}

\hspace{\parindent}
In this section, we introduce some basic results on $\Pi_{U}^{(m)}$.
In subsection \ref{subsectionwf}, by using  the weight filtration of $\overline{\Pi}^{1,\Sigma}_{U}$, we   show  the center-freeness of $\Pi^{(m,\Sigma)}_{U}$.
In subsection \ref{subsectionseverallemma},  we introduce several known group-theoretical reconstructions and  show  several useful lemmas, which  are used many times in this paper. 
In subsection \ref{subsectioninertia}, we  show the group-theoretical reconstruction of  inertia groups of $\overline{\Pi}_{U}^{m-1,\Sigma}$ from $\Pi_{U}^{(m,\Sigma)}$. \\\ \\
\noindent{\bf Notation of section \ref{sectionbasicresults} } In this section, we use  the following notation in addition to  Notation (in the introduction).  \begin{itemize}
\item Let   $k$ be a   field finitely generated over the prime field. Let  $p$ $(\geq 0)$ be the characteristic of $k$. 
\item  Let  $(X,E)$ be a smooth curve of type $(g,r)$ over   $k$  and set  $U:=X-E$.
\item Let $\Sigma$ be a set of primes  containing  a prime different from $p$. Set  $\Sigma^{\dag}:=\Sigma-\{p\}$. 
\end{itemize}


\subsection{The center-freeness of  $\Pi_{U}^{(m,\Sigma)}$.}\label{subsectionwf}

\hspace{\parindent}
In this subsection, by using  the weight filtration of $\overline{\Pi}_{U}^{1,\Sigma}$, we show the  center-freeness of  $\Pi_{U}^{(m,\Sigma)}$.  \par

\begin{lemma}\label{quotientlemma1.1}
 Let $n$ be  an integer that satisfies  $m\geq n\geq 0$.
\begin{enumerate}[(1)]
\item Let $G$ be a profinite group. Let  $H$ be an open subgroup of $G^{m}$  containing  $G^{[m-n]}/G^{[m]}$. Let $\tilde{H}$ be  the inverse image of $H$ in $G$ by the natural surjection $G\twoheadrightarrow G^{m}$. Then  the natural surjection   $\tilde{H}^{n}\twoheadrightarrow H^{n}$ is  an isomorphism.
\item Let $H$ be an open subgroup of $\Pi_{U}^{(m,\Sigma)}$ containing  $(\overline{\Pi}_{U}^{\Sigma})^{[m-n]}/(\overline{\Pi}_{U}^{\Sigma})^{[m]}$. Let  $\tilde{H}$ be the inverse image of $H$ in $\Pi_{U}$  by the natural surjection  $\Pi_{U}\twoheadrightarrow \Pi_{U}^{(m,\Sigma)}$. Then  the natural surjection   $\tilde{H}^{(n,\Sigma)}\twoheadrightarrow H^{(n,\Sigma)}(=H^{(n)})$ is an  isomorphism.
\end{enumerate}
\end{lemma}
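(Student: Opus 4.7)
The essential content is a derived-series calculation for part (1), together with a pro-$\Sigma$ reduction to (1) for part (2). The underlying identity is $(G^{[i]})^{[j]}=G^{[i+j]}$ for every profinite group $G$ and all $i,j\geq 0$, an immediate induction on $j$ from the definition $G^{[w]}=\overline{[G^{[w-1]},G^{[w-1]}]}$.

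\textbf{Proof of (1).} I would identify the kernel of the composite $\tilde{H}\twoheadrightarrow H\twoheadrightarrow H^{n}$. By construction $\Ker(\tilde{H}\twoheadrightarrow H)=G^{[m]}$, and since derived subgroups are compatible with quotients $H^{[n]}=\tilde{H}^{[n]}G^{[m]}/G^{[m]}$; hence the composite kernel is $\tilde{H}^{[n]}\cdot G^{[m]}$. The hypothesis $H\supset G^{[m-n]}/G^{[m]}$ lifts to $\tilde{H}\supset G^{[m-n]}$, so the core identity gives $\tilde{H}^{[n]}\supset(G^{[m-n]})^{[n]}=G^{[m]}$, and the composite kernel equals $\tilde{H}^{[n]}$. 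Hence $\tilde{H}^{n}\xrightarrow{\sim}H^{n}$.

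\textbf{Proof of (2).} The plan is to transfer the setup of (2) into (1) by working inside $G:=\overline{\Pi}_{U}^{\Sigma}$, identifying $\overline{\Pi}_{U}^{m,\Sigma}$ with $G^{m}$. Let $\widehat{H}\subset G$ be the preimage of $\overline{H}\subset G^{m}$; the hypothesis on $H$ translates into $\widehat{H}\supset G^{[m-n]}$, so part (1) applied to $G$ yields $\widehat{H}^{n}\xrightarrow{\sim}\overline{H}^{n}$, both sides coinciding with their pro-$\Sigma$ counterparts because $\widehat{H}$ and $\overline{H}$ are pro-$\Sigma$. Setting $\tilde{\overline{H}}:=\tilde{H}\cap\overline{\Pi}_{U}$, the remaining task is to verify $\tilde{\overline{H}}^{n,\Sigma}\xrightarrow{\sim}\widehat{H}^{n}$: since $\widehat{H}$ is a pro-$\Sigma$ quotient of $\tilde{\overline{H}}$ and the pulled-back containment $\tilde{\overline{H}}\supset\overline{\Pi}_{U}^{[m-n]}\cdot\Ker(\overline{\Pi}_{U}\twoheadrightarrow G)$ holds, a derived-series argument mimicking (1) shows that the induced surjection is an isomorphism. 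Combining this with the observation that $\tilde{H}^{(n,\Sigma)}$ and $H^{(n,\Sigma)}$ share the same arithmetic quotient (the common image of $\tilde{H}$ in $G_{k}$) and have geometric kernels $\tilde{\overline{H}}^{n,\Sigma}$ and $\overline{H}^{n,\Sigma}$ respectively, the five lemma delivers $\tilde{H}^{(n,\Sigma)}\xrightarrow{\sim}H^{(n,\Sigma)}$.

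\textbf{Anticipated difficulty.} The only delicate step is the comparison $\tilde{\overline{H}}^{n,\Sigma}\xrightarrow{\sim}\widehat{H}^{n}$: a priori the intrinsic maximal pro-$\Sigma$ quotient of $\tilde{\overline{H}}$ can strictly dominate the image $\widehat{H}$ of $\tilde{\overline{H}}$ in $\overline{\Pi}_{U}^{\Sigma}$, but after the further $n$-step solvable quotient the relevant part of the discrepancy is absorbed by $\tilde{\overline{H}}^{[n]}$ via the same core identity used in (1). Modulo that bookkeeping, the proof is a routine exercise in the derived series and pro-$\Sigma$ completion.
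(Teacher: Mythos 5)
Your part (1) is correct and is in substance the paper's own proof: you compute that the kernel of $\tilde{H}\twoheadrightarrow H\twoheadrightarrow H^{n}$ is $\tilde{H}^{[n]}\cdot G^{[m]}$ and that $G^{[m]}=(G^{[m-n]})^{[n]}\subset\tilde{H}^{[n]}$ because $G^{[m-n]}\subset\tilde{H}$; the paper packages the same computation as a snake-lemma diagram. Your reduction of (2) to (1) also follows the paper's route: your $\widehat{H}$ is the paper's $\overline{H}_{1}$, part (1) applied to $G=\overline{\Pi}_{U}^{\Sigma}$ gives $\widehat{H}^{n}\xrightarrow{\sim}\overline{H}^{n}$, and the final five/snake-lemma step over the common image in $G_{k}$ is fine.

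The gap is exactly at the step you flag as delicate, and your proposed fix for it does not work. Writing $\tilde{\overline{H}}:=\tilde{H}\cap\overline{\Pi}_{U}$ and $N:=\Ker(\overline{\Pi}_{U}\twoheadrightarrow\overline{\Pi}_{U}^{\Sigma})$, the kernel of $\tilde{\overline{H}}\twoheadrightarrow\widehat{H}$ is $N$, and what must be shown is that the image of $N$ in $\tilde{\overline{H}}^{n,\Sigma}$ is trivial. This has nothing to do with the derived series: from $\tilde{\overline{H}}\supset\overline{\Pi}_{U}^{[m-n]}\cdot N$ the identity $(G^{[i]})^{[j]}=G^{[i+j]}$ only yields $\overline{\Pi}_{U}^{[m]}\subset\tilde{\overline{H}}^{[n]}$, which says nothing about $N$; in general $N\not\subset\tilde{\overline{H}}^{[n]}$ and its image in $\tilde{\overline{H}}^{n}$ is large, so the discrepancy is not ``absorbed by $\tilde{\overline{H}}^{[n]}$''. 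The statement you need is the one the paper invokes (without proof) as $\overline{\tilde{H}}^{\Sigma}\xrightarrow{\sim}\overline{H}_{1}$: for a profinite group $\Gamma$ and an open subgroup $D$ of $\Gamma^{\Sigma}$ with full preimage $\tilde{D}\subset\Gamma$, the natural surjection $\tilde{D}^{\Sigma}\twoheadrightarrow D$ is an isomorphism, i.e.\ $N=\Ker(\Gamma\twoheadrightarrow\Gamma^{\Sigma})$ dies in every finite $\Sigma$-quotient of $\tilde{D}$. This is true, but its proof is a genuinely pro-$\Sigma$ argument, not derived-series bookkeeping: reduce to finite quotients and use that the $\Sigma$-residual of a finite group is generated by its elements of order prime to $\Sigma$ (equivalently, use that finite $\Sigma$-groups are closed under extensions together with the fact that $[\Gamma^{\Sigma}:D]$ is a $\Sigma$-number), so that any subgroup containing the $\Sigma$-residual has the same $\Sigma$-residual. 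If you replace the appeal to the ``core identity'' by this fact (or a reference for it), the rest of your argument for (2) goes through and coincides with the paper's proof.
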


\begin{proof}
(1)  We have the following commutative diagram.
\begin{equation*}\label{diagramH}
\vcenter{\xymatrix@R=20pt@C=60pt{
1\ar[r] 	&\tilde{H}^{[n]}\ar[r]\ar@{->>}[d]		&\tilde{H}\ar[r]\ar@{->>}[d]&\tilde{H}^{n}\ar[r]\ar[d]	&1\\
1\ar[r]	& H^{[n]}\ar[r]							&H\ar[r]					&H^{n}\ar[r]								&1
}}\end{equation*}
The kernel of the middle vertical arrow coincides with $G^{[m]}=G^{[m]}\cap \tilde{H}$. The kernel of the  left-hand vertical arrow also coincides with $G^{[m]}=(G^{[m-n]})^{[n]}$ as  $G^{[m-n]}\subset \tilde{H}$.  Hence  the right-hand vertical arrow is an isomorphism by the snake lemma. \\
(2) Let $H_{1}$ be the inverse image of  $H$ in $\Pi_{U}^{(\Sigma)}$  by the natural surjection $\Pi_{U}^{(\Sigma)}\twoheadrightarrow \Pi_{U}^{(m,\Sigma)}$. By applying (1) to the case where   $G=\overline{\Pi}_{U}^{\Sigma}$, we get   $\overline{H}_{1}^{n}\xrightarrow{\sim} \overline{H}^{n}$. Moreover, we have  $\overline{\tilde{H}}^{\Sigma}\xrightarrow{\sim}\overline{H}_{1}^{\Sigma}(=\overline{H}_{1})$. These imply  $\overline{\tilde{H}}^{n,\Sigma}\xrightarrow{\sim}\overline{H}^{n,\Sigma}(=\overline{H}^{n})$. Hence we obtain that  $\tilde{H}^{(n,\Sigma)}\xrightarrow{\sim} H^{(n,\Sigma)}(=H^{(n)})$ by the snake lemma.
\end{proof}

We  define  an outer Galois representation $G_{k}\rightarrow \Out(\overline{\Pi}^{m,\Sigma}_{U})$ by the following diagram. 
\begin{equation}\label{outeraction}
\vcenter{\xymatrix@R=20pt@C=40pt{
1\ar[r] 	&\overline{\Pi}^{m,\Sigma}_{U}\ar[r]\ar@{->>}[d]		&\Pi^{(m,\Sigma)}_{U}\ar[r]\ar[d]&G_{k}\ar[r] \ar@{.>}[d]	&1\\
1\ar[r]	& \text{Inn}(\overline{\Pi}^{m,\Sigma}_{U})\ar[r]							&\Aut(\overline{\Pi}^{m,\Sigma}_{U})\ar[r]					&\Out(\overline{\Pi}^{m,\Sigma}_{U})\ar[r]								&1
}}
\end{equation}
Here, the middle vertical arrow in (\ref{outeraction}) is the homomorphism determined from the conjugate action. 

\begin{lemma}\label{wflemma}
The following isomorphism and the  exact sequence of  $G_k$-modules exist.
\begin{equation}\label{wfseq}
\begin{cases}
\overline{\Pi}_{U}^{1 ,\Sigma}\xrightarrow{\sim} T_{\Sigma}(J_{X})&\ \ \ \ \ (r=0)\\
0\rightarrow \hat{\mathbb{Z}}^{\Sigma^{\dag}}(1)\rightarrow \mathbb{Z}[E(k^{\sep})]\bigotimes_{\mathbb{Z}} \hat{\mathbb{Z}}^{\Sigma^{\dag}}(1)\xrightarrow{f} \overline{\Pi}_{U}^{1 ,\Sigma}\rightarrow T_{\Sigma}(J_{X})\rightarrow 0&\ \ \ \ \ (r\neq 0).\\
\end{cases}
\end{equation}
Here, $ \mathbb{Z}[E(k^{\sep})]$ is the  free $\mathbb{Z}$-module with the basis $E(k^{\sep})$ and is regarded  as a  $G_k$-module via the natural   $G_k$-action on   $E(k^{\sep})$, and $f$ satisfies that $f(v\otimes1)$  is  a (topological) generator of the inertia group of $\overline{\Pi}_{U}^{1 ,\Sigma}$ at  $v\in E(k^{\sep})$. Further, the $G_k$-representations on $\mathbb{Z}[E(k^{\sep})]\bigotimes_{\mathbb{Z}} \hat{\mathbb{Z}}^{\Sigma^{\dag}}(1)$ and $T_{\Sigma}(J_{X})$ have (Frobenius) weights $-2$ and $-1$, respectively. 
\end{lemma}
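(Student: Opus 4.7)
My plan is to derive both statements from the SGA1 description of $\overline{\Pi}_{U}^{\Sigma}$ as a quotient of the pro-$\Sigma$ completion of the surface group $\Pi_{g,r}$ in (\ref{equation2.1}) (with inertia landing in the pro-$\Sigma^{\dag}$ part), combined with the Kummer/Albanese identification $\overline{\Pi}_{X}^{1,\Sigma}\cong T_{\Sigma}(J_{X})$. The case $r=0$ is then immediate, because $U=X$ and $\overline{\Pi}_{U}^{1,\Sigma}=\overline{\Pi}_{X}^{1,\Sigma}\cong T_{\Sigma}(J_{X})$ as $G_{k}$-modules.

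For the case $r\neq 0$, I would abelianize the standard short exact sequence
\[
1\to \overline{I}_{U}\to \overline{\Pi}_{U}^{\Sigma}\to \overline{\Pi}_{X}^{\Sigma}\to 1,
\]
where $\overline{I}_{U}$ is the closed normal subgroup topologically generated by the tame inertia at all $v\in E(k^{\sep})$; this yields a four-term exact sequence whose right-hand end is identified with $T_{\Sigma}(J_{X})$ by the $r=0$ case. Since each tame inertia group at $v$ is canonically (as a $G_{k(v)}$-module) isomorphic to $\hat{\mathbb{Z}}^{\Sigma^{\dag}}(1)$, the inertia generators assemble into a $G_{k}$-equivariant surjection
\[
f\colon\mathbb{Z}[E(k^{\sep})]\otimes_{\mathbb{Z}}\hat{\mathbb{Z}}^{\Sigma^{\dag}}(1)\twoheadrightarrow \overline{I}_{U}^{\ab}
\]
that sends $v\otimes 1$ to a topological inertia generator at $v$; the $G_{k}$-equivariance comes precisely from the Tate twist, which accounts for the cyclotomic character that appears when conjugating inertia at $v$ to inertia at $\sigma(v)$. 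Abelianizing the sole surface-group relation $\prod_{i}[\alpha_{i},\beta_{i}]\prod_{j}\sigma_{j}=1$ produces $\sum_{j}\sigma_{j}=0$, and I would argue that this is the only relation among the $\sigma_{j}$ in $\overline{I}_{U}^{\ab}$, so that $\ker(f)$ is exactly the diagonal $\hat{\mathbb{Z}}^{\Sigma^{\dag}}(1)\hookrightarrow\mathbb{Z}[E(k^{\sep})]\otimes_{\mathbb{Z}}\hat{\mathbb{Z}}^{\Sigma^{\dag}}(1)$; assembled together, this gives the desired four-term sequence.

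For the weight assertions, $\hat{\mathbb{Z}}^{\Sigma^{\dag}}(1)$ is pure of weight $-2$ by definition of the Tate twist, and $\mathbb{Z}[E(k^{\sep})]$ is a permutation $G_{k}$-module (hence of weight $0$), so the tensor product has weight $-2$; similarly, $T_{\Sigma}(J_{X})$ is pure of weight $-1$ by the Weil conjectures for finite $k$, extended to arbitrary finitely generated $k$ by spreading $X$ out to a smooth proper model over a finitely generated subring of $\mathbb{Z}$ and specializing. The main obstacle I anticipate is confirming rigorously that $\ker(f)$ is exactly the single diagonal copy of $\hat{\mathbb{Z}}^{\Sigma^{\dag}}(1)$ with no further hidden relations among the tame inertia generators; this should reduce, via the SGA1 comparison of $\overline{\Pi}_{U}^{\Sigma}$ with the appropriate quotient of $\hat{\Pi}_{g,r}^{\Sigma}$, to a direct computation with the surface-group presentation after abelianization, taking care that the inertia part of the pro-$\Sigma$ tame completion really is pro-$\Sigma^{\dag}$ when $p\in\Sigma$.
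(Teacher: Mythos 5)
Your reconstruction of the exact sequence itself is essentially the standard route, and note that the paper does not reprove this part at all: the sequence, and the weights in the case $p\notin\Sigma$, are simply cited (to \cite{Na1990-405}, \cite{Ta1997} Remark (1.3), \cite{Ya2020}). Two points of care there: abelianizing $1\to\overline{I}_U\to\overline{\Pi}_U^{\Sigma}\to\overline{\Pi}_X^{\Sigma}\to1$ only gives coinvariants on the left, and when $p\in\Sigma$ the group $\overline{\Pi}_U^{\Sigma}$ is a proper quotient of $\hat{\Pi}_{g,r}^{\Sigma}$, so the surface-group computation controls only the prime-to-$p$ part of the abelianization; you must separately argue that the $p$-primary part of $\overline{\Pi}_U^{1,\Sigma}$ maps isomorphically onto $T_p(J_X)$ (for instance: a $\mathbb{Z}/p$-cover that is tame along $E$ is automatically unramified there, hence extends to $X$). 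You flag a version of this issue, so I regard it as fixable bookkeeping rather than a wrong turn.

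The genuine gap is in the weight assertion, and it is precisely the point to which the paper's entire proof is devoted. Since $\Sigma$ is allowed to contain $p$ (only $\Sigma^{\dag}=\Sigma-\{p\}$ must be nonempty), the claim that $T_{\Sigma}(J_X)$ has weight $-1$ includes the $p$-adic Tate module $T_p(J_X)$ in characteristic $p>0$. The Weil/Riemann hypothesis for abelian varieties over finite fields gives that Frobenius eigenvalues on $T_\ell(J)$ have absolute value $q^{1/2}$ only for $\ell\neq p$; it says nothing directly about $T_p$, and spreading out and specializing does not supply the missing input. The paper closes this by observing that on the special fibre the Galois Frobenius acts through the Frobenius endomorphism, which satisfies the integral characteristic polynomial $P(t)$ computed on $T_\ell(J_{\mathcal{X}_s})$ for $\ell\in\Sigma^{\dag}$: $P(\mathrm{Fr}_{\kappa(s)})$ kills the $\ell$-power torsion, which is dense in $J_{\mathcal{X}_s}$, hence it kills $J_{\mathcal{X}_s}[p^{\infty}]$ as well, so the eigenvalues of Frobenius on $T_p(J_{\mathcal{X}_s})$ are among the roots of $P(t)$ and thus have absolute value $|\kappa(s)|^{1/2}$. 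Without this (or an equivalent argument, e.g. via Weil numbers attached to the \'etale part of the $p$-divisible group), your argument establishes the weight $-1$ statement only for the prime-to-$p$ part of $T_{\Sigma}(J_X)$.
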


\begin{proof}
For ``the first assertion'' and ``the second assertion when $p\notin \Sigma$'', see  \cite{Na1990-405} section 2, \cite{Ta1997} Remark (1.3), \cite{Ya2020} subsection 1.3. Thus, it is sufficient  to show that  the $G_k$-representation on $T_{p}(J_{X})$ has  weight  $-1$  when $p>0$.  Let $(\mathcal{X},\mathcal{E})$ be an affine hyperbolic curve of type $(g,r)$ over $S$ whose generic fiber is isomorphic to (and identified with) $(X,E)$, where  $S$ is an  integral regular scheme  of finite type over $\text{Spec}(\mathbb{Z})$ with function field $k$. By shrinking $S$ if necessary,  we have that $G_{k}\rightarrow \text{Aut}(T_{p}(J_{X}))$ factors through  $G_{k}\twoheadrightarrow \pi_{1}(S)$. Let $s\in S^{\text{cl}}$. Let  $\text{Fr}_{\kappa(s)}\in G_{\kappa(s)}$ be the Frobenius element of $\kappa(s)$.  Let $P(t)\in \mathbb{Z}[t]$ be the characteristic polynomial of $\text{Fr}_{\kappa(s)}$ on $T_{\ell}(J_{\mathcal{X}_{s}})$, where $\ell\in\Sigma^{\dag}$. Then $P(\text{Fr}_{\kappa(s)})\mid_{J_{\mathcal{X}_{s}}[\ell^{\infty}]}=0$. The $\text{Fr}_{\kappa(s)}$-action  and the restriction of the Frobenius endomorphism of $J_{\mathcal{X}_{s}}$ coincide on $J_{\mathcal{X}_{s}}[\ell^{\infty}]$.  As  $J_{\mathcal{X}_{s}}[\ell^{\infty}]$ is dense in $J_{\mathcal{X}_{s}}$,  we also obtain that $P(\text{Fr}_{\kappa(s)})\mid_{J_{\mathcal{X}_{s}}[p^{\infty}]}=0$, and the eigenvalues of the action of $\text{Fr}_{\kappa(s)}$ on  $T_{p}(J_{\mathcal{X}_{s}})$ are roots of  $P(t)$. Thus, the $G_{\kappa(s)}$-representation on $T_{p}(J_{\mathcal{X}_{s}})$ has weight  $-1$. Therefore,  the $G_k$-representation on $T_{p}(J_{X})$ has weight  $-1$.
\end{proof}

\noindent   We write  $W_{-2}(\overline{\Pi}_{U}^{1,\Sigma})$ for the maximal weight $-2$ submodule of $\overline{\Pi}_{U}^{1,\Sigma}$, which can be regarded as a part of the weight filtration of $\overline{\Pi}_{U}^{1,\Sigma}$.  We have  that $W_{-2}(\overline{\Pi}_{U}^{1,\Sigma})=I_{\overline{\Pi}^{1,\Sigma}_{U}}(=I_{\overline{\Pi}^{1,\Sigma^{\dag}}_{U}})$ by  Lemma \ref{wflemma}.  \par 
Next, we  show  the center-freeness of $\Pi^{(m,\Sigma)}_{U}$.  

\begin{proposition}\label{center}
\begin{enumerate}[(1)]
\item  $Z(\Pi^{(m,\Sigma)}_{U})\cap \overline{\Pi}^{m,\Sigma}_{U}=\{1\}$.
\item  Assume that the homomorphism $G_{k}\rightarrow \text{Aut}(\overline{\Pi}^{1,\Sigma}_{U})$ is injective when $k$ is a finite field.  Then  $\Pi^{(m,\Sigma)}_{U}$ is center-free.
\end{enumerate}
\end{proposition}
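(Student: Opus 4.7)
The plan is to induct on $m$ for (1), exploiting the weight filtration of Lemma \ref{wflemma}. For the base case $m=1$, since $\overline{\Pi}^{1,\Sigma}_U$ is abelian, the intersection $Z(\Pi^{(1,\Sigma)}_U) \cap \overline{\Pi}^{1,\Sigma}_U$ coincides with the $G_k$-invariants $(\overline{\Pi}^{1,\Sigma}_U)^{G_k}$; by Lemma \ref{wflemma}, this module has only Frobenius weights $-1$ and $-2$, and since $k$ is finitely generated over the prime field, its $G_k$-invariants are trivial. For the inductive step, given $\gamma \in Z(\Pi^{(m,\Sigma)}_U) \cap \overline{\Pi}^{m,\Sigma}_U$, I project to $\Pi^{(m-1,\Sigma)}_U$: the image is again central and lies in $\overline{\Pi}^{m-1,\Sigma}_U$, so by the induction hypothesis it equals $1$, whence $\gamma \in A := (\overline{\Pi}^{\Sigma}_U)^{[m-1]}/(\overline{\Pi}^{\Sigma}_U)^{[m]} = \Ker(\Pi^{(m,\Sigma)}_U \twoheadrightarrow \Pi^{(m-1,\Sigma)}_U)$. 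Since $A$ is abelian and normal in $\Pi^{(m,\Sigma)}_U$, the conjugation action factors through $\Pi^{(m-1,\Sigma)}_U$ and endows $A$ with a genuine $G_k$-module structure, for which centrality of $\gamma$ forces $\gamma \in A^{G_k}$. It then remains to show $A^{G_k} = 0$ via a Frobenius weight argument.

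For (2), I will combine (1) with two constraints on a central element $\sigma \in Z(\Pi^{(m,\Sigma)}_U)$ with image $g \in G_k$. First, commutation of $\sigma$ with every lift $\tilde h$ of every $h \in G_k$ projects via $\Pi^{(m,\Sigma)}_U \twoheadrightarrow G_k$ to $gh = hg$, placing $g \in Z(G_k)$. Second, since $\sigma$ centralizes all of $\overline{\Pi}^{m,\Sigma}_U$, the outer action of $g$ on $\overline{\Pi}^{m,\Sigma}_U$ is trivial, and in particular so is the induced action on the abelianization $\overline{\Pi}^{1,\Sigma}_U$; hence $g \in \Ker(G_k \to \Aut(\overline{\Pi}^{1,\Sigma}_U))$. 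When $k$ is finite, this latter kernel is trivial by hypothesis, so $g = 1$. When $k$ is infinite and finitely generated over the prime field, the classical center-freeness $Z(G_k) = 1$ (known for number fields, for global function fields, and by specialization for arbitrary infinite finitely generated fields) forces $g = 1$. Either way $\sigma \in \overline{\Pi}^{m,\Sigma}_U$, and (1) yields $\sigma = 1$.

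The main obstacle will be the weight bound on the derived graded piece $A$ in the inductive step of (1). Because the derived series does not admit as clean a commutator presentation as the lower central series, my plan is to control $A$ via the inclusion $(\overline{\Pi}^{\Sigma}_U)^{[m]} \subset \gamma_{2^m}(\overline{\Pi}^{\Sigma}_U)$, where $\gamma_{\bullet}$ denotes the lower central series: this exhibits $A$ as being built from subquotients of $\gamma_k/\gamma_{k+1}$ for $k \geq 2^{m-1}$, each of which receives a $G_k$-equivariant surjection from $(\overline{\Pi}^{1,\Sigma}_U)^{\otimes k}$ and therefore carries only strictly negative Frobenius weights. A secondary but essential technical point is Lemma \ref{wflemma}'s determination of the weight $-1$ of $T_p(J_X)$ in characteristic $p$ via the characteristic polynomial argument, without which the weight bound on $A$ would fail when $p \in \Sigma$.
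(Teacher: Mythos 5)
Your part (2) and the base case $m=1$ of (1) are essentially the paper's own argument and are fine. The problem is the inductive step of (1). A minor point first: the conjugation action on $A:=(\overline{\Pi}^{\Sigma}_{U})^{[m-1]}/(\overline{\Pi}^{\Sigma}_{U})^{[m]}$ factors through $\Pi^{(m-1,\Sigma)}_{U}$, not through $G_{k}$ — the geometric quotient $\overline{\Pi}^{m-1,\Sigma}_{U}$ acts nontrivially on $A$ in general, so ``$A^{G_{k}}$'' is not defined as you use it; this is repairable (a central $\gamma$ lies in $A^{\overline{\Pi}^{m-1,\Sigma}_{U}}$, which is a genuine $G_{k}$-module). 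The decisive gap is your weight bound on $A$ via the lower central series $\gamma_{\bullet}$. The filtration of $A$ by the images of $(\overline{\Pi}^{\Sigma}_{U})^{[m-1]}\cap\gamma_{k}$ does have graded pieces that are subquotients of $\gamma_{k}/\gamma_{k+1}$, hence of strictly negative weight; but to conclude anything about an invariant element of $A$ you need this filtration to be separated, and for general $\Sigma$ it is not. If $\Sigma$ is not a single prime, $\overline{\Pi}^{\Sigma}_{U}$ is not pronilpotent, so $R:=\bigcap_{k}\gamma_{k}(\overline{\Pi}^{\Sigma}_{U})$ is a nontrivial closed normal subgroup (precisely the part killed in every nilpotent quotient), and its image in $A$ need not vanish: already for $m=2$, any surjection of $\overline{\Pi}^{\Sigma}_{U}$ onto a metabelian non-nilpotent finite $\Sigma$-group such as $S_{3}$ (available for a hyperbolic curve when $2,3\in\Sigma$) shows $R\subset\gamma_{2}=(\overline{\Pi}^{\Sigma}_{U})^{[1]}$ but $R\not\subset(\overline{\Pi}^{\Sigma}_{U})^{[2]}$, so $R$ has nonzero image in $A$ lying in every step of your filtration. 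A central element could a priori sit in this intersection, where the graded-piece weight argument says nothing. Since the proposition is stated for any $\Sigma$ containing a prime $\neq p$, and the applications take $\Sigma$ to be all primes or all primes other than $p$, this is not a peripheral case; your route would only be sound for $\Sigma=\{\ell\}$.

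Note how the paper sidesteps this: its inductive step never analyzes $A$ as a module at all. For every open subgroup $H$ of $\Pi^{(m,\Sigma)}_{U}$ containing $A$, the quotient $H^{(1)}$ is the level-one arithmetic quotient attached to the covering curve $U_{H}$ over a finite extension of $k$, so the $m=1$ case (Lemma \ref{wflemma} applied to $U_{H}$) gives $Z(\Pi^{(m,\Sigma)}_{U})\cap\overline{H}\subset\overline{H}^{[1]}$; intersecting over all such $H$ and using $\bigcap_{H}\overline{H}^{[1]}=A^{[1]}=\{1\}$ (as $A$ is abelian) finishes the step, uniformly in $\Sigma$. If you want to salvage your plan, you essentially have to make the same move: replace the lower-central-series filtration of $A$ by the family of abelianizations $\overline{H}^{1}$ of open subgroups, i.e.\ run the weight argument on $H_{1}$ of covering curves rather than on the graded pieces of $\gamma_{\bullet}$, because only the former detect the non-nilpotent part of $\overline{\Pi}^{\Sigma}_{U}$.
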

\begin{proof}
(1) Let us show the assertion by using induction on $m$. First, we consider the case that $m=1$.  We have  that $(\overline{\Pi}_{U}^{1,\Sigma})^{G_{k}}=\left\{1\right\}$, since  the action  $G_{k}\curvearrowright \overline{\Pi}_{U}^{1,\Sigma}$ has weights  $-1$ and $-2$ by Lemma \ref{wflemma}. Hence  $ Z(\Pi^{(1,\Sigma)}_{U})\cap \overline{\Pi}^{1,\Sigma}_{U}=(\overline{\Pi}_{U}^{1,\Sigma})^{\Pi^{(1,\Sigma)}_{U}}=\left\{1\right\}$ follows, where $\Pi^{(1,\Sigma)}_{U}$ acts on $\overline{\Pi}_{U}^{1,\Sigma}$ by conjugation.  Next, we consider the general case. By the assumption of induction on $m$, we get $Z(\Pi^{(m,\Sigma)}_{U})\cap \overline{\Pi}^{m,\Sigma}_{U}\subset (\overline{\Pi}_{U}^{\Sigma})^{[m-1]}/(\overline{\Pi}_{U}^{\Sigma})^{[m]}$. Hence  it is sufficient to show that $Z(\Pi^{(m,\Sigma)}_{U})\cap(\overline{\Pi}_{U}^{\Sigma})^{[m-1]}/(\overline{\Pi}_{U}^{\Sigma})^{[m]}=\{1\}$.  Set  $\mathcal{Q}:=\{H\overset{\text{op}}\subset \Pi^{(m,\Sigma)}_{U}\mid (\overline{\Pi}_{U}^{\Sigma})^{[m-1]}/(\overline{\Pi}_{U}^{\Sigma})^{[m]}\subset H\}$. Let $H$ be an element of $\mathcal{Q}$. By the case that $m=1$, we have that  $ Z(H^{(1)})\cap \overline{H}^{1}=\{1\}$, and hence $Z(\Pi_{U}^{(m,\Sigma)})\cap \overline{H}\subset  \overline{H}^{[1]}$. Considering all $H\in\mathcal{Q}$,   we obtain that  $Z(\Pi_{U}^{(m,\Sigma)})\cap ((\overline{\Pi}_{U}^{\Sigma})^{[m-1]}/(\overline{\Pi}_{U}^{\Sigma})^{[m]}) \subset \underset{H\in\mathcal{Q}}\cap \overline{H}^{[1]}=((\overline{\Pi}_{U}^{\Sigma})^{[m-1]}/(\overline{\Pi}_{U}^{\Sigma})^{[m]})^{[1]}=\{1\}$.  Thus,  the assertion follows.\\
(2)  When $k$ is not  finite, we know that  $G_{k}$  is center-free by \cite{Fa2008} section 16, and   hence   $ Z(\Pi^{(m,\Sigma)}_{U})\subset \overline{\Pi}^{m,\Sigma}_{U}$  follows. Thus, $Z(\Pi^{(m,\Sigma)}_{U})=\{1\}$ follows by (1). Next, we consider the case that  $k$ is  finite.   The injectivity of $G_{k}\rightarrow \text{Aut}(\overline{\Pi}^{1,\Sigma}_{U})$   implies that  $ Z(\Pi^{(1,\Sigma)}_{U})\subset Z_{\Pi^{(1,\Sigma)}_{U}} ( \overline{\Pi}^{1,\Sigma}_{U})\subset \overline{\Pi}^{1,\Sigma}_{U}$.   This implies that  $ Z(\Pi^{(m,\Sigma)}_{U})$ is mapped to $\{1\}$ by the   homomorphism $\Pi_{U}^{(m,\Sigma)}(\rightarrow \Pi_{U}^{(1,\Sigma)})\rightarrow G_{k}$. Therefore, by (1),  $Z(\Pi^{(m,\Sigma)}_{U})=Z(\Pi^{(m,\Sigma)}_{U})\cap \overline{\Pi}^{m,\Sigma}_{U}=\{1\}$ follows.
\end{proof}

\noindent The representation $G_{k}\rightarrow \text{Aut}(\overline{\Pi}^{1,\Sigma}_{U})$  is not always injective when $k$ is a finite field.  Consider a  character $\rho^{\dag}_{U/k}: G_{k}\rightarrow (\hat{\mathbb{Z}}^{\pro p'})^{\times}$ obtained as the composite of the following homomorphisms.
\begin{equation}\label{rhodet}
\rho^{\dag}_{U/k}: G_{k}\rightarrow \Aut(\overline{\Pi}^{1}_{U}) \rightarrow \Aut(\overline{\Pi}^{1,\pro p'}_{U}) \xrightarrow{\text{det}}   \text{Aut}\left(\bigwedge_{\hat{\mathbb{Z}}^{\pro p'}}^{\text{max}}\overline{\Pi}_{U}^{1,\pro p' }\right)=(\hat{\mathbb{Z}}^{\pro p'})^{\times} 
\end{equation}

\begin{lemma}\label{rhoinj}
Assume that  $(g,r)\neq (0,0)$, $(0,1)$,  and  that  $k$ is a finite field. Then the character $\rho^{\dag}_{U/k}$  is injective.  In particular, the representations $G_{k}\rightarrow \text{Aut}(\overline{\Pi}^{1,\pro p'}_{U})$ and $G_{k}\rightarrow \text{Aut}(\overline{\Pi}^{1}_{U})$ are  injective. 
\end{lemma}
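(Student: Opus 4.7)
The plan is to compute $\rho^{\dag}_{U/k}(\text{Fr}_{k})$ explicitly via the weight filtration of Lemma \ref{wflemma}, and then to verify that the resulting continuous character $G_{k}=\hat{\mathbb{Z}}\to (\hat{\mathbb{Z}}^{\pro p'})^{\times}$ is injective. Set $q:=|k|$. Taking determinants along the exact sequence of Lemma \ref{wflemma}, together with $\det(\text{Fr}_{k}\mid \hat{\mathbb{Z}}^{\pro p'}(1))=q$, $\det(\text{Fr}_{k}\mid \mathbb{Z}[E(k^{\sep})]\otimes\hat{\mathbb{Z}}^{\pro p'}(1))=\pm q^{r}$ (the sign being that of the Frobenius permutation on $E(k^{\sep})$), and $\det(\text{Fr}_{k}\mid T_{\pro p'}(J_{X}))=q^{g}$ (from the functional equation for the characteristic polynomial of Frobenius on the Jacobian), I would obtain $\rho^{\dag}_{U/k}(\text{Fr}_{k})=\pm q^{r+g-1}$ when $r\geq 1$, and $\rho^{\dag}_{U/k}(\text{Fr}_{k})=q^{g}$ when $r=0$. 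Under the hypothesis $(g,r)\neq (0,0),(0,1)$, the exponent $c$ of $q$ satisfies $c\geq 1$.

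It then suffices to prove that the continuous character $\chi:\hat{\mathbb{Z}}\to (\hat{\mathbb{Z}}^{\pro p'})^{\times}$, $1\mapsto \pm q^{c}$, is injective for every $c\geq 1$; since $\overline{\langle \pm q^{c}\rangle}\supseteq \overline{\langle q^{2c}\rangle}$, it is enough to check that the closure $\overline{\langle q^{2c}\rangle}\subseteq (\hat{\mathbb{Z}}^{\pro p'})^{\times}=\prod_{\ell\neq p}\mathbb{Z}_{\ell}^{\times}$ has maximal supernatural order (equivalently, is isomorphic to $\hat{\mathbb{Z}}$). For each prime $\ell\neq p$, the integer $q^{2c}\geq 2$ has infinite order in $\mathbb{Z}_{\ell}^{\times}$ (otherwise $q^{2cn}=1$ in $\mathbb{Z}$ for some $n$, absurd), so its closure in $\mathbb{Z}_{\ell}^{\times}=\mu_{\ell-1}\times(1+\ell\mathbb{Z}_{\ell})$ meets $1+\ell\mathbb{Z}_{\ell}$ in an open subgroup, contributing an $\ell^{\infty}$-factor.

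The main (and only serious) obstacle is to produce the $p^{\infty}$-factor. For this I would use cyclotomic polynomials: for each $M\geq 1$, the value $\Phi_{p^{M}}(q^{2c})$ is a positive integer $\geq 2$ with $\Phi_{p^{M}}(q^{2c})\equiv \Phi_{p^{M}}(0)=1\pmod p$ (since $p\mid q$), so any prime divisor $\ell_{M}$ of it is distinct from $p$. The roots of $\Phi_{p^{M}}(x)$ in $\overline{\mathbb{F}}_{\ell_{M}}$ are exactly the primitive $p^{M}$-th roots of unity, so the order of $q^{2c}$ in $\mathbb{F}_{\ell_{M}}^{\times}$ is $p^{M}$; letting $M\to\infty$ yields the desired $p^{\infty}$-factor. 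Hence $\chi$ is injective, and the ``in particular'' assertions follow since $\rho^{\dag}_{U/k}$ factors through $\text{Aut}(\overline{\Pi}_{U}^{1,\pro p'})$ and $\text{Aut}(\overline{\Pi}_{U}^{1})$.
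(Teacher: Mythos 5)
Your proof is correct, and its first half coincides with the paper's: evaluating $\rho^{\dag}_{U/k}$ at $\text{Fr}_{k}$ through the weight filtration of Lemma \ref{wflemma} is exactly the computation behind the paper's formula (\ref{rhocalu}), giving $\pm q^{c}$ with $q=|k|$ and $c=g+r-1$ (resp. $c=g$) when $r\geq 1$ (resp. $r=0$), and both arguments square to dispose of the sign. Where you genuinely diverge is the key injectivity step. The paper simply invokes the injectivity of the cyclotomic character $\chi$ of a finite field (equivalently, that $\overline{k}$ is generated over $k$ by roots of unity of order prime to $p$) together with the injectivity of multiplication by $n$ on $\hat{\mathbb{Z}}$, to conclude that $\chi^{n}$, hence $(\rho^{\dag}_{U/k})^{2}$, is injective. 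You instead prove from scratch that the procyclic subgroup topologically generated by $q^{2c}$ in $(\hat{\mathbb{Z}}^{\pro p'})^{\times}$ is isomorphic to $\hat{\mathbb{Z}}$: the $\ell^{\infty}$-part of its supernatural order for each $\ell\neq p$ comes from the infinite order of the integer $q^{2c}>1$ in $\mathbb{Z}_{\ell}^{\times}$, and the $p^{\infty}$-part from prime divisors $\ell_{M}$ of the cyclotomic value $\Phi_{p^{M}}(q^{2c})$, which produce finite quotients $\mathbb{F}_{\ell_{M}}^{\times}$ in which $q^{2c}$ has order exactly $p^{M}$. This is a self-contained, elementary substitute for the quoted injectivity of $\chi$ (indeed it reproves it), at the cost of a longer argument; the reduction from $\pm q^{c}$ to $q^{2c}$ is sound, since any nonzero closed subgroup of $\hat{\mathbb{Z}}$ meets $2\hat{\mathbb{Z}}$ nontrivially. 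One cosmetic point: for $\ell=2$ the decomposition $\mathbb{Z}_{\ell}^{\times}=\mu_{\ell-1}\times(1+\ell\mathbb{Z}_{\ell})$ should be replaced by $\mathbb{Z}_{2}^{\times}=\{\pm 1\}\times(1+4\mathbb{Z}_{2})$, but the conclusion that infinite order forces a $\mathbb{Z}_{2}$-factor in the closure is unaffected.
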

\begin{proof}
We consider  the action $G_{k}\curvearrowright \mathbb{Z}[E(k^{\text{sep}})]$.    Let   $v\in E$. Let  $\rho:E(k^{\text{sep}})\rightarrow E$ be the natural surjection. We have that the action $\text{Fr}_{k}\curvearrowright \rho^{-1}(v)$ is a cyclic permutation, hence the determinant of  $\text{Fr}_{k}\curvearrowright \mathbb{Z}[\rho^{-1}(v)]$ is $(-1)^{|\rho^{-1}(v)|-1}$. Hence we obtain that the determinant of $\text{Fr}_{k}\curvearrowright\mathbb{Z}[E(k^{\text{sep}})]$ is $(-1)^{|E(k^{\text{sep}})|-|E|}$.  Let  $\chi:G_{k}\rightarrow (\hat{\mathbb{Z}}^{\pro p'})^{\times}$ be  the  cyclotomic character and  set  $\lambda: G_{k}\rightarrow G_{k}/G_{k}^{2}\cong \mathbb{Z}/2\mathbb{Z}\cong \{\pm1\}\hookrightarrow (\hat{\mathbb{Z}}^{\text{pro-}p'})^{\times}$.    Then    we obtain that 
\begin{equation}\label{rhocalu}
\rho^{\dag}_{U/k}= \lambda^{|E(k^{\text{sep}})|-|E|}\chi^{g+r-\epsilon},
\end{equation}
by Lemma \ref{wflemma}, where  $\epsilon$ stands for $1$ (resp. $0$) when $r\geq 1$ (resp. when  $r=0$).  Since  the cyclotomic character $\chi$ and the map $\hat{\mathbb{Z}}\rightarrow \hat{\mathbb{Z}}$ of multiplication by $n$ $(n\in \mathbb{Z}_{\geq 1})$ are injective,   the character  $\chi^{n}$ is also injective.   Hence we get $(\rho^{\dag}_{U/k})^{2}$ is injective.  Thus,  $\rho^{\dag}_{U/k}$ is also  injective. The second assertion is clear because $\rho^{\dag}_{U/k}$ factors through  $G_{k}\rightarrow \text{Aut}(\overline{\Pi}^{1,\pro p'}_{U})$ and $G_{k}\rightarrow \text{Aut}(\overline{\Pi}^{1}_{U})$.
\end{proof}


\subsection{The group-theoretical reconstruction of  various invariants of  $\Pi_{U}^{(m,\Sigma)}$.}\label{subsectionseverallemma}

\hspace{\parindent}In this subsection, we show the group-theoretical reconstruction of  the invariants  $\overline{\Pi}_{U}^{m,\Sigma}$, $g$, and  $r$ (resp. the invariant $|k|$)  from  $\Pi_{U}^{(m,\Sigma)}$ (resp. $\Pi_{U}^{(m)}$).  The results of this subsection are essentially shown in \cite{Ta1997} section 3  if we discuss from  $\Pi_{U}^{(\Sigma)}$ instead of $\Pi_{U}^{(m,\Sigma)}$.

\begin{lemma}\label{gandr}
Assume that $U$ is hyperbolic   (i.e., $(g,r)\neq (0,0),(0,1),(0,2),(1,0)$). Let $\ell$ be a prime different from $p$. Let    $g_{0},r_{0}\in \mathbb{Z}_{\geq 0}$ be integers. 
\begin{enumerate}[(1)]
\item If $r=0$, then there exists an open characteristic  subgroup $H$ of $ \overline{\Pi}_{U}^{1,\text{pro-}\ell}$ such that  $g(U_{H})\geq g_{0}$.
\item   If  $r>0$ and  $(g,r)\neq  (1,1)$,  then there exists an open  characteristic subgroup $H$ of $ \overline{\Pi}_{U}^{1,\text{pro-}\ell}$  such that  $g(U_H)\geq g_{0}$ and  $r(U_H)\geq r_{0}$.
\item If  $(g,r)=(1,1)$,  then there exists an open characteristic subgroup $H$ of $ \overline{\Pi}_{U}^{1,\text{pro-}\ell}$ (resp. $ \overline{\Pi}_{U}^{2,\text{pro-}\ell}$) such that  $r(U_H)\geq r_{0}$ (resp.  $g(U_H)\geq g_{0}$ and  $r(U_H)\geq r_{0}$).
\end{enumerate}
\end{lemma}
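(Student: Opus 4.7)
The plan is to invoke the Riemann--Hurwitz formula for tame covers: a Galois tame cover $U_H \to U_{k^{\sep}}$ of degree $d$ with common ramification index $e_v$ above each $v \in E$ satisfies
\[
2g(U_H) - 2 + r(U_H) = d\,(2g - 2 + r), \qquad r(U_H) = \sum_{v \in E} d/e_v.
\]
For cases (1) and (2) I take
\[
H_n := \ker\!\bigl(\overline{\Pi}_U^{1,\text{pro-}\ell} \twoheadrightarrow \overline{\Pi}_U^{1,\text{pro-}\ell}/\ell^n\bigr),
\]
which is open and characteristic because $\overline{\Pi}_U^{1,\text{pro-}\ell}$ is a finitely generated free $\mathbb{Z}_\ell$-module whose characteristic open subgroups are exactly the $\ell^n$-multiples. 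From the presentation (\ref{equation2.1}), each inertia generator $\sigma_v$ has non-torsion image in $\overline{\Pi}_U^{1,\text{pro-}\ell}$ when $r \geq 2$ (so $e_v = \ell^n$), while for $r = 1$ the relation $\prod[\alpha_i,\beta_i]\,\sigma_1 = 1$ forces $\sigma_1 = 0$ in the abelianization and the cover is unramified. Using $\mathrm{rk}\,\overline{\Pi}_U^{1,\text{pro-}\ell} = 2g$ (if $r=0$) or $2g+r-1$ (if $r\geq 1$) and Riemann--Hurwitz, a case-by-case check shows that $g(U_{H_n})$ and, when $r > 0$, also $r(U_{H_n})$ both tend to infinity with $n$ in every hyperbolic subcase of (1) and (2).

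The same $H_n$ proves the first half of (3): for $(g,r)=(1,1)$ the cover is unramified, so $r(U_{H_n})=d=\ell^{2n}\to\infty$ while Riemann--Hurwitz forces $g(U_{H_n})=1$. This is the structural reason $(1,1)$ is special --- every abelian $\ell$-cover of a once-punctured elliptic curve extends to an isogeny of the elliptic curve itself, preserving genus~$1$. To force the genus to grow one must pass to $G := \overline{\Pi}_U^{2,\text{pro-}\ell}$. Letting $N_n \subset G$ be the preimage of $\ell^n G^{\text{ab}}$ (which is open and characteristic in $G$), I set
\[
H_n' := N_n^{\ell^n}\,[N_n, N_n].
\]
Because $N_n$ is characteristic in $G$ and $H_n'$ is built from $N_n$ solely by the canonical operations of $\ell^n$-th powers and commutators, $H_n'$ is characteristic in $G$; it is open since $N_n^{\text{ab}}$ is topologically finitely generated. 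One has $[G:H_n'] \geq [G:N_n]^2 = \ell^{4n}$, and, by a direct calculation with the $\mathbb{Z}_\ell[[G^{\text{ab}}]]$-module structure of $[G,G]$ (from which $[N_n,N_n]$ is described as the submodule obtained by acting with the augmentation ideal of $\mathbb{Z}_\ell[[\ell^n G^{\text{ab}}]]$), the class of $\sigma_v = [\alpha,\beta]^{-1}$ in $N_n^{\text{ab}}/\ell^n N_n^{\text{ab}}$ is non-zero, so $\ell \leq e_v \leq \ell^n$. Riemann--Hurwitz then delivers $r(U_{H_n'}) \geq [G:H_n']/\ell^n$ and $2g(U_{H_n'}) - 2 \geq [G:H_n'](1 - \ell^{-1})$, both tending to infinity.

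The main (and essentially only) obstacle is case (3): because abelian $\ell$-covers cannot raise the genus of a once-punctured elliptic curve, one must work with a 2-step solvable cover, and the delicate point is realizing it via an \emph{honestly characteristic} (not merely normal) open subgroup of $\overline{\Pi}_U^{2,\text{pro-}\ell}$. The construction $H_n'$ above is manufactured precisely so that it is stable under every automorphism of $G$; verifying the non-triviality of $\sigma_v$ modulo $H_n'$ is the only concrete module-theoretic input required.
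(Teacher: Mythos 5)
Your treatment of (1) and (2) is the paper's own argument: the subgroup $H_n=\ell^n\overline{\Pi}_U^{1,\text{pro-}\ell}$ is exactly the kernel used in the paper (with $N=\ell^n$), and the Riemann--Hurwitz count is the same; the only difference is that you assert the ``case-by-case check'' rather than writing out the identity $2g(U_{H_n})-2=(2g-2+r)N^{2g+r-\epsilon}-rN^{2g+\alpha}$, and your justification $e_v=\ell^n$ should invoke that the image of $\sigma_v$ is a \emph{primitive} vector of the free $\mathbb{Z}_\ell$-module (true, since $\sigma_1,\dots,\sigma_{r-1}$ are basis vectors and $\sigma_r=-(\sigma_1+\cdots+\sigma_{r-1})$), not merely non-torsion. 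Where you genuinely diverge is part (3): the paper deduces it from (1)--(2) by a tower argument (pass to the characteristic preimage in $\overline{\Pi}_U^{2,\text{pro-}\ell}$ of a characteristic $H\subset\overline{\Pi}_U^{1,\text{pro-}\ell}$ with many cusps, then take a characteristic subgroup of its maximal abelian quotient), whereas you build a characteristic open subgroup directly as the verbal subgroup $H_n'=N_n^{\ell^n}[N_n,N_n]$ of the free metabelian quotient $G$. Your route is more explicit and self-contained; the paper's is shorter because it recycles (1)--(2) and needs no computation inside $[G,G]$.

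The one step that needs repair is the ``direct calculation'' that the class of $\sigma=[\alpha,\beta]^{-1}$ in $N_n^{\text{ab}}/\ell^n N_n^{\text{ab}}$ is non-zero. As stated it is both unproved and based on an inaccurate description of $[N_n,N_n]$: besides the elements $(X^{\ell^n}-1)\cdot c$ and $(Y^{\ell^n}-1)\cdot c$ coming from the augmentation ideal of $\mathbb{Z}_\ell[[\ell^nG^{\text{ab}}]]$ acting on $[G,G]\cong\mathbb{Z}_\ell[[G^{\text{ab}}]]\cdot c$, the subgroup $[N_n,N_n]$ also contains $[\alpha^{\ell^n},\beta^{\ell^n}]=\bigl(\tfrac{X^{\ell^n}-1}{X-1}\bigr)\bigl(\tfrac{Y^{\ell^n}-1}{Y-1}\bigr)\cdot c$, which does \emph{not} lie in that augmentation-ideal submodule (its augmentation is $\ell^{2n}\neq 0$), so your module-theoretic description of $N_n^{\text{ab}}$ is incomplete. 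Fortunately the non-vanishing you need is true and has a one-line proof: map $G$ onto the Heisenberg group $H$ of order $\ell^3$ (for $\ell=2$, onto the dihedral group of order $8$) by sending $\alpha,\beta$ to the standard generators; then $N_n$ maps onto the center $Z=[H,H]\cong\mathbb{Z}/\ell$, while $H_n'=N_n^{\ell^n}[N_n,N_n]$ maps to $Z^{\ell^n}\cdot[Z,Z]=\{1\}$, and $\sigma$ maps to a generator of $Z$; hence $\sigma\notin H_n'$, giving $\ell\le e\le\ell^n$ and the growth of both $g(U_{H_n'})$ and $r(U_{H_n'})$ exactly as you compute. With that substitution your proof of (3) is complete.
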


\begin{proof}
Let $a\in\mathbb{Z}_{\geq1}$, and set $N:=\ell^{a}$. We define  $\epsilon$ as $1$ (resp. $0$) when $r\geq 1$ (resp. when $r=0$).   We set $H:=\text{Ker}(\overline{\Pi}_{U}^{1,\text{pro-}\ell}\twoheadrightarrow (\overline{\Pi}_{U}^{1,\pro\ell}/(\overline{\Pi}_{U}^{1,\pro\ell})^{N})\cong(\mathbb{Z}/N\mathbb{Z})^{2g+r-\epsilon})$. We set $\alpha:=r-2$ and  $\beta:=1$ (resp. $\alpha:=0$ and  $\beta:=0$) when $r\geq 2$ (resp. $r<2$). Note that $2g+r-\epsilon=2g+\alpha+\beta$. We have the following equalities.
\begin{eqnarray*}
2g(U_{H})-2&=&(2g-2)[\overline{\Pi}^{1,\pro\ell}_{U}:H]+\underset{x\in X_{H}}\Sigma(e_{x}-1)\ \ (\text{the Riemann-Hurwitz formula})\\
&=& (2g-2)N^{2g+r-\epsilon}+rN^{2g+\alpha}(N^{\beta}-1)\\
&=&(2g-2+r)N^{2g+r-\epsilon}-rN^{2g+\alpha},
\end{eqnarray*}
where  $e_{x}$ is the ramification index of $x$ in $X_{H}\rightarrow X$.   We have that $2g+r-\epsilon\geq 2g-2+r>0$ by the hyperbolicity of $U$, and $2g+r-\epsilon=2g+\alpha$ if and only if $r<2$. Thus, when ``$g\geq 2$ or $r\geq 2$'' ($\Leftrightarrow (g,r)\neq (1,1)$), we can take $g(U_{H})$  large enough (by taking $N$ large enough). We also  have $r(U_{H})=rN^{2g+\alpha}$. Therefore, when $(g,r)\neq  (1,1)$  and  $r>0$, we can take $g(U_{H})$ and $r(U_{H})$   large enough (by taking $N$ large enough).
Thus, the assertions   (1) and (2) hold. The assertion (3) holds from (1), (2), and  $r(U_{H})=rN^{2g+\alpha}$.
\end{proof}

\begin{lemma}\label{grreco}
 Assume that $(g,r)\neq (0,0)$, $(0,1)$. Let $\ell$ be a prime different from $p$.
\begin{enumerate}[(1)]
\item\label{greco}  $g=\frac{1}{2}\text{rank}_{\mathbb{Z}_{\ell}}(\overline{\Pi}_{U}^{1,\text{pro-}\ell}/W_{-2}(\overline{\Pi}_{U}^{1,\text{pro-}\ell}))$.
\item\label{rreco-1}  If $r\geq 1$, then $r=\text{rank}_{\mathbb{Z}_{\ell}}(W_{-2}(\overline{\Pi}_{U}^{1,\text{pro-}\ell}))+1$
\item\label{rreco-2} $r\leq 1$ if and only if $W_{-2}(\overline{\Pi}_{U}^{1,\text{pro-}\ell})=\{0\}$.  Moreover,  if  $m\geq 2$, then  $r= 0$ if and only if $W_{-2}(\overline{H}^{1})=\{0\}$ for every open subgroup $H$ of $\Pi_{U}^{(m,\text{pro-}\ell)}$ that contains $(\overline{\Pi}_{U}^{\text{pro-}\ell})^{[m-1]}/(\overline{\Pi}_{U}^{\text{pro-}\ell})^{[m]}$. 
\end{enumerate}
\end{lemma}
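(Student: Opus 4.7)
The strategy is to read off each assertion from the weight-filtration exact sequence of Lemma \ref{wflemma}, specialized to $\Sigma=\{\ell\}$ (so $\Sigma^{\dag}=\{\ell\}$ since $\ell\neq p$). Parts (1) and (2), and the first equivalence of (3), fall out of rank arithmetic; only the ``moreover'' assertion of (3) requires a cover-theoretic input.

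For (1), (2), and the first half of (3): if $r\geq 1$, Lemma \ref{wflemma} gives
\[
0\to\mathbb{Z}_{\ell}(1)\to\mathbb{Z}_{\ell}[E(k^{\sep})](1)\to\overline{\Pi}_{U}^{1,\pro\ell}\to T_{\ell}(J_{X})\to 0,
\]
in which the two left terms are pure of weight $-2$ and $T_{\ell}(J_{X})$ is pure of weight $-1$. Hence $W_{-2}(\overline{\Pi}_{U}^{1,\pro\ell})$ coincides with the image of the middle arrow, of $\mathbb{Z}_{\ell}$-rank $r-1$, while $\overline{\Pi}_{U}^{1,\pro\ell}/W_{-2}\cong T_{\ell}(J_{X})$ has rank $2g$. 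If $r=0$, Lemma \ref{wflemma} identifies $\overline{\Pi}_{U}^{1,\pro\ell}$ directly with $T_{\ell}(J_{X})$, pure of weight $-1$ and of rank $2g$, and $W_{-2}=\{0\}$ by purity. Together these cases give (1), (2), and the equivalence $W_{-2}(\overline{\Pi}_{U}^{1,\pro\ell})=\{0\}\Leftrightarrow r\leq 1$.

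For the forward implication of the ``moreover'' assertion, if $r=0$ then $U=X$ is proper, so any finite étale cover $U_{H}$ is proper with $r(U_{H})=0$, and by Riemann--Hurwitz $g(U_{H})\geq 1$ (since $(g,0)\neq(0,0)$ forces $g\geq 1$); the already-proven first half of (3), applied to $U_{H}$, yields $W_{-2}(\overline{H}^{1})=\{0\}$. For the reverse implication, assume $r\geq 1$. If $r\geq 2$ one takes $H=\Pi_{U}^{(m,\pro\ell)}$, whose $\overline{H}^{1}=\overline{\Pi}_{U}^{1,\pro\ell}$ has nonzero $W_{-2}$ by (2). If $r=1$, then the exclusion of $(0,1)$ forces $g\geq 1$, so $U$ is hyperbolic, and Lemma \ref{gandr}(2),(3) yields an open characteristic subgroup $H_{0}\overset{\op}\subset\overline{\Pi}_{U}^{1,\pro\ell}$ with $r(U_{H_{0}})\geq 2$. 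Being characteristic, $H_{0}$ is $G_{k}$-stable, so its preimage $\overline{H}$ in $\overline{\Pi}_{U}^{m,\pro\ell}$ is a $G_{k}$-stable open subgroup; the resulting extension of $G_{k}$ by the finite abelian $\ell$-group $\overline{\Pi}_{U}^{1,\pro\ell}/H_{0}$ splits after restriction to a sufficiently small open subgroup $G_{k'}\subset G_{k}$ (represent its class in some $H^{2}(G_{k}/N,A)$ and restrict along $N\hookrightarrow G_{k}$), and the resulting section produces an open $H\overset{\op}\subset\Pi_{U}^{(m,\pro\ell)}$ with $H\cap\overline{\Pi}_{U}^{m,\pro\ell}=\overline{H}$. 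Since $\overline{H}\supset (\overline{\Pi}_{U}^{\pro\ell})^{[1]}/(\overline{\Pi}_{U}^{\pro\ell})^{[m]}\supset (\overline{\Pi}_{U}^{\pro\ell})^{[m-1]}/(\overline{\Pi}_{U}^{\pro\ell})^{[m]}$ (using $m\geq 2$), $H$ satisfies the required containment; and Lemma \ref{quotientlemma1.1}(1), applied with $G=\overline{\Pi}_{U}^{\pro\ell}$ and $n=1$, identifies $\overline{H}^{1}$ with the abelianization of the preimage of $H_{0}$ in $\overline{\Pi}_{U}^{\pro\ell}$, i.e., with the maximal abelian pro-$\ell$ geometric tame fundamental group of $U_{H_{0}}$. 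Applying (2) to that cover gives $\text{rank}_{\mathbb{Z}_{\ell}}W_{-2}(\overline{H}^{1})=r(U_{H_{0}})-1\geq 1$, so $W_{-2}(\overline{H}^{1})\neq\{0\}$.

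The main obstacle is the distinction between $r=0$ and $r=1$ in (3): the abelianization $\overline{\Pi}_{U}^{1,\pro\ell}$ alone cannot separate these cases, and one is forced to pass to a ramified abelian pro-$\ell$ cover produced by Lemma \ref{gandr} (whose construction rests on the hyperbolicity of $U$). Realizing that geometric cover as the geometric part of an honest open subgroup of $\Pi_{U}^{(m,\pro\ell)}$ requires the short cohomological splitting argument above, after which Lemma \ref{quotientlemma1.1} matches the abelianized geometric groups and closes the argument.
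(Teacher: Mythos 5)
Your proof is correct and follows essentially the same route as the paper: parts (1), (2) and the first equivalence of (3) are read off from the weight filtration of Lemma \ref{wflemma}, and the ``moreover'' assertion is handled through Lemma \ref{gandr}, which is only needed in the case $r=1$ (where hyperbolicity is automatic), the cases $r=0$ and $r\geq 2$ being immediate from the first half of (3) and from (2). The one place you go beyond the paper's terse argument is in making explicit the passage from the characteristic subgroup $H_{0}\subset\overline{\Pi}_{U}^{1,\text{pro-}\ell}$ to an open subgroup $H\subset\Pi_{U}^{(m,\text{pro-}\ell)}$ whose geometric part is the preimage of $H_{0}$ --- via splitting the induced extension of $G_{k}$ by a finite abelian $\ell$-group over a small open subgroup and then applying Lemma \ref{quotientlemma1.1} --- and that argument is sound.
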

\begin{proof}
The assertions (1)(2) and the first assertion of (3) follow from  Lemma \ref{wflemma}.  When $U$ is hyperbolic (i.e., $\overline{\Pi}_{U}^{m}$ is not abelian) the second assertion of (3) follows from  the first assertion of (3) and Lemma \ref{gandr}. When $(g,r)=(0,2)$ or $(1,0)$,   the second assertion of (3) follows from  the first assertion of (3). Thus, the assertions follow.
\end{proof}

\begin{proposition}\label{severalinvrecoprop}
 Let $i=1$, $2$. Let    $g_{i},r_{i}\in \mathbb{Z}_{\geq 0}$ be integers. Let $(X_{i},E_{i})$ be a smooth curve of type $(g_{i},r_{i})$ over   $k$ and set  $U_{i}:=X_{i}-E_{i}$.  Assume that $(g_{1},r_{1})\neq (0,0)$, $(0,1)$. Let  $\Phi: \Pi^{(m,\Sigma)}_{U_{1}}\xrightarrow[G_{k}]{\sim}\Pi^{(m,\Sigma)}_{U_{2}}$ be  a $G_{k}$-isomorphism.  
\begin{enumerate}[(1)]
\item  $g_{1}=g_{2}$.
\item  If, moreover,   either ``$r_{1}\geq 2$", ``$r_{1}\geq 1$ and $r_{2}\geq 1$" or  ``$m\geq 2$" holds, then   $r_{1}=r_{2}$ holds.
\end{enumerate}
\end{proposition}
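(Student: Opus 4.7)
My plan is to reduce everything to the abelian case $m=1$, where Lemma \ref{wflemma} provides an explicit weight-filtration description, and then invoke the reconstruction Lemma \ref{grreco} to recover $g$ and $r$ from the $G_{k}$-module structure of $\overline{\Pi}_{U}^{1,\text{pro-}\ell}$ for some prime $\ell\in\Sigma$ with $\ell\neq p$; such an $\ell$ exists because $\Sigma^{\dag}$ is nonempty by the standing hypothesis on $\Sigma$.

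Because $\Phi$ is a $G_{k}$-isomorphism, it restricts to a $G_{k}$-equivariant isomorphism $\overline{\Pi}_{U_{1}}^{m,\Sigma}\xrightarrow{\sim}\overline{\Pi}_{U_{2}}^{m,\Sigma}$ of the kernels of the projections to $G_{k}$. Taking successively the maximal abelian and the maximal pro-$\ell$ quotients, both of which are functorial and preserve the $G_{k}$-action, I obtain a $G_{k}$-equivariant isomorphism $\overline{\Pi}_{U_{1}}^{1,\text{pro-}\ell}\xrightarrow{\sim}\overline{\Pi}_{U_{2}}^{1,\text{pro-}\ell}$. Since the weight filtration on these modules is characterized purely by the $G_{k}$-action (via the description in Lemma \ref{wflemma}), this isomorphism automatically identifies the submodules $W_{-2}$ on the two sides.

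For assertion (1), Lemma \ref{grreco}(1) gives $g_{i}=\tfrac{1}{2}\mathrm{rank}_{\mathbb{Z}_{\ell}}\bigl(\overline{\Pi}_{U_{i}}^{1,\text{pro-}\ell}/W_{-2}\bigr)$, hence $g_{1}=g_{2}$. For assertion (2) I handle the three listed hypotheses in turn. If $r_{1}\geq 2$, then $W_{-2}(\overline{\Pi}_{U_{1}}^{1,\text{pro-}\ell})$ has $\mathbb{Z}_{\ell}$-rank $r_{1}-1\geq 1$ by Lemma \ref{grreco}(2), so the same rank holds on the $U_{2}$ side; by Lemma \ref{grreco}(3) this forces $r_{2}\geq 2$, and then Lemma \ref{grreco}(2) applied to $U_{2}$ gives $r_{2}=r_{1}$. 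If $r_{1}\geq 1$ and $r_{2}\geq 1$, Lemma \ref{grreco}(2) applies directly on both sides. If $m\geq 2$, the first half of Lemma \ref{grreco}(3) already yields $r_{1}\leq 1\Leftrightarrow r_{2}\leq 1$, and the second half distinguishes $r_{i}=0$ from $r_{i}=1$ via the vanishing of $W_{-2}(\overline{H}^{1})$ for every open $H\subset\Pi_{U_{i}}^{(m,\text{pro-}\ell)}$ containing $(\overline{\Pi}_{U_{i}}^{\text{pro-}\ell})^{[m-1]}/(\overline{\Pi}_{U_{i}}^{\text{pro-}\ell})^{[m]}$; since $\Phi$ induces an isomorphism on the pro-$\ell$ quotients and matches these characteristic subgroups, the criterion transfers.

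The main point requiring genuine care is this last case, where one must verify that both the open-subgroup-membership condition and the computation of the geometric abelianization $\overline{H}^{1}$ are compatible with $\Phi$. The first is tautological from $\Phi$ being an isomorphism that respects the projection to $G_{k}$, and the second is controlled by Lemma \ref{quotientlemma1.1}(2), which identifies $\overline{H}^{1}$ with the abelianization of the corresponding open subgroup of the larger pro-$\ell$ group. Beyond these checks, the remaining content is routine rank comparison in the pro-$\ell$ abelianization.
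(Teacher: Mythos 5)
Your proof is correct and takes essentially the same route as the paper: reduce to the $G_{k}$-module $\overline{\Pi}_{U_{i}}^{1,\text{pro-}\ell}$ with its weight filtration (Lemma \ref{wflemma}) and read off $g_{i}$ and $r_{i}$ via Lemma \ref{grreco}, handling the three hypotheses in (2) just as the paper's citation of Lemma \ref{grreco}(1)(2)(3) does. The only point you leave implicit is that $(g_{2},r_{2})\neq (0,0),(0,1)$, which is needed before Lemma \ref{grreco} can be applied on the $U_{2}$ side; the paper secures this via (\ref{trivialcong2}), and in your argument it also follows at once because $\overline{\Pi}_{U_{2}}^{1,\text{pro-}\ell}\cong\overline{\Pi}_{U_{1}}^{1,\text{pro-}\ell}$ is nontrivial.
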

\begin{proof}
By  (\ref{trivialcong2}),  we have that   $(g_{1},r_{1})\in \{(0,0), (0,1)\}$ if and only if $(g_{2},r_{2})\in \{(0,0), (0,1)\}$. Hence the assertions follow  from  Lemma \ref{grreco}(\ref{greco})(\ref{rreco-1})(\ref{rreco-2}).
\end{proof}

In section 2, we have to consider isomoprhisms $\Pi^{(m,\Sigma)}_{U_{1}}\xrightarrow{\sim}\Pi^{(m,\Sigma)}_{U_{2}}$  for smooth curves $U_{1}$, $U_{2}$ over finite fields $k_{1}$, $k_{2}$, respectively. Hence  we have to show that Proposition \ref{severalinvrecoprop} is also true in the case that $k$ is finite and $\Phi$ is an arbitrary isomoprhism  (which may not be a  $G_{k}$-isomorphism).  

\begin{lemma}\label{Frobeniusreco}
Assume that   $k$ is  a finite field.
\begin{enumerate}[(1)]
\item  $\overline{\Pi}^{m,\Sigma}_{U}$ coincides with the kernel of the morphism
\begin{equation*}\label{geompi1}
\Pi_{U}^{(m,\Sigma)}\twoheadrightarrow (\Pi_{U}^{(m,\Sigma)})^{\text{ab}}/(\Pi_{U}^{(m,\Sigma)})^{\text{ab}}_{\text{tor}}.
\end{equation*}
\item Assume that  $(g,r)\neq (0,0)$, $(0,1)$. Then  $p$ is the unique prime number such that $\overline{\Pi}_{U}^{1,\text{pro-}p'}$ is free as a $\hat{\mathbb{Z}}^{\text{pro-}p'}$-module.
\item  Assume that  $(g,r)\neq (0,0)$, $(0,1)$. Then the  $|k|$-th power Frobenius element $\text{Fr}_{k}\in G_{k}$ is a unique element of $G_{k}$ that satisfies the following conditions. 
\begin{enumerate}[(a)]
\item $G_{k}$ is topologically generated by $\text{Fr}_{k}$. 
\item $\rho^{\dag}_{U/k}(\text{Fr}_{k}) $ (see (\ref{rhodet}))  is contained in $\pm p^{\mathbb{Z}_{\geq 0}}$.
\end{enumerate}
\item Assume that  $(g,r)\neq (0,0)$, $(0,1)$.  Let $\ell$ be a prime different from $p$ and   $\mathcal{A}$  the set of  absolute values of all eigenvalues of the Frobenius action $\text{Fr}_{k}\curvearrowright \overline{\Pi}_{U}^{1,\pro\ell}$.  Then $\mathcal{A}=\{|k|^{\frac{1}{2}},\ |k|\}$ (resp.  $\mathcal{A}=\{|k|^{\frac{1}{2}}\}$, resp. $\mathcal{A}=\{|k|\}$) when  $r\geq 2$ and   $g\geq 1$ (resp. $r<2$, resp.  $g=0$). 
\end{enumerate}
\end{lemma}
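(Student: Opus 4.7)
The common thread is that $G_k\cong\hat{\mathbb{Z}}$ is torsion-free and that, by the weight filtration (Lemma~\ref{wflemma}), $\text{Fr}_k$ acts on $\overline{\Pi}_U^{1,\Sigma}$ with eigenvalues of absolute value at least $|k|^{1/2}>1$ on every $\ell$-adic component. I would use these facts in a coordinated way across all four parts.

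For (1), the nontrivial inclusion $\overline{\Pi}_U^{m,\Sigma}\subset\Ker$ amounts to the image of $\overline{\Pi}_U^{m,\Sigma}$ in $(\Pi_U^{(m,\Sigma)})^{\ab}$ being torsion. Since for $m\geq 1$ the abelianization of $\overline{\Pi}_U^{m,\Sigma}$ is simply $\overline{\Pi}_U^{1,\Sigma}$ (because $(\overline{\Pi}_U^\Sigma)^{[m]}\subset(\overline{\Pi}_U^\Sigma)^{[1]}$), this image is a quotient of the $G_k$-coinvariants of $\overline{\Pi}_U^{1,\Sigma}$; the weight bound then makes $1-\text{Fr}_k$ have no zero eigenvalue, so the coinvariants are torsion (in fact finite). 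The opposite inclusion is immediate: any element killed in $(\Pi_U^{(m,\Sigma)})^{\ab}/(\Pi_U^{(m,\Sigma)})^{\ab}_{\text{tor}}$ has torsion image in $G_k$, which is torsion-free.

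For (2), I would treat $\ell=p$ and $\ell\neq p$ separately. When $\ell=p$, the sandwich $\hat{\Pi}_{g,r}^{\pro p'}\twoheadrightarrow\overline{\Pi}_U^{\pro p'}\twoheadrightarrow\hat{\Pi}_{g,r}^{\pro p'}$ from item (\ref{surfacegroup}) forces the outer maps to be isomorphisms; since the abelianization of $\Pi_{g,r}$ is free of rank $2g+r-\epsilon$ (with $\epsilon=1$ if $r\geq 1$ and $0$ if $r=0$), so is $\overline{\Pi}_U^{1,\pro p'}$ over $\hat{\mathbb{Z}}^{\pro p'}$. For $\ell\neq p$, I would decompose $\overline{\Pi}_U^{1,\pro\ell'}=\prod_{q\neq\ell}\overline{\Pi}_U^{1,\pro q}$ and use Lemma~\ref{wflemma} componentwise: for $q\neq\ell,p$ the $q$-rank is $2g+r-\epsilon$, whereas for $q=p$ the component reduces to $T_p(J_X)$ (the inertia piece vanishes since $\hat{\mathbb{Z}}^{\emptyset}=0$) with $\mathbb{Z}_p$-rank at most $g$. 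A short case analysis under $(g,r)\neq(0,0),(0,1)$ shows these two ranks never coincide, so freeness over $\hat{\mathbb{Z}}^{\pro\ell'}$ fails; this rank comparison is the main technical obstacle.

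Finally, for (3), formula (\ref{rhocalu}) gives $\rho^{\dag}_{U/k}(\text{Fr}_k)=\pm|k|^{g+r-\epsilon}\in\pm p^{\mathbb{Z}_{\geq 0}}$, verifying conditions (a) and (b) for $\text{Fr}_k$; any other topological generator of $G_k$ has the form $\text{Fr}_k^a$ with $a\in\hat{\mathbb{Z}}^\times$, and since the hypothesis $(g,r)\neq(0,0),(0,1)$ ensures $g+r-\epsilon>0$, the constraint that $|k|^{a(g+r-\epsilon)}$ lies in $\pm p^{\mathbb{Z}_{\geq 0}}\subset(\hat{\mathbb{Z}}^{\pro p'})^\times$, combined with the injectivity input provided by Lemma~\ref{rhoinj}, pins $a$ down to $1$. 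For (4), Lemma~\ref{wflemma} with $\Sigma=\{\ell\}$ shows $W_{-2}(\overline{\Pi}_U^{1,\pro\ell})$ has $\mathbb{Z}_\ell$-rank $r-\epsilon$ with $\text{Fr}_k$ acting as multiplication by $|k|$, while the quotient $T_\ell(J_X)$ has eigenvalues of absolute value $|k|^{1/2}$ by the Weil conjectures for the Jacobian; the three cases of (4) then follow by reading off when each graded piece is nonzero.
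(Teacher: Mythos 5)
Your proposal is correct and follows essentially the same route as the paper, which simply defers these four statements to Tamagawa's Propositions (3.1), (3.3)(ii) and (3.4): your weight/coinvariant argument for (1), the rank comparison using the $p$-rank bound $\leq g$ for (2), the computation of $\rho^{\dag}_{U/k}(\text{Fr}_{k})$ via (\ref{rhocalu}) together with injectivity for (3), and the reading of Lemma \ref{wflemma} plus the Weil bounds for (4) are exactly the standard arguments behind those citations, correctly adapted to the $m$-step solvable quotient via $(\overline{\Pi}_{U}^{m,\Sigma})^{\text{ab}}=\overline{\Pi}_{U}^{1,\Sigma}$. No gap worth flagging.
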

\begin{proof}
(1) Similar to   \cite{Ta1997} Proposition (3.3)(ii).\\
(2) Similar to   \cite{Ta1997} Proposition (3.1).\\
(3) Similar to  \cite{Ta1997} Proposition (3.4)(i)(ii).\\
(4) Similar to  \cite{Ta1997} Proposition  (3.4)(iii).
\end{proof}

\begin{proposition}\label{Gkisomorphic}
 Let $i=1$, $2$. Let $k_{i}$ be a finite field of characteristic $p_{i}$.   Let    $g_{i},r_{i}\in \mathbb{Z}_{\geq 0}$ be integers. Let $(X_{i},E_{i})$ be a smooth curve of type $(g_{i},r_{i})$ over   $k_{i}$ and set  $U_{i}:=X_{i}-E_{i}$.  Let $\Phi: \Pi^{(m,\Sigma)}_{U_{1}}\xrightarrow{\sim}\Pi^{(m,\Sigma)}_{U_{2}}$ be  an isomorphism.   
 \begin{enumerate}[(1)]
 \item  For any integer $n\in\mathbb{Z}_{\geq 0}$ satisfying $m\geq n$, $\Phi$ induces a unique  isomorphism $\Phi^{m-n}:\Pi^{(m-n,\Sigma)}_{U_{1}}\xrightarrow{\sim}\Pi^{(m-n,\Sigma)}_{U_{2}}$  such that the following diagram is commutative.
\[
\xymatrix@R=20pt{
\Pi^{(m,\Sigma)}_{U_{1}}\ar[r]^-{\Phi}\ar[d] & \Pi^{(m,\Sigma)}_{U_{2}}\ar[d]\\
\Pi^{(m-n,\Sigma)}_{U_{1}}\ar[r]^-{\Phi^{m-n}} & \Pi^{(m-n,\Sigma)}_{U_{2}}
}\] 

  \item    Assume that  $\Sigma$ contains all primes but $p_{1}$ and   that $(g_{1},r_{1})\neq (0,0)$, $(0,1)$. Then $p_{1}=p_{2}$ and   $\Phi^{0}(\text{Fr}_{k_{1}})=\text{Fr}_{k_{2}}$ hold. 
  \item  Assume that  $\Sigma$ contains all primes but $p_{1}$, that $(g_{1},r_{1})\neq (0,0)$, $(0,1)$, and that $m\geq2$. Then  $|k_{1}|=|k_{2}|$ holds.
    \item  Assume that $\Sigma$ contains all primes but $p_{1}$ and that $m\geq 2$. Then  $\Phi^{1}|_{\overline{\Pi}_{U_{1}}^{1,\Sigma}}:\overline{\Pi}_{U_{1}}^{1,\Sigma}\xrightarrow{\sim}\overline{\Pi}_{U_{2}}^{1,\Sigma}$ induces $W_{-2}(\overline{\Pi}_{U_{1}}^{1, \Sigma})\xrightarrow{\sim}W_{-2}(\overline{\Pi}_{U_{2}}^{1, \Sigma})$.
 \end{enumerate}
\end{proposition}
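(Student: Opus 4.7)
The plan is to prove (1)--(4) in order, each step reducing to a group-theoretic reconstruction already established in Lemma \ref{Frobeniusreco}.

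For (1), the kernel of the projection $\Pi_{U_i}^{(m,\Sigma)} \twoheadrightarrow \Pi_{U_i}^{(m-n,\Sigma)}$ equals $(\overline{\Pi}_{U_i}^{\Sigma})^{[m-n]}/(\overline{\Pi}_{U_i}^{\Sigma})^{[m]}$, which is the $(m-n)$-th closed derived subgroup of $\overline{\Pi}_{U_i}^{m,\Sigma}$. By Lemma \ref{Frobeniusreco}(1), $\overline{\Pi}_{U_i}^{m,\Sigma}$ is intrinsically characterized inside $\Pi_{U_i}^{(m,\Sigma)}$ as the kernel of the map to the maximal torsion-free abelian quotient; hence $\Phi$ carries $\overline{\Pi}_{U_1}^{m,\Sigma}$ onto $\overline{\Pi}_{U_2}^{m,\Sigma}$, and the corresponding $(m-n)$-th derived subgroups onto each other. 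Passing to the quotients produces the unique induced isomorphism $\Phi^{m-n}$.

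For (2), I first show $p_1=p_2$. Since $\Sigma$ is the complement of $p_1$, the group $\overline{\Pi}_{U_1}^{1,\Sigma}=\overline{\Pi}_{U_1}^{1,\pro p_1'}$ is free as a $\hat{\mathbb{Z}}^{\pro p_1'}$-module by Lemma \ref{Frobeniusreco}(2). Via $\Phi^1$ (from (1)), the abelian pro-$\Sigma$ group $\overline{\Pi}_{U_2}^{1,\Sigma}$ is likewise free. But the rank computation from Lemma \ref{wflemma} applied to $U_2$ shows that the $\ell$-rank of $\overline{\Pi}_{U_2}^{1,\pro\ell}$ is strictly smaller at $\ell=p_2$ than at any other $\ell$ (given that $(g_2,r_2)\neq(0,0),(0,1)$, a fact itself forced by $\Phi^1$); hence $p_2\notin\Sigma$, i.e., $p_1=p_2$. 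Next, $\Phi^{0}(\text{Fr}_{k_1})=\text{Fr}_{k_2}$ follows from the uniqueness in Lemma \ref{Frobeniusreco}(3): $\Phi^{0}$ sends a topological generator to a topological generator, and since the outer representation $G_{k_i}\rightarrow\Out(\overline{\Pi}_{U_i}^{1,\pro p_i'})$ is obtained group-theoretically from the diagram (\ref{outeraction}), $\Phi$ intertwines the characters $\rho^{\dag}_{U_i/k_i}$; the determinant condition $\rho^{\dag}_{U_2/k_2}(\Phi^{0}(\text{Fr}_{k_1}))\in\pm p_1^{\mathbb{Z}_{\geq 0}}=\pm p_2^{\mathbb{Z}_{\geq 0}}$ follows automatically.

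For (4), once $\Phi^{0}(\text{Fr}_{k_1})=\text{Fr}_{k_2}$ is known, the restriction $\Phi^{1}|_{\overline{\Pi}_{U_1}^{1,\Sigma}}$ becomes $G_{k_1}\cong G_{k_2}$-equivariant; since $W_{-2}$ is intrinsic to the Frobenius weight filtration, it is preserved. For (3), combining (4) with Lemma \ref{grreco} yields $g_1=g_2$ and $r_1=r_2$, so the pairs $(g_i,r_i)$ fall into the same case of Lemma \ref{Frobeniusreco}(4). Picking any $\ell\in\Sigma$ with $\ell\neq p_1=p_2$, the Frobenius eigenvalue absolute-value sets on $\overline{\Pi}_{U_i}^{1,\pro\ell}$ agree through $\Phi^{1}$, and reading $|k_i|$ off these sets via Lemma \ref{Frobeniusreco}(4) gives $|k_1|=|k_2|$. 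The main obstacle is the $p_1=p_2$ step of (2): since $\Phi$ is not assumed $G_k$-equivariant, characteristic invariance cannot be taken for granted, and the intrinsic freeness characterization of Lemma \ref{Frobeniusreco}(2) must be converted into a rank-drop statement via Lemma \ref{wflemma}, with care taken to rule out the small-type exclusions $(g,r)=(0,0),(0,1)$ on both curves.
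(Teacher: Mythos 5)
Your parts (1) and (2) are fine and essentially reproduce the paper's route: (1) is exactly the reduction via Lemma \ref{Frobeniusreco}(1) (the geometric part is the kernel of the map to the maximal torsion-free abelian quotient, and the kernel of $\Pi^{(m,\Sigma)}\twoheadrightarrow\Pi^{(m-n,\Sigma)}$ is its $(m-n)$-th derived subgroup), and (2) is the intended application of Lemma \ref{Frobeniusreco}(2)(3), with your rank-drop discussion merely unpacking the proof of Lemma \ref{Frobeniusreco}(2).

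The genuine gap is in your treatment of (3) and (4): you prove (4) first from "Frobenius-equivariance plus $W_{-2}$ is intrinsic to the weight filtration," and then deduce (3) from (4) --- but this is circular. The submodule $W_{-2}(\overline{\Pi}_{U_i}^{1,\Sigma})$ is defined as the part where $\text{Fr}_{k_i}$ acts with eigenvalues of absolute value $|k_i|$, so "weight $-2$" is \emph{not} intrinsic to the pair (module, Frobenius operator) unless one already knows $|k_1|=|k_2|$, which is precisely assertion (3). Concretely, the abelianized Frobenius data of $\mathbb{P}^1_{\mathbb{F}_q}-\{0,1,\infty\}$ (a free rank-two prime-to-$p$ module with Frobenius acting as the scalar $q$, so $W_{-2}$ is everything) is isomorphic, Frobenius-equivariantly, to that of a suitable supersingular elliptic curve over $\mathbb{F}_{q^2}$ with Frobenius acting as multiplication by $q$ (all of weight $-1$, so $W_{-2}=\{0\}$): the pairs (module, operator) match, yet $W_{-2}$ is not preserved. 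So equivariance alone cannot give (4) before $|k_1|=|k_2|$ is known, and your proposal contains no independent argument for (3). This is where the paper does real work: it first settles the non-hyperbolic types $(1,0),(0,2)$ by comparing $\text{rank}_{\hat{\mathbb{Z}}^{\Sigma^{\dag}}}(\overline{\Pi}_{U_i}^{1,\Sigma^{\dag}})$, and in the hyperbolic case it uses $m\geq 2$ to pass to an open subgroup $H=s(G_{k_1})\cdot H'$ (with $H'$ characteristic, containing the first derived subgroup, and $g(U_{1,H})\geq 1$, via Lemma \ref{gandr}) and applies Lemma \ref{Frobeniusreco}(4) to the Frobenius-compatible isomorphism $H^{(1)}\xrightarrow{\sim}\Phi(H)^{(1)}$ to read off $|k_1|=|k_2|$; only after that does it deduce (4) from (1)(2)(3). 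To repair your proposal you would need to supply such a direct proof of (3) (not relying on (4)), and note also that your appeal to Lemma \ref{grreco} to get $r_1=r_2$ would, for the $r=0$ versus $r=1$ distinction, require $W_{-2}$-preservation for open subgroups as well, i.e., again more than your (4) provides.
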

\begin{proof}
(1) The assertion follows from Lemma \ref{Frobeniusreco}(1).\\ 
(2) By (1), $\Phi$ induces an isomorphism $\Phi^{1}|_{\overline{\Pi}_{U_{1}}^{1,\Sigma}}:\overline{\Pi}_{U_{1}}^{1,\Sigma}\xrightarrow{\sim}\overline{\Pi}_{U_{2}}^{1,\Sigma}$. Thus, the first and second assertions follow from Lemma \ref{Frobeniusreco}(2)(3), respectively.\\
(3) By (\ref{trivialcong2}) and (\ref{trivialcong}), we have that $(g_{1},r_{1})= (1,0)$, $(0,2)$ if and only if $(g_{2},r_{2})=  (1,0)$, $(0,2)$. 
If $(g_{i},r_{i})= (1,0)$ (resp.  $(0,2)$), then $\text{rank}_{\hat{\mathbb{Z}}^{\Sigma^{\dag}}}(\overline{\Pi}_{U_{i}}^{1,\Sigma^{\dag}})=2$ (resp. $1$). 
Hence $(g_{1},r_{1})=(1,0)$  (resp.  $(0,2)$) if and only if $(g_{2},r_{2})=(1,0)$  (resp.  $(0,2)$).  
Thus,  the assertion follows from Lemma \ref{Frobeniusreco}(4) when $(g_{1},r_{1})=(1,0)$, $(0,2)$. 
Hence we may  assume that $U_{1}$ is hyperbolic. 
Let $s: G_{k}\rightarrow \Pi_{U_{1}}^{(m,\Sigma)}$ be a section of the projection $\Pi_{U_{1}}^{(m,\Sigma)}\twoheadrightarrow G_{k}$. By Lemma \ref{gandr}(1)(2), there exists an open characteristic subgroup $H'$ of $\overline{\Pi}_{U_{1}}^{m,\Sigma}$ containing $(\overline{\Pi}_{U_{1}}^{\Sigma})^{[1]}/(\overline{\Pi}_{U_{1}}^{\Sigma})^{[m]}$ such that  $g(U_{1,H'})\geq 1$. We set $H:=s(G_{k})\cdot H'$.   Since $H^{(1)}\xrightarrow{\sim}\Phi(H)^{(1)}$ and  $g(U_{1,H})=g(U_{2,\Phi(H)})\geq 1$, we obtain that $|k_{1}|=|k_{2}|$ by Lemma \ref{Frobeniusreco}(4). Thus, the assertion follows.\\
(4) By (\ref{trivialcong2}), we have that $(g_{1},r_{1})= (0,0)$, $(0,1)$ if and only if $(g_{2},r_{2})= (0,0)$, $(0,1)$.  When  $(g_{1},r_{1})= (0,0)$, $(0,1)$,  the assertion is clearly true, since $\overline{\Pi}^{m, \Sigma}_{U_{1}}$ is trivial by  (\ref{trivialcong2}). When $(g_{1},r_{1})\neq  (0,0)$, $(0,1)$,  the assertion follows from (1)(2)(3).
\end{proof}


\subsection{Inertia groups of  $\overline{\Pi}^{m,\Sigma}_{U}$}\label{subsectioninertia}
\hspace{\parindent} In this subsection,  we show the group-theoretical reconstruction of  inertia groups of $\overline{\Pi}_{U}^{m,\Sigma}$.  First, we consider the relationship between inertia groups of $\Pi_{U}^{\Sigma}$ and  $\Pi_{U}^{(m,\Sigma)}$.

\begin{lemma}\label{decisom2}
Assume that    $(m,r)\neq (1,1)$. Let $\tilde{v}$ be an element of  $\tilde{E}^{\Sigma,\text{cl}}$ and $\tilde{v}^{m}$ the image of $\tilde{v}$ in $(\tilde{E}^{m,\Sigma})^{\text{cl}}$. Then  the natural surjection  $I_{\tilde{v},\overline{\Pi}^{\Sigma}_{U}}\twoheadrightarrow I_{\tilde{v}^{m},\overline{\Pi}^{m,\Sigma}_{U}}$ is an isomorphism. 
\end{lemma}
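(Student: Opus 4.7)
Since the natural surjection $\overline{\Pi}_U^{\Sigma}\twoheadrightarrow\overline{\Pi}_U^{m,\Sigma}$ carries $I_{\tilde{v},\overline{\Pi}_U^{\Sigma}}$ onto $I_{\tilde{v}^m,\overline{\Pi}_U^{m,\Sigma}}$, the claim is equivalent to showing $I_{\tilde{v},\overline{\Pi}_U^{\Sigma}}\cap(\overline{\Pi}_U^{\Sigma})^{[m]}=\{1\}$. The plan is to dispose of the easy cases and then treat the hyperbolic situation by induction on $m$: if $r=0$ the statement is vacuous; if $(g,r)\in\{(0,1),(1,0)\}$ then $\overline{\Pi}_U^{\Sigma}$ is trivial; and if $(g,r)=(0,2)$ then $\overline{\Pi}_U^{\Sigma}\cong \hat{\mathbb{Z}}^{\Sigma^{\dag}}$ is abelian and coincides with every $\overline{\Pi}_U^{m,\Sigma}$. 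Thus it remains to treat the hyperbolic case with $r\geq 1$.

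For the hyperbolic case with $r\geq 2$ (any $m\geq 1$) I would argue directly. Letting $v\in E(k^{\sep})$ be the image of $\tilde{v}$, Lemma \ref{wflemma} asserts that the inertia in $\overline{\Pi}_U^{1,\Sigma}$ at $v$ is topologically generated by $f(v\otimes 1)$, with $\ker f$ equal to the diagonal embedding $\hat{\mathbb{Z}}^{\Sigma^{\dag}}(1)\hookrightarrow\mathbb{Z}[E(k^{\sep})]\otimes\hat{\mathbb{Z}}^{\Sigma^{\dag}}(1)$, $a\mapsto \sum_{w\in E(k^{\sep})} w\otimes a$. When $r\geq 2$ no nonzero element of the form $v\otimes a$ lies on this diagonal, so the procyclic $I_{\tilde{v},\overline{\Pi}_U^{\Sigma}}$ injects into $\overline{\Pi}_U^{1,\Sigma}$. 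Since $(\overline{\Pi}_U^{\Sigma})^{[m]}\subset(\overline{\Pi}_U^{\Sigma})^{[1]}$, this injectivity already settles all $m\geq 1$ under $r\geq 2$.

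The only remaining configuration is $r=1$ with $m\geq 2$, where hyperbolicity forces $g\geq 1$. Here I propose to exhibit an open subgroup $V\overset{\text{op}}\subset\overline{\Pi}_U^{\Sigma}$ with (i) $V\supset(\overline{\Pi}_U^{\Sigma})^{[1]}$, (ii) $V\supset I_{\tilde{v},\overline{\Pi}_U^{\Sigma}}$, and (iii) $r(U_V)\geq 2$. Granting these, (i) gives $V^{[m-1]}\supset(\overline{\Pi}_U^{\Sigma})^{[m]}$; (ii) gives $I_{\tilde{v},V}=I_{\tilde{v},\overline{\Pi}_U^{\Sigma}}\cap V=I_{\tilde{v},\overline{\Pi}_U^{\Sigma}}$ by the usual behavior of inertia in intermediate Galois extensions; and (iii) lets me apply the already-proved case ($r(U_V)\geq 2$) to the curve $U_V$ with parameter $m-1$, yielding $I_{\tilde{v},V}\cap V^{[m-1]}=\{1\}$. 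Combining these gives
\[
I_{\tilde{v},\overline{\Pi}_U^{\Sigma}}\cap(\overline{\Pi}_U^{\Sigma})^{[m]}\subset I_{\tilde{v},V}\cap V^{[m-1]}=\{1\}.
\]
To construct $V$, Lemma \ref{wflemma} provides the surjection $\overline{\Pi}_U^{1,\Sigma}\twoheadrightarrow T_{\Sigma}(J_X)$ with kernel $W_{-2}=I_{\overline{\Pi}_U^{1,\Sigma}}$; since $g\geq 1$ and $\Sigma^{\dag}\neq\emptyset$, $T_{\Sigma}(J_X)\neq 0$, so one can pick a proper open $V_0\subsetneq T_{\Sigma}(J_X)$ and let $V$ be its preimage in $\overline{\Pi}_U^{\Sigma}$. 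By construction $V$ contains $(\overline{\Pi}_U^{\Sigma})^{[1]}$ and every inertia subgroup (giving (i) and (ii)), and the cover $X_V\to X$ is étale of degree $d=[\overline{\Pi}_U^{\Sigma}:V]\geq 2$, whence $r(U_V)=d\cdot r=d\geq 2$.

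The main technical point will be the construction of $V$ in the case $r=1$: one needs simultaneously (i) and (ii), and the cleanest route is through $T_{\Sigma}(J_X)$, whose nonvanishing (equivalently, $g\geq 1$) is exactly the Jacobian-theoretic consequence of hyperbolicity when $r=1$.
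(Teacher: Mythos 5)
Your proposal is correct and follows essentially the same strategy as the paper's proof: reduce to showing $I_{\tilde{v},\overline{\Pi}^{\Sigma}_{U}}\cap(\overline{\Pi}_{U}^{\Sigma})^{[m]}=\{1\}$, settle the case $r\geq 2$ by showing the inertia already injects at the abelian level, and settle $r=1$, $m\geq 2$ by passing to an open subgroup containing $(\overline{\Pi}_{U}^{\Sigma})^{[1]}$ whose covering has at least two cusps and invoking the $r\geq 2$ case. The only (immaterial) differences are in the justifications of the two steps — the paper uses the freeness of $\hat{\Pi}^{\Sigma^{\dag}}_{g,r}$ with $\sigma_{i}$ a free generator where you use the weight filtration of Lemma \ref{wflemma}, and it obtains the auxiliary subgroup from Lemma \ref{gandr} rather than by your explicit pullback of a proper open subgroup of $T_{\Sigma}(J_{X})$ — together with your harmless slip that $\overline{\Pi}_{U}^{\Sigma}$ is trivial for $(g,r)=(1,0)$ (it is abelian, but that case has $r=0$ and is vacuous anyway).
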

\begin{proof}
If $\overline{\Pi}^{\Sigma}_{U}$ is abelian, then the assertion clearly holds. Hence we may assume that $(g,r)\neq(0,0)$, $(0,1)$, $(0,2)$, $(1,0)$ by the equivalence (\ref{trivialcong}). Moreover, we may  assume that $r\geq 1$. Since $\text{Ker}(I_{\tilde{v},\overline{\Pi}^{\Sigma}_{U}}\rightarrow \overline{\Pi}^{\Sigma,m}_{U})=I_{\tilde{v},\overline{\Pi}_{U}^{\Sigma}}\cap (\overline{\Pi}_{U}^{\Sigma})^{[m]}$, it is sufficient to show that  $I_{\tilde{v},\overline{\Pi}^{\Sigma}_{U}}\cap  (\overline{\Pi}_{U}^{\Sigma})^{[m]}=\{1\}$. First, we consider the case that $r\geq 2$. Let $x$ be a generator of the inertia group $I_{\tilde{v},\overline{\Pi}^{\Sigma}_{U}}$.  The surjection $\overline{\Pi}^{\Sigma}_{U}\twoheadrightarrow  \hat{\Pi}^{\Sigma^{\dag}}_{g,r}$ in Notation (\ref{surfacegroup}) maps $x$ to (a conjugate of) $\sigma_{i}$ for some $i$, and induces an isomorphism $\overline{\Pi}^{\Sigma^{\dag}}_{U}\xrightarrow{\sim}  \hat{\Pi}^{\Sigma^{\dag}}_{g,r}$. We have that  $\hat{\Pi}^{\Sigma^{\dag}}_{g,r}$ is a free pro-$\Sigma^{\dag}$ group of  rank $2g+r-1\ (>1)$,  and $\sigma_{i}$ is an element of  a set of  free generators. (Here, we use the assumption $r\geq 2$.) Hence   $\overline{\langle \sigma_{i}\rangle}\cap (\hat{\Pi}^{\Sigma^{\dag}}_{g,r})^{[1]}=\{1\}$ follows.   Thus, $I_{\tilde{v}^{\dag},\overline{\Pi}^{\Sigma^{\dag}}_{U}}\cap  (\overline{\Pi}_{U}^{\Sigma^{\dag}})^{[m]}=\{1\}$ follows, where $\tilde{v}^{\dag}$ is the image of $\tilde{v}$ in $(\tilde{E}^{m,\Sigma^{\dag}})^{\text{cl}}$.   Since the natural surjection $\overline{\Pi}_{U}^{\Sigma}\twoheadrightarrow \overline{\Pi}_{U}^{\Sigma^{\dag}}$  induces an isomorphism $I_{\tilde{v},\overline{\Pi}^{\Sigma}_{U}}\xrightarrow{\sim}I_{\tilde{v}^{\dag},\overline{\Pi}^{\Sigma^{\dag}}_{U}}$, we obtain that  $I_{\tilde{v},\overline{\Pi}^{\Sigma}_{U}}\cap  (\overline{\Pi}_{U}^{\Sigma})^{[m]}=\{1\}$. Thus, the assertion follows when $r\geq 2$.  Finally, we consider the case that $r=1$. (In particular, $m\geq 2$ by assumption.)  By  Lemma \ref{gandr}(2)(3), there exists   an open subgroup $H$ of $\Pi_{U}^{(\Sigma)}$ which contains $(\overline{\Pi}_{U}^{\Sigma})^{[1]}$ and satisfies $r(U_{H})\geq 2$.  Hence, by the case that $r\geq 2$, we obtain that $I_{\tilde{v},\overline{\Pi}_{U}^{\Sigma}}\cap \overline{H}^{[1]}=\{1\}$. Since $(\overline{\Pi}_{U}^{\Sigma})^{[1]}\subset H$, we get $(\overline{\Pi}_{U}^{\Sigma})^{[m]}\subset (\overline{\Pi}_{U}^{\Sigma})^{[2]}\subset\overline{H}^{[1]}$. Thus, $I_{\tilde{v},\overline{\Pi}_{U}^{\Sigma}}\cap (\overline{\Pi}_{U}^{\Sigma})^{[m]}=\{1\}$ follows. Therefore, the assertion follows.  
\end{proof}
 
In  \cite{Ya2020} subsection 1.2,  we obtained the separatedness of inertia groups of  $\overline{\Pi}_{U}^{m,\Sigma^{\dag}}$ (\cite{Ya2020} Lemma 1.2.1 and Lemma 1.2.2).  In the following lemma, we show a slightly stronger result.

\begin{lemma}\label{inerrev}
\begin{enumerate}[(1)]
\item Assume that $r\neq 2$. Let   $\tilde{v},\tilde{v}'$ be elements of $\tilde{E}^{1,\Sigma^{\dag}} $ and $\rho: \tilde{E}^{1,\Sigma^{\dag}}\rightarrow \tilde{E}^{0,\Sigma^{\dag}}$ $(=\tilde{E}^{0})$ the natural surjection. Then the following conditions (a)-(c) are equivalent.
\begin{enumerate}[(a)]
\item  $\rho(\tilde{v})=\rho(\tilde{v}')$.
\item $I_{\tilde{v},\Pi_{U}^{(1,\Sigma^{\dag})}}=I_{\tilde{v}',\Pi_{U}^{(1,\Sigma^{\dag})}}$.
\item $I_{\tilde{v},\Pi_{U}^{(1,\Sigma^{\dag})}}$ and $I_{\tilde{v}',\Pi_{U}^{(1,\Sigma^{\dag})}}$ are commensurable.
\end{enumerate} 
\item Assume that $(g,r)\neq (0,0),(0,1),(0,2)$ and that $(m,r)\neq (1,2)$.   Let   $\tilde{v},\tilde{v}'$ be elements of $\tilde{E}^{m,\Sigma} $ and $\rho_{m}: \tilde{E}^{m,\Sigma}\rightarrow \tilde{E}^{m-1,\Sigma}$ the natural surjection. Consider the following conditions (a)-(d).
\begin{enumerate}[(a)]
\item  $\tilde{v}=\tilde{v}'$.
\item $I_{\tilde{v},\Pi_{U}^{(m,\Sigma)}}=I_{\tilde{v}',\Pi_{U}^{(m,\Sigma)}}$.
\item $I_{\tilde{v},\Pi_{U}^{(m,\Sigma)}}$ and $I_{\tilde{v}',\Pi_{U}^{(m,\Sigma)}}$ are commensurable.
\item  $\rho_{m}(\tilde{v})=\rho_{m}(\tilde{v}')$.
\end{enumerate} 
Then (a)$\Rightarrow$(b)$\Rightarrow$(c)$\Rightarrow$(d) holds. 
\item Assume that $(g,r)\neq (0,0),(0,1),(0,2)$ and  that either  ``$m\geq 3$'' or ``$m\geq 2$ and $r\geq 2$''.    Let   $\tilde{v},\tilde{v}'$ be elements of  $\tilde{E}^{m,\Sigma^{\dag}} $. Then  the following conditions (a)-(c) are equivalent.
\begin{enumerate}[(a)]
\item  $\tilde{v}=\tilde{v}'$.
\item $I_{\tilde{v},\Pi_{U}^{(m,\Sigma^{\dag})}}=I_{\tilde{v}',\Pi_{U}^{(m,\Sigma^{\dag})}}$.
\item $I_{\tilde{v},\Pi_{U}^{(m,\Sigma^{\dag})}}$ and $I_{\tilde{v}',\Pi_{U}^{(m,\Sigma^{\dag})}}$ are commensurable.
\end{enumerate} 
In particular, $D_{\tilde{v},\Pi_{U}^{(m,\Sigma^{\dag})}}$ coincides with the nomalizer of $I_{\tilde{v},\Pi_{U}^{(m,\Sigma^{\dag})}}$ in $\Pi_{U}^{(m,\Sigma^{\dag})}$.
\end{enumerate}
\end{lemma}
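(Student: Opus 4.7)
Throughout, I observe that since every $v \in E(k^{\mathrm{sep}})$ has residue field $k^{\mathrm{sep}}$, the Galois contribution to inertia is trivial, and hence $I_{\tilde{v}, \Pi_{U}^{(\bullet)}} = I_{\tilde{v}, \overline{\Pi}_{U}^{\bullet}}$; the proof takes place entirely inside the geometric part. The implications $(a) \Rightarrow (b) \Rightarrow (c)$ are immediate in all three parts, since equal stabilizers are commensurable. For $(a) \Rightarrow (b)$ in part~(1), we additionally use the abelianness of $\overline{\Pi}_{U}^{1,\Sigma^{\dag}}$: two points $\tilde{v}, \tilde{v}'$ with $\rho(\tilde{v}) = \rho(\tilde{v}')$ lie in the same $\overline{\Pi}_{U}^{1,\Sigma^{\dag}}$-orbit (the fiber over a point of $E_{k^{\mathrm{sep}}}$), so $\tilde{v}' = g\tilde{v}$ and $I_{\tilde{v}'} = g I_{\tilde{v}} g^{-1} = I_{\tilde{v}}$.

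For part~(1), $(c) \Rightarrow (a)$, I plan to use Lemma~\ref{wflemma} directly: $W_{-2}(\overline{\Pi}_{U}^{1,\Sigma^{\dag}})$ is the cokernel of the diagonal $\hat{\mathbb{Z}}^{\Sigma^{\dag}}(1) \hookrightarrow \mathbb{Z}[E(k^{\mathrm{sep}})] \otimes \hat{\mathbb{Z}}^{\Sigma^{\dag}}(1)$, and the inertia $I_{v}$ is the rank-one image of $v \otimes \hat{\mathbb{Z}}^{\Sigma^{\dag}}(1)$. Given distinct $v \neq v' \in E(k^{\mathrm{sep}})$, any common element of $I_{v}$ and $I_{v'}$ lifts to an element of $\mathbb{Z}[E(k^{\mathrm{sep}})] \otimes \hat{\mathbb{Z}}^{\Sigma^{\dag}}(1)$ supported on $\{v, v'\}$ that differs from a diagonal element by zero; when $r \geq 3$ the existence of a point outside $\{v, v'\}$ forces the diagonal coefficient to vanish, and hence both original coefficients as well, so $I_{v} \cap I_{v'} = \{1\}$ and the two subgroups are not commensurable. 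For $r \leq 1$ the claim is trivial (empty or singleton fiber), and $r = 2$ is excluded since there the relation $v \otimes 1 + v' \otimes 1 \in \ker f$ forces $I_{v} = I_{v'}$.

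For part~(2), $(c) \Rightarrow (d)$, the cases $r = 0$ (vacuous) and $r = 1$ with $m = 1$ (singleton base) are trivial; in the remainder I induct on $m$. The base $m = 1$ with $r \geq 2$ (hypothesis $(m,r) \neq (1,2)$ forces $r \neq 2$) reduces to part~(1) via the surjection $\Pi_{U}^{(1,\Sigma)} \twoheadrightarrow \Pi_{U}^{(1,\Sigma^{\dag})}$, which is an isomorphism on inertia since inertia is pro-$\Sigma^{\dag}$. For the inductive step $m \geq 2$, project $\overline{\Pi}_{U}^{m,\Sigma} \twoheadrightarrow \overline{\Pi}_{U}^{m-1,\Sigma}$; Lemma~\ref{decisom2} ensures that the induced map on inertia is an isomorphism in the generic case, so commensurability descends, and equality $\rho_{m}(\tilde{v}) = \rho_{m}(\tilde{v}')$ then follows by invoking part~(3) at level $m-1$. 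The residual small case $m = 2$, $r = 1$ (where $(m-1, r) = (1,1)$ obstructs Lemma~\ref{decisom2}) must be handled by direct analysis of $N := \ker(\overline{\Pi}_{U}^{2,\Sigma} \twoheadrightarrow \overline{\Pi}_{U}^{1,\Sigma})$ and its conjugation action. This interleaving between parts~(2) and~(3), together with the small-$m$ cases, is the principal technical obstacle.

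For part~(3), $(c) \Rightarrow (a)$, apply part~(2) to obtain $\rho_{m}(\tilde{v}) = \rho_{m}(\tilde{v}')$, so $\tilde{v}' = n\tilde{v}$ for some $n$ in $N := \ker(\overline{\Pi}_{U}^{m,\Sigma^{\dag}} \twoheadrightarrow \overline{\Pi}_{U}^{m-1,\Sigma^{\dag}})$. Then $n I_{\tilde{v}} n^{-1}$ is commensurable with $I_{\tilde{v}}$: choose $x$ generating the open common subgroup $I_{\tilde{v}} \cap n I_{\tilde{v}} n^{-1}$, and write $n x n^{-1} = x^{a}$ for some $a \in (\hat{\mathbb{Z}}^{\Sigma^{\dag}})^{\times}$. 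Then $[n, x] = x^{a-1}$ lies in $N \cap I_{\tilde{v}}$, which is trivial by Lemma~\ref{decisom2} applied at both levels $m$ and $m-1$ (the hypotheses of part~(3) ensure these apply). Hence $a = 1$ and $n$ commutes with $x$; by procyclicity of $I_{\tilde{v}}$ and continuity, $n$ commutes with all of $I_{\tilde{v}}$. A weight-filtration argument on the $\overline{\Pi}_{U}^{m-1,\Sigma^{\dag}}$-module $N$ (together with the weight $-2$ structure on $I_{\tilde{v}}$ coming from Lemma~\ref{wflemma}) forces $n = 1$, giving $\tilde{v}' = \tilde{v}$. The ``in particular'' statement is then immediate: any $\gamma$ normalizing $I_{\tilde{v}}$ sends $\tilde{v}$ to a point with identical inertia, so $\gamma\tilde{v} = \tilde{v}$ by $(b) \Rightarrow (a)$, whence $\gamma \in D_{\tilde{v}}$.
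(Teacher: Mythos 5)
Your part (1) is essentially correct: computing inside the weight exact sequence of Lemma \ref{wflemma} (two distinct inertia images intersect trivially once $r\geq 3$, because $\ker f$ is the diagonal copy of $\hat{\mathbb{Z}}^{\Sigma^{\dag}}(1)$) is a legitimate substitute for the paper's citation of \cite{Ya2020} Lemma 1.2.1. The gaps are in (2) and (3). For (2), your inductive step ``descend commensurability to level $m-1$ and invoke part (3) there'' fails in two ways. First, part (3) is only available for $m-1\geq 3$, or $m-1\geq 2$ with $r\geq 2$, so the scheme breaks exactly at the bottom: $m=2$ for \emph{every} $r$ (not just $(m,r)=(2,1)$, which you defer without an argument), and $m=3$, $r=1$; since your (3) at level $m$ calls your (2) at the same level $m$, the mutual recursion is never grounded. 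Second, part (3) concerns $\Sigma^{\dag}$ only, while condition (d) asserts equality in $\tilde{E}^{m-1,\Sigma}$ with $p$ possibly in $\Sigma$; commensurability of the image inertia groups can at best separate points of $\tilde{E}^{m-1,\Sigma^{\dag}}$ (and at level $1$ it separates nothing beyond $\tilde{E}^{0}$, the group being abelian), so even where (3) applies you obtain strictly less than (d). The paper's route is different: it takes open subgroups $H$ of $\Pi_{U}^{(m,\Sigma)}$ containing $(\overline{\Pi}_{U}^{\Sigma})^{[m-1]}/(\overline{\Pi}_{U}^{\Sigma})^{[m]}$ with $r(U_{H})\geq 3$, applies the abelian separatedness of part (1) to each covering curve $U_{H}$ through $H^{(1,\Sigma^{\dag})}$, and recovers $(\tilde{X}^{m-1,\Sigma})^{\mathrm{cl}}$ as $\varprojlim_{H}(\tilde{X}_{H}^{0,\Sigma})^{\mathrm{cl}}$ over the cofinal family given by Lemma \ref{gandr}; the full pro-$\Sigma$ (including $p$) information at level $m-1$ is carried by the finite covers, not by inertia.

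For (3), the crux of your argument --- that $n\in N=\ker(\overline{\Pi}^{m,\Sigma^{\dag}}_{U}\twoheadrightarrow\overline{\Pi}^{m-1,\Sigma^{\dag}}_{U})$ centralizing an open subgroup of $I_{\tilde{v}}$ must be trivial --- is precisely the hard content of the lemma, and ``a weight-filtration argument on the module $N$'' is not a proof: the relevant action is the conjugation action of $\overline{\Pi}^{m-1,\Sigma^{\dag}}_{U}$ on $N$, not the Galois action, and Lemma \ref{wflemma} says nothing about centralizers in that module. The preparatory step is also unjustified: from $x\in I_{\tilde{v}}\cap nI_{\tilde{v}}n^{-1}$ you get $n^{-1}xn\in I_{\tilde{v}}$, but not $nxn^{-1}=x^{a}$, since conjugation by $n$ need not preserve $I_{\tilde{v}}$ or the procyclic subgroup generated by $x$ (likewise, commuting with $x$ only gives commutation with an open subgroup of $I_{\tilde{v}}$, not all of it). The paper never attempts such a centralizer computation: for $m\geq 2$, $r\geq 2$ it quotes \cite{Ya2020} Lemma 1.2.2, and it settles the remaining case $m\geq 3$, $r=1$ by passing to open subgroups $H$ with $r(U_{H})\geq 2$ containing $(\overline{\Pi}_{U}^{\Sigma^{\dag}})^{[m-2]}/(\overline{\Pi}_{U}^{\Sigma^{\dag}})^{[m]}$, applying the established case to $H^{(2)}$, and again concluding by a limit argument. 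Unless you can genuinely prove the centralizer claim and supply the missing base cases of (2), the proposal does not establish the lemma.
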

\begin{proof}
(1) The assertion follows from \cite{Ya2020} Lemma 1.2.1.\\
(2) The implications (a)$\Rightarrow$(b)$\Rightarrow$(c) are clear. We show the implication (c)$\Rightarrow$(d). If $m=1$, then the assertion follows from (1). Hence we assume that $m\geq 2$. We may also assume that $r\geq 1$. Set  $\mathcal{Q}_{1}:=\{H\overset{\text{op}}\subset \Pi_{U}^{(m,\Sigma)}\mid(\overline{\Pi}^{\Sigma}_{U})^{[m-1]}/(\overline{\Pi}^{\Sigma}_{U})^{[m]}\subset H,\  r(U_{1,H})\geq 3\}$ and let  $H$ be an element of $\mathcal{Q}_{1}$.  Let   $v_{H},v_{H}'\in(\tilde{X}_{H}^{0,\Sigma})^{\cl}=\tilde{X}_{H}^{0,\text{cl}}$ be   the  images of $\tilde{v},\tilde{v}'\in\tilde{E}^{m,\Sigma}\subset(\tilde{X}^{m,\Sigma})^{\cl}$, respectively. Then (c) implies that  the images of $I_{\tilde{v}}\cap H$ and $I_{\tilde{v}'}\cap H$ by the map $H\twoheadrightarrow H^{(1,\text{pro-}\Sigma^{\dag})}$ are commensurable, and  hence  we get $v_{H}=v_{H}'$ by (1).  By  Lemma \ref{gandr}(2)(3), $\mathcal{Q}_{1}$  is  cofinal in the set of  open subgroups of $\Pi_{U}^{(m,\Sigma)}$ containing $(\overline{\Pi}_{U}^{\Sigma})^{[m-1]}/(\overline{\Pi}_{U}^{\Sigma})^{[m]}$.  Hence we obtain  that $(\overline{\Pi}^{\Sigma}_{U})^{[m-1]}/(\overline{\Pi}^{\Sigma}_{U})^{[m]}\xrightarrow{\sim}\plim{H\in\mathcal{Q}_{1}}\overline{H}$ and $(\tilde{X}^{m-1,\Sigma})^{\cl}=\plim{H\in\mathcal{Q}_{1}} (\tilde{X}_{H}^{0,\Sigma})^{\text{cl}}$. Thus,   $\rho_{m}(\tilde{v})=\rho_{m}(\tilde{v}')$ follows. Hence the assertion follows. \\
(3) When ``$m\geq 2$ and $r\geq 2$'', the  assertion follows from \cite{Ya2020}  Lemma 1.2.2. The implications (a)$\Rightarrow$(b)$\Rightarrow$(c) is clear. We show the implication  (c)$\Rightarrow$(a) when  $m\geq 3$ and $r=1$. We set  $\mathcal{Q}_{2}:=\{H\overset{\text{op}}\subset \Pi_{U}^{(m,\Sigma^{\dag})}\mid(\overline{\Pi}_{U}^{\Sigma^{\dag}})^{[m-2]}/(\overline{\Pi}_{U}^{\Sigma^{\dag}})^{[m]}\subset H,  r(U_{1,H})\geq 2\}$ and let $H$ be an element of   $\mathcal{Q}_{2}$.  Let   $\tilde{v}_{H},\tilde{v}_{H}'\in(\tilde{X}_{H}^{2,\Sigma^{\dag}})^{\text{cl}}$ be   the  images of $\tilde{v},\tilde{v}'\in(\tilde{X}^{m,\Sigma^{\dag}})^{\cl}$, respectively.  Then (c) implies that  the image of $I_{\tilde{v}}\cap H$ and $I_{\tilde{v}'}\cap H$ by the map $H\twoheadrightarrow H^{(2)}$ are commensurable. Hence  we get $\tilde{v}_{H}=\tilde{v}_{H}'$ by the case that $m\geq 2$ and $r\geq 2$.  By  Lemma \ref{gandr}(2)(3), $\mathcal{Q}_{2}$  is also cofinal in the set of  open subgroups of $\Pi_{U}^{(m,\Sigma^{\dag})}$ containing $(\overline{\Pi}_{U}^{\Sigma^{\dag}})^{[m-2]}/(\overline{\Pi}_{U}^{\Sigma^{\dag}})^{[m]}$.  Hence we obtain  that $\plim{H\in\mathcal{Q}_{2}}\overline{H}^{[2]}\xleftarrow{\sim}((\overline{\Pi}^{\Sigma^{\dag}}_{U})^{[m-2]}/(\overline{\Pi}^{\Sigma^{\dag}}_{U})^{[m]})^{[2]}=\{1\}$ and $(\tilde{X}^{m,\Sigma^{\dag}})^{\cl}=\plim{H\in\mathcal{Q}_{2}} (\tilde{X}_{H}^{2,\Sigma^{\dag}})^{\text{cl}}$. Thus,   $\tilde{v}=\tilde{v}'$ follows. Hence the first assertion follows. The second  assertion follows from the first assertion. 
\end{proof}

   In \cite{Ya2020} subsection 1.4, we obtained the group-theoretical reconstruction of inertia groups of $\overline{\Pi}^{m-2,\Sigma^{\dag}}_{U}$ from $\Pi^{(m,\Sigma^{\dag})}_{U}$  when  $m\geq 3$ and $r\geq 2$.   In the following lemma,  we  show a stronger (bi-anabelian) result by a method different  from  \cite{Ya2020} subsection 1.4. 

\begin{proposition}\label{inertiareco}
Let $i=1$, $2$.  Let $g_{i}$, $r_{i}\in \mathbb{Z}_{\geq 0}$ be integers.  Assume that  $(g_{1},r_{1})\neq (0,0),(0,1),(0,2)$ and that  $m\geq 2$.   Let $n\in\mathbb{Z}_{\geq1}$ be an integer  satisfying   $m> n$.  Let  $(X_{i},E_{i})$ be  a smooth curve of type $(g_{i},r_{i})$ over   $k$ and set   $U_{i}:=X_{i}-E_{i}$.  Let  $\Phi: \Pi^{(m,\Sigma)}_{U_{1}}\xrightarrow[G_{k}]{\sim}\Pi^{(m,\Sigma)}_{U_{2}}$ be  an isomorphism and    $\overline{\Phi}^{m-n}: \overline{\Pi}^{m-n,\Sigma}_{U_{1}}\xrightarrow{\sim}\overline{\Pi}^{m-n,\Sigma}_{U_{2}}$  the isomorphism induced by $\Phi$.  
\begin{enumerate} [(1)]
\item There exists a bijection $\mathcal{F}_{E}:=\mathcal{F}_{E,\Phi}:\tilde{E}_{1}^{m-n,\Sigma} \xrightarrow{\sim}\tilde{E}_{2}^{m-n,\Sigma} $  such that  the following diagram is commutative.
\begin{equation}\label{actiondiag}
\vcenter{\xymatrix@R=20pt{
\overline{\Pi}^{m-n,\Sigma}_{U_{1}}\ar[d]^{\overline{\Phi}^{m-n}} \ar@{}[r]|{\curvearrowright} &  \tilde{E}_{1}^{m-n,\Sigma} \ar[d]^{\mathcal{F}_{E}} \\
\overline{\Pi}^{m-n,\Sigma}_{U_{2}}\ar@{}[r]|{\curvearrowright} &  \tilde{E}_{2}^{m-n,\Sigma} \\
}}
\end{equation}
In particular,   $\overline{\Phi}^{m-n}$ preserves the  inertia groups.
\item Set $h:=m-n$. Assume that $(h,r_{1})\neq (1,2)$. Let $m'\in\mathbb{Z}_{\geq 0}$ be an integer satisfying $h>m'$. Then the bijection $\mathcal{F}_{E}^{m'}:  \tilde{E}_{1}^{m',\Sigma}\xrightarrow{\sim}\tilde{E}_{2}^{m',\Sigma}$ induced by $\mathcal{F}_{E}$ is a unique bijection satisfying  the following diagram is commutative.
\begin{equation}\label{innercommu}
\vcenter{\xymatrix@R=20pt{
  \tilde{E}_{1}^{m',\Sigma} \ar[d]^{\mathcal{F}_{E}^{m'}}\ar[r] &\text{Iner}(\overline{\Pi}_{U_{1}}^{h,\Sigma})/((\overline{\Pi}_{U_{1}}^{\Sigma})^{[m']}/(\overline{\Pi}_{U_{1}}^{\Sigma})^{[h]})\ar[d] \\
 \tilde{E}_{2}^{m',\Sigma}\ar[r] &\text{Iner}(\overline{\Pi}_{U_{2}}^{h,\Sigma})/((\overline{\Pi}_{U_{2}}^{\Sigma})^{[m']}/(\overline{\Pi}_{U_{2}}^{\Sigma})^{[h]})\\
}}
\end{equation}
  Here, $\tilde{E}_{i}^{m',\Sigma} \to\text{Iner}(\overline{\Pi}_{U_{i}}^{h,\Sigma})/((\overline{\Pi}_{U_{i}}^{\Sigma})^{[m']}/(\overline{\Pi}_{U_{i}}^{\Sigma})^{[h]})$ stands for the map induced by the natural map $\tilde{E}_{i}^{h,\Sigma}\rightarrow \text{Iner}(\overline{\Pi}_{U_{i}}^{h,\Sigma})$ and the right-hand vertical arrow stands for  the map induced by $\overline{\Phi}^{h}$. In particular, $\mathcal{F}_{E}^{m'}$ does not depend on $\mathcal{F}_{E}$.
\end{enumerate}
\end{proposition}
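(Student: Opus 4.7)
The plan is to show that $\overline{\Phi}^{m-n}$ preserves the family of inertia subgroups of $\overline{\Pi}^{m-n,\Sigma}_{U_i}$; the bijection $\mathcal{F}_E$ is then produced by pairing each $\tilde v\in\tilde E^{m-n,\Sigma}_1$ with a point $\tilde w\in\tilde E^{m-n,\Sigma}_2$ for which $I_{\tilde w}=\overline{\Phi}^{m-n}(I_{\tilde v})$, and commutativity of \eqref{actiondiag} is forced by the formula $I_{g\cdot\tilde v}=g I_{\tilde v}g^{-1}$ together with the $\overline{\Phi}^{m-n}$-equivariance of the conjugation action.

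First I would handle the abelianised case $m-n=1$. Since $\Phi$ is $G_k$-equivariant, $\overline{\Phi}^1$ is a $G_k$-equivariant isomorphism and hence preserves the weight filtration; by Lemma \ref{wflemma}, $W_{-2}(\overline{\Pi}^{1,\Sigma}_{U_i})=I_{\overline{\Pi}^{1,\Sigma}_{U_i}}$ is the closed span of all inertia subgroups, so $\overline{\Phi}^1$ maps it isomorphically onto its counterpart. To single out \emph{individual} inertia subgroups, I would apply the open-subgroup trick: by Lemma \ref{gandr}, choose $H\subset\Pi^{(m,\Sigma)}_{U_1}$ open containing $(\overline{\Pi}^\Sigma_{U_1})^{[1]}/(\overline{\Pi}^\Sigma_{U_1})^{[m]}$ with $r(U_{1,\tilde H})$ as large as needed; Lemma \ref{quotientlemma1.1}(2) identifies $\overline H^{(1,\Sigma)}$ with $\overline{\Pi}^{1,\Sigma}_{U_{1,\tilde H}}$, and in such a cover the decomposition groups of the many cusps supply enough Galois-theoretic rigidity to pick out each inertia subgroup as the unique rank-$1$ submodule of $W_{-2}$ stabilised by a prescribed decomposition subgroup. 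Pushing these forward along the canonical inclusion then recovers the individual inertia subgroups of $\overline{\Pi}^{1,\Sigma}_{U_1}$, and the preservation by $\overline{\Phi}^1$ follows.

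To propagate from level $1$ to higher levels $1<m-n\le m-1$, I would invoke Lemma \ref{decisom2}: since $(m,r)\neq(1,1)$, the natural surjection $I_{\tilde v,\overline{\Pi}^\Sigma_U}\twoheadrightarrow I_{\tilde v^\ell,\overline{\Pi}^{\ell,\Sigma}_U}$ is an isomorphism at every intermediate level $\ell\le m$. Consequently a subgroup of $\overline{\Pi}^{m-n,\Sigma}_{U_i}$ is an inertia subgroup exactly when its image in $\overline{\Pi}^{1,\Sigma}_{U_i}$ is one and it is procyclic of the appropriate shape dictated by Lemma \ref{decisom2}. This characterisation is preserved by $\overline{\Phi}^{m-n}$ via its compatibility with $\overline{\Phi}^1$ along the canonical projection $\overline{\Pi}^{m-n,\Sigma}_{U_i}\twoheadrightarrow\overline{\Pi}^{1,\Sigma}_{U_i}$.

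\textbf{Part (2).} Under the hypothesis $(h,r_1)\neq(1,2)$, Lemma \ref{inerrev}(2) applied at each level $\ell$ with $m'<\ell\le h$ yields that commensurable inertia subgroups at level $\ell$ project to the same point at level $\ell-1$. Iterating this down from $\ell=h$ to $\ell=m'+1$, the natural map $\tilde E^{m',\Sigma}_i\to \text{Iner}(\overline{\Pi}^{h,\Sigma}_{U_i})/((\overline{\Pi}^\Sigma_{U_i})^{[m']}/(\overline{\Pi}^\Sigma_{U_i})^{[h]})$ is seen to be injective. Since the right-hand vertical arrow of \eqref{innercommu} is the bijection induced by $\overline{\Phi}^h$, this injectivity pins $\mathcal{F}^{m'}_E$ down as the unique bijection making the diagram commute, and in particular it depends only on $\overline{\Phi}^h$ rather than on the chosen lift $\mathcal{F}_E$. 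The subtlest step throughout is the separation of individual inertia subgroups inside the weight-$(-2)$ piece of the abelianisation when $E_i(k^{\mathrm{sep}})$ forms very few $G_k$-orbits; the open-subgroup trick combined with Lemma \ref{quotientlemma1.1} is essential to reduce the question to a cover with many cusps, where Galois-theoretic separation becomes available.
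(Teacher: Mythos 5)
Your overall strategy inverts the paper's: you try first to show that $\overline{\Phi}^{m-n}$ preserves individual inertia subgroups and then extract $\mathcal{F}_{E}$, whereas the paper constructs $\mathcal{F}_{E}$ directly as an inverse limit, over open normal subgroups $N_{1}'$ of $\overline{\Pi}^{m,\Sigma}_{U_{1}}$ containing $(\overline{\Pi}^{\Sigma}_{U_{1}})^{[m-n]}/(\overline{\Pi}^{\Sigma}_{U_{1}})^{[m]}$, of equivariant bijections $E_{1,N_{1}'}\xrightarrow{\sim}E_{2,\Phi(N_{1}')}$ supplied by \cite{Ta1999} Lemma 2.3; the only inputs are $r(U_{1,H_{1}})=r(U_{2,\Phi(H_{1})})$ (Proposition \ref{severalinvrecoprop}) and \cite{Lu1982} Lemma A, and preservation of inertia then comes for free because inertia subgroups are the stabilizers of the action on $\tilde{E}_{i}^{m-n,\Sigma}$. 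Your route has genuine gaps. First, the crucial level-$1$ step --- singling out the individual inertia subgroups inside $W_{-2}(\overline{\Pi}^{1,\Sigma}_{U_{i}})$ --- is not established: characterizing an inertia line as ``the unique rank-$1$ submodule of $W_{-2}$ stabilised by a prescribed decomposition subgroup'' presupposes that decomposition subgroups are already group-theoretically accessible, but their reconstruction is the content of Section 2 and in fact uses Proposition \ref{inertiareco} as an input; no concrete mechanism is given for separating the inertia lines when $E_{i}(k^{\sep})$ consists of few Galois orbits, which is exactly the hard case you acknowledge. Second, the propagation to level $m-n$ rests on the claim that a subgroup of $\overline{\Pi}^{m-n,\Sigma}_{U_{i}}$ is an inertia subgroup as soon as it is procyclic and its image at level $1$ is one; Lemma \ref{decisom2} only says that the projection restricted to a given inertia subgroup is injective, and your proposed converse characterization is unjustified (a topological generator of a level-$(m-n)$ inertia subgroup can be perturbed by an element of the kernel of the projection to level $1$ without changing its image, producing procyclic subgroups that are not inertia).

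Third, even granting that $\overline{\Phi}^{m-n}$ preserves inertia subgroups, the recipe ``pair $\tilde{v}$ with the $\tilde{w}$ such that $I_{\tilde{w}}=\overline{\Phi}^{m-n}(I_{\tilde{v}})$'' does not define a bijection on points: for general $\Sigma$ (possibly containing $p$) the assignment $\tilde{v}\mapsto I_{\tilde{v}}$ on $\tilde{E}_{i}^{m-n,\Sigma}$ need not be injective --- Lemma \ref{inerrev}(2) only gives that equal or commensurable inertia subgroups force equal images one level down, and the injectivity statements of Lemma \ref{inerrev}(3) are available only for $\Sigma^{\dag}$ and under extra hypotheses on $(m,r)$. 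This is precisely why part (2) of the proposition descends to level $m'<h$ and to the quotient of $\text{Iner}(\overline{\Pi}^{h,\Sigma}_{U_{i}})$ by $(\overline{\Pi}^{\Sigma}_{U_{i}})^{[m']}/(\overline{\Pi}^{\Sigma}_{U_{i}})^{[h]}$ before asserting a bijection. Your sketch of part (2) is essentially the paper's argument (an application of Lemma \ref{inerrev}(2) together with the commutativity of (\ref{actiondiag})), but as written it cannot stand on its own because it takes the unproved part (1) as input.
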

\begin{proof}
(1) Since $m\geq 2$, we obtain that $g_{1}=g_{2}$ and  $r_{1}=r_{2}$ by Proposition \ref{severalinvrecoprop}.  Let $i=1,2$. We may assume that $r_{i}\geq 1$.  We write $\mathcal{Q}_{i}:=\{H\overset{\text{op}}\subset \Pi^{(m,\Sigma)}_{U_{i}} \mid (\overline{\Pi}^{\Sigma}_{U_{i}})^{[m-n]}/(\overline{\Pi}^{\Sigma}_{U_{1}})^{[m]}\subset H\}$ and  $\overline{\mathcal{Q}}_{i}:=\{H'\overset{\text{op}}\subset \overline{\Pi}^{m,\Sigma}_{U_{i}}\mid(\overline{\Pi}^{\Sigma}_{U_{i}})^{[m-n]}/(\overline{\Pi}^{\Sigma}_{U_{1}})^{[m]}\subset H'\}$.  The map $\mathcal{Q}_{i}\rightarrow \overline{\mathcal{Q}}_{i},$  $ H\mapsto \overline{H}$ is surjective  by \cite{Lu1982} Lemma A.  Let   $N_{1}'\overset{\text{op}}\lhd \overline{\Pi}_{U_{1}}^{m,\Sigma}$. Let  $H'_{1}\overset{\text{op}}\subset  \overline{\Pi}_{U_{1}}^{m,\Sigma}$ containing $N'_{1}$.  Let  $H_{1}$ be an element of the inverse image  of $H_{1}'$ by  $\mathcal{Q}_{1}\rightarrow \overline{\mathcal{Q}}_{1}$.  Since $\Phi$ induces an  isomorphism $H^{(1)}_{1}\xrightarrow{\sim}\Phi(H_{1})^{(1)}$, we obtain that $r(U_{1,H_{1}})=r(U_{2,\Phi(H_{1})})$ by Proposition \ref{severalinvrecoprop}.  Thus, by \cite{Ta1999} Lemma 2.3, we obtain that 
\begin{equation*}
S_{N'_{1}}:= \left\{\phi: E_{1,N'_{1}} \xrightarrow{\sim}E_{2,\Phi(N'_{1})}\middle|\vcenter{\xymatrix@R=20pt@C=20pt{
\overline{\Pi}^{m,\Sigma}_{U_{1}}/N'_{1} \ar[d]^{\rotatebox{90}{$\sim$}} \ar@{}[r]|{\curvearrowright} & E_{1,N'_{1}} \ar[d]^{\phi} \\
\overline{\Pi}^{m,\Sigma}_{U_{2}}/\Phi(N'_{1})\ar@{}[r]|{\curvearrowright} &  E_{2,\Phi(N'_{1})}
}}\text{ is commutative.}\right\}\neq \emptyset,
\end{equation*}
where the left-hand vertical arrow is induced by $\Phi$.   We have that   the sets $\{S_{N'_{1}}\}_{N'_{1}\in \overline{\mathcal{Q}}_{1}, N'_{1}\overset{\text{op}}\lhd \overline{\Pi}_{U_{1}}^{m,\Sigma}}$ form  a projective system of non-empty finite sets,  that $\overline{\Pi}^{m-n,\Sigma}_{U_{i}}$$=$$\plim{N'\in \overline{\mathcal{Q}}_{i}, N'\overset{\text{op}}\lhd \overline{\Pi}_{U_{i}}^{m,\Sigma}}\overline{\Pi}^{m,\Sigma}_{U_{i}}/N'$, and that    $\tilde{E}_{i}^{m-n,\Sigma}$$ =$$\plim{N'\in \overline{\mathcal{Q}}_{i}, N'\overset{\text{op}}\lhd \overline{\Pi}_{U_{i}}^{m,\Sigma}}E_{i,N'}$.   Thus, there exists  a  bijection $\mathcal{F}_{E}:\tilde{E}_{1}^{m-n,\Sigma} \xrightarrow{\sim}\tilde{E}_{2}^{m-n,\Sigma}$ such that the diagram (\ref{actiondiag}) is commutative. \par
 The  inertia groups of $\overline{\Pi}_{U_{i}}^{m-n,\Sigma}$ are defined as the stabilizers of the action $\overline{\Pi}_{U_{i}}^{m-n,\Sigma}\curvearrowright\tilde{E}_{i}^{m-n,\Sigma}$. Hence the second assertion  follows from the first assertion.\\
(2) The commutativity of (\ref{innercommu}) follows from the commutativity of (\ref{actiondiag}).  By Lemma \ref{inerrev}(2), we obtain that the natural map $\tilde{E}_{i}^{h,\Sigma}\twoheadrightarrow\text{Iner}(\overline{\Pi}_{U_{i}}^{h,\Sigma})$ induces a bijection $\tilde{E}_{i}^{m',\Sigma}\xrightarrow{\sim}\text{Iner}(\overline{\Pi}_{U_{i}}^{h,\Sigma})/((\overline{\Pi}_{U_{i}}^{\Sigma})^{[m']}/(\overline{\Pi}_{U_{i}}^{\Sigma})^{[h]})$. Hence the first assertion  follows. The second assertion follows from the first assertion.
\end{proof}



\section{The case of  finite fields}\label{sectionfin}

\hspace{\parindent}In this section,  we show  the (weak bi-anabelian  and strong bi-anabelian) $m$-step solvable Grothendieck conjecture for  affine hyperbolic curves over   finite fields (Theorem \ref{finGCweak} and Theorem \ref{finGCstrong}). 
In subsection \ref{subsectionsep}, we show the separatedness property of decomposition groups of $\Pi_{U}^{(m)}$. 
In  subsection \ref{subsectionreco}, we  show the group-theoretical reconstruction of decomposition groups of $\Pi_{U}^{(m-1)}$ from $\Pi_{U}^{(m)}$. 
In subsection \ref{subsectionfinweak} and subsection \ref{subsectionfinstrong}, we show the main results of this section. 
\\\ \\
{\bf Notaion of section \ref{sectionfin} } In this section, we use  the following notation in addition to Notation (see Introduction).  
 \begin{itemize}
\item  For $i=1,$ $2$, let  $k_{i}$ (resp. $k$) be a  finite field of  characteristic  $p_{i}$ (resp. $p$).
\item   For $i=1,$ $2$, let $(X_{i},E_{i})$ (resp. $(X,E)$) be a smooth  curve of type $(g_{i},r_{i})$ (resp. $(g,r)$) over $k_{i}$ (resp. $k$)  and set  $U_{i}:=X_{i}-E_{i}$ (resp. $U:=X-E$).
\end{itemize}


\subsection{The separatedness of decomposition groups of $\Pi_{U}^{(m)}$}\label{subsectionsep}
\hspace{\parindent}In this subsection, we  show the separatedness property of decomposition groups of $\Pi_{U}^{(m)}$. First, we define sections and quasi-sections of the natural projection $\text{pr}:\Pi_{U}^{(m)}\twoheadrightarrow G_{k}$. 

\begin{definition}\label{sectionsofpi}
 Let   $G$ be  an open subgroup of $G_{k}$ and denote by $\iota$ the natural inclusion $G\hookrightarrow G_{k}$.    Let $H$ be an open subgroup of $\Pi_{U}^{(m)}$. We define the set $\Sect(G,H):=\left\{s\in\Hom_{\text{cont}}(G,\Pi^{(m)}_{U})\mid \text{pr}\circ s=\iota\text{,  }s(G)\subset H\right\}.$ We call an element of  $\Sect(G,H)$ a $section$. We say that $s\in\Sect(G,H)$ is $geometric$, if  there exists  $\tilde{v}\in \tilde{X}^{m,\cl}$ such that $s(G) \subset D_{\tilde{v},\Pi_{U}^{(m)}}$. We define  $\GSect(G,H)$ to be  the set of all geometric sections in $\text{Sect}(G,H)$.  Moreover, we define the following sets
\begin{equation*}
\QSect(H):=\ilim{G\overset{\text{op}}\subset G_{k}}\Sect(G,H),\ \ \ \ \QSect^{\text{geom}}(H):=\ilim{G\overset{\text{op}}\subset G_{k}}\GSect(G,H),
\end{equation*}
where $G$ runs over all open subgroups of $G_{k}$. We  call  an element of $ \QSect(H)$ a $quasi$-$section$. For every $s\in \Sect(G,H)$, we write $[s]$ for the image of $s$ by  $\Sect(G,H)\rightarrow \QSect(H)$. 
\end{definition}

\begin{remark}\label{rem2.2}
 Let    $H$ be  an open subgroup of $\Pi_{U}^{(m)}$,  $G$  an open subgroup of $G_{k}$, and $s\in \Sect(G,\Pi^{(m)}_{U})$.  Then $s\mid_{G\cap s^{-1}(H)}$ yields an element $\tilde{s}\in\text{Sect}(G\cap s^{-1}(H),H)$, and $[\tilde{s}]\in\text{QSect}(H)$  is mapped to $[s]$ by  the natural map $\QSect(H)\rightarrow \QSect(\Pi^{(m)}_{U})$. In particular, the natural map $\QSect(H)\rightarrow \QSect(\Pi^{(m)}_{U})$ is bijective (as it is clearly injective).  The natural map  $\QSect^{\text{geom}}(H)\rightarrow \QSect^{\text{geom}}(\Pi^{(m)}_{U})$ is also bijective. 
\end{remark}

We define the map 
\begin{equation}\label{mapj}
j_{U}(G): \Sect(G,\Pi_{U}^{(1)}) \times \Sect(G,\Pi_{U}^{(1)}) \to H^{1}_{\text{cont}}(G,\overline{\Pi}^{1}_{U})
\end{equation}
which  sends a pair $(s_{1},s_{2})$ to the cohomology class of the (continuous) 1-cocycle $G\rightarrow \overline{\Pi}^{1}_{U}$, $\sigma\mapsto s_{1}(\sigma)s_{2}(\sigma)^{-1}$.

\begin{lemma}\label{lem2.1s} Let $G$ be  an open subgroup of $G_{k}$. 
\begin{enumerate}[(1)]
\item Let $A$ be a semi-abelian variety over $k$. Let $a$ be a $\overline{k}^{G}$-rational point of $A$ and $0$ the origin of $A$. Let $s_{a}$, $s_{0}\in \text{Hom}_{\text{cont}}(G,\pi_{1}(A)^{(\text{pro-}p')})$  be sections associated to $a$, $0$, respectively. Then the projective limit 
\[
\plim{ p\nmid n}A(\overline{k}^{G})/nA(\overline{k}^{G})\rightarrow H^{1}_{\text{cont}}(G,T_{p'}(A)).
\]
of  the Kummer  homomorphisms  maps $a$ to the class of the 1-cocycle $G\rightarrow T_{p'}(A)$, $\sigma\mapsto s_{a}(\sigma)s_{0}(\sigma)^{-1}$.

\item  Assume that $g\geq 1$ and $r=0$.  Let $s$, $s'$ be  elements of  $\GSect(G,\Pi^{(1)}_{X})$ and $\tilde{v}$, $\tilde{v}'$ elements of $\tilde{X}^{1,\text{cl}}$ satisfying $s(G)\subset D_{\tilde{v},\Pi_{X}^{(1)}}$ and  $s'(G)\subset D_{\tilde{v}',\Pi_{X}^{(1)}}$, respectively.   Let $v$, $v'\in X^{\text{cl}}$ be the images of $\tilde{v}$, $\tilde{v}'$ by the natural map $\tilde{X}^{1}\twoheadrightarrow X$. Then $v$, $v'$ are $\overline{k}^{G}$-rational and  $j_{X}(G)(s,s')$ coincides with the image of the degree $0$ divisor $v-v'$  by the composite of the  homomorphisms 
\begin{equation}\label{imp}
\xymatrix{
\text{Div}^{0}(X_{ \overline{k}^{G}})\ar[r]& J_{X}(\overline{k}^{G})\ar[r]^-{\sim}  & \plim{n} J_{X}(\overline{k}^{G})/nJ_{X}(\overline{k}^{G})\ar@{^{(}-_>}[r] & H^{1}_{\text{cont}}(G,T(J_{X})).
}
\end{equation}
\item Assume that $g=0$, $r=2$, and $E(\overline{k})=E(k)$.  Let $s$, $s'$ be  elements of  $\GSect(G,\Pi^{(1)}_{U})$ and $\tilde{v}$, $\tilde{v}'$ elements of $\tilde{X}^{1,\text{cl}}$ satisfying $s(G)\subset D_{\tilde{v},\Pi_{U}^{(1)}}$ and  $s'(G)\subset D_{\tilde{v}',\Pi_{U}^{(1)}}$, respectively.   Let $v$, $v'\in X^{\text{cl}}$ be the images of $\tilde{v}$, $\tilde{v}'$ by the natural map $\tilde{X}^{1}\twoheadrightarrow X$. Assume that $v$, $v'\not\in E$.  We fix an isomorphism $U\xrightarrow{\sim} \mathbb{P}_{k}^{1}- \{0,\infty\}=\mathbb{G}_{m,k}$,  and identify $U$ with $\mathbb{G}_{m,k}$. Then $v$, $v'$ are $\overline{k}^{G}$-rational and   $j_{U}(G)(s,s')$ coincides with the image of  $v/v'$  by the composite of the  maps 
\begin{equation}\label{mapj0}
\xymatrix{
 \mathbb{G}_{m,k}(\overline{k}^{G}) \ar[r]^-{\sim}& \plim{p\nmid n} \mathbb{G}_{m,k}(\overline{k}^{G})/\mathbb{G}_{m,k}(\overline{k}^{G})^{\times n}\ar[r]^-{\sim}  & H^{1}_{\text{cont}}(G,T_{p'}(\mathbb{G}_{m,k}))\ =\ H^{1}_{\text{cont}}(G,T(\mathbb{G}_{m,k})).\\
}
\end{equation}
\end{enumerate}
\end{lemma}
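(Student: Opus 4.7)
The strategy is to reduce all three parts to a single Kummer-theoretic identification, namely part~(1); parts~(2) and~(3) then follow via the Abel--Jacobi morphism and the $\mathbb{G}_m$ group law respectively. For part~(1), I would unwind the definitions: for any $\overline{k}^G$-point $x$ of $A$, the section $s_x$ is obtained by fixing a compatible system $(x_n)_{p\nmid n}\in A(\overline{k})$ with $n\cdot x_n=x$---equivalently, a geometric point of the pro-prime-to-$p$ universal cover of $A$ lying above $x$---and declaring $s_x(\sigma)$ to be the unique element of $\pi_1(A)^{(\pro p')}$ that fixes this geometric point and maps to $\sigma$ in $G$. Applied to $x=a$ and $x=0$ with compatible choices, the difference $s_a(\sigma)s_0(\sigma)^{-1}$ lies in $T_{p'}(A)=\ker(\pi_1(A)^{(\pro p')}\twoheadrightarrow G)$ and acts on the chosen system of roots of $0$ by translation by $(\sigma(a_n)-a_n)_n\in\varprojlim A[n]=T_{p'}(A)$; by the definition of the Kummer map as the inverse limit of the boundary maps of the multiplication-by-$n$ short exact sequences, this is precisely the image of $a$.

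For part~(2), use the Abel--Jacobi morphism $\iota_{v'}\colon X\to J_X$, $p\mapsto[p-v']$, sending $v'$ to the origin $0\in J_X$. Since $r=0$ and $X$ is a proper smooth curve, the induced map $\overline{\Pi}^1_X\to T(J_X)$ is the standard canonical isomorphism, yielding $\Pi_X^{(1)}\xrightarrow{\sim}\pi_1(J_X)$ as $G_k$-groups (the geometric part of $\pi_1(J_X)$ is already abelian). Choose $\tilde{v}'$ so that $s'$ corresponds under this isomorphism to the canonical section of $\pi_1(J_X)$ at $0$; then $s$ corresponds to the section at $[v-v']$. The $\overline{k}^G$-rationality of $v,v'$ follows because $\text{pr}(D_{\tilde{v},\Pi_X^{(1)}})$ surjects onto $G_{\kappa(v)}$ and contains the image $G$ of $s$. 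Apply the full Tate-module version of part~(1) (which holds by exactly the same argument) to $A=J_X$ with $a=[v-v']$ to conclude.

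For part~(3), identify $U$ with $\mathbb{G}_{m,k}$; since the tame geometric fundamental group of $\mathbb{G}_{m,\overline{k}}$ is already pro-prime-to-$p$ and abelian, $\overline{\Pi}_U^1=T_{p'}(\mathbb{G}_m)=T(\mathbb{G}_m)$. Translation by $(v')^{-1}$ is a $\overline{k}^G$-automorphism of $\mathbb{G}_m$ sending $v'$ to $1$ and $v$ to $v/v'$; because it is a translation in a commutative group scheme it acts trivially on $T(\mathbb{G}_m)$, so transporting $s$ and $s'$ through it yields $j_U(G)(s,s')=j_U(G)(s_{v/v'},s_1)$, which by part~(1) applied to $A=\mathbb{G}_m$ and $a=v/v'$ is the Kummer class of $v/v'$. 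The $\overline{k}^G$-rationality of $v$ and $v'$ follows as in~(2).

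The main obstacle is the careful bookkeeping in part~(1): one must track base points in the universal cover consistently so that the fundamental-group-theoretic cocycle $s_a(\sigma)s_0(\sigma)^{-1}$ matches the number-theoretic Kummer cocycle $(\sigma(a_n)-a_n)_n$. Once that identification is set up cleanly, parts~(2) and~(3) are routine structural reductions, via the Abel--Jacobi embedding and the $\mathbb{G}_m$ translation automorphism respectively.
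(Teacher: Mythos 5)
Your proof is correct and follows essentially the same route as the paper, which simply delegates the content to citations: part (1) is the paper's reference to \cite{St2010} Proposition 28 (whose proof is exactly the Kummer-cocycle unwinding you describe, and which extends verbatim to the full Tate module), part (2) is \cite{Ta1997} Lemma (2.6) (proved there by the same Abel--Jacobi reduction to Kummer theory for $J_X$), and part (3) is deduced from (1) via the translation argument just as you do. The only point to phrase more carefully is that $\iota_{v'}$ is defined only over $\overline{k}^{G}$, so the comparison of $\Pi_{X}^{(1)}$ with $\pi_1(J_X)$ should be made after restricting to the preimages of $G$ rather than ``as $G_k$-groups'' --- harmless here, since $j_X(G)(s,s')$ only depends on the restrictions to $G$.
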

\begin{proof}
(1) When $A$ is an abelian variety, the assertion is proved in   \cite{St2010} Proposition 28 .   The proof for the case that $A$ is a semi-abelian variety is just the same as the proof for the case that $A$ is an abelian variety. \\
(2)  See \cite{Ta1997} LEMMA (2.6).\\
(3) The assertion follows from  (1).
\end{proof}

\begin{lemma}\label{seplemmacase10}  Assume that $(g,r)\neq (0,0)$, $(0,1)$. Let $\tilde{v},\tilde{v}'$ be elements of $\tilde{X}^{1,\cl}$. Consider the following conditions (a)-(d). 
\begin{enumerate}[(a)]
\item  $\tilde{v}=\tilde{v}'$.
\item $D_{\tilde{v},\Pi_{U}^{(1)}}=D_{\tilde{v}',\Pi_{U}^{(1)}}$.
\item $D_{\tilde{v},\Pi_{U}^{(1)}}$ and $D_{\tilde{v}',\Pi_{U}^{(1)}}$ are commensurable.
\item The image of $D_{\tilde{v},\Pi_{U}^{(1)}}\cap D_{\tilde{v}',\Pi_{U}^{(1)}}$ in $G_{k}$ is open.
\end{enumerate}
If   either ``$(g,r)\neq (0,2)$''  or ``$(g,r)=(0,2)$ and  $\tilde{v},\tilde{v}'\notin \tilde{E}^{1}$'' (resp. either ``$(g,r)\neq (0,2), (0,3)$'',  ``$(g,r)=(0,3)$ and $\tilde{v}\notin \tilde{E}^{1}$'',  or ``$(g,r)=(0,2)$ and $\tilde{v},\tilde{v}'\notin \tilde{E}^{1}$''), then the conditions  (a)-(c)  (resp. (a)-(d)) are equivalent.
\end{lemma}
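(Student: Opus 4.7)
The chain (a)$\Rightarrow$(b)$\Rightarrow$(c) is tautological, and (c)$\Rightarrow$(d) follows because for every $\tilde{w}\in\tilde{X}^{1,\cl}$ the image of $D_{\tilde{w},\Pi_{U}^{(1)}}$ in $G_{k}$ is the open subgroup $G_{\kappa(w)}$, so a common finite-index subgroup of two commensurable decomposition groups projects onto a finite-index, hence open, subgroup of $G_{k}$.

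For the nontrivial direction I would first perform an inertia dichotomy. Under $(g,r)\neq(0,2)$, Lemma \ref{wflemma} identifies $D_{\tilde{w},\Pi_{U}^{(1)}}\cap\overline{\Pi}_{U}^{1}=I_{\tilde{w},\overline{\Pi}_{U}^{1}}$ with an infinite pro-cyclic submodule of $W_{-2}(\overline{\Pi}_{U}^{1})$ of rank one when $\tilde{w}\in\tilde{E}^{1}$, and with the trivial group otherwise. A ``mixed'' configuration is then incompatible with (c): $I_{\tilde{v}}\cap I_{\tilde{v}'}=\{1\}$ cannot have finite index in the infinite group $I_{\tilde{v}}$. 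So under (c) or (d), either both of $\tilde{v},\tilde{v}'$ lie in $\tilde{E}^{1}$, or neither does. In the ``both in $\tilde{E}^{1}$'' case, intersecting commensurable decomposition groups with $\overline{\Pi}_{U}^{1}$ yields commensurability of inertia, so Lemma \ref{inerrev}(1) gives $v=v'\in E$. Writing $\tilde{v}'=g\tilde{v}$ for some $g\in\overline{\Pi}_{U}^{1}$, the condition $D_{\tilde{v}'}=gD_{\tilde{v}}g^{-1}$ being equal to (or commensurable with) $D_{\tilde{v}}$ translates, via the $G_{\kappa(v)}$-action on the abelian $\overline{\Pi}_{U}^{1}$, into the class of $g$ in $\overline{\Pi}_{U}^{1}/I_{\tilde{v}}$ being $G_{\kappa(v)}$-(co)invariant; but by Lemma \ref{wflemma} the Frobenius action on $\overline{\Pi}_{U}^{1}/I_{\tilde{v}}$ has weights only $-1$ and $-2$, so no nonzero (co)invariants exist and $g\in I_{\tilde{v}}$, i.e., $\tilde{v}=\tilde{v}'$.

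In the ``both outside $\tilde{E}^{1}$'' case, each $D_{\tilde{w},\Pi_{U}^{(1)}}$ projects isomorphically onto $G_{\kappa(w)}$ and yields a canonical geometric section $s_{\tilde{w}}\in\GSect(G_{\kappa(w)},\Pi_{U}^{(1)})$. Condition (d) produces an open subgroup $G\subset G_{k}$ on which $s_{\tilde{v}}|_{G}=s_{\tilde{v}'}|_{G}$, so $j_{U}(G)(s_{\tilde{v}},s_{\tilde{v}'})=0$ in $H^{1}_{\text{cont}}(G,\overline{\Pi}_{U}^{1})$. When $g\geq 1$, Lemma \ref{lem2.1s}(2) identifies this vanishing class with the Kummer image of $[v-v']\in J_{X}(\overline{k}^{G})$; since $\overline{k}^{G}$ is a finite field, Lang's theorem yields $H^{1}(G,J_{X})=0$, so the Kummer map $J_{X}(\overline{k}^{G})\hookrightarrow H^{1}_{\text{cont}}(G,T(J_{X}))$ is injective, forcing $[v-v']=0$ and then $v=v'$ by Riemann--Roch. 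The case $(g,r)=(0,2)$ is handled identically via Lemma \ref{lem2.1s}(3) and the analogous injectivity for $\mathbb{G}_{m}(\overline{k}^{G})\hookrightarrow H^{1}_{\text{cont}}(G,T(\mathbb{G}_{m}))$; the remaining genus-zero affine cases with $r\geq 4$ are reduced to positive genus by passing to a characteristic finite étale subcover via Lemma \ref{gandr}(2)(3). The case $(g,r)=(0,3)$ is omitted from the (a)-(d) half precisely because the Kummer identification of Lemma \ref{lem2.1s} does not detect a single difference $v-v'$ on $\mathbb{P}^{1}-\{\text{3 pts}\}$ at this refinement. Finally, $v=v'$ together with $s_{\tilde{v}}|_{G}=s_{\tilde{v}'}|_{G}$ pins down the common lift in $\tilde{X}^{1,\cl}$, so $\tilde{v}=\tilde{v}'$.

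The main obstacle I anticipate is the ramified sub-case: translating equality or commensurability of decomposition groups in the mixed arithmetic-geometric extension $\Pi_{U}^{(1)}$ into equality of lifts above a single $v\in E$ requires fully exploiting the weight structure of Lemma \ref{wflemma} to rule out nontrivial Galois (co)invariants, and the various low-genus exceptional cases in the statement (particularly $(0,2)$ and $(0,3)$) correspond precisely to the places where either the inertia dichotomy or the Kummer identification must be rerouted.
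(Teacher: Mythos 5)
Your easy implications, your ``both points outside $\tilde{E}^{1}$'' argument for $g\geq 1$ (unique common section, $j_{X}(G)(s,s)=0$, injectivity of the Kummer map over a finite field, Riemann--Roch), and your final weight/coinvariance step are essentially the paper's Steps 2 and 1. The genuine problem is your cusp branch: you deduce $v=v'$ from commensurability of inertia via Lemma \ref{inerrev}(1), which assumes $r\neq 2$. For $(g,r)=(g,2)$ with $g\geq 1$ and both $\tilde{v},\tilde{v}'\in\tilde{E}^{1}$ --- a case the lemma does cover --- the two inertia generators satisfy $\sigma_{1}+\sigma_{2}=0$ in $\overline{\Pi}_{U}^{1}$, so the inertia subgroups over the two distinct cusps already coincide in the abelianization; commensurability of inertia cannot detect the underlying point and this step fails. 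The paper avoids any cusp/non-cusp dichotomy: for $g\geq 1$ it pushes $\tilde{v},\tilde{v}'$ and the hypothesis along $\Pi_{U}^{(1)}\twoheadrightarrow\Pi_{X}^{(1)}$, where every decomposition group injects into $G_{k}$, obtains $v_{G}=v_{G}'$ from Lemma \ref{lem2.1s}(2), and only then runs the weight argument (its auxiliary condition (d$'$)). Relatedly, your dichotomy ``both in $\tilde{E}^{1}$ or both out'' is justified only under (c); for the (a)--(d) equivalence a mixed configuration under (d) is treated by neither of your branches (it is excluded, but only a posteriori by the uniform Kummer argument, since a cusp and a non-cusp have distinct images in $X$).

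The genus-zero handling also does not go through as proposed. Reducing $g=0$, $r\geq 4$ (and the case ``$(g,r)=(0,3)$, $\tilde{v}\notin\tilde{E}^{1}$'' of the (a)--(d) half, which your proposal leaves out) to positive genus ``by passing to a characteristic finite \'{e}tale subcover'' is exactly the move that is unavailable at level $m=1$: commensurability of decomposition groups inside $\Pi_{U}^{(1)}$ gives no control of decomposition groups in $\Pi_{U_{H}}^{(1)}$ for a cover $U_{H}$, because the geometric part of $\Pi_{U}^{(1)}$ only sees a small quotient of $\overline{\Pi}_{U_{H}}^{1}$, so the Kummer class of the divisor on the cover would only be visible in $H^{1}$ of a quotient of $T(J_{X_{H}})$ in which the Abel--Jacobi image need not inject. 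This is precisely the difficulty (P), and it is why Proposition \ref{sepprop} can pass to covers only because the present lemma is already available as the base case. The paper instead stays on the base curve: it enlarges $k$ so that $E(k)=E(\overline{k})$, chooses $S\subset E$ with $|S|=2$ avoiding the images of $\tilde{v},\tilde{v}'$, identifies $X-S$ with $\mathbb{G}_{m,k}$, and applies Lemma \ref{lem2.1s}(3). Finally, your closing remark about $(0,3)$ understates the situation: by Remark \ref{seprem}, (d) genuinely fails to imply (a) for $(0,3)$ with both points in $\tilde{E}^{1}$, so the exclusion is forced by a counterexample, not by a limitation of the Kummer method.
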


\begin{proof}
The implications (a)$\Rightarrow$(b)$\Rightarrow$(c)$\Rightarrow$(d) are clear. We consider the following condition.
\begin{itemize}
\item [(d$'$)]The image of $D_{\tilde{v},\Pi_{U}^{(1)}}\cap D_{\tilde{v}',\Pi_{U}^{(1)}}$ in $G_{k}$ is open and the images of $\tilde{v}$, $\tilde{v}'$ by $\tilde{X}^{1}\twoheadrightarrow \tilde{X}^{0}(=X_{\overline{k}})$ are the same.
\end{itemize}
\noindent (Step $1$) In this step, we show that (d$'$)$\Rightarrow$(a).  Let  $G$ be the image of $D_{\tilde{v}}\cap D_{\tilde{v}'}$ in $G_{k}$, which is open in $G_{k}$ by the  assumption (d$'$). Since $G$ acts on  $I_{\tilde{v}'}\subset \overline{\Pi}_{U}^{1}$, we get the action $G\curvearrowright \overline{\Pi}_{U}^{1}/I_{\tilde{v}'}$.   The action  $G\curvearrowright \overline{\Pi}_{U}^{1}/I_{\tilde{v}'}$ has weights  $-1$ and $-2$ by Lemma \ref{wflemma}. Hence we obtain  that   $(\overline{\Pi}_{U}^{1}/I_{\tilde{v}'})^{G}=\left\{1\right\}$.  By the condition (d$'$), there exists $\gamma\in\overline{\Pi}_{U}^{1}$ such that $\tilde{v}'=\gamma\tilde{v}$.  Let $t\in G$ be an element and  $\tilde{t}\in D_{\tilde{v}}\cap D_{\tilde{v}'}$  an inverse image of $t$.   Since $\gamma\tilde{t}\gamma^{-1}\in \gamma D_{\tilde{v}}\gamma^{-1}=D_{\tilde{v}'}$ and $\tilde{t}^{-1}\gamma\tilde{t}\in \overline{\Pi}^{1}_{U}$, we obtain that $\tilde{t}^{-1}\gamma\tilde{t}\gamma^{-1}\in D_{\tilde{v}'}\cap \overline{\Pi}^{1}_{U}=I_{\tilde{v}'}$. Hence we get   $\tilde{t}^{-1}\gamma\tilde{t}\equiv\gamma$ (mod $I_{\tilde{v}'})$ for any $t\in G$. Thus, $\gamma$ (mod $I_{\tilde{v}'}$) $\in (\overline{\Pi}_{U}^{1}/I_{\tilde{v}'})^{G}=\{1\}$ and hence $\gamma\in I_{\tilde{v}'}$. Therefore,  $\tilde{v}=\gamma^{-1} \tilde{v}'=\tilde{v}'$. 
\begin{spacing}{0.7}
\ \\
\end{spacing}\noindent
(Step $2$)  In this step,  we  show that (d)$\Rightarrow$(d$'$) when either ``$(g,r)\neq (0,2), (0,3)$'',  ``$(g,r)=(0,3)$ and $\tilde{v}\notin \tilde{E}^{1}$'',  or ``$(g,r)=(0,2)$ and $\tilde{v},\tilde{v}'\notin \tilde{E}^{1}$''. Let $G$ be an open subgroup of the image of $D_{\tilde{v}}\cap D_{\tilde{v}'}$ in $G_{k}$ and $v_{G},v_{G}'\in X_{\overline{k}^{G}}$ the  images of $\tilde{v},\tilde{v}'$.  (By definition,  $v_{G}$ and $v_{G}'$ are $\overline{k}^{G}$-rational points of $X_{\overline{k}^{G}}$.) \par
First, assume that $g\geq 1$.    We have    $\Pi_{U}^{(1)}\twoheadrightarrow \Pi^{(1)}_{X}\twoheadrightarrow G_{k}$. Let   $\tilde{v}_{X}$,  $\tilde{v}'_{X}$ be the images of  $\tilde{v}$, $\tilde{v}'$ by $\tilde{X}^{1}\twoheadrightarrow \tilde{X}^{X,1}$, respectively. Then   the condition (d) for $U$, $\tilde{v}$, $\tilde{v}'$  implies the condition (d) for $X$, $\tilde{v}_{X}$, $\tilde{v}_{X}$.  Moreover, we have  natural surjective morphisms $\tilde{X}^{1}\twoheadrightarrow \tilde{X}^{X,1}\twoheadrightarrow \tilde{X}^{0}=\tilde{X}^{X,0}$. Thus, it is sufficient  to consider the case that $r=0$, i.e., $U=X$.  Let $s\in \Sect(G,\Pi_{X}^{(1)})$ be  the unique section  which satisfies $s(G)\subset  D_{\tilde{v}}\cap D_{\tilde{v}'}$. By Lemma \ref{lem2.1s}(2), the image of the degree $0$ divisor $v_{G}-v_{G}'$ on  $X_{\overline{k}^{G}}$ by (\ref{imp}) coincides with  $j_{X}(G)(s,s)=0$, hence we  obtain that $v_{G}=v_{G}'$.  The set of  all open subgroups of the image of $D_{\tilde{v}}\cap D_{\tilde{v}'}$ in $G_{k}$ is cofinal in the set of  all open subgroups of $G_{k}$, hence the images of $\tilde{v}$, $\tilde{v}'$ in $\tilde{X}^{0}$ are the same. Thus,  (d)$\Rightarrow$(d$'$) follows.\par 
Next, assume that either   ``$g=0$ and $r\geq 4$'',  ``$(g,r)=(0,3)$ and $\tilde{v}\not\in \tilde{E}^{1}$'', or ``$(g,r)=(0,2)$ and $\tilde{v},\tilde{v}'\notin \tilde{E}^{1}$''.  By taking an enough large $k$ if necessary, we may assume that $E(k)=E(\overline{k})$. By these assumptions, there exists  a subset $S\subset E$ with $|S|=2$  which does not contain the images of $\tilde{v},\tilde{v}'$. We fix an isomorphism $X-S\cong \mathbb{P}^{1}_{k}-\{0,\infty\}=\mathbb{G}_{m,k}$ and  identify $X-S$ with $\mathbb{G}_{m,k}$. Let  $s\in \Sect(G,\Pi_{X-S}^{(1)})$ be  the unique section  which satisfies $s(G)\subset  D_{\tilde{v}}\cap D_{\tilde{v}'}$. By Lemma \ref{lem2.1s}(3), the image of  $v_{G}/v_{G}'$ by (\ref{mapj0})  coincides with $j_{U}(G)(s,s)=0$, hence we obtain that $v_{G}=v_{G}'$. The set of  all open subgroups of the image of $D_{\tilde{v}}\cap D_{\tilde{v}'}$ in $G_{k}$ is cofinal in the set of  all open subgroups of $G_{k}$, hence the images of $\tilde{v}$, $\tilde{v}'$ in $\tilde{X}^{0}$ are the same.  Thus,    (d)$\Rightarrow(\text{d}')$ follows.
\begin{spacing}{0.7}
\ \\
\end{spacing}\noindent
 (Step $3$) Finally, we show that (c)$\Rightarrow$(d$'$) when either ``$(g,r)\neq (0,2)$''  or ``$(g,r)=(0,2)$ and  $\tilde{v},$ $\tilde{v}'\notin \tilde{E}^{1}$''. By  (Step $2$), we may assume that  $(g,r)=(0,3)$ and $\tilde{v},\ \tilde{v}'\in \tilde{E}^{1}$. Then  (c) implies that  $I_{\tilde{v},\Pi_{U}^{(1)}}$ and $I_{\tilde{v}',\Pi_{U}^{(1)}}$ are commensurable. Since $(g,r)=(0,3)$,   the images of $\tilde{v}$, $\tilde{v}'$ by $\tilde{E}^{1}\twoheadrightarrow \tilde{E}^{0}(=E_{\overline{k}})$ are the same by Lemma \ref{inerrev}(2) (c)$\Rightarrow$(d). Hence (c)$\Rightarrow$(d$'$) follows.
\end{proof}

\begin{remark}\label{seprem}
In the case that    $(g,r)=(0,3)$ and  $\tilde{v},\ \tilde{v}'\in \tilde{E}^{1}$,  the implication (d)$\Rightarrow$(d$'$) in the proof of  Lemma \ref{seplemmacase10} is false.  Indeed, for simplicity, consider the case that  $E\subset X(k)$ and set  $E=\{v_{1},v_{2},v_{3}\}$. Let $\tilde{v}_{i}\in \tilde{E}^{1}$ be a point above $v_{i}$ for each $i=1,2,3$ and  $\rho:\Pi_{U}^{(1)}\twoheadrightarrow \Pi_{U}^{(1)}/I_{\tilde{v}_{1}}$ the natural surjection. (Observe that $I_{\tilde{v}_{1}}$ is normal in $\Pi_{U}^{(1)}$, since $v_{1}\in E(k)$.)   We have that  $\rho(D_{\tilde{v}_1})\subset \Pi_{U}^{(1)}/I_{\tilde{v}_{1}}=\rho(D_{\tilde{v}_2})$, since the tame fandamental group for a hyperbolic curve of type $(0,2)$ coincides with the decomposition group  of a cusp. This  implies  $ D_{\tilde{v}_1}\subset D_{\tilde{v}_2}\cdot I_{\tilde{v}_{1}}$. Let $t$ be an element of $ G_{k}$ and  $\tilde{t}\in D_{\tilde{v}_{1}}$  an inverse image of $t$. Then there exist $s\in D_{\tilde{v}_2}$ and $\gamma\in I_{\tilde{v}_{1}}$ such that $\tilde{t}=s\gamma$. Hence $s=\tilde{t}\gamma^{-1}\in D_{\tilde{v}_{1}}\cap D_{\tilde{v}_2}$ and $s$ maps to $t$ by $\Pi_{U}^{(1)}\twoheadrightarrow G_{k}$. Thus, the image of $D_{\tilde{v}_{1}}\cap D_{\tilde{v}_2}$ in $G_{k}$ is the whole of $G_{k}$.\par
\end{remark}

\begin{proposition}\label{sepprop}  Assume that  $(g,r)\neq(0,0)$, $(0,1)$.  Let $\tilde{v},\tilde{v}'$ be elements of $\tilde{X}^{m,\cl}$.    Consider the following conditions (a)-(d). 
\begin{enumerate}[(a)]
\item  $\tilde{v}=\tilde{v}'$.
\item $D_{\tilde{v},\Pi_{U}^{(m)}}=D_{\tilde{v}',\Pi_{U}^{(m)}}$.
\item $D_{\tilde{v},\Pi_{U}^{(m)}}$ and $D_{\tilde{v}',\Pi_{U}^{(m)}}$ are commensurable.
\item The image of $D_{\tilde{v},\Pi_{U}^{(m)}}\cap D_{\tilde{v}',\Pi_{U}^{(m)}}$ in $G_{k}$ is open.
\end{enumerate}
If  either ``$(g,r)\neq (0,2)$''  or ``$(g,r)= (0,2)$ and $\tilde{v},\tilde{v}'\notin \tilde{E}^{m}$'' (resp. either ``$(g,r)\neq (0,2)$ and $(m,g,r)\neq (1,0,3)$'',  ``$(m,g,r)=(1,0,3)$ and $\tilde{v}\notin \tilde{E}^{1}$'',  or ``$(g,r)=(0,2)$ and $\tilde{v},\tilde{v}'\notin \tilde{E}^{m}$''), then  the conditions  (a)-(c)  (resp. (a)-(d)) are equivalent.
\end{proposition}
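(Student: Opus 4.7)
The implications (a)$\Rightarrow$(b)$\Rightarrow$(c)$\Rightarrow$(d) are immediate, so the content is in establishing the reverse implications under the stated hypotheses. My plan is to proceed by induction on $m$, with base case $m=1$ provided by Lemma~\ref{seplemmacase10}.

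For the inductive step ($m\ge 2$), I would first descend from level $m$ to level $m-1$. Since the natural surjection $\Pi_{U}^{(m)} \twoheadrightarrow \Pi_{U}^{(m-1)}$ carries $D_{\tilde{v},\Pi_{U}^{(m)}}$ onto $D_{\tilde{v}^{m-1},\Pi_{U}^{(m-1)}}$ (where $\tilde{v}^{m-1}$ denotes the image of $\tilde{v}$ in $\tilde{X}^{m-1,\cl}$), both hypotheses (c) and (d) at level $m$ descend to the same hypotheses at level $m-1$, and the inductive hypothesis then yields $\tilde{v}^{m-1}=\tilde{v}'^{m-1}$. Set $N:=\overline{\Pi}_{U}^{[m-1]}/\overline{\Pi}_{U}^{[m]}$, which is an abelian normal subgroup of $\Pi_{U}^{(m)}$. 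We may write $\tilde{v}'=\gamma\tilde{v}$ for some $\gamma\in N$, and $\tilde{v}=\tilde{v}'$ is equivalent to $\gamma\in I_{\tilde{v},\overline{\Pi}_{U}^{m}}\cap N$.

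To show $\gamma\in I_{\tilde{v},\overline{\Pi}_{U}^{m}}\cap N$, I would mimic Step~1 in the proof of Lemma~\ref{seplemmacase10}. For any $\tilde{t}\in D_{\tilde{v}}\cap D_{\tilde{v}'}=D_{\tilde{v}}\cap \gamma D_{\tilde{v}}\gamma^{-1}$, the commutator $\tilde{t}^{-1}\gamma^{-1}\tilde{t}\gamma$ lies in $D_{\tilde{v}}\cap\overline{\Pi}_{U}^{m}=I_{\tilde{v},\overline{\Pi}_{U}^{m}}$ and also in the normal subgroup $N$, hence in $I_{\tilde{v},\overline{\Pi}_{U}^{m}}\cap N$. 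Consequently the image of $\gamma$ in $N/(I_{\tilde{v},\overline{\Pi}_{U}^{m}}\cap N)$ is fixed by every such $\tilde{t}$. Since $N$ is abelian, the $\Pi_{U}^{(m)}$-conjugation on $N$ factors through $\Pi_{U}^{(m-1)}$, so this fixedness is a condition on the image $\tilde{G}$ of $D_{\tilde{v}}\cap D_{\tilde{v}'}$ in $\Pi_{U}^{(m-1)}$; under (d), $\tilde{G}$ surjects onto an open subgroup of $G_{k}$ and thus contains a lift of a positive power of $\mathrm{Fr}_{k}$. The weight computation of Lemma~\ref{wflemma} (weights $-1,-2$ on $\overline{\Pi}_{U}^{1,\pro\ell}$ for $\ell\ne p$, iterating to weights $\le-2$ on successive derived quotients) then forces the Frobenius eigenvalues on $(N/(I\cap N))^{\pro\ell}\otimes\mathbb{Q}_{\ell}$ to all have absolute value $\ne 1$, whence the fixed class vanishes and $\gamma\in I_{\tilde{v},\overline{\Pi}_{U}^{m}}\cap N$.

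The main obstacle is the exceptional subcase of the strong form (d)$\Rightarrow$(a) when $(g,r)=(0,3)$, $m=2$, and $\tilde{v},\tilde{v}'\in\tilde{E}^{m}$: Remark~\ref{seprem} shows that the naive descent to level $1$ breaks down, since for two distinct rational cusps the intersection of decomposition groups already surjects onto $G_{k}$ at level $1$. For this subcase I would exploit that cuspidal inertia injects into $\overline{\Pi}_{U}^{1}$ via Lemma~\ref{decisom2} (valid since $r=3\ge 2$), so that $D_{\tilde{v},\Pi_{U}^{(2)}}\hookrightarrow\Pi_{U}^{(1)}$, and then show that the quasi-sections realized by $D_{\tilde{v}}/I_{\tilde{v}}$ and $D_{\tilde{v}'}/I_{\tilde{v}'}$ for distinct cusps $v\ne v'$ differ by a non-trivial Kummer-type class in $H^{1}(G_{k},N)$ attached to the divisor $v-v'$---a higher metabelian analogue of Lemma~\ref{lem2.1s}. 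Non-vanishing of this class would force $D_{\tilde{v}}\cap D_{\tilde{v}'}$ to have non-open image in $G_{k}$, contradicting~(d). I expect this metabelian Kummer computation, where one works directly with the iterated quotient $\overline{\Pi}_{U}^{[1]}/\overline{\Pi}_{U}^{[2]}$ rather than at the Jacobian level, to be the technically most delicate part of the proof.
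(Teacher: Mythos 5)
Your plan breaks down at exactly the point you flag as "the main obstacle," and the sketch you offer there is not a proof. The case $(m,g,r)=(2,0,3)$ with $\tilde{v},\tilde{v}'\in\tilde{E}^{2}$ is included in the statement of the (d)-version, the level-$1$ input fails there (Remark \ref{seprem}), and your proposed remedy --- a non-vanishing "metabelian Kummer class" in $H^{1}(G_{k},\overline{\Pi}_{U}^{[1]}/\overline{\Pi}_{U}^{[2]})$ attached to $v-v'$ --- is only announced, not constructed; note that for $g=0$ there is no Jacobian-level class to bootstrap from, so the non-vanishing you "expect" is precisely the content that would have to be established, and no such theory is developed in the paper or in your argument. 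A second, related gap is the weight claim you use in the inductive step: Lemma \ref{wflemma} controls only $\overline{\Pi}_{U}^{1,\Sigma}$, and there is no statement in the paper (nor an easy one) that Frobenius weights "iterate to $\leq-2$" on the successive derived quotients; $N=\overline{\Pi}_{U}^{[m-1]}/\overline{\Pi}_{U}^{[m]}$ is an infinitely generated pro-abelian module, and the vanishing of invariants of $N/(I_{\tilde{v}}\cap N)$ has to be deduced by expressing $N$ through abelianizations $\overline{H}^{1}$ of open subgroups $H\supseteq N$ and invoking the level-$1$ weight statement for the coverings $U_{H}$ --- which is no longer your direct argument but essentially the paper's.

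For comparison, the paper avoids both problems with a single device and no induction on $m$: it takes the family $\mathcal{Q}$ of open subgroups $H\subset\Pi_{U}^{(m)}$ containing $\overline{\Pi}_{U}^{[m-1]}/\overline{\Pi}_{U}^{[m]}$ whose coverings satisfy $(g(U_{H}),r(U_{H}))\neq(0,2),(0,3)$ (possible by Lemma \ref{gandr}), applies Lemma \ref{seplemmacase10} to the quotients $H^{(1)}$ --- where the (a)--(d) equivalence holds with no exceptions, because the coverings have enough cusps --- and then recovers $\tilde{v}=\tilde{v}'$ from the cofinality of $\mathcal{Q}$ and the identification $\tilde{X}^{m,\cl}=\plim{H\in\mathcal{Q}}\tilde{X}_{H}^{1,\cl}$. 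This passage to coverings simultaneously supplies the weight input you are missing and eliminates the $(0,3)$ cusp pathology, so if you want to salvage your write-up the efficient fix is to replace the direct analysis of the $N$-torsor and the hypothetical metabelian Kummer computation by this covering-and-limit argument.
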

\begin{proof}
If either $m=1$  or  $(g,r)=(0,2), \ (1,0)$, then the assertion follows from Lemma \ref{seplemmacase10}. Thus, we may assume that $\overline{\Pi}^{m}_{U}$ is not abelian (see (\ref{trivialcong})). (a)$\Rightarrow$(b)$\Rightarrow$(c)$\Rightarrow$(d) are clear. First, we show that (d)$\Rightarrow$ (a) when either ``$(g,r)\neq (0,2)$ and $(m,g,r)\neq (1,0,3)$'',  ``$(m,g,r)=(1,0,3)$ and $\tilde{v}\notin \tilde{E}^{1}$'',  or ``$(g,r)=(0,2)$ and $\tilde{v},\tilde{v}'\notin \tilde{E}^{m}$''.  
We set  $\mathcal{Q}_{1}:=\{H\overset{\text{op}}\subset \Pi_{U}^{(m)}\mid\overline{\Pi}_{U}^{[m-1]}/\overline{\Pi}_{U}^{[m]}\subset H,  (g(U_{1,H}),r(U_{1,H}))\neq (0,2), (0,3)\}$.  Fix an element $H\in\mathcal{Q}_{1}$.  Let   $\tilde{v}_{H},\tilde{v}_{H}'\in\tilde{X}_{H}^{1,\text{cl}}$ be   the  images of $\tilde{v},\tilde{v}'\in\tilde{X}^{m,\cl}$, respectively. (d) implies that  the image of $(D_{\tilde{v}}\cap H)\cap ( D_{\tilde{v}'}\cap H)$ by $\pr$ is open in $\pr(H)$. Hence  the image of $D_{\tilde{v}_{H}}\cap D_{\tilde{v}_{H}'}$ by $H^{(1)}\twoheadrightarrow \pr(H)$ is also open  in $\pr(H)$. Thus, we get $\tilde{v}_{H}=\tilde{v}_{H}'$ by Lemma \ref{seplemmacase10}.  By  Lemma \ref{gandr}(2), $\mathcal{Q}_{1}$  is cofinal in the set of  open subgroups of $\Pi_{U}^{(m)}$ containing $\overline{\Pi}_{U}^{[m-1]}/\overline{\Pi}_{U}^{[m]}$.  Hence we obtain  that $\overline{\Pi}_{U}^{[m-1]}/\overline{\Pi}_{U}^{[m]}\xrightarrow{\sim}\plim{H\in \mathcal{Q}_{1}}\overline{H}^{1}$ and $\tilde{X}^{m,\cl}=\plim{H\in\mathcal{Q}_{1}} \tilde{X}_{H}^{1,\cl}$. Thus,  $\tilde{v}=\tilde{v}'$ follows. Next, we show that  (c)$\Rightarrow$ (a) when either ``$(g,r)\neq (0,2)$''  or ``$(g,r)= (0,2)$ and $\tilde{v},\tilde{v}'\notin \tilde{E}^{m}$''. By the implications (c)$\Rightarrow$(d)$\Rightarrow$(a) when $(g,r)= (0,2)$ and $\tilde{v},\tilde{v}'\notin \tilde{E}^{m}$, we may  assume that $(g,r)\neq (0,2)$. We set  $\mathcal{Q}_{2}:=\{H\overset{\text{op}}\subset \Pi_{U}^{(m)}\mid\overline{\Pi}_{U}^{[m-1]}/\overline{\Pi}_{U}^{[m]}\subset H \}$and let $H$ be an element of $\mathcal{Q}_{2}$.  Let   $\tilde{v}_{H},\tilde{v}_{H}'\in\tilde{X}_{H}^{1,\text{cl}}$ be   the  images of $\tilde{v},\tilde{v}'\in\tilde{X}^{m,\cl}$, respectively. Then (c) implies that  the images of $D_{\tilde{v}}\cap H$ and $D_{\tilde{v}'}\cap H$ by the map $H\twoheadrightarrow H^{(1)}$ are commensurable. Thus,  we get $\tilde{v}_{H}=\tilde{v}_{H}'$ by Lemma \ref{seplemmacase10}. Since $\tilde{X}^{m,\text{cl}}=\plim{H\in\mathcal{Q}_{2}}\tilde{X}_{H}^{1,\text{cl}}$, we obtain that     $\tilde{v}=\tilde{v}'$. Therefore, the assertion follows.
\end{proof}

\begin{corollary}\label{2.6c}
Assume that $(g,r)\neq(0,0)$, $(0,1)$, $(0,2)$ and that $(m,g,r)\neq  (1,0,3)$.   Let $G$ be an open subgroup of  $G_{k}$. Then there exists  a unique  map 
\[
\phi(G,\Pi^{(m)}_{U}): \Sect^{\text{geom}}(G,\Pi^{(m)}_{U})\rightarrow \tilde{X}^{m,\text{cl}}
\]
such that  $s(G)\subset D_{\phi(G,\Pi^{(m)}_{U})(s)}$ for any $s\in  \Sect^{\text{geom}}(G,\Pi_{U}^{(m)})$.  Moreover, $\phi(G,\Pi^{(m)}_{U})$ is $\Pi_{U}^{(m)}$-equivariant.
\end{corollary}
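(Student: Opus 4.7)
The plan is to deduce this corollary directly from the separatedness result Proposition \ref{sepprop}. The existence of some $\tilde{v}$ with $s(G)\subset D_{\tilde{v},\Pi_{U}^{(m)}}$ is built into the definition of $\Sect^{\text{geom}}(G,\Pi^{(m)}_{U})$, so the content of the claim is the \emph{uniqueness} of $\tilde{v}$, from which well-definedness of $\phi(G,\Pi_U^{(m)})$ follows. Given this, $\phi$ is characterized by the defining property $s(G)\subset D_{\phi(s)}$.

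For uniqueness, I would suppose that both $\tilde{v},\tilde{v}'\in \tilde{X}^{m,\text{cl}}$ satisfy $s(G)\subset D_{\tilde{v}}\cap D_{\tilde{v}'}$. Because $\pr\circ s$ is the inclusion of the open subgroup $G\hookrightarrow G_{k}$, the image of $D_{\tilde{v},\Pi_{U}^{(m)}}\cap D_{\tilde{v}',\Pi_{U}^{(m)}}$ in $G_{k}$ contains $G$, and is therefore open in $G_{k}$. This is exactly condition (d) of Proposition \ref{sepprop}. The hypotheses $(g,r)\neq (0,0),(0,1),(0,2)$ and $(m,g,r)\neq (1,0,3)$ place us outside \emph{both} exceptional cases allowed in that proposition (no restriction of the form $\tilde{v}\notin \tilde{E}^{m}$ is needed), so the implication (d)$\Rightarrow$(a) of Proposition \ref{sepprop} applies unconditionally and yields $\tilde{v}=\tilde{v}'$. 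This proves uniqueness and so produces the desired map $\phi(G,\Pi_U^{(m)})$.

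For the $\Pi_U^{(m)}$-equivariance, recall that the covering action of $\Pi_U^{(m)}$ on $\tilde{X}^{m,\text{cl}}$ satisfies $gD_{\tilde{v},\Pi_U^{(m)}}g^{-1}=D_{g\tilde{v},\Pi_U^{(m)}}$. For $g\in\Pi_U^{(m)}$, the section $s$ is transported by conjugation to $g\cdot s := g\,s(\cdot)\,g^{-1}$, which is a section over $\pr(g)G\pr(g)^{-1}$ (and hence represents an element of $\QSect^{\text{geom}}(\Pi_U^{(m)})$; cf.\ Remark \ref{rem2.2}). From $s(G)\subset D_{\tilde{v}}$ we immediately get $(g\cdot s)(\pr(g)G\pr(g)^{-1})\subset gD_{\tilde{v}}g^{-1}=D_{g\tilde{v}}$. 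Applying the same characterization to $g\cdot s$ (on its natural domain), uniqueness forces $\phi(g\cdot s)=g\tilde{v}=g\cdot \phi(s)$.

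I expect no genuine obstacle: once Proposition \ref{sepprop} is available, the entire argument is a formal consequence of the definitions. The only point requiring care is the bookkeeping surrounding the equivariance statement, since strict conjugation does not preserve $\Sect(G,\Pi_U^{(m)})$ for a fixed $G$ but only the colimit $\QSect^{\text{geom}}(\Pi_U^{(m)})$; interpreting equivariance at that level (equivalently, using Remark \ref{rem2.2}) makes the statement literal and the verification immediate.
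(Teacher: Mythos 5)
Your proposal is correct and follows essentially the same route as the paper: existence is immediate from the definition of geometric sections, uniqueness comes from Proposition \ref{sepprop} via the implication (d)$\Rightarrow$(a) (the hypotheses $(g,r)\neq(0,0),(0,1),(0,2)$ and $(m,g,r)\neq(1,0,3)$ indeed put you in the case where (a)--(d) are equivalent with no condition on $\tilde{v},\tilde{v}'$), and equivariance follows from the uniqueness. Your closing caveat is unnecessary, though harmless: in this section $k$ is a finite field, so $G_{k}\cong\hat{\mathbb{Z}}$ is abelian and $\pr(g)G\pr(g)^{-1}=G$, hence conjugation by $g\in\Pi_{U}^{(m)}$ does preserve $\Sect^{\text{geom}}(G,\Pi_{U}^{(m)})$ for the fixed $G$, and the equivariance statement is literal without passing to $\QSect^{\text{geom}}(\Pi_{U}^{(m)})$.
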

\begin{proof}
For any $s\in  \GSect(G,\Pi_{U}^{(m)})$, there exists $\tilde{v}\in \tilde{X}^{m,\text{cl}}$ such that  $s(G)\subset D_{\tilde{v}}$  by definition. Hence  the existence part  follows. Further,  an element $\tilde{v}'\in\tilde{X}^{m,\text{cl}}$ satisfying  $s(G)\subset D_{\tilde{v}'}$ is unique  by Proposition \ref{sepprop} (a)$\Leftrightarrow$(d). Hence the uniqueness part follows. The map $\phi(G,\Pi^{(m)}_{U})$ is  $\Pi_{U}^{(m)}$-equivariant by the  uniqueness. Therefore, the assertion follows. 
\end{proof}

 Taking the inductive limit running over all open subgroups of $G_{k}$, we obtain the morphism  $\phi(\Pi^{(m)}_{U}):=\plim{G\subset G_{k}}\phi(G,\Pi^{(m)}_{U}):\QSect^{\text{geom}}(\Pi_{U}^{(m)})\twoheadrightarrow \tilde{X}^{m,\cl}$ which is compatible with the actions of $\Pi_{U}^{(m)}$.


\subsection{The group-theoretical reconstruction of decomposition groups  of  $\Pi_{U}^{(m)}$}\label{subsectionreco}
\hspace{\parindent}In this subsection, we  show   that  the $\Pi_{U}^{(m-n)}$-set $\text{Dec}(\Pi_{U}^{(m-n)})$ is  reconstructed  group-theoretically from  $\Pi_{U}^{(m)}$  (if $(m,g,r)$ and $n$ satisfy certain conditions). \par
First, we consider the group-theoretical characterization of geometric sections.  In the following lemma, we use the Lefschetz trace formula (see \cite{Ta1997} Proposition (0.7)).

\begin{lemma}\label{7}
Assume that $(g,r)\neq (0,0)$, $(0,1)$, and that $m\geq 2$. Let $G$ be an open subgroup of $G_{k}$,   $n\in\mathbb{Z}_{\geq1}$ an integer satisfying $m>n$,  $\ell$ a prime different from $p$,  and   $s$ an element of $\Sect(G,\Pi_{U}^{(m-n)})$. Let   $\rho:\Pi^{(m)}_{U}\twoheadrightarrow\Pi^{(m-n)}_{U}$ be the natural projection.  Then the following conditions are equivalent.
\begin{enumerate}[(a)]
\item   $s$ is geometric.
\item  For every open subgroup $H$ of $\Pi_{U}^{(m-n)}$ containing $s(G)$, the set $X_{H}(\overline{k}^{G})$ is non-empty.
\item  
For every open subgroup $M$ of  $\Pi_{U}^{(m)}$ containing $\rho^{-1}(s(G))$, 
\[1+|\overline{k}^{G}|-\text{tr}_{\mathbb{Z}_{\ell}}(\text{Fr}_{\overline{k}^{G}}\mid \overline{M}^{1,\pro\ell}/W_{-2}(\overline{M}^{1,\pro\ell}))>0. \]
\end{enumerate}
\end{lemma}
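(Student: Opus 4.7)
The plan is to prove the cycle (a)$\Rightarrow$(b)$\Leftrightarrow$(c)$\Rightarrow$(a). The equivalence (b)$\Leftrightarrow$(c) is immediate from the Lefschetz trace formula combined with Lemma~\ref{wflemma}; the implication (a)$\Rightarrow$(b) is essentially a translation of definitions; and (c)$\Rightarrow$(a) is a compactness argument that will use the separatedness results of Subsection~\ref{subsectionsep} in an essential way.

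For (a)$\Rightarrow$(b): if $s(G)\subset D_{\tilde v,\Pi_U^{(m-n)}}$ for some $\tilde v\in\tilde X^{m-n,\cl}$, then for any open $H\subset\Pi_U^{(m-n)}$ containing $s(G)$, the image $v_H$ of $\tilde v$ in $X_H$ is a closed point whose residue field is the fixed field of the image of $D_{\tilde v}\cap H$ in $G_k$; since $s(G)\subset D_{\tilde v}\cap H$ maps onto $G$, this residue field lies in $\overline{k}^G$, so $v_H\in X_H(\overline{k}^G)$.

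For (b)$\Leftrightarrow$(c): any open $M\subset\Pi_U^{(m)}$ with $M\supset\rho^{-1}(s(G))$ necessarily contains $\Ker(\rho)=(\overline{\Pi}_U)^{[m-n]}/(\overline{\Pi}_U)^{[m]}$, hence is of the form $M=\rho^{-1}(H)$ with $H:=\rho(M)\supset s(G)$; conversely every such $H$ yields such an $M$, and the covers $X_M$ and $X_H$ coincide under this correspondence. As $X_H$ is a smooth projective curve defined over a subfield of $\overline{k}^G$, the Lefschetz trace formula gives
\[
|X_H(\overline{k}^G)|=|\overline{k}^G|+1-\mathrm{tr}_{\mathbb{Z}_\ell}\bigl(\mathrm{Fr}_{\overline{k}^G}\mid T_\ell(J_{X_H})\bigr),
\]
and Lemma~\ref{wflemma} applied to the smooth curve $U_M$ identifies $T_\ell(J_{X_H})$ with $\overline{M}^{1,\text{pro-}\ell}/W_{-2}(\overline{M}^{1,\text{pro-}\ell})$ as a $G_k$-module. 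The displayed quantity in (c) is therefore exactly $|X_H(\overline{k}^G)|$, and being a non-negative integer, it is positive iff $X_H(\overline{k}^G)\neq\emptyset$.

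For (c)$\Rightarrow$(a): assuming (b), I construct $\tilde v\in\tilde X^{m-n,\cl}$ with $s(G)\subset D_{\tilde v}$ by a compactness argument. For each open $H\supset s(G)$ set
\[
\mathcal{T}_H:=\{v\in X_H^{\cl}:\text{some lift }\tilde v\text{ of }v\text{ to }\tilde X^{m-n,\cl}\text{ satisfies }s(G)\subset D_{\tilde v}\}\subset X_H(\overline{k}^G),
\]
a finite set. The maps $X_{H'}\to X_H$ (for $H'\subset H$) make $(\mathcal{T}_H)$ an inverse system, and since $\bigcap_{H\supset s(G)}H=s(G)$ one has $\plim{H}H\backslash\tilde X^{m-n,\cl}=s(G)\backslash\tilde X^{m-n,\cl}$, so any point of $\plim{H}\mathcal{T}_H$ yields a single $s(G)$-orbit $s(G)\tilde v=\{\tilde v\}$, providing the desired $\tilde v$. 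The main obstacle is showing that each $\mathcal{T}_H$ is nonempty: mere nonemptiness of $X_H(\overline{k}^G)$ (from (b)) produces $\overline{k}^G$-rational points of $X_H$ corresponding to sections $G\to H$ only up to $H$-conjugation, which need not match $s|_G$. To pin down a rational point whose decomposition data agrees with $s$ itself, one applies (c) at subgroups $M\subsetneq\rho^{-1}(H)$ of $\Pi_U^{(m)}$, exploiting the additional $n$ layers of solvability to kill the ambiguity between competing sections; together with the separatedness of decomposition groups proved in Proposition~\ref{sepprop}, this singles out an element of $\mathcal{T}_H$ and completes the proof.
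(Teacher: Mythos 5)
Your (a)$\Rightarrow$(b) step and your (b)$\Leftrightarrow$(c) step are essentially the paper's: the bijection $H\mapsto\rho^{-1}(H)$ between opens of $\Pi_U^{(m-n)}$ containing $s(G)$ and opens of $\Pi_U^{(m)}$ containing $\rho^{-1}(s(G))$, plus the Lefschetz trace formula. One small omission there: to identify $\overline{M}^{1,\pro\ell}/W_{-2}(\overline{M}^{1,\pro\ell})$ with $T_\ell(J_{X_M})$ you must first know $\overline{M}^{1,\pro\ell}\cong\overline{\Pi}_{U_M}^{1,\pro\ell}$, i.e.\ that truncating at level $m$ does not damage the abelianization of the subgroup; this is exactly Lemma \ref{quotientlemma1.1}, and it is the place where the hypothesis $n\geq 1$ is used. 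You should cite it rather than appeal to Lemma \ref{wflemma} alone.

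The genuine gap is in your (c)$\Rightarrow$(a). Your whole argument rests on $\mathcal{T}_H\neq\emptyset$ for each open $H\supset s(G)$, and the mechanism you offer for this does not exist: every open $M$ of $\Pi_U^{(m)}$ with $\rho^{-1}(s(G))\subset M\subset\rho^{-1}(H)$ is of the form $\rho^{-1}(H')$ for an intermediate $H'$, and for such $M$ condition (c) says precisely $X_{H'}(\overline{k}^{G})\neq\emptyset$ — i.e.\ nothing beyond (b) — so the ``additional $n$ layers of solvability'' give no leverage, and Proposition \ref{sepprop} cannot force a $\overline{k}^{G}$-point of $X_H$ to admit a lift $\tilde v$ with $s(G)\subset D_{\tilde v}$: at a fixed finite level the section cut out by a rational point is only determined up to $H$-conjugacy and need not be related to $s$ at all, and no argument at finite level can repair this since $\pr|_H$ is far from injective. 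The paper's proof of (b)$\Rightarrow$(a) sidesteps the problem by passing to the limit \emph{before} invoking any decomposition-group condition: $X_{s(G)}(\overline{k}^{G})=\plim{H}X_H(\overline{k}^{G})$ is nonempty by Tychonoff (an inverse limit of finite nonempty sets), and then for \emph{any} point $v$ of $X_{s(G)}(\overline{k}^{G})$ and \emph{any} lift $\tilde v\in\tilde X^{m-n,\cl}$ one has $\pr(D_{\tilde v}\cap s(G))=G=\pr(s(G))$; since $\pr|_{s(G)}$ is injective this already forces $s(G)\subset D_{\tilde v}$. This last implication is available only at the limit level, where the group over the base has been cut down to exactly $s(G)$; it uses neither Proposition \ref{sepprop} nor condition (c) beyond its equivalence with (b). You should rework your final step along these lines.
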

\begin{proof} (Similar to  \cite{Ta1997} Proposition (2.8)(iv).)
First, we show that (a)$\Rightarrow$ (b).  Let $\tilde{v}\in \tilde{X}^{m,\cl}$ such that $s(G)\subset D_{\tilde{v}}$. Then  $\pr(D_{\tilde{v}}\cap H)\supset \pr(s(G))=G$. Hence we get $\overline{k}^{G}\supset \overline{k}^{\pr(D_{\tilde{v}}\cap H)}=\kappa(v_{H})$, where $v_{H}$ stands for the image of $\tilde{v}$ by $\tilde{X}^{m}\twoheadrightarrow X_{H}$. Thus, we obtain  that the set $X_{H}(\overline{k}^{G})$ is non-empty. 
Next, we show that (b)$\Rightarrow$ (a). We have $X_{s(G)}(\overline{k}^{G})=\plim{H}X_{H}(\overline{k}^{G})$, where $H$ runs over all open subgroups of $\Pi_{U}^{(m-n)}$ containing $s(G)$. Since $X_{H}(\overline{k}^{G})$ is finite and non-empty,  $X_{s(G)}(\overline{k}^{G})$ is also non-empty by Tychonoff's theorem. Let $v\in X_{s(G)}(\overline{k}^{G})$. Let  $\tilde{v}\in \tilde{X}^{m-n,\cl}$  be a point above $v$. Then we get $\pr(D_{\tilde{v}}\cap s(G))=G=\pr(s(G))$. Since $\pr|_{s(G)}$ is injective, we obtain that $D_{\tilde{v}}\supset  s(G)$ and  hence $s$ is geometric.  Finally, we show that (b)$\Leftrightarrow$(c).  Note that the map   $\{H \overset{\text{op}}\subset \Pi_{U}^{(m-n)}\mid  s(G)\subset H\}\to \{M \overset{\text{op}}\subset \Pi_{U}^{(m)}\mid \rho^{-1}(s(G))\subset M\}$, $H\mapsto \rho^{-1}(H)$ is bijective. Since $n\geq1$, we have that $\overline{M}^{1,\pro\ell}/W_{-2}(\overline{M}^{1,\pro\ell})\cong T_{\ell}(J_{X_{M}})$ by Lemma \ref{quotientlemma1.1} and  Lemma \ref{wflemma}.  Hence  the assertion follows from  the fact that 
\begin{equation*}\label{eq-1.3-1}
|X_{\rho(M)}(\overline{k}^{G})|=|X_{M}(\overline{k}^{G})|=1+|\overline{k}^{G}|-\text{tr}_{\mathbb{Z}_{\ell}}(\text{Fr}_{\overline{k}^{G}}\mid \overline{M}^{1,\pro\ell}/W_{-2}(\overline{M}^{1,\pro\ell}))\text{ \  \ (Lefschetz trace formula)}.
\end{equation*}
\end{proof}

Next, we define an equivalence relation on  $\QSect^{\text{geom}}(\Pi_{U}^{(1)})$ as follows.

\begin{definition}\label{relation1}
\begin{enumerate}[(1)]
\item  Let $G$ be an open subgroup of $G_{k}$  satisfying $E(\overline{k}^{G})=E(\overline{k})$. Let $s_{G}$, $s'_{G}$ be elemets of $\GSect(G,\Pi_{U}^{(1)})$.  Then  we write  $s_{G}\sim_{G} s'_{G}$ when
 \begin{equation*}
\begin{cases}
\ \ j_{X}(G)(s_{G},s'_{G})=0 & \ \ (\text{if } g\geq 1)\\
\ \ {}^{\exists}w\in E_{\overline{k}^{G}}\text{ such that, }{}^{\forall}S\subset E_{\overline{k}^{G}}-\{w\}\text{ satisfying }|S|=2, \ j_{X_{\overline{k}^{G}}-S}(G)(s_{G},s'_{G})=0.  & \ \ (\text{if }g=0).
\end{cases}
\end{equation*} 
\item 
 Let $\tilde{s},\ \tilde{s}'$ be elements of $\QSect^{\text{geom}}(\Pi_{U}^{(1)})$.  Then we write  $\tilde{s}\sim \tilde{s}'$ when   there exist   an open subgroup $G$ of $G_{k}$ and elements $s_{G}$, $s'_{G}\in \GSect(G,\Pi_{U}^{(1)})$  satisfying   $E(\overline{k}^{G})=E(\overline{k})$, $\tilde{s}=[s_{G}]$ and  $\tilde{s}'=[s'_{G}]$ such that  $s_{G}\sim_{G} s'_{G}$ holds.  
\end{enumerate}
\end{definition}
\begin{lemma}
Assume that $(g,r)\neq (0,0)$, $(0,1)$, $(0,2)$, $(0,3)$, $(0,4)$.   Let $\tilde{s},\ \tilde{s}'$ be elements of $\QSect^{\text{geom}}(\Pi_{U}^{(1)})$.   Then  $\tilde{s}\sim \tilde{s}'$ if and only if  the images of $\phi(\Pi_{U}^{(1)})(\tilde{s})$, $\phi(\Pi_{U}^{(1)})(\tilde{s}')$ in $\tilde{X}^{0,\text{cl}}$ are the same. In particular, the relation $\sim$ is an equivalence relation of  $ \QSect^{\text{geom}}(\Pi_{U}^{(1)})$, and $\phi(\Pi_{U}^{(1)})$ induces a  $G_{k}$-equivariant  bijection $\overline{\phi}(\Pi_{U}^{(1)}): \QSect^{\text{geom}}(\Pi_{U}^{(1)})/\sim\xrightarrow{\sim} \tilde{X}^{0,\cl}$.
\end{lemma}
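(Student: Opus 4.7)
The plan is to compute the relation $s_G \sim_G s'_G$ explicitly in terms of the underlying closed points $v, v' \in X(\overline{k})$ associated to $s_G$, $s'_G$ via the reconstruction of Corollary \ref{2.6c}, and then to show that $s_G \sim_G s'_G$ holds if and only if $v = v'$, which is precisely equality of images in $\tilde{X}^{0,\cl}$. First, for any $\tilde{s}, \tilde{s}' \in \QSect^{\text{geom}}(\Pi_U^{(1)})$, I would pick a common open $G \overset{\text{op}}\subset G_k$ small enough that $\tilde{s} = [s]$, $\tilde{s}' = [s']$ for $s, s' \in \GSect(G, \Pi_U^{(1)})$ and $E(\overline{k}^G) = E(\overline{k})$. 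The hypothesis $(g,r) \notin \{(0,0),(0,1),(0,2),(0,3)\}$ allows Corollary \ref{2.6c} to apply at $m = 1$, giving $\tilde{v} := \phi(G,\Pi_U^{(1)})(s)$, $\tilde{v}' := \phi(G,\Pi_U^{(1)})(s')$ in $\tilde{X}^{1,\cl}$; write $v, v' \in X^{\cl}$ for their images in $X$, which are $\overline{k}^G$-rational by Lemma \ref{lem2.1s} and correspond to the desired images in $\tilde{X}^{0,\cl} = X(\overline{k})$.

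For the case $g \geq 1$, I would apply Lemma \ref{lem2.1s}(2) to express $j_X(G)(s,s')$ as the image of the degree-$0$ divisor $v - v'$ under the composite in (\ref{imp}). The Abel--Jacobi map kills $v - v'$ iff $v = v'$, because on a curve of genus $\geq 1$ a divisor of the form $v - v'$ can be principal only when $v = v'$. Since $\overline{k}^G$ is a finite extension of the finite field $k$, the group $J_X(\overline{k}^G)$ is a finite abelian group, hence has trivial maximal divisible subgroup and injects into its Kummer completion; the final inclusion into $H^1_{\text{cont}}(G, T(J_X))$ is injective by the standard Kummer sequence. The composite is therefore injective on $v - v'$, and $s \sim_G s' \Leftrightarrow v = v'$ in this case.

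For the case $g = 0$ (so $r \geq 5$ by hypothesis), I would use Lemma \ref{lem2.1s}(3): for $S \subset E$ with $|S| = 2$ and $v, v' \notin S$, fixing an isomorphism $X - S \cong \mathbb{G}_{m,k}$ identifies $j_{X - S}(G)(s, s')$ with the image of $v/v' \in (\overline{k}^G)^\times$ under (\ref{mapj0}); since $(\overline{k}^G)^\times$ is a finite cyclic group of order prime to $p$, this identification is injective, so $j_{X - S}(G)(s, s') = 0 \Leftrightarrow v = v'$. The direction $(\Leftarrow)$ is routine: assuming $v = v'$, take $w = v$ if $v \in E$ and any $w \in E$ otherwise, and observe that every $S \subset E - \{w\}$ with $|S| = 2$ automatically avoids $v, v'$ and thus yields $j = 0$. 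The direction $(\Rightarrow)$, which is the main technical point, proceeds as follows: given $s \sim_G s'$ with witness $w$, the bound $|(E - \{w\}) \setminus \{v, v'\}| \geq r - 3 \geq 2$ supplied by $r \geq 5$ lets me choose $S \subset E - \{w\}$ of size $2$ with $v, v' \notin S$, and the identification above then forces $v = v'$. This combinatorial selection of $S$ is precisely the reason the cases $(g, r) = (0, 3), (0, 4)$ must be excluded from the statement, and is the hardest part of the argument.

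Combining the two cases yields $s_G \sim_G s'_G \Leftrightarrow v = v'$, so $\sim$ is manifestly an equivalence relation, and $\phi(\Pi_U^{(1)})$ composed with the natural surjection $\tilde{X}^{1,\cl} \twoheadrightarrow \tilde{X}^{0,\cl}$ descends to a well-defined injection of $\QSect^{\text{geom}}(\Pi_U^{(1)})/\sim$ into $\tilde{X}^{0,\cl}$. For surjectivity, given $v \in X(\overline{k})$ I would pick $G \overset{\text{op}}\subset G_k$ with $v \in X(\overline{k}^G)$, a lift $\tilde{v} \in \tilde{X}^{1,\cl}$ above $v$, and a (tangential, when $v \in E$) section $G \to D_{\tilde{v}, \Pi_U^{(1)}}$; such a section exists by the standard theory of tame fundamental groups and gives an element of $\GSect(G, \Pi_U^{(1)})$ mapping to $v$. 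Finally, $G_k$-equivariance follows from the $\Pi_U^{(1)}$-equivariance of $\phi(\Pi_U^{(1)})$ (Corollary \ref{2.6c}) together with the fact that the $\Pi_U^{(1)}$-action on $\tilde{X}^{0,\cl} = X(\overline{k})$ factors through $\Pi_U^{(1)} \twoheadrightarrow G_k$.
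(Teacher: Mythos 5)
Your proposal is correct and takes essentially the same route as the paper's proof: both reduce the relation $\sim_{G}$ to equality of the associated rational points via Lemma \ref{lem2.1s}(2)(3) (using the injectivity of the Kummer-theoretic maps over a finite field), use for $g=0$, $r\geq 5$ exactly the same choice of a two-element subset $S\subset E-\{w\}$ avoiding the two points, and then deduce the bijection $\overline{\phi}(\Pi_{U}^{(1)})$ from this pointwise characterization. The remaining differences are purely presentational (you spell out the injectivity statements and re-derive the surjectivity of $\phi(\Pi_{U}^{(1)})$, which the paper had already recorded after Corollary \ref{2.6c}).
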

\begin{proof}
We show the first assertion. The  ``if'' part follows from Lemma \ref{lem2.1s}(2)(3), since  we can take $w$ as the image of $\phi(\Pi_{U}^{(1)})(\tilde{s})$ when $g=0$. We show the  ``only if'' part.   Let $G$  be an  open subgroup of $G_{k}$ satisfying $E(\overline{k}^{G})=E(\overline{k})$   and  $s_{G}$, $s_{G}'$   elements in $\Sect^{\text{geom}}(G,\Pi_{U}^{(1)})$ satisfying  $\tilde{s}=[s_{G}]$, $\tilde{s}'=[s_{G}']$, respectively.  Let $x_{s_{G}}$, $x_{s_{G}'}$ be the images of $\phi(\Pi_{U}^{(1)})(\tilde{s})$, $\phi(\Pi_{U}^{(1)})(\tilde{s}')$ in $X_{\overline{k}^{G}}$, respectively.   Assume that $x_{s_{G}}\sim_{G}x_{s_{G}'}$. When $g\geq 1$,  $x_{s_{G}}=x_{s_{G}'}$ follows by Lemma \ref{lem2.1s}(2).    When $g=0$, there exists $w\in E_{\overline{k}^{G}}$   and  $S' \subset E_{\overline{k}^{G}}- \{w,x_{s_{G}},x_{s_{G}}'\}$  satisfying $|S'|=2$ such that $j_{X_{\overline{k}^{G}}-S'}(G)(s_{G},s'_{G})=0$, since $r\geq 5$. Hence we get $x_{s_{G}}=x_{s'_{G}}$ by Lemma \ref{lem2.1s}(3).  Considering  all $G'\overset{\text{op}}\subset G$, the images of $\phi(\Pi_{U}^{(1)})(\tilde{s})$, $\phi(\Pi_{U}^{(1)})(\tilde{s}')$ in $\tilde{X}^{0,\text{cl}}$ are the same. Hence  the ``only if'' part follows. The second and third assertions follow from the first assertion.
\end{proof}
\noindent
Under the assumption $(g,r)\neq (0,0)$, $(0,1)$, $(0,2)$, $(0,3)$, $(0,4)$, we consider the following commutative diagram of the natural bijections.  

\[
\xymatrix@C=60pt{
&\QSect^{\text{geom}}(\Pi_{U}^{(1)})\ar[ld]\ar[r]^-{\phi:=\phi(\Pi_{U}^{(1)})}\ar[d]_{\rho}&  \tilde{X}^{1,\cl}\ar[d]\\
\overline{\Pi}^{1}_{U}\backslash\QSect^{\text{geom}}(\Pi_{U}^{(1)})\ar[r]^-{\alpha_{U}}\ar[d]_{\beta_{v}}&\QSect^{\text{geom}}(\Pi_{U}^{(1)})/\sim\ar[r]^-{\overline{\phi}:=\overline{\phi}(\Pi_{U}^{(1)})}_-{\sim} & \tilde{X}^{0,\cl}\\
\overline{\Pi}^{1}_{U_{v}}\backslash\QSect^{\text{geom}}(\Pi_{U_{v}}^{(1)})\ar@/_25pt/[rru]_{\psi}\ar@{.>}[ru]^{\alpha_{U,v}}\\
}
\]
Here, we write   $U_{v}:=U\cup \{v\}$ for any $v\in E(k)$. Here,   $\alpha_{U}: \overline{\Pi}^{1}_{U}\backslash\QSect^{\text{geom}}(\Pi_{U}^{(1)})\to \QSect^{\text{geom}}(\Pi_{U}^{(1)})/\sim$,  $\beta_{v}:\overline{\Pi}^{1}_{U}\backslash\QSect^{\text{geom}}(\Pi_{U}^{(1)})\to \overline{\Pi}^{1}_{U_{v}}\backslash\QSect^{\text{geom}}(\Pi_{U_{v}}^{(1)})$, and  $\rho: \QSect^{\text{geom}}(\Pi_{U}^{(1)})\rightarrow \QSect^{\text{geom}}(\Pi_{U}^{(1)})/\sim$ are  the natural surjections, and $\psi$ is   the map induced by $\phi(\Pi^{(1)}_{U_{v}}):\QSect^{\text{geom}}(\Pi_{U_{v}}^{(1)})\rightarrow (\tilde{X}^{U_{v},1})^{\text{cl}}$.  We write $\alpha_{U,v}:=\overline{\phi}^{-1}\circ \psi$. (Since $\beta_{v}$ is surjective,  $\alpha_{U,v}$ is a unique map  such that $\alpha_{U}=\alpha_{U,v}\circ \beta_{v}$.)  We define $\QSect^{\text{geom,c}}(\Pi_{U}^{(1)}):=\{\tilde{s}\in \QSect^{\text{geom}}(\Pi_{U}^{(1)})\mid |\alpha_{U}^{-1}(\rho(\tilde{s})|>1\}$. 

\begin{lemma}\label{reconstructiondecoabel}
Assume that $(g,r)\neq (0,0)$, $(0,1)$, $(0,2)$, $(0,3)$, $(0,4)$.   Let $\tilde{s}$ be an element of $\QSect^{\text{geom}}(\Pi_{U}^{(1)})$. 
\begin{enumerate}[(1)]
\item  Assume that $r\neq 1$. Then  $\tilde{s}\in \QSect^{\text{geom,c}}(\Pi_{U}^{(1)})$  if and only if $\overline{\phi}(\rho(\tilde{s}))\in \tilde{E}^{0}$.   In particular, $\overline{\phi}$ induces a bijection $\QSect^{\text{geom,c}}(\Pi_{U}^{(1)})/\sim \to \tilde{E}^{0}$.
\item  Assume that $r\neq 1$, $2$ (resp. $r=2$), and that $\tilde{s}\in \QSect^{\text{geom,c}}(\Pi_{U}^{(1)})$. Let $G$ be the open subgroup  of $G_{k}$ such that $(G_{k}:G)$ is equal to $r!$, the factorial of  $r$.   Then  $\overline{\phi}(\rho(\tilde{s}))$ is $\overline{k}^{G}$-rational and its image in $E(\overline{k}^{G})$ is  a unique   element (resp. an element)  $x_{\tilde{s}}$ satisfying  $|\alpha_{U_{\overline{k}^{G}}, x_{\tilde{s}}}^{-1}(\rho(\tilde{s}))|=1$.
\item   When $r\geq 2$ and $\tilde{s}\in \QSect^{\text{geom,c}}(\Pi_{U}^{(1)})$  (resp. either  $r< 2$ or $\tilde{s}\not\in \QSect^{\text{geom,c}}(\Pi_{U}^{(1)})$),  let $D_{\tilde{s}}$ be the subgroup 
\[
\langle\{\text{Im}(s)\cdot I_{x_{\tilde{s}},\Pi_{U}^{(1)}}\mid G\overset{\text{op}}\subset G_{k},\ s\in \Sect^{\text{geom}}(G,\Pi_{U}^{(1)})\text{ satisfying }\tilde{s}=[s]\}\rangle
\]
\[ (\text{resp. }\langle\{\text{Im}(s)\mid G\overset{\text{op}}\subset G_{k},\ s\in \Sect^{\text{geom}}(G,\Pi_{U}^{(1)})\text{ satisfying }\tilde{s}=[s]\}\rangle)
\]  
 of $\Pi_{U}^{(1)}$.  (Note that, when $r=2$, $x_{\tilde{s}}$ is not unique but $I_{x_{\tilde{s}},\Pi_{U}^{(1)}}$ does not depend on the choice of $x_{\tilde{s}}$.) Then $D_{\phi(\Pi_{U}^{(1)})(\tilde{s}),\Pi_{U}^{(1)}}$ coincides with  $D_{\tilde{s}}$.
\end{enumerate}
\end{lemma}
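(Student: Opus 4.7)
The plan is to handle the three parts in order, exploiting the bijection $\overline{\phi}\colon\QSect^{\text{geom}}(\Pi_{U}^{(1)})/\sim\ \xrightarrow{\sim}\ \tilde{X}^{0,\cl}$ established in the previous lemma together with the separatedness of decomposition groups from Proposition~\ref{sepprop}.

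For part (1), I would factor $\alpha_{U}$ through $\overline{\phi}$: writing $v:=\overline{\phi}(\rho(\tilde{s}))$, the fiber $\alpha_{U}^{-1}(\rho(\tilde{s}))$ is the set of $\overline{\Pi}_{U}^{1}$-orbits in the preimage under $\phi$ of the fiber of $\tilde{X}^{1,\cl}\to\tilde{X}^{0,\cl}$ above $v$. By equivariance of $\phi$ and transitivity of $\overline{\Pi}_{U}^{1}$ on this geometric fiber, fixing any $\tilde{v}$ over $v$ reduces the count to $I_{\tilde{v},\overline{\Pi}_{U}^{1}}$-orbits on $\phi^{-1}(\tilde{v})$. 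When $v\notin\tilde{E}^{0}$, the inertia is trivial, $D_{\tilde{v},\Pi_{U}^{(1)}}\xrightarrow{\sim}\pr(D_{\tilde{v},\Pi_{U}^{(1)}})$, $\phi^{-1}(\tilde{v})$ is a singleton, and the fiber has size~$1$; when $v\in\tilde{E}^{0}$ the inertia is non-trivial and the $I_{\tilde{v}}$-torsor structure on $\phi^{-1}(\tilde{v})$ produces more than one orbit. The claimed bijection $\QSect^{\text{geom,c}}(\Pi_{U}^{(1)})/\sim\, \to\, \tilde{E}^{0}$ then follows by restricting $\overline{\phi}$.

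For part (2), I would pass to the base $\overline{k}^{G}$ with $G\leq G_{k}$ the kernel of the natural action $G_{k}\to\text{Sym}(E(\overline{k}))$; this has index dividing $r!$ and makes $E$ pointwise $\overline{k}^{G}$-rational, so $\overline{\phi}(\rho(\tilde{s}))\in\tilde{E}^{0}$ descends to a unique $\overline{k}^{G}$-rational cusp $x_{\tilde{s}}\in E(\overline{k}^{G})$. For any $v\in E(\overline{k}^{G})$, the natural surjection $\Pi_{U_{\overline{k}^{G}}}^{(1)}\twoheadrightarrow\Pi_{U_{v}}^{(1)}$ identifies $\alpha_{U_{\overline{k}^{G}},v}^{-1}(\rho(\tilde{s}))$ with $\alpha_{U_{v}}^{-1}(\rho(\beta_{v}(\tilde{s})))$, so part~(1) applied to $U_{v}$ (legitimate precisely when $r(U_{v})=r-1\neq 1$, i.e., $r\neq 2$, and the hypothesis on $(g,r-1)$ is satisfied) shows that this fiber has cardinality~$1$ iff $\beta_{v}(\tilde{s})\notin\QSect^{\text{geom,c}}(\Pi_{U_{v}}^{(1)})$, iff $v$ is exactly the cusp removed in passing from $U$ to $U_{v}$, iff $v=x_{\tilde{s}}$. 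This gives uniqueness of $x_{\tilde{s}}$ when $r\neq 2$; in the $r=2$ case both cusps satisfy the condition (since $r(U_{v})=1$ makes $\QSect^{\text{geom,c}}$ degenerate), which is why the statement only claims existence there.

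For part (3), set $\tilde{v}:=\phi(\Pi_{U}^{(1)})(\tilde{s})$ and $D:=D_{\tilde{v},\Pi_{U}^{(1)}}$. The inclusion $D_{\tilde{s}}\subset D$ holds because, for any section $s\colon G\to \Pi_{U}^{(1)}$ representing $\tilde{s}$ with $s(G)\subset D_{\tilde{w}}$, the agreement $s\equiv s_{0}$ on a small open of $G\cap G_{0}$ places $s(G\cap G_{0})\subset D_{\tilde{v}}\cap D_{\tilde{w}}$ with open image in $G_{k}$, forcing $\tilde{w}=\tilde{v}$ by Proposition~\ref{sepprop}; moreover $I_{x_{\tilde{s}},\Pi_{U}^{(1)}}=I_{\tilde{v},\Pi_{U}^{(1)}}\subset D$ for the lift compatible with $s$. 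For the reverse inclusion, $D$ fits in an exact sequence $1\to I_{\tilde{v}}\to D\to G_{\kappa(v)}\to 1$ with procyclic pro-$p'$ kernel, which splits since $G_{\kappa(v)}\cong\hat{\mathbb{Z}}$ has cohomological dimension one; picking a global splitting $s^{*}$ and adjusting by $I_{\tilde{v}}$-cocycles produces representatives of $\tilde{s}$ over a cofinal family of open subgroups of $G_{\kappa(v)}$, and the union of their images together with $I_{\tilde{v}}$ topologically generates $D$. The main obstacle I anticipate is precisely this last generation step: the cocycle adjustment needed to produce a representative of $\tilde{s}$ over an open subgroup $G'\supsetneq G_{0}$ may be obstructed by divisibility conditions in the $G_{\kappa(v)}$-module $I_{\tilde{v}}$, so one must either exhibit enough such representatives directly or invoke the closedness of $D_{\tilde{s}}$ together with a density argument for $\bigcup G$ inside $G_{\kappa(v)}$.
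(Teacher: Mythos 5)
Your outline of (1) follows the paper's reduction (pass to the fibre of $\tilde X^{1,\cl}\to\tilde X^{0,\cl}$ and count $I_{\tilde v}$-orbits), but the decisive half is not proved. The assertion that, for $\overline{\phi}(\rho(\tilde s))\in\tilde E^{0}$, ``the $I_{\tilde v}$-torsor structure on $\phi^{-1}(\tilde v)$ produces more than one orbit'' is not an argument -- if $\phi^{-1}(\tilde v)$ were a torsor under the group that acts, there would be exactly \emph{one} orbit. What the paper actually does at this point is compute the orbit set: by Lemma \ref{decisom2} (this is precisely where the hypothesis $r\neq 1$ enters, and you never invoke it) the inertia $I_{\tilde v}$ inside $\overline{\Pi}_{U}^{1}$ is a full copy of $\hat{\mathbb{Z}}^{\pro p'}(1)$, and the set of quasi-sections landing in $D_{\tilde v}$ modulo $I_{\tilde v}$-conjugacy is identified with $\ilim{k'/k:\text{fin}}H^{1}(G_{k'},\hat{\mathbb{Z}}^{\pro p'}(1))\cong\overline{k}^{\times}$, using $H^{1}(G_{k'},\hat{\mathbb{Z}}^{\pro p'}(1))=k'^{\times}$ for finite $k'$; nontriviality of $\overline{k}^{\times}$ is what gives $|\alpha_{U}^{-1}(\rho(\tilde s))|>1$. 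Your sketch never uses the finiteness of $k$ here, so the inequality is unproved. Part (2) is essentially the paper's argument (fill in a cusp and run (1) for $U_{v}$); note only that for $(g,r)=(0,5)$ the curve $U_{v}$ has type $(0,4)$, so one must apply the \emph{argument} of (1) rather than its statement -- a point you flag (``the hypothesis on $(g,r-1)$'') but do not resolve, whereas the paper does.

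In (3) your inclusion $D_{\tilde s}\subset D_{\phi(\Pi_{U}^{(1)})(\tilde s)}$ via Proposition \ref{sepprop} matches the paper, but the reverse inclusion -- which is the whole content -- is left open by your own admission: you observe that producing representatives of $\tilde s$ on larger open subgroups of the image of $D_{\phi(\tilde s)}$ in $G_{k}$ is obstructed by divisibility in the $G_{\kappa(v)}$-module $I_{\tilde v}$, and you end with ``one must either \dots or \dots''. The paper closes this step in one line: since $G_{k}\cong\hat{\mathbb{Z}}$, it produces an open subgroup $G$ and a single representative $s\in\GSect(G,\Pi_{U}^{(1)})$ of $\tilde s$ with $\text{Im}(s)\cdot I_{\phi(\tilde s)}=D_{\phi(\tilde s)}$, i.e.\ a representative defined on the full image of $D_{\phi(\tilde s)}$ in $G_{k}$, which then generates together with $I_{x_{\tilde s}}$. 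You have correctly located the delicate point (whether such a top-level representative exists is exactly the divisibility question you raise, governed by the image of $H^{1}(G_{\kappa(v)},I_{\tilde v})$ in $\ilim{}H^{1}(G_{k'},I_{\tilde v})$), and the paper's justification there is terse; but locating the difficulty is not closing it. As written, your proposal proves neither the key count in (1) nor the equality in (3), so it does not yet establish the lemma; to finish along your lines you would have to supply the Kummer-theoretic $H^{1}$-computation in (1) and either prove the existence of a representative at the top level in (3) or reformulate $D_{\tilde s}$ (and then re-derive Proposition \ref{bidecoabel}) accordingly.
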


\begin{proof}
(1) We have that  $\overline{\phi}(\rho(\tilde{s}))\not\in \tilde{E}^{0}$ implies  $|\alpha_{U}^{-1}(\rho(\tilde{s}))|=1$, since $\phi$ is injective  on the subset  $\{\tilde{s}'\in \QSect^{\text{geom}}(\Pi_{U}^{(1)})\mid \overline{\phi}(\rho(\tilde{s}'))\not\in\tilde{E}^{0}\}$.  Hence it is sufficient to show that   $\overline{\phi}(\rho(\tilde{s}))\in \tilde{E}^{0}$ implies  $|\alpha_{U}^{-1}(\rho(\tilde{s}))|>1$. Assume that $\overline{\phi}(\rho(\tilde{s}))\in \tilde{E}^{0}$.   If  there exist  $a\in \overline{\Pi}^{1}_{U}$ and $\tilde{s}'\in \phi^{-1}(\phi(\tilde{s}))$  such that $a\cdot \tilde{s}=\tilde{s}'$, then $a\cdot \phi(\tilde{s})=\phi(\tilde{s}')$ and hence $a\in I_{\overline{\phi}(\rho(\tilde{s}))}$ follows. Thus,  we obtain that  
$I_{\overline{\phi}(\rho(\tilde{s}))}\backslash \phi^{-1}(\phi(\tilde{s}))=\overline{\Pi}_{U}^{1}\backslash \phi^{-1}(\phi(\tilde{s}))\ (\subset \overline{\Pi}_{U}^{1}\backslash \rho^{-1}(\rho(\tilde{s}))=\alpha_{U}^{-1}(\rho(\tilde{s})))$.  We know that $I_{\overline{\phi}(\rho(\tilde{s}))}$ is isomorphic to an inertia group of $\Pi_{U}$ by Lemma \ref{decisom2}. (Here, we use the assumption ``$r\neq 1$''.) For any finite extension field $k'$ over $k$, we have that  $H^{1}(G_{k'},\hat{\mathbb{Z}}^{p'}(1))=k'^{\times}$. Thus,    we obtain that  $I_{\overline{\phi}(\rho(\tilde{s}))}\backslash \phi^{-1}(\phi(\tilde{s}))\cong \ilim{k'/k: \text{fin}}H^{1}(G_{k'},\hat{\mathbb{Z}}^{p'}(1))\xrightarrow{\sim}\overline{k}^{\times}$.  Hence  $|\alpha_{U}^{-1}(\rho(\tilde{s}))|>1$ follows.\\
(2) Let $S_{r}$ be the  symmetric group of degree $r$. Then we have a permutation action  $S_{r}\curvearrowright E(\overline{k})$.  Since $G_{k}\cong \hat{\mathbb{Z}}$ and the natural action $G_{k}\curvearrowright E(\overline{k})$ factors through the permutation action, $G$ acts trivially on $E(\overline{k})$. Hence $\overline{\phi}(\rho(\tilde{s}))$ is $\overline{k}^{G}$-rational.   By applying the  arguments of $U$ and $\alpha_{U}$ in (1) to $U_{\overline{k}^{G},x_{\tilde{s}}}$ and $\alpha_{U_{\overline{k}^{G}},x_{\tilde{s}}}$, the second assertion follows when $r\neq 1$, $2$.  When $r=2$, the second assertion is clearly true, since $\alpha_{U_{\overline{k}^{G}},x}$ is bijective for any $x\in E(\overline{k}^{G})$.\\
(3) The group $D_{\tilde{s}}$ is clearly  contained in $D_{\phi(\tilde{s})}$.   Since $G_{k}\cong \hat{\mathbb{Z}}$, there exist  an open subgroup $G$ of $G_{k}$ and a  section $s\in\GSect(G,\Pi_{U}^{(1)})$ satisfying  $\tilde{s}=[s]$ such that  $\text{Im}(s)\cdot  I_{\phi(\tilde{s})}=D_{\phi(\tilde{s})}$.   Hence $D_{\tilde{s}}$ coincide with  $D_{\phi(\tilde{s})}$.  (Note that, when $r<2$, the inertia group  is trivial.)
\end{proof}

The following is  the main result of this subsection.

\begin{proposition}\label{bidecoabel}
Assume that $(g_{1},r_{1})\neq (0,0)$, $(0,1)$, $(0,2)$, and that $m$ satisfies
\begin{equation*}
\begin{cases}
\ \ m\geq  2 & \ \ (\text{if } (g_{1},r_{1})\neq (0,3), (0,4))\\
\ \ m\geq 3 & \ \ (\text{if } (g_{1},r_{1})= (0,3), (0,4)).
\end{cases}
\end{equation*} 
 Let  $n\in\mathbb{Z}_{\geq 1}$ be an integer satisfying $m> n$.  Let  $\Phi: \Pi^{(m)}_{U_{1}}\xrightarrow{\sim}\Pi^{(m)}_{U_{2}}$ be  an isomorphism and   $\Phi^{m-n}: \Pi^{(m-n)}_{U_{1}}\xrightarrow{\sim}\Pi^{(m-n)}_{U_{2}}$   the isomorphism induced by $\Phi$  (Proposition \ref{Gkisomorphic}(1)). 
 \begin{enumerate}[(1)]
 \item   $\Phi^{m-n}$   preserves decomposition groups.
 \item $\Phi$ induces  a unique  bijection $\tilde{f}_{\Phi}^{m-n,\text{cl}}: \tilde{X}_{1}^{m-n,\cl}\xrightarrow{\sim}\tilde{X}_{2}^{m-n,\cl}$   such that   the  diagram 
\begin{equation}\label{actiondiagdeco2}
\vcenter{\xymatrix@R=20pt{
\Pi^{(m-n)}_{U_{1}}\ar[d]^{\Phi^{m-n}} \ar@{}[r]|{\curvearrowright} &  \tilde{X}_{1}^{m-n,\cl}\ar[d]^{\tilde{f}_{\Phi}^{m-n,\text{cl}}} \ar[r]&\text{Dec}(\Pi^{(m-n)}_{U_{1}}) \ar[d]^{\rho_{\Phi^{m-n}}}\\
\Pi^{(m-n)}_{U_{2}}\ar@{}[r]|{\curvearrowright} &  \tilde{X}_{2}^{m-n,\cl} \ar[r]&\text{Dec}(\Pi^{(m-n)}_{U_{2}}) \\
}}
\end{equation}
  is commutative, where    $\rho_{\Phi^{m-n}}$ stands for  the bijection induced by $\Phi^{m-n}$ and  $ \tilde{X}_{i}^{m-n,\cl} \to\text{Dec}(\Pi^{(m-n)}_{U_{i}})$ stands for  the natural map.  In particular, a bijection $f^{\text{cl}}_{\Phi}:X_{1}^{\text{cl}}\rightarrow X_{2}^{\text{cl}}$ is  induced by dividing $\tilde{f}_{\Phi}^{m-n,\text{cl}}$ by the actions in (\ref{actiondiagdeco2}).
\item   If, moreover, $(m,r_{1})\neq (2,1)$, then    $\tilde{f}_{\Phi}^{m-n,\text{cl}}(\tilde{U}_{1}^{m-n,\text{cl}})=\tilde{U}_{2}^{m-n,\text{cl}}$ holds.  In particular, $f_{\Phi}^{\text{cl}}(U_{1}^{\text{cl}})=U_{2}^{\text{cl}}$ holds.
\end{enumerate}
\end{proposition}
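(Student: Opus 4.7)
The plan is to reconstruct the $\Pi^{(m-n)}_U$-set $\tilde X^{m-n,\mathrm{cl}}$ together with its map to $\mathrm{Dec}(\Pi^{(m-n)}_U)$ group-theoretically from $\Pi^{(m)}_U$, so that $\Phi^{m-n}$ automatically preserves decomposition groups. Since $\tilde X^{m-n,\mathrm{cl}}\hookrightarrow\mathrm{Dec}(\Pi^{(m-n)}_U)$ is injective by Proposition \ref{sepprop} under our hypotheses (the exceptional case $(m-n,g,r)=(1,0,3)$ is excluded because $(g_1,r_1)=(0,3)$ forces $m\geq 3$ and $n\geq 1$ gives $m-n\geq 2$), parts (1) and (2) reduce to the existence of the bijection $\tilde f^{m-n,\mathrm{cl}}_\Phi$, and the induced $f^{\mathrm{cl}}_\Phi:X_1^{\mathrm{cl}}\to X_2^{\mathrm{cl}}$ then follows by $\Pi^{(m-n)}$-equivariance. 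Proposition \ref{severalinvrecoprop} together with Proposition \ref{Gkisomorphic}(1) first give $g:=g_1=g_2$, $r:=r_1=r_2$, and the canonical quotient $\Phi^{m-n}$.

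The construction rests on three ingredients, each preserved by $\Phi^{m-n}$: (a) geometric quasi-sections, characterized group-theoretically by Lemma \ref{7}, so that $\Phi^{m-n}\circ s\in\QSect^{\mathrm{geom}}(\Pi^{(m-n)}_{U_2})$ whenever $s\in\QSect^{\mathrm{geom}}(\Pi^{(m-n)}_{U_1})$; (b) inertia subgroups at level $m-n$ via the bijection $\mathcal F_E$ of Proposition \ref{inertiareco}; and (c) the equivalence relation $\sim$ of Definition \ref{relation1} on $\QSect^{\mathrm{geom}}(\Pi^{(1)}_U)$, which is defined purely in terms of the cohomological pairings $j_X(G)$ and $j_{X-S}(G)$ and so depends only on $\Pi^{(1)}_U\twoheadrightarrow G_k$. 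Combining these at level $1$ with Lemma \ref{reconstructiondecoabel}(3) produces a $\Pi^{(1)}$-equivariant bijection $\tilde X^{0,\mathrm{cl}}_1\xrightarrow{\sim}\tilde X^{0,\mathrm{cl}}_2$ under which $\Phi^1$ preserves level-$1$ decomposition groups. In the cases $(g,r)=(0,3),(0,4)$ excluded by Lemma \ref{reconstructiondecoabel}, the assumption $m\geq 3$ lets me pass via Lemma \ref{gandr} to open subgroups $H\subset\Pi^{(m)}_U$ containing $\overline{\Pi}^{[m-1]}/\overline{\Pi}^{[m]}$ whose covers $U_H$ have $(g(U_H),r(U_H))$ outside the exceptional set; Lemma \ref{quotientlemma1.1}(2) identifies $H^{(1)}\cong\Pi^{(1)}_{U_H}$, and the level-$1$ statement for $U$ is recovered by descent.

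To lift from level $1$ to level $m-n$, given $\tilde v\in\tilde X^{m-n,\mathrm{cl}}_1$ I pick $s\in\QSect^{\mathrm{geom}}(\Pi^{(m-n)}_{U_1})$ with $\phi(\Pi^{(m-n)}_{U_1})(s)=\tilde v$ (existence by Corollary \ref{2.6c}) and set
\[
\tilde f^{m-n,\mathrm{cl}}_\Phi(\tilde v):=\phi(\Pi^{(m-n)}_{U_2})(\Phi^{m-n}\circ s),
\]
which is meaningful by (a). Choice-independence is the key technical step: two such $s,s'$ both lie in $D_{\tilde v}=I_{\tilde v}\cdot s(G_v)$, so $\Phi^{m-n}\circ s$ and $\Phi^{m-n}\circ s'$ both lie in $\Phi^{m-n}(D_{\tilde v})=I_{\mathcal F_E(\tilde v)}\cdot \Phi^{m-n}\circ s(G_v)$; the two associated decomposition groups $D_{\phi(\Phi^{m-n}\circ s)}$ and $D_{\phi(\Phi^{m-n}\circ s')}$ then share a subgroup of open image in $G_{k_2}$, so Proposition \ref{sepprop} (equivalence (c)$\Leftrightarrow$(a)) forces them to coincide; compatibility with the level-$1$ bijection of the previous paragraph and with $\mathcal F_E$ rules out the remaining low-$(g,r)$ degeneracies. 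The resulting $\tilde f^{m-n,\mathrm{cl}}_\Phi$ is $\Pi^{(m-n)}_{U_1}$-equivariant by functoriality, satisfies $\Phi^{m-n}(D_{\tilde v})=D_{\tilde f^{m-n,\mathrm{cl}}_\Phi(\tilde v)}$ because $G_k\cong\hat{\mathbb Z}$ is procyclic so $D_{\tilde v}$ is topologically generated by $I_{\tilde v}$ and the image of any one geometric section landing in $D_{\tilde v}$, and is bijective by applying the same construction to $\Phi^{-1}$. For part (3) I use the characterization $\tilde U^{m-n,\mathrm{cl}}=\{\tilde v:I_{\tilde v,\Pi^{(m-n)}_U}=\{1\}\}$, which follows from Lemma \ref{decisom2} whenever $(m-n,r)\neq (1,1)$; the degenerate case $(m-n,r)=(1,1)$ (which can occur only when $n=m-1$ and $r=1$) is handled by descending the bijection from the strictly higher level $m-1\geq 2$, where Lemma \ref{decisom2} does apply, and this descent is possible precisely under the hypothesis $(m,r_1)\neq(2,1)$. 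The main obstacle I anticipate is the choice-independence step, together with the uniform bookkeeping for the small-invariant exceptional cases $(g,r)\in\{(0,3),(0,4)\}$ and the degenerate-inertia case at low levels of the solvable filtration.
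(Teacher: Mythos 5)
Your plan correctly identifies choice-independence as the crux, but the argument you give for it does not hold up, and it is undermined at the outset by an arithmetic slip. You claim the exceptional case $(m-n,g_{1},r_{1})=(1,0,3)$ cannot occur because ``$n\geq 1$ gives $m-n\geq 2$''; in fact $n$ may equal $m-1$ (e.g.\ $(g_{1},r_{1})=(0,3)$, $m=3$, $n=2$), so $m-n=1$ is allowed by the hypotheses. In that case Corollary \ref{2.6c} is not available (its hypothesis excludes $(m,g,r)=(1,0,3)$), so the map $\phi(\Pi^{(m-n)}_{U_{1}})$ you use to define $\tilde f^{m-n,\text{cl}}_{\Phi}(\tilde v):=\phi(\Pi^{(m-n)}_{U_{2}})(\Phi^{m-n}\circ s)$ is not even well defined at cusps, and the implication (d)$\Rightarrow$(a) of Proposition \ref{sepprop} genuinely fails there: Remark \ref{seprem} exhibits distinct cusps $\tilde v_{1},\tilde v_{2}\in\tilde E^{1}$ of a $(0,3)$-curve with $D_{\tilde v_{1}}\cap D_{\tilde v_{2}}$ surjecting onto $G_{k}$. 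So your construction breaks down in a case the proposition must cover.

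Even away from that corner, the choice-independence step is not established. From $s(G),s'(G')\subset D_{\tilde v}$ you only get that $\mathrm{Im}(\Phi^{m-n}\circ s)$ and $\mathrm{Im}(\Phi^{m-n}\circ s')$ lie in $\Phi^{m-n}(D_{\tilde v})=I_{\mathcal F_{E}(\tilde v)}\cdot\Phi^{m-n}(s(G))$; this does not yet show that $D_{\phi(\Phi^{m-n}\circ s)}$ and $D_{\phi(\Phi^{m-n}\circ s')}$ are commensurable (condition (c)) nor that their intersection has open image in $G_{k_{2}}$ (condition (d)), because you do not know that $\Phi^{m-n}(D_{\tilde v})$ is contained in a single decomposition group --- that is essentially the statement being proved. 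To repair it one would need something like ``the normalizer of the inertia group $\Phi^{m-n}(I_{\tilde v})$ is the corresponding decomposition group'', i.e.\ Lemma \ref{inerrev}(3), whose hypotheses ($m\geq 3$, or $m\geq 2$ and $r\geq 2$, and prime-to-$p$ coefficients) are exactly what fails at the low levels $m-n\in\{1,2\}$ with small $r$ that the proposition permits. The paper avoids all pointwise separation at level $m-n$: it first reconstructs $\mathrm{Dec}$ at level $1$ only for open normal subgroups $H$ with $\overline{\Pi}^{[m-(n+1)]}_{U_{i}}/\overline{\Pi}^{[m]}_{U_{i}}\subset H$ and $(g(U_{H}),r(U_{H}))\neq(0,3),(0,4)$, $r(U_{H})\neq 1$ (via Proposition \ref{inertiareco} and Lemma \ref{reconstructiondecoabel}), and then obtains the bijection on $\tilde X^{m-n,\cl}_{i}=\varprojlim_{H}\tilde X^{1,\cl}_{i,H}$ by a projective-limit argument whose transition maps $\mathrm{Dec}(H'^{(1)})\rightarrow\mathrm{Dec}(H^{(1)})$ are justified by Lemma \ref{seplemmacase10}(c)$\Rightarrow$(a); the identification $\tilde X^{m-n,\cl}_{i}\xrightarrow{\sim}\mathrm{Dec}(\Pi^{(m-n)}_{U_{i}})$ then only uses the (a)--(c) part of Proposition \ref{sepprop}, which does hold for $(0,3)$ at level $1$. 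Your level-$1$ reconstruction is essentially the paper's $m=2$ case, but the lifting to level $m-n$ needs this limit-over-covers mechanism (or an equally robust substitute) rather than the quasi-section argument as you stated it.
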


\begin{proof}
By  Proposition  \ref{severalinvrecoprop}, and  Proposition \ref{Gkisomorphic}, we obtain that $g_{1}=g_{2}$, $r_{1}=r_{2}$, $|k_{1}|=|k_{2}|$, and  $\Phi$ induces an isomorphism $G_{k_{1}}\rightarrow G_{k_{2}}$ which  preserves the Frobenius elements. By Lemma \ref{wflemma}, Proposition \ref{Gkisomorphic}(4),  and  Lemma \ref{7},   the natural bijection $\QSect(\Pi_{U_{1}}^{(m-n)})\xrightarrow{\sim}\QSect(\Pi_{U_{2}}^{(m-n)})$ induced by $\Phi$ induces a bijection  $\QSect^{\text{geom}}(\Pi_{U_{1}}^{(m-n)})\xrightarrow{\sim}\QSect^{\text{geom}}(\Pi_{U_{2}}^{(m-n)})$. \par
First, we consider the case that $m=2$ (note that this implies automatically that $n=1$, $(g_{1},r_{1})\neq (0,3)$, $(0,4)$).   By Proposition  \ref{inertiareco}(1), $\Phi$ induces a bijection $\text{Iner}(\Pi_{U_{1}}^{(1)})\xrightarrow{\sim}\text{Iner}(\Pi_{U_{2}}^{(1)})$.  Hence, by Lemma \ref{reconstructiondecoabel}(1)(2)(3),  $\Phi$ induces a bijection $\text{Dec}(\Pi_{U_{1}}^{(1)})\xrightarrow{\sim}\text{Dec}(\Pi_{U_{2}}^{(1)})$.  By Proposition \ref{sepprop}, we have that the natural map $\tilde{X}_{i}^{1,\text{cl}}\rightarrow \text{Dec}(\Pi_{U_{i}}^{(1)})$ is bijective. Thus, the assertions (1)(2) follow.  When $m=2$ and $r_{1}\neq 1$, the assertion (3) follows from Lemma \ref{reconstructiondecoabel}(1).\par
Next, we consider general $m$. Set $\mathcal{Q}_{i}:=\{H\overset{\text{op}}\lhd \Pi^{(m)}_{U_{i}}\mid \overline{\Pi}^{[m-(n+1)]}_{U_{i}}/\overline{\Pi}^{[m]}_{U_{i}}\subset H, (g(U_{i,H}),r(U_{i,H}))\neq (0,3), (0,4),\ r(U_{i,H})\neq 1\}$.    Fix  an element $H_{1}\in\mathcal{Q}_{1}$ and set $H_{2}:=\Phi(H_{1})$. By the case that $m=2$,  the isomorphism $H_{1}^{(n+1)}\xrightarrow{\sim}H_{2}^{(n+1)}$ induced by $\Phi$  induces a bijection $\rho: \text{Dec}(H_{1}^{(1)})\rightarrow \text{Dec}(H_{2}^{(1)})$ and a unique bijection  $\tilde{X}_{1,H_{1}}^{1,\cl}\xrightarrow{\sim}\tilde{X}_{2,H_{2}}^{1,\cl}$   such that   the  diagram 
\begin{equation*}
\vcenter{\xymatrix@R=15pt{
\tilde{X}_{1,H_{1}}^{1,\cl}\ar[d]\ar[r]^{\sim}&\text{Dec}(H_{1}^{(1)}) \ar[d]^{\rho}\\
\tilde{X}_{2,H_{2}}^{1,\cl} \ar[r]^{\sim}&\text{Dec}(H_{2}^{(1)}) \\
}}
\end{equation*}
is commutative. Since $\overline{\Pi}^{[m-(n+1)]}_{U_{i}}/\overline{\Pi}^{[m]}_{U_{i}}\subset H_{i}$, $H_{i}^{(1)}$ is a subquotient of $\Pi_{U_{i}}^{(m-n)}$, hence we have the natural map $\text{Dec}(\Pi_{U_{i}}^{(m-n)})\rightarrow \text{Dec}(H_{i}^{(1)})$. Let $H'_{1}$ be an element of $\mathcal{Q}_{1}$ satisfying $H_{1}'\subset H_{1}$.  Set $H'_{2}:=\Phi(H'_{1})$.  For any  decomposition group $D'$ of $H'^{(1)}_{i}$,  there exists a unique decomposition group $D$ of $H^{(1)}_{i}$ such that $D$ contains the image of $D'$ by $H'_{i}\rightarrow H_{i}$ by Lemma \ref{seplemmacase10}(c)$\Rightarrow$(a). 
Hence we obtain  a map  $\text{Dec}(H'^{(1)}_{i})\rightarrow\text{Dec}(H^{(1)}_{i})$, sending $D'$ to the unique  element containing the image of $D'$ in $H_{i}^{(1)}$. (Note that this map is compatible with the actions of  $\Pi_{U_{i}}^{(m-n)}/\overline{H}_{i}'^{[1]}$ as $H_{i}$ and $H'_{i}$ are normal in $\Pi_{U_{i}}^{(m)}$.)   By construction of these maps,  the diagram
\begin{equation*}
\vcenter{\xymatrix@R=15pt{
\tilde{X}_{i}^{m-n,\cl}\ar[d]\ar@/_28pt/[dd]\ar[r]^{\sim}&\text{Dec}(\Pi_{U_{i}}^{(m-n)})\ar[d]\ar@/^28pt/[dd]\\
\tilde{X}_{i,H'_{i}}^{1,\cl} \ar[d]\ar[r]^{\sim}&\text{Dec}(H'^{(1)}_{i}) \ar[d]\\
\tilde{X}_{i,H_{i}}^{1,\cl} \ar[r]^{\sim}&\text{Dec}(H_{i}^{(1)}) \\
}}
\end{equation*}
is commutative, where  the right-hand vertical maps and   the  horizontal maps  are the natural maps and the upper-horizontal map is bijective by Proposition \ref{sepprop}.  By Lemma \ref{gandr}, $\mathcal{Q}_{i}$ is cofinal in the set of all open normal subgroups of  $\Pi^{(m)}_{U_{i}}$.  Since $\underset{H\in\mathcal{Q}_{i}}\cap H^{[1]}=(\overline{\Pi}^{[m-(n+1)]}_{U_{i}}/\overline{\Pi}^{[m]}_{U_{i}})^{[1]}=\overline{\Pi}^{[m-n]}_{U_{i}}/\overline{\Pi}^{[m]}_{U_{i}}$, we have that $ \tilde{X}_{i}^{m-n,\text{cl}}\xrightarrow{\sim} \plim{H\in\mathcal{Q}_{i}}\tilde{X}_{i,H}^{1,\text{cl}}$. 
Thus,  we obtain a bijection  $\tilde{X}_{i}^{m-n,\text{cl}}(\xrightarrow{\sim}\text{Dec}(\Pi_{U_{i}}^{(m-n)}))\xrightarrow{\sim}\plim{H\in\mathcal{Q}_{i}}\text{Dec}(H_{i}^{(1)})$ which  is compatible with the actions of $\Pi_{U_{i}}^{(m-n)}$ on $\tilde{X}_{i}^{m-n,\text{cl}}$ and $\plim{H\in\mathcal{Q}_{i}}\text{Dec}(H_{i}^{(1)})$. Hence  there exists  a bijection $\tilde{X}_{1}^{m-n,\cl}\xrightarrow{\sim}\tilde{X}_{2}^{m-n,\cl}$ such that (\ref{actiondiagdeco2}) is commutative.  Therefore, the assertions (1)(2) follow.  The assertion  (3)  follows from the case that $m=2$.
\end{proof}


\subsection{The weak bi-anabelian results over finite fields}\label{subsectionfinweak}
\hspace{\parindent}In this subsection,  we show  the  weak bi-anabelian $m$-step solvable Grothendieck conjecture for  affine hyperbolic curves over   finite fields. In other words, we show that  $\Pi^{(m)}_{U_{1}}\xrightarrow{\sim} \Pi^{(m)}_{U_{2}}$ implies $U_{1}\xrightarrow{\sim}U_{2}$ (under  certain assumptions on $(m,g,r)$, see Theorem \ref{finGCweak}). \par

Let   $\text{ord}_{v}: K(U)^{\times}\twoheadrightarrow \mathbb{Z}$ be the unique surjective valuation associated to $v\in X^{\cl}$ and $K(U)_{v}$ the $v$-adic completion of $K(U)$. We also write $\text{ord}_{v}$ for the surjective valuation $K(U)_{v}^{\times}\twoheadrightarrow \mathbb{Z}$ induced by $\text{ord}_{v}:K(U)^{\times}\twoheadrightarrow \mathbb{Z}$.  Let   $O_{X,v}:=\left\{a\in K(U)\mid \text{ord}_{v}(a)\geq 0\right\}$ be  the  valuation ring of $K(U)$ at $v$, $O_{v}:=\left\{a\in K(U)_{v}\mid \text{ord}_{v}(a)\geq 0\right\}$ the  valuation ring of $K(U)_{v}$, $m_{X,v}$ the maximal ideal of  $O_{X,v}$, and $m_{v}$ the maximal ideal of  $O_{v}$. We have  $\Gamma(U, O_{X})=\left\{ a\in K(U)\mid \text{ord}_{v}(a)\geq 0\text{ for each } v\in U^{\text{cl}}\right\}$.  The following lemma is  shown in \cite{Ta1997} section $4$ where  $\Pi_{U}^{(m)}$ is replaced by $\Pi_{U}$. The case of $\Pi_{U}^{(m)}$ can be settled  by using  Proposition \ref{bidecoabel}.

\begin{lemma}\label{reconstructionlemmaabel}  Assume that  $U_{1}$ is affine  hyperbolic and that
\begin{equation}\label{assumptionmg}
\begin{cases}
\ \ m\geq  2 & \ \  (\text{if }r_{1}\geq 2\text{ and  } (g_{1},r_{1})\neq (0,3), (0,4))\\
\ \ m\geq 3 & \ \  (\text{if } r_{1}<2\text{ or  } (g_{1},r_{1})= (0,3), (0,4)).
\end{cases}
\end{equation} 
Let  $\Phi: \Pi^{(m)}_{U_{1}}\xrightarrow{\sim}\Pi^{(m)}_{U_{2}}$ be  an isomorphism. Then the following hold.
\begin{enumerate}[(1)]
\item $\Phi$ induces a natural   isomorphism of multiplicative groups $F(\Phi):K(U_{1})^{\times}\rightarrow K(U_{2})^{\times}$ such that,  for each $v\in X_{1}^{\cl}$, the diagram
\begin{equation*}
\vcenter{
\xymatrix@C=60pt{
K(U_{1})^{\times}\ar[r]^{\text{ord}_{v}} \ar[d]^{F(\Phi)}& \mathbb{Z}\ar@{}[d]|*=0[@]{\rotatebox{90}{$=$}} \\
K(U_{2})^{\times}\ar[r]^{\text{ord}_{f^{\text{cl}}_{\Phi}(v)}}& \mathbb{Z}
}
}
\end{equation*}
is commutative. Here, $f^{\text{cl}}_{\Phi}$ stands for the bijection $X_{1}^{\text{cl}}\xrightarrow{\sim} X_{2}^{\text{cl}}$ defined in   Proposition \ref{bidecoabel}(2). Moreover, $F(\Phi)$ does not depend on $m$. (In other words, if $\Phi': \Pi^{(m')}_{U_{1}}\xrightarrow{\sim}\Pi^{(m')}_{U_{2}}$ is an isomorphism for some $m'\geq m$ and the isomorphism induced by $\Phi'$  on $\Pi^{(m)}_{U_{1}}$ (Proposition \ref{Gkisomorphic}(1)) coincides with $\Phi$, then $F(\Phi')=F(\Phi)$ holds.)  
\item   $F(\Phi)(1+m_{X_{1},v})=1+m_{X_{2},f^{\text{cl}}_{\Phi}(v)}$ for each $v\in E_{1}$.
\end{enumerate}
\end{lemma}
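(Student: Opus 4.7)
The plan is to carry out the arguments of \cite{Ta1997} Section 4 at the level of the $m$-step solvable quotient, using Proposition \ref{bidecoabel} (in place of the full tame fundamental group) to supply the necessary bijection of decomposition groups. The hypotheses (\ref{assumptionmg}) on $(m, g_1, r_1)$ are arranged exactly so that $(m, r_1) \neq (2, 1)$ and the conditions of Proposition \ref{bidecoabel} and Proposition \ref{severalinvrecoprop} are met; applying Proposition \ref{bidecoabel} with $n = m - 1$ will yield an isomorphism $\Phi^{1} : \Pi^{(1)}_{U_1} \xrightarrow{\sim} \Pi^{(1)}_{U_2}$, a $\Pi^{(1)}$-equivariant bijection $\tilde{f}^{1,\cl}_{\Phi} : \tilde{X}^{1,\cl}_{1} \xrightarrow{\sim} \tilde{X}^{1,\cl}_{2}$, a quotient bijection $f^{\cl}_{\Phi} : X_1^{\cl} \xrightarrow{\sim} X_2^{\cl}$ (carrying $U_1^{\cl}$ onto $U_2^{\cl}$), and a matching bijection of decomposition groups; this is the data we feed into everything below.

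Next I would reconstruct the local multiplicative groups. For each $v \in X_1^{\cl}$, let $D_{v} := D_{\tilde{v}, \Pi^{(1)}_{U_1}}$ for some $\tilde{v} \in \tilde{X}^{1,\cl}_{1}$ above $v$. The subgroup $D_v$ is an extension of $G_{\kappa(v)}$ by the inertia $I_v = D_v \cap \overline{\Pi}^{1}_{U_1}$, and local class field theory identifies this extension canonically with the reciprocity filtration on (a completion of) $K(U_1)_v^\times$: $I_v$ corresponds to $\widehat{O_v^\times}$, the higher ramification filtration on $I_v$ to the principal unit filtration $\{1 + m_v^k\}_{k \geq 1}$, and any Frobenius lift to the class of a uniformizer. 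Since $\Phi^1$ preserves each $D_v$, it preserves $I_v = D_v \cap \overline{\Pi}^1_{U_1}$, the Frobenius class (by Proposition \ref{Gkisomorphic}(2)), and the intrinsic ramification filtration on $I_v$; thus it will induce, for each $v$, an isomorphism $\Phi_v : \widehat{K(U_1)_v^\times} \xrightarrow{\sim} \widehat{K(U_2)_{f^{\cl}_\Phi(v)}^\times}$ compatible with $\text{ord}$ and with the principal unit filtration.

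To globalize I would then use the embedding $K(U_i)^\times \hookrightarrow \bigoplus_{v \in X_i^{\cl}} K(U_i)_v^\times$. For each open $V \subset X_1$ with $V \cap U_1$ open in $U_1$, the appropriate ``$(\Pi^{(1)}_V)^{\text{ab}}$'' can be read off from $\Pi^{(1)}_{U_1}$ by passing to the open subgroup corresponding to $V \cap U_1$ and quotienting by the inertia at points of $V - (V \cap U_1)$; global reciprocity then yields a map $\bigoplus_v K(U_1)_v^\times \to (\Pi^{(1)}_V)^{\text{ab}}$ whose kernel, together with a product-formula type compatibility, pins down the image of $K(U_1)^\times$. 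Taking the limit over $V$, the product $\prod_v \Phi_v$ will carry this image onto the corresponding image of $K(U_2)^\times$, producing the desired $F(\Phi) : K(U_1)^\times \xrightarrow{\sim} K(U_2)^\times$. The commutativity of the diagram with the valuations $\text{ord}_v$, and the equality $F(\Phi)(1 + m_{X_1, v}) = 1 + m_{X_2, f^{\cl}_\Phi(v)}$ of (2), will be immediate from the corresponding local properties of $\Phi_v$. Finally, because everything depends only on $\Phi^1$, and Proposition \ref{Gkisomorphic}(1) ensures that $\Phi'$ and $\Phi$ induce the same $\Phi^1$, the independence of $F(\Phi)$ on $m$ will follow.

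The main obstacle is precisely point (P) of the introduction: Tamagawa's Section 4 is phrased using the full tame $\pi_1$, whereas here we have only $\Pi^{(m)}_{U_1}$ available. The saving observation is that all the class-field-theoretic input needed --- the local pairing of $D_v$ with $\widehat{K(U_1)_v^\times}$, and the global reciprocity map through abelianized fundamental groups of open subsets of $X_1$ --- already lives entirely at the abelian geometric level $\Pi^{(1)}_{U_1}$, so that once Proposition \ref{bidecoabel} provides the correspondence of decomposition groups at that level the classical arguments transfer verbatim. The subtlest technical check is that at cusps $v \in E_1$ the separation of inertia, principal units, and Frobenius-lift parts inside $\Pi^{(1)}_{U_1}$ is clean enough to match the full local filtration on $K(U_1)_v^\times$ --- this is where Lemma \ref{wflemma}, the separation result Proposition \ref{sepprop}, and the detailed inertia analysis of Subsection \ref{subsectioninertia} will all be invoked.
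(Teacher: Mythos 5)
Your overall strategy (decomposition groups supplied by Proposition \ref{bidecoabel}, local class field theory on their abelianizations, then global reciprocity to cut out $K(U_1)^{\times}$ inside a restricted product, with affineness and the cusp data giving part (2)) is the same as the paper's, but there is a genuine gap in the level at which you propose to work. You do everything with $\Phi^{1}$ and decomposition groups inside $\Pi^{(1)}_{U_1}$, claiming that ``all the class-field-theoretic input needed already lives at the abelian geometric level.'' This fails exactly in the case $r_1=1$, which the hypothesis (\ref{assumptionmg}) allows as soon as $m\geq 3$: for $r_1=1$ the single inertia generator at the cusp is a product of commutators, so its image in $\overline{\Pi}^{1}_{U_1}$ is trivial (this is why Lemma \ref{decisom2} excludes $(m,r)=(1,1)$). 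Hence at level $1$ the decomposition group at the cusp contains no inertia at all, and its abelianization cannot recover $K(U_1)_v^{\times}/(1+m_v)$; but that cusp datum is indispensable both for assertion (2) and for the global step (without the condition ``$\equiv 1$ at a cusp'' the kernel of $K(U_1)^{\times}\to\prod_v' F_v$ contains the nonzero constants, so you do not cut out $K(U_1)^{\times}$ exactly). The paper avoids this by working with decomposition groups at level $m-1$ and invoking Lemma \ref{decisom2} to get $D_{\tilde v,\Pi_{U}}\xrightarrow{\sim}D_{\tilde v^{m-1},\Pi^{(m-1)}_{U}}$, the case $m=2$ (level $1$) being permitted only because (\ref{assumptionmg}) then forces $r_1\geq 2$. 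Your argument as written therefore proves the lemma only when $r_1\geq 2$, not under the stated hypotheses.

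Two smaller points. First, your local step claims to recover the completed group $\widehat{K(U_1)_v^{\times}}$ together with the full principal-unit filtration $\{1+m_v^{k}\}_{k\geq 1}$ from the higher ramification filtration on $I_v$; this is not available, since $\Pi_{U_1}^{(m)}$ is a quotient of the \emph{tame} fundamental group and sees nothing beyond $K(U_1)_v^{\times}/(1+m_v)$ at cusps and $K(U_1)_v^{\times}/O_v^{\times}$ at points of $U_1$ (fortunately, as in the paper, that is all that is needed). Second, in the globalization you should make explicit where affineness of $U_1$ enters: it is precisely what makes the map from $K(U_1)^{\times}$ to the kernel of $\prod_v' F_v\to \Pi_{U_1}^{\text{ab}}$ bijective, the cusp factors killing the constant functions; ``a product-formula type compatibility pins down the image'' is not a proof of this.
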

\begin{proof} 
By  Proposition  \ref{severalinvrecoprop}, and  Proposition \ref{Gkisomorphic}, we obtain that $g_{1}=g_{2}$, $r_{1}=r_{2}$, $|k_{1}|=|k_{2}|$, and  $\Phi$ induces an isomorphism $G_{k_{1}}\rightarrow G_{k_{2}}$ which  preserves the Frobenius elements. By Proposition \ref{bidecoabel}(1), $\Phi$ induces a bijection between  $\text{Dec}(\Pi_{U_{1}}^{(m-1)})$ and  $\text{Dec}(\Pi_{U_{2}}^{(m-1)})$. \\
(1) Let  $v$ be a closed point of $X$, $\tilde{v}^{m-1}$  an inverse image of $v$ in $\tilde{X}^{m-1}$, and  $\tilde{v}$ an inverse image of $\tilde{v}^{m-1}$ in  $\tilde{X}$. We have that the  natural projection $D_{\tilde{v},\Pi_{U}}\twoheadrightarrow D_{\tilde{v}^{m-1},\Pi_{U}^{(m-1)}}$ is an isomorphism by Lemma \ref{decisom2}. (When $m=2$, we need $r\geq 2$ here.) In particular, we obtain that $D_{\tilde{v},\Pi_{U}}^{\text{ab}}\xrightarrow{\sim}D^{\text{ab}}_{\tilde{v}^{m-1},\Pi^{\text{(m-1)}}_{U}}$.  Let   $F_{v}$ be the inverse image of  the subgroup  $\langle\text{Fr}_{k}\rangle$ by   $D^{\text{ab}}_{\tilde{v}^{m-1},\Pi^{\text{(m-1)}}_{U}}\rightarrow G_{k}$.    By  class field theory, we get
\begin{equation}\label{fnogun}
F_{v}\xleftarrow{\sim}
\begin{cases}
K(U)_{v}^{\times}/O_{v}^{\times} & (\text{if }v\in U)\\
K(U)_{v}^{\times}/(1+m_{v}) & (\text{if }v\in E),
\end{cases}
\end{equation}
where the isomorphism is induced by the local reciprocity isomorphism $\widehat{K(U)^{\times}_{v}} \xrightarrow{\rho_{v}}G_{K(U)_{v}}^{\ab}$. Further, we define the following group.
\begin{equation*}
K(\Pi_{U}^{(m)})^{\times}:=\Ker(\prod_{v\in X^{\cl}}{}^{'} F_{v}\rightarrow \Pi_{U}^{\ab})
\end{equation*}
Here,  $\prod ' F_{v}$ stands for the restricted direct product  of $F_{v}$ ($v\in X^{\cl}$) with respect to $\Ker(F_{v}\rightarrow G_{k})$ (which turns out to coincide with the direct sum of $F_{v}$ ($v\in X^{\text{cl}}$).  By definition and global class field theory, we obtain the following commutative diagram
\begin{equation*}\label{gcft}
\vcenter{
\xymatrix{
1 \ar[r]& K(U)^{\times} \ar[r]\ar@{.>}[d] & \mathbb{A}_{K(U)}^{\times}\ar[r] \ar@{->>}[d]&G_{K(U)}^{\ab}\ar@{->>}[d]\\
1 \ar[r]&K(\Pi_{U}^{(m)})^{\times} \ar[r]& \prod_{v\in X^{\cl}}' F_{v}\ar[r]&\Pi_{U}^{\ab},
}
}
\end{equation*}
where  $\mathbb{A}_{K(U)}^{\times}$ is the idele group of $K(U)$ (i.e. the  restricted direct product  of $K(U)^{\times}_{v}$ ($v\in X^{\cl}$) with respect to $O_{v}^{\times}$).  The lower horizontal sequence is  exact by definition. The upper  horizontal sequence is  exact by global class field theory.  The left-hand vertical arrow  turns out to be an isomorphism (by the assumption that $U_{1}$ is affine). The first assertion follows from this isomorphism and the commutativity of the  following diagram.
\begin{equation*}\label{e1}
\vcenter{
\xymatrix{
 K(U)^{\times}\ar@{->>}[rr]^{\text{ord}_{v}} \ar[d]_{\rotatebox{90}{$\sim$}}&& \mathbb{Z}\ar[d]_{\rotatebox{90}{$\sim$}}\\
 K(\Pi_{U}^{(m)})^{\times}\ar[r] 					& F_{v}\ar@{->>}[r]				& F_{v}/\Ker(F_{v}\rightarrow G_{k}).
}
}
\end{equation*}
Here, the right-hand vertical arrow stands for the morphism  $\mathbb{Z}\rightarrow F_{v}/\Ker(F_{v}\rightarrow G_{k})(\hookrightarrow G_{k}), 1\mapsto \text{Fr}_{\kappa(v)}(=\text{Fr}_{k}^{[G_{k}:G_{\kappa(v)}]})$. The second assertion follows from the construction of $F(\Phi)$. \\
(2) Let $v$ be an element of $E$. Then, by  the isomorphism $K(U)^{\times}\xrightarrow{\sim}K(\Pi_{U}^{(m)})^{\times}$, the subgroup $1+m_{X,v}\subset K(U)^{\times}$ corresponds to  $\Ker(K(\Pi^{(m)}_{U})^{\times}\rightarrow F_{v})\subset K(\Pi_{U}^{(m)})^{\times}$. Hence the assertion follows.
\end{proof}

We define $K(\Pi_{U}^{(m)}):=K(\Pi_{U}^{(m)})^{\times}\cup\left\{ *\right\}$. By Lemma \ref{reconstructionlemmaabel},  we obtain  an isomorphism  of  multiplicative monoids $F(\Phi):K(U_{1})\xrightarrow{\sim} K(U_{2})$ (with $0\mapsto 0$)  under the assumption of Lemma \ref{reconstructionlemmaabel}.

\begin{lemma}\label{prop1.14}
Assume that   $U_{1}$ is affine  hyperbolic.  Let $n\in\mathbb{Z}_{\geq 0}$ be an integer satisfying  $m\geq n$. Let $H_1$, $H'_{1}$ be open subgroups of  $\Pi^{(m)}_{U_{1}}$ that  satisfy    $\overline{\Pi}_{U_{1}}^{[m-n]}/\overline{\Pi}_{U_{1}}^{[m]} \subset H_{1}'\subset H_{1}$.  We assume that $(n,g(U_{H_{1}}),r(U_{H_{1}}))$  satisfies  the assumption for  $(m,g_{1},r_{1})$ in (\ref{assumptionmg}). (Thus,  $(n,g(U_{H'_{1}}),r(U_{H'_{1}}))$ satisfies the same assumption, since  $H_{1}'\subset H_{1}$.)   Let   $\Phi: \Pi^{(m)}_{U_{1}}\xrightarrow{\sim}\Pi^{(m)}_{U_{2}}$ be an isomorphism, $H_{2}:=\Phi(H_{1})$, and $H_{2}':=\Phi(H_{1}')$. Then   the  following diagram is commutative.
\begin{equation*}\vcenter{
\xymatrix@C=60pt{
K(U_{1,H'_{1}}) \ar[r]^-{F(\Phi|_{H'_{1}})} & K(U_{2,H'_{2}})\\
K(U_{1,H_{1}}) \ar[r]^{F(\Phi|_{H_{1}})}\ar@{^{(}-_>}[u]  & K(U_{2,H_{2}})\ar@{^{(}-_>}[u] 
}
}
\end{equation*} 
Here,   $F(\Phi|_{H_{1}})$ (resp. $F(\Phi|_{H'_{1}})$ ) stands for  the isomorphism  of  multiplicative monoids  induced by  the isomorphism $H_{1}^{(n)}\xrightarrow{\sim} H_{2}^{(n)} $ (resp.  $H'_{1}{}^{(n)}\xrightarrow{\sim} H'_{2}{}^{(n)}$) (see Lemma \ref{reconstructionlemmaabel}(1)). 
\end{lemma}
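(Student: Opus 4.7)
The plan is to trace through the construction of $F(\Phi\vert_{H_i})$ given in Lemma \ref{reconstructionlemmaabel} and verify its naturality with respect to passing from $H_i$ to the open subgroup $H_i'$. First, by Lemma \ref{quotientlemma1.1}(2), the natural surjections $\tilde H_i^{(n)} \twoheadrightarrow H_i^{(n)}$ and $\tilde H_i'^{(n)} \twoheadrightarrow H_i'^{(n)}$ (where $\tilde H_i$, $\tilde H_i'$ are the preimages of $H_i$, $H_i'$ in $\Pi_{U_i}$) are isomorphisms. Hence $H_i^{(n)}$ is canonically identified with $\Pi_{U_{i,H_i}}^{(n)}$ and $H_i'^{(n)}$ with $\Pi_{U_{i,H_i'}}^{(n)}$, and the hypothesis on $(n,g(U_{H_1}),r(U_{H_1}))$ guarantees that Lemma \ref{reconstructionlemmaabel} applies to both $U_{i,H_i}$ and $U_{i,H_i'}$ (the cover $U_{1,H_1'} \to U_{1,H_1}$ is unramified so in particular preserves affine hyperbolicity).

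Recall that $F(\Phi\vert_{H_i})$ is obtained via the canonical isomorphism $K(U_{i,H_i})^\times \xrightarrow{\sim} K(H_i^{(n)})^\times := \Ker\bigl(\prod_v{}' F_v \to (H_i^{(n)})^{\ab}\bigr)$, where the subgroups $F_v$ are built from decomposition groups by local class field theory. The key technical step is to construct a group-theoretic map $K(H_1^{(n)})^\times \hookrightarrow K(H_1'^{(n)})^\times$ that, under the above isomorphisms, matches the field inclusion $\iota_1 \colon K(U_{1,H_1})^\times \hookrightarrow K(U_{1,H_1'})^\times$. For each $v \in X_{1,H_1}^{\cl}$ and each $v' \in X_{1,H_1'}^{\cl}$ above $v$, the natural inclusion $D_{v',H_1'^{(n)}} \hookrightarrow D_{v,H_1^{(n)}}$ (coming from the cover map $\tilde X_{1,H_1'}^{0} \to \tilde X_{1,H_1}^{0}$, see Proposition \ref{bidecoabel}(2)), together with the compatibility of local reciprocity maps under finite extensions of local fields, induces a map $F_v \to F_{v'}$; assembling these over $v' \mid v$ yields $F_v \to \prod_{v' \mid v} F_{v'}$, and hence a map $\prod_v{}' F_v \to \prod_{v'}{}' F_{v'}$ which restricts to the desired map on kernels. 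That this group-theoretic map corresponds to the inclusion of function fields is exactly the functoriality of global class field theory with respect to the extension $K(U_{1,H_1}) \hookrightarrow K(U_{1,H_1'})$.

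Finally, since $\Phi\vert_{H_1}$ and $\Phi\vert_{H_1'}$ preserve decomposition groups (Proposition \ref{bidecoabel}(1)), and the bijections $f_{\Phi\vert_{H_1}}^{\cl}$, $f_{\Phi\vert_{H_1'}}^{\cl}$ are compatible with the projection $X_{1,H_1'}^{\cl} \to X_{1,H_1}^{\cl}$ (by the uniqueness in Proposition \ref{bidecoabel}(2) applied to the pairs $(H_1, \Phi(H_1))$ and $(H_1', \Phi(H_1'))$), the transition maps $F_v \to \prod_{v'\mid v} F_{v'}$ on the $U_1$-side intertwine with the analogous maps on the $U_2$-side under the isomorphisms on $F_v$ induced by $\Phi$. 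Passing to the global kernels and back to function fields via the reconstruction of Lemma \ref{reconstructionlemmaabel} then yields the commutativity of the diagram.

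The main obstacle is the middle step: verifying rigorously that the group-theoretically defined restriction map $K(H_1^{(n)})^\times \to K(H_1'^{(n)})^\times$ matches the field-theoretic inclusion $\iota_1$ under the class field theory isomorphism. This is in essence the classical compatibility of the adelic reciprocity sequence with respect to a tower of global function fields, but it must be stated carefully in terms of $F_v$ and the (topological) abelianizations of decomposition groups; once this is in hand, the rest of the argument is a formal diagram chase using the fact that every ingredient of the construction is natural in the group-theoretic data that $\Phi$ preserves.
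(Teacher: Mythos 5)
Your overall strategy is the paper's: reduce the commutativity to the naturality, with respect to the inclusion $H_1'\subset H_1$, of the class-field-theoretic construction of $F(\Phi|_{H_1})$, the local ingredient being a map $F_v\to F_{v'}$ (for $v'\in X_{1,H_1'}^{\cl}$ above $v\in X_{1,H_1}^{\cl}$) that corresponds to the inclusion of completions under reciprocity, followed by the diagonal map into $\prod_{v'\mid v}F_{v'}$, restriction to the global kernels, and the identification $H_i^{(n)}\cong \Pi_{U_{i,H_i}}^{(n)}$ from Lemma \ref{quotientlemma1.1}.

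The gap is at the step you yourself single out as the main obstacle, and as written it does not work: the homomorphism on abelianizations induced by the inclusion $D_{v',H_1'^{(n)}}\hookrightarrow D_{v,H_1^{(n)}}$ goes in the direction $D_{v'}^{\text{ab}}\to D_{v}^{\text{ab}}$ and corresponds under local reciprocity to the \emph{norm} map, not to the inclusion $K(U_{1,H_1})_v^{\times}\hookrightarrow K(U_{1,H_1'})_{v'}^{\times}$ that you need; so "the inclusion of decomposition groups induces $F_v\to F_{v'}$" is not yet a construction. What is needed, and what the paper uses, is the transfer (Verlagerung) homomorphism $D_{\tilde v,\Pi_{U_{1,H_1}}}^{\text{ab}}\to D_{\tilde v,\Pi_{U_{1,H_1'}}}^{\text{ab}}$, identified with the abelianized decomposition groups of $H_1^{(n)}$ and $H_1'^{(n)}$ via Lemma \ref{decisom2}; under the local reciprocity isomorphisms the transfer is exactly the inclusion of local multiplicative groups (cf. \cite{Sa2021}, section 2), and this yields the injection $F_v\hookrightarrow F_{v'}$. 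Naming the transfer also repairs the second soft spot in your third paragraph: the intertwining of the transition maps by $\Phi$ does not follow just from $\Phi$ preserving decomposition groups and the compatibility of the point bijections with the projections (those only match up the indexing sets of places); it follows because the transfer is canonically attached to the pair (profinite group, open subgroup) and is therefore automatically carried to the corresponding transfer on the $U_2$-side by the group isomorphism $\Phi$. With the transfer inserted at this point, the rest of your argument coincides with the paper's proof.
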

\begin{proof}
Let  $\tilde{v}_{i}$ be an element of $\tilde{X}_{i,H_{i}}$ and   $\tilde{v}^{n}_{i}$  (resp. $v_{i}$, resp. $v'_{i}$)  an image of $\tilde{v}_{i}$ in  $\tilde{X}_{i,H_{i}}^{n}$ (resp.   $X_{i,H_{i}}$, resp.  $X_{i,H'_{i}}$).  By Lemma \ref{decisom2}, we obtain that $D_{\tilde{v}_{i},\Pi_{U_{i,H_{i}}}}^{\text{ab}}\xrightarrow{\sim}D^{\text{ab}}_{\tilde{v}_{i}^{n},H_{i}^{(n)}}$.  The transfer homomorphism  $D_{\tilde{v}_{i},\Pi_{U_{i,H_{i}}}}^{\text{ab}}\rightarrow D_{\tilde{v}_{i},\Pi_{U_{i,H'_{i}}}}^{\text{ab}}$    yields the natural injection $F_{v_{i}}\hookrightarrow F_{v'_{i}}$ (cf. \cite{Sa2021} section  2), where $F_{v_{i}}$ and $F_{v'_{i}}$ are defined in (\ref{fnogun}). The assertion follows from this and the  various constructions. 
\end{proof}

Next, we consider  the addition of  $K(U)$.

\begin{lemma}\label{Talem}
Let  $i=1,2$. Let  $t_{i}$ be an algebraically closed field, $Y_{i}$ a proper, smooth, connected curve over $t_{i}$. Let   $T_{i}$ be a  subset of   $(Y_{i})^{\cl}$. Assume that we are given an isomorphism $\overline{F}:K(Y_{1})\rightarrow K(Y_{2})$ as multiplicative monoids and a bijection $\overline{f}:(Y_{1})^{\cl}\rightarrow (Y_{2})^{\cl}$ with $\overline{f}(T_{1})=T_{2}$, satisfying the following conditions.
\begin{enumerate}[(i)]
\item For each $P\in (Y_{1})^{\cl}$, the following diagram is commutative.
\begin{equation*}
\vcenter{
\xymatrix@C=60pt{
K(Y_{1})^{\times}\ar[r]^{\text{ord}_{P}} \ar[d]^{\overline{F}}& \mathbb{Z}\ar@{}[d]|*=0[@]{\rotatebox{90}{$=$}} \\
K(Y_{2})^{\times}\ar[r]^{\text{ord}_{\overline{f}(P)}}& \mathbb{Z}
}
}
\end{equation*}
\item For each $P\in T_{1}$, $\overline{F}(1+m_{Y_{1},P})=1+m_{Y_{2},\overline{f}(P)}$.
\item $|T_{1}|\geq 3$.
\end{enumerate}
Then $\overline{F}:K(Y_{1})\rightarrow K(Y_{2})$ is additive.
\end{lemma}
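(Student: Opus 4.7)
The plan is to follow Tamagawa's argument in \cite{Ta1997}, Lemma 4.7. The strategy has two main phases: first, to produce a field isomorphism $\overline{F}_0 : t_1 \to t_2$ on the constant fields from the multiplicative data together with the $1+m_{Y_i,P}$-filtration at points of $T$, and then to upgrade this to additivity on all of $K(Y_1)$.

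First I would set up two preliminary compatibilities. By condition (i), the constants are precisely the units with all valuations zero, so $\overline{F}_0 := \overline{F}|_{t_1} : t_1 \to t_2$ is already a multiplicative monoid isomorphism. Next, for each $P \in T_1$ with $P' := \overline{f}(P)$, and each $f \in O_{Y_1, P}^\times$ with $f(P) = c \in t_1^\times$, the condition $f(P) = c$ rewrites as $f/c \in 1 + m_{Y_1,P}$; by (ii) this is equivalent to $\overline{F}(f)/\overline{F}_0(c) \in 1 + m_{Y_2,P'}$, i.e., $\overline{F}(f)(P') = \overline{F}_0(f(P))$. Thus $\overline{F}$ intertwines evaluation-at-$P$ with $\overline{F}_0$ for every $P \in T_1$.

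The main step is to show that $\overline{F}_0$ is additive. Using hypothesis (iii), fix three distinct points $P_0, P_1, P_\infty \in T_1$, and by Riemann-Roch construct a non-constant $u \in K(Y_1)$ with $u(P_0) = 0$, $u(P_1) = 1$, and a pole at $P_\infty$. For each $c \in t_1 \setminus \{0,1\}$, I would compare the two elements $\overline{F}(u-c)$ and $\overline{F}(u) - \overline{F}_0(c)$ of $K(Y_2)^\times$: by (i) their pole divisors coincide, and (using the evaluation compatibility together with the freedom in choosing $u$ via Riemann-Roch so that $u^{-1}(c)$ meets $T_1$ in a controlled way) their zero divisors also coincide, so the two functions differ by a scalar in $t_2^\times$. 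Evaluating at $P_0'$ and $P_1'$, combined with $\overline{F}_0(-1) = -1$ (forced by the characterization of $-1$ as the unique element of multiplicative order $2$ in characteristic $\neq 2$, and trivially in characteristic $2$), yields the identity $\overline{F}_0(1-c) = 1 - \overline{F}_0(c)$; together with multiplicativity this makes $\overline{F}_0$ a field isomorphism.

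Once $\overline{F}_0$ is a field isomorphism, the passage to full additivity on $K(Y_1)$ is formal. It suffices to show $\overline{F}(1+h) = 1 + \overline{F}(h)$ for every $h \in K(Y_1) \setminus \{-1\}$; at each $P \in T_1$ where $h$ is a unit with $h(P) \neq -1$, both sides take the same value $1 + \overline{F}_0(h(P))$ at $P'$ by the evaluation compatibility, and a divisor comparison parallel to the main step gives equality in $K(Y_2)$. The hardest part will be the divisor comparison in the constants step: showing that the zero divisors of $\overline{F}(u-c)$ and $\overline{F}(u) - \overline{F}_0(c)$ agree away from $T_2$ requires a careful argument combining both hypotheses (i) and (ii) with a genericity choice for $u$, since a priori we only have evaluation compatibility at the points of $T$.
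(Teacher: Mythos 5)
Your preliminary steps are fine and are indeed in the spirit of the argument the paper invokes (the paper itself gives no proof of this lemma; it simply cites \cite{Ta1997}, Lemma 4.7): condition (i) shows $\overline{F}$ carries constants to constants, condition (ii) gives the evaluation compatibility $\overline{F}(f)(\overline{f}(P))=\overline{F}_{0}(f(P))$ for $P\in T_{1}$, the identity $\overline{F}_{0}(-1)=-1$ is correctly forced, and the reduction of full additivity to the identity $\overline{F}_{0}(1-c)=1-\overline{F}_{0}(c)$ together with a pointwise comparison of $\overline{F}(x+1)$ and $\overline{F}(x)+1$ is sound \emph{provided} one has evaluation compatibility at enough points.

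The genuine gap is exactly the step you flag and then defer: the claim that $\overline{F}(u-c)$ and $\overline{F}(u)-\overline{F}_{0}(c)$ have the same zero divisor. In the minimal case $|T_{1}|=3$ this cannot be extracted from the stated mechanism: with $u(P_{0})=0$, $u(P_{1})=1$ and the pole at $P_{\infty}$, the fibre $u^{-1}(c)$ for $c\neq 0,1$ is disjoint from $T_{1}=\{P_{0},P_{1},P_{\infty}\}$, so there is no point of the zero locus at which you have any evaluation compatibility; and matching the fibre of $\overline{F}(u)$ over $\overline{F}_{0}(c)$ with $\overline{f}(u^{-1}(c))$ is essentially the statement being proved (that $\overline{f}$ identifies fibres), so it cannot be read off from (i). Even if $|T_{1}|\geq 4$ and you renormalize $u$ so that one point of $u^{-1}(c)$ lies in $T_{1}$, in genus $\geq 1$ a function with a single pole point has degree $\geq 2$, so the fibre still contains points outside $T_{1}$ and the divisors cannot be matched by evaluation at $T_{1}$ alone. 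The tool actually available at this stage is condition (ii) at the pole: $(u-c)/u\in 1+m_{Y_{1},P_{\infty}}$ gives $\overline{F}(u-c)/\overline{F}(u)\in 1+m_{Y_{2},\overline{f}(P_{\infty})}$, which only shows that the difference $\overline{F}(u)-\overline{F}(u-c)$ lies in the Riemann--Roch space $L\bigl((n-1)\,\overline{f}(P_{\infty})\bigr)$, where $n=-\text{ord}_{P_{\infty}}(u)$; this forces it to be the desired constant only when that space consists of constants (e.g.\ genus $0$ with $n=1$), and closing the general case is precisely the nontrivial content of Tamagawa's Lemma 4.7, which your sketch does not supply. The same unproved divisor comparison is invoked again in your final paragraph, so both the additivity of $\overline{F}_{0}$ and the passage to additivity of $\overline{F}$ rest on a step that is missing.
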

\begin{proof}
See \cite{Ta1997} Lemma 4.7.
\end{proof}

We obtain the first main result of this section.

\begin{theorem}[Weak bi-anabelian result over finite fields]\label{finGCweak}
 Assume that $U_{1}$ is  affine hyperbolic and that $m$ satisfies
\begin{equation*}
\begin{cases}
\ \ m\geq  2 & \ \ (\text{if }r_{1}\geq  3 \text{ and }(g_{1},r_{1})\neq (0,3), (0,4))\\
\ \ m\geq 3 & \ \ (\text{if }r_{1}<3 \text{ or } (g_{1},r_{1})= (0,3), (0,4))
\end{cases}
\end{equation*}
 (see Notation  of section \ref{sectionfin}). Let  $\Phi: \Pi^{(m)}_{U_{1}}\xrightarrow{\sim}\Pi^{(m)}_{U_{2}}$ be an isomorphism.  Let $\mathcal{Q}_{1}$ be the set of all  finite  extensions  of $k_{1}$ and set  $\Phi_{L_{1}}:=\Phi|_{\Pi^{(m)}_{U_{1,L_{1}}}}$. Then the  multiplicative monoid isomorphism $F(\Phi):K(U_{1})\xrightarrow{\sim}K(U_{2})$   is additive and $\{F(\Phi_{L_{1}})\}_{L_{1}\in\mathcal{Q}_{1}}$ induces  scheme isomorphisms $\tilde{f}^{0}_{\Phi}:\tilde{X}^{0}_{1}\xrightarrow{\sim} \tilde{X}^{0}_{2}$ and $f_{\Phi}:X_{1}\xrightarrow{\sim} X_{2}$ which satisfy the following conditions (i)-(iii).
 \begin{enumerate}[(i)]
 \item The isomorphisms $\tilde{f}^{0}_{\Phi}$,  $f_{\Phi}$ induce isomorphisms $\tilde{U}^{0}_{1}\xrightarrow{\sim}\tilde{U}_{2}^{0}$, $U_{1}\xrightarrow{\sim}U_{2}$, respectively.
 \item   The maps $\tilde{f}^{0}_{\Phi}|_{\tilde{X}_{1}^{0,\text{cl}}}$, $f_{\Phi}|_{X_{1}^{\text{cl}}}$  coincide with  the bijections $\tilde{f}^{0,\text{cl}}_{\Phi}: \tilde{X}^{0,\text{cl}}_{1}\xrightarrow{\sim} \tilde{X}^{0,\text{cl}}_{2}$, $f^{\text{cl}}_{\Phi}: X^{\text{cl}}_{1}\xrightarrow{\sim} X^{\text{cl}}_{2}$   defined in   Proposition \ref{bidecoabel}(2), respectively.
 \item Let $\Phi^{\text{ab}}$ be the element  of $\Isom(\Pi_{U_{1}}^{\text{ab}},\Pi_{U_{2}}^{\text{ab}})$ induced by $\Phi$. Then the image of $f_{\Phi}|_{U_{1}}:U_{1}\xrightarrow{\sim}U_{2}$ by the natural map $
 \Isom(U_1,U_2)\rightarrow \Isom(\Pi_{U_{1}}^{\text{ab}},\Pi_{U_{2}}^{\text{ab}})$ coincides with $\Phi^{\text{ab}}$. 
 \item Let  $\overline{\Phi}^{1}$ be an element of $\Isom(\overline{\Pi}_{U_{1}}^{1},\overline{\Pi}^{1}_{U_{2}})$ induced by $\Phi$. Then the image of $\tilde{f}_{\Phi}^{0}|_{\tilde{U}^{0}_{1}}: \tilde{U}^{0}_{1}\xrightarrow{\sim}\tilde{U}^{0}_{2}$ by the natural map $
 \Isom(\tilde{U}^{0}_{1},\tilde{U}^{0}_{2})\rightarrow \Isom(\overline{\Pi}_{U_{1}}^{1},\overline{\Pi}_{U_{2}}^{1})$ coincides with $\overline{\Phi}^{1}$. 
  \end{enumerate}
\noindent  In particular, the following holds.
\begin{equation}\label{eq123}
\Pi_{U_{1}}^{(m)}\xrightarrow[]{\sim} \Pi^{(m)}_{U_{2}}\iff U_{1}\xrightarrow[\text{scheme}]{\sim}U_{2}
\end{equation}
\end{theorem}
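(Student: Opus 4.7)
The approach is to combine the three main ingredients of this section: the closed-point bijection of Proposition \ref{bidecoabel}, the multiplicative function-field isomorphism of Lemma \ref{reconstructionlemmaabel}, and the additivity criterion of Lemma \ref{Talem}.  The backward implication of \eqref{eq123} will follow immediately from the functoriality of $\pi_{1}^{\text{tame}}$, so the plan concerns the forward direction together with the scheme-isomorphism data in (i)--(iv).

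First I will use Proposition \ref{severalinvrecoprop} together with Proposition \ref{Gkisomorphic} to extract $g_1=g_2$, $r_1=r_2$, $|k_1|=|k_2|$, and a canonical Galois isomorphism $G_{k_1}\xrightarrow{\sim}G_{k_2}$ preserving Frobenius; this will assign to each finite $L_1/k_1$ a corresponding finite $L_2/k_2$ so that $\Phi$ restricts to $\Phi_{L_1}:\Pi^{(m)}_{U_{1,L_1}}\xrightarrow{\sim}\Pi^{(m)}_{U_{2,L_2}}$.  Since our hypothesis strictly strengthens the hypothesis (\ref{assumptionmg}) of Lemma \ref{reconstructionlemmaabel}, applying that lemma at each level will produce multiplicative monoid isomorphisms $F(\Phi_{L_1}):K(U_{1,L_1})\xrightarrow{\sim}K(U_{2,L_2})$ compatible with valuations along $f_\Phi^{\text{cl}}$ (from Proposition \ref{bidecoabel}) and carrying $1+m_{X_1,v}$ to $1+m_{X_2,f_\Phi^{\text{cl}}(v)}$ at cusps.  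Lemma \ref{prop1.14} will ensure these are compatible under enlargement of $L_1$, so they assemble into a multiplicative monoid isomorphism $F_\infty:K(\tilde{U}_1^0)\xrightarrow{\sim}K(\tilde{U}_2^0)$.

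The key step is to upgrade $F_\infty$ to a field isomorphism via Lemma \ref{Talem}.  When $r_1\geq 3$, I will apply the lemma directly with $Y_i=\tilde{X}_i^0$ and $T_1=\tilde{E}_1^0$ (so $|T_1|=r_1\geq 3$), conditions (i)--(ii) of the lemma following from Lemma \ref{reconstructionlemmaabel}(1)(2) in the limit.  When $r_1<3$ (so $m\geq 3$ by hypothesis), no such $T_1$ is available at the base, so I will instead use Lemma \ref{gandr} to choose an open subgroup $H_1\subset\Pi^{(m)}_{U_1}$ containing $\overline{\Pi}_{U_1}^{[1]}/\overline{\Pi}_{U_1}^{[m]}$ with $r(U_{1,H_1})\geq 3$ and $(g(U_{1,H_1}),r(U_{1,H_1}))\neq (0,3),(0,4)$.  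By Lemma \ref{quotientlemma1.1}(2), $H_1^{(m-1)}\cong \Pi^{(m-1)}_{U_{1,H_1}}$, so the $r\geq 3$ argument applied to the induced $(m-1)$-quotient isomorphism will yield an additive $F_{\infty,H_1}:K(\widetilde{U_{1,H_1}}^0)\xrightarrow{\sim}K(\widetilde{U_{2,H_2}}^0)$; Lemma \ref{prop1.14} will then identify $F_\infty$ with the restriction of $F_{\infty,H_1}$ along the inclusion $K(\tilde{U}_1^0)\hookrightarrow K(\widetilde{U_{1,H_1}}^0)$, and additivity of $F_\infty$ will descend.

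Once $F_\infty$ is a field isomorphism compatible with valuations and with $f_\Phi^{\text{cl}}$, it will determine a scheme isomorphism $\tilde{f}_\Phi^0:\tilde{X}_1^0\xrightarrow{\sim}\tilde{X}_2^0$ restricting to $\tilde{U}_1^0\xrightarrow{\sim}\tilde{U}_2^0$.  The naturality of the construction in $L_1$ together with the canonical Galois identification will provide the descent data for a scheme isomorphism $f_\Phi:X_1\xrightarrow{\sim}X_2$.  Properties (i) and (ii) will hold by construction; (iii) and (iv) will follow from the fact that $F(\Phi)$ is defined via the local reciprocity maps in the proof of Lemma \ref{reconstructionlemmaabel}, so the induced map on $\pi_1^{\text{ab}}$ agrees with $\Phi^{\text{ab}}$ and its geometric version with $\overline{\Phi}^1$.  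The hard part will be the cover step in the $r_1<3$ case: $H_1$ must be chosen so that the full $r\geq 3$ argument applies cleanly on the cover and then descends via Lemma \ref{prop1.14}, and this constraint is precisely what forces the hypothesis $m\geq 3$ when $r_1<3$.
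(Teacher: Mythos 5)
Your proposal follows essentially the same route as the paper's proof: recover $g_i,r_i,|k_i|$ and the Frobenius-compatible Galois isomorphism via Propositions \ref{severalinvrecoprop} and \ref{Gkisomorphic}, assemble the multiplicative isomorphisms of Lemma \ref{reconstructionlemmaabel} over all finite extensions using Lemma \ref{prop1.14}, obtain additivity from Lemma \ref{Talem} with $T_1=\tilde{E}_1^0$ when $r_1\geq 3$, pass to a cover with $r\geq 3$ supplied by Lemma \ref{gandr} otherwise, and deduce (i)--(iv) and (\ref{eq123}) as in the paper (the paper proves (iii) via Frobenius elements in decomposition groups and Chebotarev, which is the precise form of your reciprocity argument). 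Your only deviations --- handling $(g_1,r_1)=(0,3),(0,4)$ in the direct case (legitimate, since $m\geq 3$ there makes Lemma \ref{reconstructionlemmaabel} applicable) and using an abelian cover with the $(m-1)$-quotient where the paper takes $H_1\supset\overline{\Pi}_{U_1}^{[m-2]}/\overline{\Pi}_{U_1}^{[m]}$ with the $2$-quotient --- are harmless and coincide with the paper's choices when $m=3$.
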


\begin{proof}
By  Proposition  \ref{severalinvrecoprop}, and  Proposition \ref{Gkisomorphic}, we obtain that $g_{1}=g_{2}$, $r_{1}=r_{2}$, $|k_{1}|=|k_{2}|$, and  $\Phi$ induces an isomorphism $G_{k_{1}}\xrightarrow{\sim} G_{k_{2}}$ which  preserves the Frobenius elements.\par
 Let $\mathcal{Q}_{2}$ be the set of all  finite  extensions  of $k_{2}$.  The isomorphism $G_{k_{1}}\xrightarrow{\sim} G_{k_{2}}$ induced by $\Phi$ (Proposition \ref{Gkisomorphic}(1)) induces a  bijection   $\rho: \mathcal{Q}_{1}\xrightarrow{\sim}\mathcal{Q}_{2}$. For each $i=1$, $2$ and each $P\in  (X_{i,\overline{k}_{i}})^{\cl}$, we have  $K(U_{i,\overline{k}_{i}})=\ilim{L_{i}\in\mathcal{Q}_{i}} K(U_{i,L_{i}})$, $\text{ord}_{P}=\plim{L_{i}\in\mathcal{Q}_{i}}\text{ord}_{P_{L_{i}}}$,  and $1+m_{X_{i,\overline{k}_{i}},P}=\ilim{L_{i}\in\mathcal{Q}_{i}}(1+m_{X_{i,L_{i}},P_{L_{i}}})$, where $P_{L_{i}}\in (X_{i,L_{i}})^{\cl}$ stands for  the image of $P$.  By Lemma  \ref{reconstructionlemmaabel}(1) and   Lemma \ref{prop1.14}, we obtain  an isomorphism  of  multiplicative monoids
\begin{equation}\label{geomisommap}
K(U_{1,\overline{k}_{1}})=\ilim{L_{1}\in\mathcal{Q}_{1}} K(U_{1,L_{1}})\rightarrow \ilim{L_{1}\in\mathcal{Q}_{1}}K(U_{2,\rho(L_{1})})=K(U_{2,\overline{k}_{2}}).
\end{equation}\par
First, we consider the case that    $r_{1}\geq 3$ and $(g_{1},r_{1})\neq (0,3), (0,4)$. Then, by  Lemma  \ref{reconstructionlemmaabel}(1)(2) and Lemma \ref{Talem},   the multiplicative monoid isomorphism (\ref{geomisommap}) is additive.    Hence we obtain an isomorphism $\tilde{f}^{0}_{\Phi}:\tilde{X}^{0}_{1}\xrightarrow{\sim}\tilde{X}^{0}_{2}$. As $F(\Phi):K(U_{1})\xrightarrow{\sim}K(U_{2})$ is  a restriction of (\ref{geomisommap}),   $F(\Phi)$ is also additive.  Hence we obtain an isomorphism $f_{\Phi}:X_{1}\xrightarrow{\sim}X_{2}$. By construction, $\tilde{f}^{0}_{\Phi}$ and $f_{\Phi}$ satisfy (ii).  By Proposition \ref{bidecoabel}(3) and (ii),  $\tilde{f}^{0}_{\Phi}$ and $f_{\Phi}$ also satisfy (i). \par 
Next, we  consider general $(g_{1},r_{1})$. By Lemma \ref{gandr}, there exists an open subgroup $H_{1}\subset \Pi_{U_{1}}^{(m)}$  containing $\overline{\Pi}_{U_{1}}^{[m-2]}/\overline{\Pi}_{U_{1}}^{[m]}$ such  that   $r(U_{1,H_{1}})\geq 3$ and  that   $(g(U_{1,H_{1}}),r(U_{1,H_{1}}))\neq (0,3), (0,4)$.  We obtain that  $\tilde{F}^{0}(\Phi|_{H_{1}})$ and $F(\Phi|_{H_{1}})$ are additive, that  $\tilde{F}^{0}(\Phi|_{H_{1}})(\Gamma(U_{1,H_1,\overline{k}}, O_{X_{1,H_1,\overline{k}}}))=\Gamma(U_{2,\Phi(H_1),\overline{k}}, O_{X_{2,\Phi(H_1),\overline{k}}})$, and that $F(\Phi|_{H_{1}})(\Gamma(U_{1,H_1}, O_{X_{1,H_1}}))=\Gamma(U_{2,\Phi(H_1)}, O_{X_{2,\Phi(H_1)}})$, where  $\tilde{F}^{0}(\Phi|_{H_{1}})$ and $F(\Phi|_{H_{1}})$ stand for the isomorphisms  of  multiplicative monoids  $K(U_{1,H_{1},\overline{k}})\xrightarrow{\sim} K(U_{2,\Phi(H_{1}),\overline{k}})$ and $K(U_{1,H_{1}})\xrightarrow{\sim} K(U_{2,\Phi(H_{1})})$ induced by  $H_{1}^{(2)}\xrightarrow{\sim} \Phi(H_{1})^{(2)} $, respectively (see Lemma \ref{reconstructionlemmaabel}(1)).  Hence,   by Lemma  \ref{prop1.14},  we obtain  isomorphisms $\tilde{f}^{0}_{\Phi}:\tilde{X}^{0}_{1}\xrightarrow{\sim}\tilde{X}^{0}_{2}$ and  $f_{\Phi}:X_{1}\xrightarrow{\sim}X_{2}$ satisfying the condition (i). By  construction of $F(\Phi)$, $\tilde{f}^{0}_{\Phi}$ and  $f_{\Phi}$ satisfy the condition (ii). Hence the equivalence  in  (\ref{eq123})  follows. (Note that the implication $\Leftarrow$ in (\ref{eq123}) is clear.)\par
Next, we show that $f_{\Phi}$ satisfies the condition (iii).     Let  $\tilde{f}_{\Phi}$ be the image of $f_{\Phi}$ by $\Isom(U_1,U_2)\rightarrow \Isom(\Pi_{U_{1}}^{\text{ab}},\Pi_{U_{2}}^{\text{ab}})$. By (ii),  we obtain that  $\Phi^{\text{ab}}(D_{v})=D_{f^{\text{cl}}_{\Phi}(v)}=\tilde{f}_{\Phi}(D_{v})$ for each $v\in U^{\text{cl}}_{1}$.  We set   $s_{v}:G_{\kappa(v)}\xrightarrow{\sim} D_{v}$. By  Proposition \ref{Gkisomorphic}(2), we get $\Phi^{\text{ab}}(s_{v}(\text{Fr}_{\kappa(v)}))=\tilde{f}_{\Phi}(s_{v}(\text{Fr}_{\kappa(v)}))$, where $\text{Fr}_{\kappa(v)}$ is the Frobenius element of $G_{\kappa(v)}$.  Since we have $\Pi_{U_{1}}^{\text{ab}}=\overline{\langle s_{v}(\text{Fr}_{\kappa(v)})\ |\ v\in U_{1}^{\text{cl}}\rangle}$ by  Chebotarev's density theorem, we obtain that $\tilde{f}_{\Phi}=\Phi^{\text{ab}}$. Thus, $f_{\Phi}$ satisfies the condition (iii). \par 
Finally, we show that $\tilde{f}^{0}_{\Phi}$ satisfies the condition (iv).  For any $L_{1}\in \mathcal{Q}_{i}$, $\tilde{f}^{0}_{\Phi}$ and $\Phi_{L_{1}}$ induces the same  isomorphism $\Pi_{U_{1,L_{1}}}^{\text{ab}}\xrightarrow{\sim}\Pi_{U_{2,L_{2}}}^{\text{ab}}$ by (iii). Since we have that  $\overline{\Pi}^{1}\xrightarrow{\sim}(\plim{L\in\mathcal{Q}_{i}}\Pi_{U_{i,L}})^{\text{ab}}\xrightarrow{\sim}\plim{L\in\mathcal{Q}_{i}}\Pi_{U_{i,L}}^{\text{ab}}$, (iv) follows.
\end{proof}


\subsection{The strong bi-anabelian results over finite fields}\label{subsectionfinstrong}
\hspace{\parindent}In this subsection,  we show the strong bi-anabelian  $m$-step solvable Grothendieck conjecture for  affine hyperbolic curves over   finite fields, and  obtain  corollaries.
 
 \begin{lemma}\label{stxlemma1}
Let $X$ and $Y$ be schemes of finite type over $\text{Spec}(\mathbb{Z})$ and assume that $X$ is integral. Let  $f, g$: $X \rightarrow  Y$ be morphisms.  If  $f$ and $g$  coincide set-theoretically on the set of closed points of $X$, then one of the following conditions (a)-(b) holds.
\begin{enumerate}[(a)]
\item   $f = g$
\item $X$ is a scheme over $\mathbb{F}_p$ for some prime $p$, and there exists $a\in \mathbb{Z}$ such that either $a\geq 0$,  $f= g\circ \text{Fr}_{X}^{a}$, or $a<0$, $f\circ\text{Fr}^{-a}_{X}=g$. If, moreover, $f$ is not constant, then $a\in\mathbb{Z}$ is unique. 
\end{enumerate}
\end{lemma}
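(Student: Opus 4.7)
The plan is to reduce to the affine case and then combine a Galois-theoretic analysis at closed points with a globalization argument exploiting the integrality and Jacobson property of $X$.

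First I would reduce to $X=\text{Spec}(A)$ and $Y=\text{Spec}(B)$ with $A$ a finitely generated $\mathbb{Z}$-algebra that is a domain and $B$ a finitely generated $\mathbb{Z}$-algebra. The key input is that schemes of finite type over $\text{Spec}(\mathbb{Z})$ are Jacobson, so any open subscheme is determined by its set of closed points. Hence for every affine open $U\subseteq Y$ the opens $f^{-1}(U),g^{-1}(U)\subseteq X$ have the same closed points and therefore coincide; further restricting to affine opens of $X$ reduces the problem to a comparison of two ring homomorphisms $\phi,\psi\colon B\to A$ with $\phi^{-1}(\mathfrak{m})=\psi^{-1}(\mathfrak{m})$ for every maximal ideal $\mathfrak{m}\subset A$. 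Consistency of the resulting Frobenius exponent $a$ across patches is ensured by the uniqueness statement in (b) for non-constant $f$; the constant case is handled separately.

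Next I would analyze the hypothesis at each closed point. For $\mathfrak{m}\subset A$ maximal, set $\mathfrak{n}:=\phi^{-1}(\mathfrak{m})$. By the Nullstellensatz for Jacobson rings, both $A/\mathfrak{m}$ and $B/\mathfrak{n}$ are finite fields, say of characteristic $p$. The induced embeddings $\overline{\phi}_{\mathfrak{m}},\overline{\psi}_{\mathfrak{m}}\colon B/\mathfrak{n}\hookrightarrow A/\mathfrak{m}$ have the same image (the unique subfield of $A/\mathfrak{m}$ of cardinality $|B/\mathfrak{n}|$), so they differ by a power of the absolute Frobenius of $A/\mathfrak{m}$: there exists $a(\mathfrak{m})\in\mathbb{Z}$, well-defined modulo $[A/\mathfrak{m}:\mathbb{F}_p]$, with $\overline{\phi}_{\mathfrak{m}}(b)=\overline{\psi}_{\mathfrak{m}}(b)^{p^{a(\mathfrak{m})}}$. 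For each integer $a\ge 0$ I would then form the closed subschemes $Z_a:=V(\{\phi(b)^{p^a}-\psi(b):b\in B\})\subseteq X$, and $Z_{-a}$ analogously. Every closed point of $X$ lies in some $Z_a$, and if one $Z_a$ equals all of $X$ then the identity $\phi(b)^{p^a}=\psi(b)$ holds in the domain $A$ for every $b$: in characteristic zero this forces $a=0$ and $\phi=\psi$, yielding case (a); in characteristic $p$ we obtain case (b). Uniqueness of $a$ for non-constant $f$ follows from the injectivity of $\text{Fr}_X$ on the function field of a non-constant integral $\mathbb{F}_p$-scheme.

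The main obstacle is the globalization step: upgrading $\bigcup_{a}Z_a^{\cl}\supseteq X^{\cl}$ to $Z_a=X$ for a single $a$. A naive Baire-category argument fails, because countably many proper closed subsets of a Jacobson scheme can exhaust all of its closed points (as already happens for $\mathbb{A}^1_{\mathbb{F}_p}$). I would split into two cases according to the generic characteristic of $X$. If $X$ is a scheme over $\mathbb{F}_p$, I would show that the function $\mathfrak{m}\mapsto a(\mathfrak{m})$ is locally constant in the constructible topology (each $Z_a$ being cut out by algebraic equations), so that by integrality of $X$ a single value of $a$ is achieved on a dense open, forcing $Z_a=X$. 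If $X$ has generic characteristic zero I would instead show $\phi=\psi$ directly: using Chevalley's theorem for $X\to\text{Spec}(\mathbb{Z})$ together with the existence of $\mathbb{F}_p$-rational points on sufficiently generic fibers, one proves that the closed points of $X$ with residue field exactly $\mathbb{F}_p$ (for varying $p$) are Zariski-dense; at such points the relevant Galois group is trivial, so the hypothesis already forces $\overline{\phi}_{\mathfrak{m}}=\overline{\psi}_{\mathfrak{m}}$, and hence $\phi(b)-\psi(b)$ lies in a Zariski-dense set of maximal ideals and vanishes in the Jacobson domain $A$. Making this dichotomy rigorous, especially the density-of-$\mathbb{F}_p$-points step in the characteristic-zero case, is the technical heart of the argument.
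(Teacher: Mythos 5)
The decisive step of your argument---the positive-characteristic globalization---has a genuine gap. After reducing to ring maps $\phi,\psi\colon B\to A$ with $A$ a domain of finite type over $\mathbb{F}_p$, you define closed subschemes $Z_a$ cut out by the relations $\phi(b)^{p^a}-\psi(b)$ (and the analogous ones for negative $a$), observe that every closed point lies on some $Z_a$, and then claim that, the exponent function having constructible fibres, it is ``locally constant in the constructible topology'', so that by integrality a single value of $a$ is achieved on a dense open, forcing $Z_a=X$. This inference is unsupported: every function with constructible fibres is automatically locally constant in the constructible topology (constructible sets are open there), so that clause carries no force, and irreducibility only forbids covering $X$ by \emph{finitely} many proper closed subsets, whereas here you have countably many---exactly the phenomenon you acknowledge one sentence earlier when dismissing the Baire-type argument. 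In fact the principle your justification actually uses (countably many closed subschemes, each ``cut out by algebraic equations'', whose closed points cover $X^{\mathrm{cl}}$, hence one of them equals $X$) is false: take $X=\mathbb{A}^1_{\mathbb{F}_p}$ and let the $Z_a$ enumerate its closed points. A correct proof must exploit the specific shape of the Frobenius graphs, e.g.\ via degree-aware point counts over $\mathbb{F}_{p^n}$ (note the naive estimate is borderline, since the degrees of the $Z_a$ grow with $a$, and only exponents up to roughly $n$ are needed to capture $\mathbb{F}_{p^n}$-points) or via a direct comparison of the two induced embeddings of $\kappa(f(\eta))$ into $K(X)$; nothing of this kind is indicated in your sketch, and it is precisely the hard content of the theorem.

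For comparison, the paper does not reprove the statement at all: it invokes \cite{St2002P} Theorem 1.2.1 and merely remarks that the proof given there only uses agreement of $f$ and $g$ on closed points (cf.\ \cite{St2002P} Proposition 1.2.4), so a self-contained argument would have to reproduce the actual content of Stix's proof, not just constructibility. The rest of your plan is essentially sound: the reduction to affine charts via the Jacobson property, the local analysis at a closed point (though the exponent there is well defined only modulo $[B/\mathfrak{n}:\mathbb{F}_p]$, the degree of the image point, not modulo $[A/\mathfrak{m}:\mathbb{F}_p]$), and the characteristic-zero branch, where the density of closed points with prime residue field does hold but itself requires a genuine input such as Lang--Weil combined with Chebotarev, applied to the complement of an arbitrary proper closed subset.
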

\begin{proof}
See the proof of   \cite{St2002P} Theorem 1.2.1. We remark that, in  the assertion of \cite{St2002P} Theorem 1.2.1, it is assumed  that $f$ and $g$ coincide as morphisms of topological spaces. However,  in the proof of \cite{St2002P} Theorem 1.2.1,  we only need the fact   that $f$ and $g$  coincide set-theoretically on the set of closed points of $X$ (cf.  \cite{St2002P} Proposition 1.2.4). 
\end{proof}

\begin{lemma}\label{injectiveofPi}
Assume that $U_{1}$ is hyperbolic. Then the natural map
\begin{equation*}
\Isom(\tilde{U}^{m}_{1}/U_1,\tilde{U}^{m}_2/U_2)\rightarrow \Isom(\Pi_{U_{1}}^{(m)},\Pi_{U_{2}}^{(m)}).
\end{equation*}
 is injective. 
\end{lemma}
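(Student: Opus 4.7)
The plan is to reduce the injectivity to showing that any element of $\Aut(\tilde{U}_1^m/U_1)$ that induces the trivial automorphism on $\Pi_{U_1}^{(m)}$ is itself trivial, and then to prove this by combining the separatedness of decomposition groups (Proposition \ref{sepprop}), Lemma \ref{stxlemma1} (to rule out Frobenius twists), and the center-freeness of $\Pi_{U_1}^{(m)}$ (Proposition \ref{center}). Concretely, if two pairs $(\tilde{F}_1, F_1)$ and $(\tilde{F}_2, F_2)$ map to the same isomorphism $\Phi$, their ratio $(\tilde{G}, G) := (\tilde{F}_2^{-1} \circ \tilde{F}_1,\ F_2^{-1} \circ F_1) \in \Aut(\tilde{U}_1^m/U_1)$ acts by conjugation as $\Phi^{-1}\circ\Phi = \mathrm{id}$ on $\Pi_{U_1}^{(m)}$, so it suffices to show $(\tilde{G}, G) = (\mathrm{id}, \mathrm{id})$.

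I would first show that $G$ fixes every closed point of $X_1$. For any $\tilde{v} \in \tilde{X}_1^{m,\mathrm{cl}}$ above $v \in X_1^{\mathrm{cl}}$, the hypothesis that $\tilde{G}$ centralizes $\Pi_{U_1}^{(m)}$ gives $D_{\tilde{G}(\tilde{v}),\Pi_{U_1}^{(m)}} = \tilde{G}\,D_{\tilde{v},\Pi_{U_1}^{(m)}}\,\tilde{G}^{-1} = D_{\tilde{v},\Pi_{U_1}^{(m)}}$. Because $U_1$ is hyperbolic we have $(g_1,r_1) \neq (0,0),(0,1),(0,2)$, so the equivalence (a)$\Leftrightarrow$(b) of Proposition \ref{sepprop} forces $\tilde{G}(\tilde{v}) = \tilde{v}$ and hence $G(v) = v$.

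Then I would invoke Lemma \ref{stxlemma1} with $f := G$ and $g := \mathrm{id}_{U_1}$: either $G = \mathrm{id}$, or $G$ differs from the identity by a nonzero power of the absolute Frobenius $\mathrm{Fr}_{U_1}$. The second alternative cannot occur, since $U_1$ is a curve over the finite field $k_1$, the function field $k_1(U_1)$ is imperfect, so $\mathrm{Fr}_{U_1}$ (and any positive power of it) is not an automorphism of $U_1$; no exponent $a \neq 0$ can make the relation compatible with $G$ being an isomorphism. Hence $G = \mathrm{id}$.

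Finally, with $G = \mathrm{id}$ the morphism $\tilde{G}$ lies in $\Aut(\tilde{U}_1^m/U_1) = \Pi_{U_1}^{(m)}$, and by hypothesis it lies in $Z(\Pi_{U_1}^{(m)})$. Since $(g_1,r_1) \neq (0,0),(0,1)$, Lemma \ref{rhoinj} provides the injectivity of $G_{k_1} \to \Aut(\overline{\Pi}_{U_1}^{1})$ needed for Proposition \ref{center}(2), which then yields $Z(\Pi_{U_1}^{(m)}) = \{1\}$ and therefore $\tilde{G} = \mathrm{id}$. The main technical step I expect to be delicate is the Frobenius-twist discussion: one has to verify cleanly that Lemma \ref{stxlemma1}'s alternative is incompatible with $G$ being an automorphism of a positive-dimensional curve over a finite field, since this is what upgrades a set-theoretic coincidence on closed points to an equality of scheme morphisms.
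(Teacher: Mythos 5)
Your proof is correct, and its skeleton is the same as the paper's: reduce injectivity to showing that a pair $(\tilde G,G)\in\Aut(\tilde U_1^m/U_1)$ inducing the identity on $\Pi_{U_1}^{(m)}$ is trivial, use the separatedness of decomposition groups (Proposition \ref{sepprop}, applicable since hyperbolicity rules out $(g,r)=(0,2)$) to see that $\tilde G$ fixes every closed point, and then use Lemma \ref{stxlemma1} together with the observation that a nonzero power of the absolute Frobenius is never an automorphism of a curve over a finite field. The one place you genuinely diverge is the concluding step. The paper does not use center-freeness at all: it applies the fixed-point-plus-Lemma-\ref{stxlemma1} argument on every finite intermediate covering $U_H$ (for $H$ open in $\Pi_{U_1}^{(m)}$), getting $\alpha_H=\mathrm{Fr}_{U_H}^{a_H}$ with $a_H=0$ for each $H$, and concludes that the automorphism of $\tilde U_1^m$ is the identity by passing to the limit over all $H$. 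You instead apply Lemma \ref{stxlemma1} only once, on the base $U_1$, to get $G=\mathrm{id}$, and then note that $\tilde G$ is a deck transformation commuting with all of $\Aut(\tilde U_1^m/U_1)\cong\Pi_{U_1}^{(m)}$, hence lies in $Z(\Pi_{U_1}^{(m)})$, which vanishes by Proposition \ref{center}(2), whose hypothesis over a finite field is indeed supplied by Lemma \ref{rhoinj}. Both routes work: yours trades the cofinal family of coverings for the group-theoretic input of center-freeness (already established in the paper), whereas the paper's version stays entirely within the covering-theoretic argument and needs no hypothesis beyond what Proposition \ref{sepprop} and Lemma \ref{stxlemma1} provide at each finite level.
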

\begin{proof}
If  $\Isom(\tilde{U}^{m}_{1}/U_{1},\tilde{U}^{m}_{2}/U_{2})=\emptyset$, then the assertion follows.  We may assume that  $\Isom(\tilde{U}^{m}_{1}/U_{1},\tilde{U}^{m}_{2}/U_{2})\not=\emptyset$ and that $(X_{1},E_{1})=(X_{2}.E_{2})$. Write  $U$ (resp. $X$, resp. $E$) instead of $U_{i}$ (resp. $X_{i}$, resp. $E_{i}$). Let $(\alpha_{\{1\}},\alpha)$ be an element of $ \Isom(\tilde{U}^{m}/U,\tilde{U}^{m}/U)$ which is mapped to the identity by the natural map $\rho:\Isom(\tilde{U}^{m}/U,\tilde{U}^{m}/U)\rightarrow \Aut(\Pi_{U}^{(m)})$.  Let $H$ be an open subgroup of $\Pi_{U}^{(m)}$, $U_{H}$ the  \'{e}tale covering over $U$ corresponding  to $H$,  and $\alpha_{H}$ the isomorphism $U_{H}\xrightarrow{\sim} U_{H}$ induced by $\alpha_{\{1\}}$.  Since $\rho(\alpha_{\{1\}},\alpha)$ preserves decomposition groups, we obtain that $\alpha_{\{1\}}(\tilde{v})=\tilde{v}$ for $\tilde{v}\in \tilde{U}^{m}$ by  Proposition \ref{sepprop}.  In particular, we get $\alpha_{H}(v)=v$ for $v\in U_{H}$. By Lemma \ref{stxlemma1}, this implies that there exists   $a_{H}\in \mathbb{Z}_{\geq 0}$ such that  $\alpha_{H}=\text{Fr}_{U_{H}}^{a_{H}}$. Since $\alpha_{H}\in\text{Aut}(U_{H})$, we obtain that $a_{H}=0$. Considering all open subgroups $H$, we obtain that  $(\alpha_{\{1\}},\alpha)$ is the identity.  Hence the assertion follows.
\end{proof}

\begin{definition}\label{m+nletter}
Let $n\in\mathbb{Z}_{\geq 0}$ be an integer satisfying $m\geq n$.  We define  $\Isom^{(m)}(\Pi_{U_{1}}^{(m-n)},\Pi_{U_{2}}^{(m-n)})$ as the image of the  map $\Isom( \Pi_{U_{1}}^{(m)},\Pi_{U_{2}}^{(m)})\rightarrow \Isom(\Pi_{U_{1}}^{(m-n)},\Pi_{U_{2}}^{(m-n)})$ induced by Proposition  \ref{Gkisomorphic}(1).
\end{definition}

The following theorem  is  the   second main result of this section.

\begin{theorem}[Strong bi-anabelian result  over finite fields]\label{finGCstrong}
Assume that $m\geq 3$ and  that  $U_{1}$ is  affine hyperbolic (see Notation of section \ref{sectionfin}).  Let  $n\in\mathbb{Z}_{\geq2}$ be an integer satisfying $m>n$. Then the natural map
\begin{equation*}
\Isom(\tilde{U}^{m-n}_{1}/U_1,\tilde{U}^{m-n}_2/U_2)\rightarrow \Isom^{(m)}(\Pi_{U_{1}}^{(m-n)},\Pi_{U_{2}}^{(m-n)})
\end{equation*}
  is  bijective.
\end{theorem}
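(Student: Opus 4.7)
The proof splits into injectivity and surjectivity. For the injectivity, note that since $m\geq 3$ and $n\geq 2$, one has $m-n\geq 1$. Applying Lemma~\ref{injectiveofPi} with ``$m$'' replaced by $m-n$ -- which is legitimate because the proof of that lemma only invokes Proposition~\ref{sepprop} at the level in question, and the latter holds for affine hyperbolic $U_{1}$ (where $(g_{1},r_{1})\neq (0,2)$) -- yields injectivity of the natural map
\[
\Isom(\tilde{U}^{m-n}_{1}/U_{1},\tilde{U}^{m-n}_{2}/U_{2})\to\Isom(\Pi_{U_{1}}^{(m-n)},\Pi_{U_{2}}^{(m-n)}).
\]
Since $\Isom^{(m)}(\Pi_{U_{1}}^{(m-n)},\Pi_{U_{2}}^{(m-n)})$ is by definition a subset of the target, the map in the theorem is injective as well.

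For surjectivity, fix $\Phi\in\Isom^{(m)}(\Pi_{U_{1}}^{(m-n)},\Pi_{U_{2}}^{(m-n)})$ and choose a lift $\tilde{\Phi}\in\Isom(\Pi_{U_{1}}^{(m)},\Pi_{U_{2}}^{(m)})$. The strategy is to apply the weak bi-anabelian Theorem~\ref{finGCweak} not merely to $\tilde{\Phi}$ itself, but to each of its restrictions to a cofinal family of open subgroups, and to glue the resulting scheme isomorphisms. Put
\[
\mathcal{Q}:=\{H_{1}\overset{\text{op}}{\subset}\Pi^{(m)}_{U_{1}}\mid\overline{\Pi}^{[m-n]}_{U_{1}}/\overline{\Pi}^{[m]}_{U_{1}}\subset H_{1},\ r(U_{1,H_{1}})\geq 3,\ (g(U_{1,H_{1}}),r(U_{1,H_{1}}))\neq (0,3),(0,4)\};
\]
by Lemma~\ref{gandr}, $\mathcal{Q}$ is cofinal in the set of open subgroups of $\Pi^{(m)}_{U_{1}}$ containing $\overline{\Pi}^{[m-n]}_{U_{1}}/\overline{\Pi}^{[m]}_{U_{1}}$. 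For each $H_{1}\in\mathcal{Q}$ put $H_{2}:=\tilde{\Phi}(H_{1})$; by Lemma~\ref{quotientlemma1.1}(2), $H_{i}^{(n)}$ is identified with $\Pi^{(n)}_{U_{i,H_{i}}}$, so $\tilde{\Phi}$ induces an isomorphism $\Pi^{(n)}_{U_{1,H_{1}}}\xrightarrow{\sim}\Pi^{(n)}_{U_{2,H_{2}}}$. Since $n\geq 2$ and the membership of $H_{1}$ in $\mathcal{Q}$ places the type of $U_{1,H_{1}}$ in the first case of Theorem~\ref{finGCweak}, that theorem produces a scheme isomorphism $f_{H_{1}}\colon X_{1,H_{1}}\xrightarrow{\sim}X_{2,H_{2}}$ restricting to $U_{1,H_{1}}\xrightarrow{\sim}U_{2,H_{2}}$. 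Compatibility of the $f_{H_{1}}$ as $H_{1}$ shrinks within $\mathcal{Q}$ follows from Lemma~\ref{prop1.14}, and passing to the inverse limit yields a scheme isomorphism $\tilde{F}\colon\tilde{X}^{m-n}_{1}\xrightarrow{\sim}\tilde{X}^{m-n}_{2}$ restricting to $\tilde{U}^{m-n}_{1}\xrightarrow{\sim}\tilde{U}^{m-n}_{2}$ over the scheme isomorphism $f\colon U_{1}\xrightarrow{\sim}U_{2}$ obtained by applying Theorem~\ref{finGCweak} directly to $\tilde{\Phi}$.

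The hard part is verifying that the pair $(\tilde{F}|_{\tilde{U}^{m-n}_{1}},f)$ is mapped to $\Phi$ itself -- not to some $\Phi$ twisted by an inner automorphism of $\Pi^{(m-n)}_{U_{2}}$ -- under the natural map. Property (iv) of Theorem~\ref{finGCweak} applied at each level $H_{1}\in\mathcal{Q}$ forces the iso induced by $\tilde{F}$ on each $\overline{H}_{1}^{1}$ to coincide with the one induced by $\tilde{\Phi}|_{H_{1}}$, and passing to the cofinal inverse limit over $\mathcal{Q}$ propagates this agreement to all of $\overline{\Pi}_{U_{1}}^{m-n}$. Combining this with the center-freeness $Z(\Pi^{(m-n)}_{U_{2}})=\{1\}$ (Proposition~\ref{center}(2), applicable since $k_{2}$ is finite and $(g_{2},r_{2})\neq(0,0),(0,1)$ via Lemma~\ref{rhoinj}) eliminates the residual inner-automorphism ambiguity and gives the required equality of isomorphisms on $\Pi^{(m-n)}$. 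This final bookkeeping -- tracking the levelwise function-field reconstruction through the tower of covers while controlling the inner-automorphism freedom -- is the technical heart of the argument and the $m$-step solvable analogue of the concluding step of \cite{Ta1997}~\S4.
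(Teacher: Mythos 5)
Your overall architecture coincides with the paper's: injectivity via Lemma \ref{injectiveofPi} applied at level $m-n$, and, for surjectivity, a lift $\tilde{\Phi}$ of $\Phi$, the cofinal family $\mathcal{Q}$ supplied by Lemma \ref{gandr}, levelwise application of Theorem \ref{finGCweak} to the induced isomorphisms $H_{1}^{(n)}\xrightarrow{\sim}H_{2}^{(n)}$ (here $n\geq 2$ is used), compatibility via Lemma \ref{prop1.14}, and passage to the limit to get an element of $\Isom(\tilde{U}^{m-n}_{1}/U_{1},\tilde{U}^{m-n}_{2}/U_{2})$. The genuine gap is in your final identification step, which is indeed the heart of the matter: you argue that the induced isomorphism $\Pi_{U_{1}}^{(m-n)}\xrightarrow{\sim}\Pi_{U_{2}}^{(m-n)}$ agrees with $\Phi^{m-n}$ on $\overline{\Pi}_{U_{1}}^{m-n}$ (via Theorem \ref{finGCweak}(iv)) and that center-freeness of $\Pi_{U_{2}}^{(m-n)}$ then forces equality. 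This does not follow. If two isomorphisms agree on $\overline{\Pi}_{U_{1}}^{m-n}$ (and induce the same map $G_{k_{1}}\to G_{k_{2}}$), their discrepancy is an automorphism $\alpha$ of $\Pi_{U_{2}}^{(m-n)}$ restricting to the identity on $\overline{\Pi}_{U_{2}}^{m-n}$ and inducing the identity on $G_{k_{2}}$; such $\alpha$ are classified by continuous $1$-cocycles $G_{k_{2}}\to Z(\overline{\Pi}_{U_{2}}^{m-n})$, and there is no reason for this set to be trivial. The theorem allows $m=3$, $n=2$, i.e.\ $m-n=1$: then $\overline{\Pi}_{U_{2}}^{1}$ is abelian, so conjugation by any $\gamma\in\overline{\Pi}_{U_{2}}^{1}$ that is noncentral in $\Pi_{U_{2}}^{(1)}$ (such $\gamma$ exist by Proposition \ref{center}(1)) is a nontrivial automorphism that is the identity on $\overline{\Pi}_{U_{2}}^{1}$ and on $G_{k_{2}}$; thus $\Phi^{m-n}$ and $\mathrm{inn}_{\gamma}\circ\Phi^{m-n}$ are indistinguishable by your criteria, yet differ. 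In fact center-freeness (which concerns $Z(\Pi_{U_{2}}^{(m-n)})$, not the centralizer $Z_{\Pi_{U_{2}}^{(m-n)}}(\overline{\Pi}_{U_{2}}^{m-n})$) works against you here: it is exactly what guarantees these competing automorphisms are nontrivial. Note also that the target of the map in the theorem is the honest Isom-set, not the quotient by $\text{Inn}$ (that quotient statement is Corollary \ref{finGCcorinn}), so you cannot discard an inner discrepancy; and a priori the discrepancy need not even be inner.

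What is needed, and what the paper does instead, is an argument that pins down the two isomorphisms on actual group elements beyond $\overline{\Pi}_{U_{1}}^{m-n}$. Concretely: for every section $s\in\Sect(G_{k_{1}},\Pi_{U_{1}}^{(m-n)})$ one considers the cofinal family $\mathcal{Q}_{s}$ of open subgroups $H\supset s(G_{k_{1}})$ with $r(U_{1,H})\geq 3$ and $(g(U_{1,H}),r(U_{1,H}))\neq(0,3),(0,4)$; by construction both isomorphisms send each such $H$ to the same open subgroup, hence send $s(G_{k_{1}})=\bigcap_{H\in\mathcal{Q}_{s}}H$ to the same closed subgroup; since both isomorphisms preserve Frobenius elements (Proposition \ref{Gkisomorphic}(2)) and that closed subgroup injects into $G_{k_{2}}$, they agree on $s(\text{Fr}_{k_{1}})$; finally, because $G_{k_{1}}\cong\hat{\mathbb{Z}}$, the elements $s(\text{Fr}_{k_{1}})$ over all sections $s$ topologically generate $\Pi_{U_{1}}^{(m-n)}$, so the two isomorphisms coincide. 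You should replace your ``agreement on the geometric part plus center-freeness'' step by an argument of this kind; as it stands, that step fails.
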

\begin{proof}
The injectivity follows from Lemma \ref{injectiveofPi}.  We show the  surjectivity.  First, we construct  a map  $\mathcal{F}: \Isom(\Pi_{U_{1}}^{(m)},\Pi_{U_{2}}^{(m)})\rightarrow \Isom(\tilde{U}^{m-n}_{1}/U_1,\tilde{U}^{m-n}_{2}/U_2)$. Let  $\Phi$ be an element of $\Isom(\Pi_{U_{1}}^{(m)},\Pi_{U_{2}}^{(m)})$.  
 Set $\mathcal{Q}_{1}:=\{H\overset{\op}\subset \Pi_{U_{1}}^{(m)} \mid \overline{\Pi}_{U_{1}}^{[m-n]}/\overline{\Pi}_{U_{1}}^{[m]}\subset H$, $r(U_{1, H})\geq 3$ and $(g(U_{1,H}),r(U_{1,H}))\neq (0,3), (0,4)\}$.   For any element  $H\in\mathcal{Q}_{1}$, we write $F(\Phi|_{H}) $ for   the isomorphism  of  multiplicative monoids $K(U_{1,H})\xrightarrow{\sim} K(U_{2,\Phi(H)})$ induced by  $H^{(n)}\xrightarrow{\sim} \Phi(H)^{(n)} $ (Lemma \ref{reconstructionlemmaabel}(1)). The multiplicative monoid isomorphism $F(\Phi|_{H}) $ is a field isomorphism by Theorem \ref{finGCweak}, as $n\geq 2$.  We know that  $\mathcal{Q}_{1}$ is cofinal in the set of all open subgroups of  $\Pi_{U_{1}}^{(m)}$ containing  $\overline{\Pi}_{U_{1}}^{[m-n]}/\overline{\Pi}_{U_{1}}^{[m]}$ by  Lemma \ref{gandr}. Hence   the  field isomorphisms $\{F(\Phi|_{H})\}_{H\in\mathcal{Q}_{1}}$ induce the following field isomorphism  by Lemma \ref{prop1.14}.
\begin{equation}\label{monoidisomuniv}
 \tilde{\mathcal{K}}^{m-n}(U_{1})=\ilim{H\in\mathcal{Q}_{1}} K(U_{1,H}) \rightarrow \ilim{H\in \mathcal{Q}_{1}} K(U_{2,\Phi(H)}) = \tilde{\mathcal{K}}^{m-n}(U_{2}).
\end{equation}
By Theorem \ref{finGCweak},  we obtain  that the  field isomorphism $F(\Phi|_{H})$ induces  a scheme isomorphism  $U_{1,H}\xrightarrow{\sim}U_{2,\Phi(H)}$ for any $H\in\mathcal{Q}_{1}$. Hence the isomorphism (\ref{monoidisomuniv}) induces a scheme isomorphism $\tilde{U}^{m-n}_{1}\xrightarrow{\sim}\tilde{U}^{m-n}_{2}$. We have that   $F(\Phi)$ is a restriction of (\ref{monoidisomuniv}). Hence the scheme isomorphism $\tilde{U}^{m-n}_{1}\xrightarrow{\sim}\tilde{U}^{m-n}_{2}$ induces  $U_{1}\xrightarrow{\sim}U_{2}$. Thus, we obtain the desired  map $\mathcal{F}: \Isom(\Pi_{U_{1}}^{(m)},\Pi_{U_{2}}^{(m)})\rightarrow \Isom(\tilde{U}^{m-n}_{1}/U_1,\tilde{U}^{m-n}_{2}/U_2).$\par
Next, we show that  the diagram 
\begin{equation}\label{commutativediag1}
\vcenter{
\xymatrix@C=80pt{
 && \Isom(\Pi_{U_{1}}^{(m)},\Pi_{U_{2}}^{(m)}) \ar[d]\ar[dll]_{\mathcal{F}}\\
\Isom(\tilde{U}^{m-n}_{1}/U_1,\tilde{U}^{m-n}_{2}/U_2)\ar[rr]_{\Pi^{(m-n)}(\cdot)} &\ & \Isom(\Pi_{U_{1}}^{(m-n)},\Pi_{U_{2}}^{(m-n)}) 
}
}
\end{equation}
is commutative, where the right-hand vertical arrow is induced by using  Proposition \ref{Gkisomorphic}(1).  Let   $\Phi^{m-n}$ be the image of $\Phi$ in $\Isom(\Pi_{U_{1}}^{(m-n)},\Pi_{U_{2}}^{(m-n)})$.  Let $s\in \Sect(G_{k_{1}},\Pi_{U_{1}}^{(m-n)})$ and  $\mathcal{Q}_{s}:=\{H\overset{\text{op}}\subset \Pi_{U_{1}}^{(m-n)}\mid $  $r(U_{1,H})\geq3$,   $(g(U_{1,H}),r(U_{1,H}))\neq (0,3), (0,4)$, and $s(G_{k_{1}})\subset  H $$\}$.   Fix   $H\in \mathcal{Q}_{s}$.  By construction, $\mathcal{F}(\Phi)$ restricts to the isomorphism $U_{1,H}\xrightarrow{\sim} U_{2,\Phi(H)}$ induced by $F(\Phi\mid_{H})$. We obtain that $(\Pi^{(m-n)}\circ\mathcal{F})(\Phi)(H)=\Phi^{m-n}(H)$.  By Lemma \ref{gandr}, for any open subgroup $H'$ of $\Pi_{U_{1}}^{(m-n)}$ containing $s(G_{k_{1}})$,  we can take a characteristic subgroup $\overline{H}''$ of $\overline{\Pi}_{U_{1}}^{m-n}$ that satisfies   $r(U_{1,\overline{H}''})\geq3$,   $(g(U_{1,\overline{H}''}),r(U_{1,\overline{H}''}))\neq (0,3), (0,4)$, and that $\overline{H}''\subset \overline{\Pi}_{U_{1}}^{m-n}\cap H'$. Hence $\mathcal{Q}_{s}$ is cofinal in the set of all open subgroups of $\Pi_{U_{1}}^{(m-n)}$ containing $s(G_{k_{1}})$. This implies that  $s(G_{k_{1}})=\underset{H\in\mathcal{Q}_{s}}\cap H$. Hence we obtain $(\Pi^{(m-n)}\circ\mathcal{F})(\Phi)(s(G_{k_{1}}))=\Phi^{m-n}(s(G_{k_{1}}))$.  Thus, we obtain that $(\Pi^{(m-n)}\circ\mathcal{F})(\Phi)(s(\text{Fr}_{k_{1}}))=\Phi^{m-n}(s(\text{Fr}_{k_{1}}))$ by Proposition \ref{Gkisomorphic}(2). Since $G_{k_{1}}\cong \hat{\mathbb{Z}}$, we have $\Pi_{U_{1}}^{(m-n)}=\overline{\langle s(\text{Fr}_{k_{1}})\ |\ s\in \Sect(G_{k_{1}},\Pi_{U_{1}}^{(m-n)})\rangle}$.   Therefore, we get $(\Pi^{(m-n)}\circ\mathcal{F})(\Phi)=\Phi^{m-n}$ and then the diagram (\ref{commutativediag1}) is commutative. Thus, the surjectivity follows.\par
\end{proof}

\begin{corollary}\label{finGCcorn}
Let the  assumption and the notation be as in   Theorem \ref{finGCstrong}.  Then the subset   $\Isom^{(m)}(\Pi_{U_{1}}^{(m-n)},\Pi_{U_{2}}^{(m-n)})$ of $\Isom(\Pi_{U_{1}}^{(m-n)},\Pi_{U_{2}}^{(m-n)})$  depends  only on $m-n$, not on $m$.
\end{corollary}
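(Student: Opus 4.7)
The plan is to invoke Theorem \ref{finGCstrong} directly; the corollary is essentially a repackaging of that bijectivity. First I would fix $h := m-n \in \mathbb{Z}_{\geq 1}$ and consider the natural map
\[
\iota_h : \Isom(\tilde{U}^{h}_{1}/U_1, \tilde{U}^{h}_{2}/U_2) \longrightarrow \Isom(\Pi_{U_{1}}^{(h)}, \Pi_{U_{2}}^{(h)})
\]
sending a compatible pair of scheme isomorphisms $(\tilde{F}, F)$ to the induced isomorphism on deck transformation groups. The point I would emphasize is that $\iota_h$ is defined intrinsically in terms of $U_1$, $U_2$ and their coverings $\tilde{U}_i^h$, so its domain, target, and image all depend only on $h$ and involve no reference to any auxiliary integer $m$.

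Next, for any pair $(m,n)$ with $m \geq 3$, $n \geq 2$, $m > n$, and $m - n = h$, I would observe that Theorem \ref{finGCstrong} factors $\iota_h$ as the bijection
\[
\Isom(\tilde{U}^{h}_{1}/U_1, \tilde{U}^{h}_{2}/U_2)\xrightarrow{\sim} \Isom^{(m)}(\Pi_{U_{1}}^{(h)}, \Pi_{U_{2}}^{(h)})
\]
followed by the tautological inclusion $\Isom^{(m)}(\Pi_{U_{1}}^{(h)}, \Pi_{U_{2}}^{(h)}) \hookrightarrow \Isom(\Pi_{U_{1}}^{(h)}, \Pi_{U_{2}}^{(h)})$. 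Consequently, as a subset of $\Isom(\Pi_{U_{1}}^{(h)}, \Pi_{U_{2}}^{(h)})$, the image $\Isom^{(m)}(\Pi_{U_{1}}^{(h)}, \Pi_{U_{2}}^{(h)})$ equals the image of $\iota_h$. Since the latter is independent of $m$, so is the former, which is exactly the assertion of the corollary. There is no serious obstacle here: the substantive content has already been absorbed into Theorem \ref{finGCstrong}, and all that remains is to rephrase its bijectivity as an equality of subsets of $\Isom(\Pi_{U_{1}}^{(h)}, \Pi_{U_{2}}^{(h)})$.
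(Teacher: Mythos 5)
Your argument is correct and is essentially the paper's own proof: the paper simply cites Theorem \ref{finGCstrong}, and your identification of $\Isom^{(m)}(\Pi_{U_{1}}^{(m-n)},\Pi_{U_{2}}^{(m-n)})$ with the image of the natural map from $\Isom(\tilde{U}^{m-n}_{1}/U_1,\tilde{U}^{m-n}_{2}/U_2)$, which depends only on $m-n$, is precisely the intended reasoning.
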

\begin{proof}
The assertion follows from Theorem \ref{finGCstrong}.  
\end{proof}

\begin{corollary}\label{finGCcorinn}
Let the  assumption and the notation be as in   Theorem \ref{finGCstrong}. Then  the  natural map
\begin{equation*}
\Isom(U_1,U_2)\rightarrow \Isom^{(m)}(\Pi_{U_{1}}^{(m-n)},\Pi_{U_{2}}^{(m-n)})/\text{Inn}(\Pi_{U_{2}}^{(m-n)})
\end{equation*}
is  bijective, where $\text{Inn}(\Pi^{(m-n)}_{U_{2}})$ stands  for the group of all inner automorphisms of $\Pi^{(m-n)}_{U_{2}}$ and  the action   $\text{Inn}(\Pi_{U_{2}}^{(m-n)})\curvearrowright \Isom^{(m)}(\Pi_{U_{1}}^{(m-n)},\Pi_{U_{2}}^{(m-n)})$ is  induced by taking the composite.
\end{corollary}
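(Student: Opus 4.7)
The plan is to deduce the corollary directly from Theorem~\ref{finGCstrong} by identifying both sides of the desired bijection as the same quotient of $\Isom(\tilde{U}_{1}^{m-n}/U_{1}, \tilde{U}_{2}^{m-n}/U_{2})$ by a natural action of $\Pi_{U_{2}}^{(m-n)}$. First, I would introduce the forgetful map
\[
\pi: \Isom(\tilde{U}_{1}^{m-n}/U_{1}, \tilde{U}_{2}^{m-n}/U_{2}) \to \Isom(U_{1}, U_{2}), \qquad (\tilde{F}, F) \mapsto F,
\]
and check two facts about it: (i)~$\pi$ is surjective, since $\tilde{U}_{i}^{m-n}$ is characterized intrinsically in terms of $U_{i}$ as the integral closure of $U_{i}$ in the maximal tamely ramified $(m-n)$-step solvable Galois extension of $K(U_{i,k_{i}^{\sep}})$, so any scheme isomorphism $F \in \Isom(U_{1}, U_{2})$ lifts to some $\tilde{F}$ after a choice of embedding of algebraic closures; and (ii)~each fiber of $\pi$ is a torsor under $\Pi_{U_{2}}^{(m-n)} = \Aut(\tilde{U}_{2}^{m-n}/U_{2})$, with $\gamma \in \Pi_{U_{2}}^{(m-n)}$ acting by $(\tilde{F}, F) \mapsto (\gamma \circ \tilde{F}, F)$. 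Together these give a canonical bijection $\Isom(\tilde{U}_{1}^{m-n}/U_{1}, \tilde{U}_{2}^{m-n}/U_{2})/\Pi_{U_{2}}^{(m-n)} \xrightarrow{\sim} \Isom(U_{1}, U_{2})$.

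Next, I would verify that the bijection $\Psi$ of Theorem~\ref{finGCstrong} intertwines this post-composition action with the conjugation action of $\Pi_{U_{2}}^{(m-n)}$ on $\Isom^{(m)}(\Pi_{U_{1}}^{(m-n)}, \Pi_{U_{2}}^{(m-n)})$. By construction (see the proof of Theorem~\ref{finGCstrong} and the identification $\Pi_{U_{i}}^{(m-n)} = \Aut(\tilde{U}_{i}^{m-n}/U_{i})$), the image $\Psi(\tilde{F}, F)$ is the unique isomorphism satisfying $\tilde{F} \circ \sigma = \Psi(\tilde{F}, F)(\sigma) \circ \tilde{F}$ for every $\sigma \in \Pi_{U_{1}}^{(m-n)}$. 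Replacing $\tilde{F}$ by $\gamma \circ \tilde{F}$ and rearranging yields
\[
\Psi(\gamma \circ \tilde{F}, F)(\sigma) = \gamma \cdot \Psi(\tilde{F}, F)(\sigma) \cdot \gamma^{-1} = (\text{inn}(\gamma) \circ \Psi(\tilde{F}, F))(\sigma),
\]
which is the required compatibility.

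Finally, since the conjugation action of $\Pi_{U_{2}}^{(m-n)}$ on $\Isom^{(m)}(\Pi_{U_{1}}^{(m-n)}, \Pi_{U_{2}}^{(m-n)})$ factors through $\text{Inn}(\Pi_{U_{2}}^{(m-n)})$, the quotient by $\Pi_{U_{2}}^{(m-n)}$ coincides with the quotient by $\text{Inn}(\Pi_{U_{2}}^{(m-n)})$. Passing to quotients in the $\Pi_{U_{2}}^{(m-n)}$-equivariant bijection $\Psi$ therefore produces the desired bijection
\[
\Isom(U_{1}, U_{2}) \xrightarrow{\sim} \Isom^{(m)}(\Pi_{U_{1}}^{(m-n)}, \Pi_{U_{2}}^{(m-n)})/\text{Inn}(\Pi_{U_{2}}^{(m-n)}).
\]
As an internal consistency check (not logically needed here), the torsor action on the left-hand side is free, which together with the intertwining forces $Z(\Pi_{U_{2}}^{(m-n)}) = \{1\}$; this is indeed guaranteed independently by Proposition~\ref{center}(2) via Lemma~\ref{rhoinj}, using the hypothesis that $U_{1}$ is affine hyperbolic and $k_{1}$ is finite. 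The only step requiring real care is the intertwining computation in the middle paragraph; the remainder of the argument is formal set-theoretic manipulation of free group actions.
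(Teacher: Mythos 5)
Your proposal is correct and follows essentially the same route as the paper: the paper also introduces the forgetful map $p:\Isom(\tilde{U}^{m-n}_{1}/U_{1},\tilde{U}^{m-n}_{2}/U_{2})\rightarrow\Isom(U_{1},U_{2})$, proves its surjectivity by extending function-field isomorphisms, identifies its fibers with orbits of $\text{Aut}(\tilde{U}^{m-n}_{2}/U_{2})\cong\Pi^{(m-n)}_{U_{2}}$ (citing Tamagawa's Lemma (4.1)(ii) where you verify the orbit/intertwining statement directly), and then deduces bijectivity of the lower map from Theorem \ref{finGCstrong} via the same commutative square. Your explicit intertwining computation and the remark that freeness of the action is not logically needed are accurate refinements of the same argument.
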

\begin{proof}
Let  $p: \Isom(\tilde{U}^{m-n}_{1}/U_{1},\tilde{U}^{m-n}_{2}/U_{2})\rightarrow \Isom(U_{1},U_{2})$ be the natural map.  Any field  isomorphism $K(U_{1})\xrightarrow{\sim}K(U_{2})$ extends to  $\tilde{\mathcal{K}}(U_{1})\xrightarrow{\sim}\tilde{\mathcal{K}}(U_{2})$ (and preserves $K(U_{i,\overline{k_{i}}})$), and hence it extends to  $\tilde{\mathcal{K}}^{m-n}(U_{1})\xrightarrow{\sim}\tilde{\mathcal{K}}^{m-n}(U_{2})$. Thus,  $p$ is surjective.   Consider  the following commutative diagram

\begin{equation}\label{isomequ}
\vcenter{\xymatrix{
\Isom(\tilde{U}^{m-n}_{1}/U_{1},\tilde{U}^{m-n}_{2}/U_{2})\ar@{->>}[d]^{p}\ar[r]^{\Pi^{(m-n)}(\cdot)} & \Isom^{(m)}(\Pi_{U_{1}}^{(m-n)},\Pi_{U_{2}}^{(m-n)})\ar@{->>}[d]\\
\Isom(U_{1},U_{2}) \ar[r]^-{}& \Isom^{(m)}(\Pi_{U_{1}}^{(m-n)},\Pi_{U_{2}}^{(m-n)})/\text{Inn}(\Pi_{U_{2}}^{(m-n)}).
}}\end{equation}
We have that  $p^{-1}p((\tilde{F},F))=\text{Aut}(\tilde{U}^{m-n}_{2}/U_{2})(\tilde{F},F)$ for $(\tilde{F},F)\in \Isom(\tilde{U}^{m-n}_{1}/U_{1},\tilde{U}^{m-n}_{2}/U_{2})$ (see \cite{Ta1997} LEMMA (4.1)(ii)) and $\Pi^{(m-n)}_{U_{2}}\xleftarrow{\sim}\text{Aut}(\tilde{U}^{m-n}_{2}/U_{2})$.   Thus, Theoreo \ref{finGCstrong} implies that   the lower horizontal arrow of (\ref{isomequ}) is bijective.  
\end{proof}

\begin{remark}[The relative version]\label{finrela}
 Assume that $k=k_{1}=k_{2}$. Then we know that   $\Isom^{(m)}(\Pi_{U_{1}}^{(m-n)},\Pi_{U_{2}}^{(m-n)})=\Isom^{(m)}_{G_{k}}(\Pi_{U_{1}}^{(m-n)},\Pi_{U_{2}}^{(m-n)})$ by Proposition \ref{Gkisomorphic}(1)(2). However,  $\text{Isom}_{k}(U_{1},U_{2})\subsetneq\text{Isom}(U_{1},U_{2})$ holds in general. Hence the natural map $\text{Isom}_{k}(U_{1},U_{2})\rightarrow \Isom^{(m)}_{G_{k}}(\Pi_{U_{1}}^{(m-n)},\Pi_{U_{2}}^{(m-n)})/\text{Inn}(\overline{\Pi}_{U_{2}}^{m-n})$ is not bijective in general. For the case that $k$ is a field finitely generated over the prime field, see Theorem \ref{fingeneGCstrong} below.
\end{remark}


\section{The  $m$-step solvable version of the good reduction criterion for hyperbolic curves}\label{subsectiongood}

\hspace{\parindent}In this section,  we show the $m$-step solvable version of the Oda-Tamagawa good reduction criterion for hyperbolic curves over discrete valuation fields and a corollary for hyperbolic curves over the fields of fractions of  henselian regular local rings. 
\\\ \\
{\bf Notation of section \ref{subsectiongood} }  In this section, we use the following notation in addition to Notation (see Introduction).
\begin{itemize}
\item Let $R$ be a discrete valuation ring, $K:=K(R)$ the field of fractions  of $R$, $s\in \text{Spec}(R)$ the  closed point,  and  $\eta\in \text{Spec}(R)$ the generic point. We write  $\kappa(s)$  for the residue field  at $s$ and  $p$ $(\geq 0)$ for the characteristic of $\kappa(s)$.  
\item Let $(X,E)$ be a  smooth curve of type $(g,r)$  over $K$. Set $U:=X-E$.
\item We write  $I \subset G_{K}$ for an inertia group at $s$ (determined up to $G_{K}$-conjugacy). 
\item Fix a prime $\ell$ different from $ p$. 
\end{itemize}

\begin{definition}\label{defstablecurve}
\begin{enumerate}[(1)]
\item Let $S$ be a scheme, $\mathcal{X}$ a scheme over $S$, $\mathcal{E}$ a (possibly empty) closed subscheme of $\mathcal{X}$, and $(g,r)$ a pair of non-negative integers. We  say that  the pair $(\mathcal{X},\mathcal{E})$ is a $semi$-$stable$ (resp. $stable$) $curve$  ($of$ $type$ $(g,r)$) $over$ $S$ if the following conditions (a)-(d) (resp. (a)-(e)) hold.
\begin{enumerate}[(a)]
\item $\mathcal{X}$ is flat, proper, of finite presentation, and of relative dimension one over $S$.
\item For any  geometric point  $\overline{s}$  of  $S$, the geometric fiber   $\mathcal{X}_{\overline{s}}$ at $\overline{s}$ is reduced, connected with at most ordinary double points as singularities, and   satisfies $\text{dim}(H^{1}(\mathcal{X}_{\overline{s}},\mathcal{O}_{\mathcal{X}_{\overline{s}}}))=g$.
\item The composite of $\mathcal{E}\hookrightarrow \mathcal{X}\rightarrow S$ is finite, \'{e}tale,  and of degree $r$. 
\item  For any  geometric point  $\overline{s}$  of  $S$,   $\mathcal{E}_{\overline{s}}$ is contained in the smooth locus of $\mathcal{X}_{\overline{s}}$, where $\mathcal{X}_{\overline{s}}$ and    $\mathcal{E}_{\overline{s}}$ are the generic fibers of $\mathcal{X}$ and $\mathcal{E}$, respectively, at $\overline{s}$.
\item Assume that $2g+r-2>0$. For any irreducible component $\mathcal{T}$ of   $\mathcal{X}_{\overline{s}}$ which is isomorphic to a projective line,   ``the number of  points where  $\mathcal{T}$  meets other components'' plus  ``the number of points of $\mathcal{E}_{\overline{s}}$ on $\mathcal{T}$'' is at least three.
\end{enumerate}
If there is no risk of confusion, we also call  the complement $\mathcal{U}=\mathcal{X}-\mathcal{E}$ a  semi-stable (resp. stable) curve over $S$ (of type $(g,r)$). 
\item We say that a smooth curve (resp. semi-stable, resp. stable) curve $(\mathfrak{X},\mathfrak{E})$ over $\text{Spec}(R)$ is a $ smooth$  (resp. $semi$-$stable$, resp. $stable$) $model$ $of$ $(X,E)$ over $\text{Spec}(R)$ if the  generic fiber $(\mathfrak{X}_{\eta},\mathfrak{E}_{\eta})$ is isomorphic to $(X,E)$ over $K$.  We say that $(X,E)$ has $good$ (resp.  $semi$-$stable$, resp. $stable$) $reduction$ $at$ $s$ if there exists a smooth (resp. semi-stable, resp. stable) model of $(X,E)$ over $\text{Spec}(R)$. 
\end{enumerate}
\end{definition}

 We have the following theorem.

\begin{theorem}[The Oda-Tamagawa good reduction criterion  for hyperbolic curves, see \cite{Oda1990}, \cite{Oda1995}, \cite{Ta1997}]\label{otgoodthm}
 Assume that $(X,E)$ is hyperbolic.  Then the  following conditions  (a)-(c) are equivalent
\begin{enumerate}[(a)]
\item $(X,E)$ has good reduction at $s$.
\item The image of $I $ in $\Out(\overline{\Pi}^{\pro p'}_{U})$ is trivial.
\item The image of $I $ in $\Out(\overline{\Pi}^{\pro\ell}_{U})$ is trivial.
\end{enumerate}
Here, $\overline{\Pi}^{\pro 0'}_{U}$ is defined as $\overline{\Pi}_{U}$.
\end{theorem}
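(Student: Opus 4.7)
The statement is the classical Oda--Tamagawa good reduction criterion, so I follow the established route: (a)$\Rightarrow$(b)$\Rightarrow$(c) is the easy half, and (c)$\Rightarrow$(a) contains all the geometric content.

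For (a)$\Rightarrow$(b), assume $(X,E)$ extends to a smooth model $(\mathfrak{X}, \mathfrak{E})$ over $R$. The specialisation theorem for the tame fundamental group of a smooth curve (SGA~1, XIII) furnishes a $G_K$-equivariant isomorphism $\overline{\Pi}_U^{\pro p'} \xrightarrow{\sim} \pi_1^{\text{tame}}(\mathfrak{U}_{\bar s})^{\pro p'}$, and the $G_K$-action on the right factors through $G_K \twoheadrightarrow G_{\kappa(s)}$. Hence $I$ lies in the kernel of the outer Galois representation into $\Out(\overline{\Pi}_U^{\pro p'})$, proving (b). The implication (b)$\Rightarrow$(c) is immediate: for $\ell \ne p$, the group $\overline{\Pi}_U^{\pro \ell}$ is a characteristic quotient of $\overline{\Pi}_U^{\pro p'}$, so any element acting trivially outerly on the latter acts trivially outerly on the former.

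For (c)$\Rightarrow$(a), the plan has two steps. First, by the Deligne--Mumford--Knudsen stable reduction theorem there is a finite separable extension $K'/K$ such that $(X_{K'},E_{K'})$ admits a stable model $(\mathfrak{X},\mathfrak{E})$ over the integral closure $R'$ of $R$ in $K'$. A standard descent argument along $\mathrm{Gal}(K'/K)$, using that the conclusion (a) is insensitive to tame finite base change and that the hypothesis (c) passes to an open subgroup of $I$, reduces the problem to showing that the geometric special fibre $(\mathfrak{X}_{\bar s},\mathfrak{E}_{\bar s})$ of a stable model is automatically smooth whenever (c) holds.

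The main obstacle is this last step. It rests on the Picard--Lefschetz description of the monodromy action on $\overline{\Pi}_U^{\pro\ell}$ for a degenerating curve with marked points: each node $x$ of $\mathfrak{X}_{\bar s}$ produces a nontrivial vanishing cycle $\delta_x \in \overline{\Pi}_U^{\pro\ell}$ (well-defined up to conjugacy), and a topological generator of $I^{\pro\ell}$ acts on $\overline{\Pi}_U^{\pro\ell}$ outerly as a product of Dehn twists along the $\delta_x$, together with twists attached to any coalescence of marked points in the special fibre. The hypothesis (c) forces this product to be the identity in $\Out(\overline{\Pi}_U^{\pro\ell})$, and the geometric linear independence of the $\delta_x$ in $(\overline{\Pi}_U^{\pro\ell})^{\ab}$ — compatible with the weight filtration recorded in Lemma \ref{wflemma} — then implies that every $\delta_x$ is trivial and no two marked points can collide, so $\mathfrak{X}_{\bar s}$ is smooth and $\mathfrak{E}$ stays a disjoint union of sections. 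The delicate technical point, and the reason \cite{Ta1997} devotes an entire section to it, is making this monodromy analysis work uniformly in mixed and positive characteristic, and in particular allowing $\kappa(s)$ to be imperfect.
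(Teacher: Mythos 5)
Your implications (a)$\Rightarrow$(b)$\Rightarrow$(c) are fine, but the heart of (c)$\Rightarrow$(a) as you present it does not work. After reducing to a stable model, you claim that triviality of the image of $I$ in $\Out(\overline{\Pi}^{\pro\ell}_{U})$, combined with ``linear independence of the vanishing cycles $\delta_x$ in $(\overline{\Pi}^{\pro\ell}_{U})^{\ab}$'', forces every $\delta_x$ to be trivial and hence the special fibre to be smooth. This is false at exactly the critical point: a vanishing cycle attached to a \emph{separating} node (in particular, any node of a stable curve whose dual graph is a tree) is already trivial in the abelianization, since a Dehn twist along a separating cycle acts trivially on $H_{1}$. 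What the abelian quotient (via Lemma \ref{wflemma} and N\'eron--Ogg--Shafarevich) gives you is only that $J_{X}$ has good reduction and the cusps do not move, i.e.\ semi-stable reduction whose dual graph is a tree --- compare Lemma \ref{11.3} of the paper. Excluding the remaining separating nodes is the whole difficulty of the theorem, and it requires a genuinely non-abelian input: in \cite{Ta1997} one passes to suitable $\ell$-cyclic \'etale coverings on which a separating node becomes non-separating (so that the Jacobian of the covering curve acquires toric reduction, contradicting the inherited triviality of the outer action); this covering construction is precisely what the paper reproduces in the proof of its $m$-step solvable version, Theorem \ref{2goodreductiontheorem}. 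The alternative route via the explicit Dehn-twist description of local monodromy is \cite{AMO1992}, but there too one must work with a quotient strictly deeper than the abelianization. As written, your argument proves semi-stable reduction with tree dual graph, not good reduction.

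Two further points are glossed over. First, your descent step ``(a) is insensitive to tame finite base change'' is not automatic: descending a smooth model from $K'$ to $K$ needs an argument such as uniqueness of smooth models coming from separatedness of the moduli stack and an Isom-scheme/Galois-descent argument, which is exactly what the paper's Lemma \ref{goodtamagawanag} supplies. Second, the equivalence ``$X$ has (semi-)stable reduction iff $J_{X}$ does'' from \cite{DM1969} is proved only for algebraically closed residue field; when $\kappa(s)$ is imperfect this is a real obstruction, which you mention but do not address, and which the paper resolves with Nagamachi's Lemma \ref{lemmanag}. Note finally that the paper itself does not reprove Theorem \ref{otgoodthm}: it cites \cite{Oda1990}, \cite{Oda1995}, \cite{Ta1997}, and its genuine contribution in this direction is the $m$-step solvable refinement together with the fix for imperfect residue fields.
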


\begin{remark}
The proof of Theorem \ref{otgoodthm}    essentially only  used  the information of $\overline{\Pi}_{U}^{3,\pro\ell}$ (see \cite{Ta1997}  THEOREM (5.3)). In fact,  when $r<2$ (resp. $r\geq 2$), the  $2$-step (resp. $3$-step) solvable version of Theorem \ref{otgoodthm} follows from  \cite{Ta1997} Remark (5.4) and \cite{AMO1992}. (A proof of this fact and a certain  extension will be presented in a forthcoming joint paper by Ippei Nagamachi and the author.)  However, the proof  in  \cite{Ta1997}  has the following problem:

\begin{itemize}
\item  Tamagawa reduced the proof  to the case where $R$ is strictly henselian and then  to the case where $\kappa(s)$ is perfect (i.e., algebraically closed) by using  the claim  ``When  $R$ is strictly henselian,  $X$ has (semi-)stable reduction at $s$ if and only if $J_{X}$ has semi-stable reduction at $s$''.  This claim  is proved in \cite{DM1969} Theorem (2.4) when $\kappa(s)$ is  algebraically closed, but is not proved when $\kappa(s)$ is  separably closed. 
\end{itemize}
Clearly, one possible way to solve  this problem  is to show the claim. In fact, the claim  is already fully  proved in \cite{Na2022} Theorem 3.15, based on the new theory of minimal log regular models. In this section, instead, we take another more elementary way to solve this problem. More precisely, we prove a certain weaker variant of the claim (Lemma \ref{lemmanag}) by discussing the descent for purely inseparable extensions of $\kappa(s)$ and give a complete proof of the $m$-step solvable version of Theorem \ref{otgoodthm} for arbitrary $m\geq 2$.
\end{remark}

Let us consider the $m$-step solvable version of  Theorem \ref{otgoodthm}. \par

\begin{lemma}\label{11.2}
Assume that $R$ is strictly henselian. Then  the following conditions (i)-(ii) are equivalent. 
\begin{enumerate}[(i)]
\item $X$ has semi-stable reduction at $s$  and $E(K^{\text{sep}})=E(K)$. 
\item  $(X,E)$ has  semi-stable reduction at $s$.
 \end{enumerate}
\end{lemma}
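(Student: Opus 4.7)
The plan is to prove the two implications separately, with the reverse direction being essentially a formal consequence of strict henselianness, while the forward direction requires explicit local blow-up surgery.

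For (ii) $\Rightarrow$ (i): If $(\mathfrak{X},\mathfrak{E})$ is a semi-stable model of $(X,E)$, then $\mathfrak{X}$ alone is a semi-stable model of $X$, which gives the first half of (i). For the second half, note that $\mathfrak{E} \to \mathrm{Spec}(R)$ is finite \'etale of degree $r$ by Definition \ref{defstablecurve}(1)(c); since $R$ is strictly henselian, any such scheme is a disjoint union of $r$ copies of $\mathrm{Spec}(R)$, and passing to the generic fibre shows $E \cong \coprod_{i=1}^{r} \mathrm{Spec}(K)$, whence $E(K) = E(K^{\mathrm{sep}})$.

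For (i) $\Rightarrow$ (ii): Pick a semi-stable model $\mathfrak{X}$ of $X$. The assumption $E(K^{\mathrm{sep}}) = E(K)$ gives $E = \{P_1,\ldots,P_r\}$ with $P_i \in X(K)$, and by the valuative criterion applied to the proper morphism $\mathfrak{X} \to \mathrm{Spec}(R)$, each $P_i$ extends uniquely to a section $\sigma_i : \mathrm{Spec}(R) \to \mathfrak{X}$. One would like to take $\mathfrak{E} := \bigcup_i \sigma_i(\mathrm{Spec}(R))$, but this only yields a semi-stable model of $(X,E)$ if the $\sigma_i$ (i) land in the smooth locus of $\mathfrak{X}/R$ and (ii) are pairwise disjoint. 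The strategy is to modify $\mathfrak{X}$ by a finite sequence of blow-ups to ensure both, while preserving semi-stability of the model; semi-stability (as opposed to stability) is comfortable here because no minimality constraint on rational components is imposed.

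The main content is thus local at each bad point of the special fibre. If $\sigma_i$ meets $\mathfrak{X}_s$ at a node $p$, then by strict henselianness the completed local ring of $\mathfrak{X}$ at $p$ has the form $R[[x,y]]/(xy - t^n)$ for some $n \geq 1$ and a uniformizer $t$ of $R$; the section corresponds to $(x,y) \mapsto (a,b)$ with $v(a)+v(b)=n$, and iteratively blowing up ideals of the form $(x,t^k)$ resolves the node into a chain of $\mathbb{P}^1$'s along which the strict transform of $\sigma_i$ now meets the special fibre transversally at a smooth point. If two sections $\sigma_i,\sigma_j$ meet at a smooth point of $\mathfrak{X}_s$, blowing up that point (iterated if the sections are tangent to high order) inserts an exceptional $\mathbb{P}^1$ separating them. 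Each of these operations replaces $\mathfrak{X}$ by another semi-stable model (the inserted $\mathbb{P}^1$-components intersect the original configuration transversally at smooth points), and only finitely many are needed; the resulting $(\mathfrak{X}',\mathfrak{E}')$ is a semi-stable model of $(X,E)$.

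The principal obstacle is the verification that the local surgeries preserve semi-stability and simultaneously achieve (i) and (ii) above. This verification is classical and underlies the Deligne--Mumford--Knudsen construction of moduli of pointed stable curves; the strict henselian hypothesis on $R$ is used essentially, both to identify nodes with the standard equation $xy = t^n$ over $R$ (rather than only formally) and to produce the $K$-rational sections $\sigma_i$ in the first place.
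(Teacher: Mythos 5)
Your proposal is correct and follows essentially the same route as the paper: the direction (ii)$\Rightarrow$(i) via splitness of the finite \'etale scheme $\mathfrak{E}$ over the strictly henselian base (i.e.\ triviality of $\pi_{1}(\text{Spec}(R))$), and (i)$\Rightarrow$(ii) by extending the points of $E$ to sections of a semi-stable model of $X$ via the valuative criterion and then blowing up to repair conditions (c)(d) of Definition \ref{defstablecurve}(1). The only difference is that you spell out the local blow-up surgery (resolving nodes met by sections and separating colliding sections), which the paper leaves as an assertion.
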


\begin{proof}
The implication  (i)$\Leftarrow$(ii)  follows from the fact that $\pi_{1}(\text{Spec}(R))$ is trivial. We consider the implication (i)$\Rightarrow$(ii). Let $\mathfrak{X}$ be a semi-stable model of $X$.  Let $x$ be an element of $E (\subset \mathfrak{X})$. By  the valuative criterion applied to  the diagram
\[
 \xymatrix{
 \kappa(x)\ar[d]\ar[r] & \mathfrak{X}\ar[d]\\
 \text{Spec}(R)\ar[r]&\text{Spec}(R),
 }
 \]
the closed subscheme  $E$  extends to a closed subscheme $\mathfrak{E}$ of $\mathfrak{X}$. Even if  $\mathfrak{E}$ does not satisfy the conditions (c)(d) in Definition \ref{defstablecurve}(1) (in other words, ($\mathfrak{X},\mathfrak{E}$) is not semi-stable), by taking  blowing-ups of  the semi-stable model $\mathfrak{X}$, we can get a semi-stable model of ($X,E)$. Hence  the implication (i)$\Rightarrow$(ii) follows.
\end{proof}

\begin{lemma}\label{11.3}
 Assume that $(X,E)$ is hyperbolic, that $R$ is strictly  henselian, and that $\kappa(s)$ is  perfect.   Then  the following conditions (a)-(c) are equivalent. 
\begin{enumerate}[(a)]
\item The image of  $I $ in $\Aut(\overline{\Pi}_{X}^{1,\pro\ell})$ is finite and $(X,E)$ has  semi-stable reduction at $s$ with a semi-stable model ($\mathcal{X},\mathcal{E}$) such that   the dual graph of the geometric special fiber $\mathfrak{X}_{\overline{s}}$ is a tree (see  \cite{Liu2002} Definition 10.3.17).
\item The image of  $I $ in $\Aut(\overline{\Pi}_{X}^{1,\pro\ell})$ is finite and  $(X,E)$ has semi-stable reduction at $s$.
\item The image of  $I $ in $\Aut(\overline{\Pi}_{X}^{1,\pro\ell})$ is trivial and the image of $I$ in  $\text{Aut}_{\text{set}}(E(K^{\sep}))$ is trivial.
\end{enumerate}
\end{lemma}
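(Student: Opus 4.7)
The plan is to establish (a)$\Rightarrow$(b) trivially by forgetting the tree condition on the dual graph, and then prove (b)$\Rightarrow$(c) and (c)$\Rightarrow$(a).

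For (b)$\Rightarrow$(c), I fix a semi-stable model $(\mathfrak{X},\mathfrak{E})$ of $(X,E)$. Since $\mathfrak{E}$ is finite \'{e}tale over the strictly henselian $R$, it decomposes as $\bigsqcup_{i=1}^{r}\text{Spec}(R)$, forcing $E(K)=E(K^{\text{sep}})$ and hence trivial $I$-action on $E(K^{\text{sep}})$. Independently, semi-stable reduction of $X$ implies, by Deligne--Mumford, semi-stable reduction of $J_X$, so Grothendieck's monodromy theorem forces $I$ to act unipotently on $T_{\ell}(J_X)=\overline{\Pi}_X^{1,\text{pro-}\ell}$. Combined with the hypothesis that the image of $I$ in $\text{Aut}(\overline{\Pi}_X^{1,\text{pro-}\ell})$ is finite and the standard fact that a finite-order unipotent element of $\text{GL}_n(\mathbb{Z}_{\ell})$ is the identity (via the $\ell$-adic $\exp/\log$ bijection), the action must actually be trivial.

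For (c)$\Rightarrow$(a), triviality of the $I$-action on $E(K^{\text{sep}})$ immediately gives $E(K)=E(K^{\text{sep}})$. When $g\geq 1$, the trivial $I$-action on $T_{\ell}(J_X)$ combined with the N\'{e}ron--Ogg--Shafarevich criterion yields good reduction of $J_X$; the classical equivalence \emph{$X$ has semi-stable reduction if and only if $J_X$ has semi-stable reduction} (valid since $\kappa(s)$ is perfect and strictly henselian, hence algebraically closed) then yields semi-stable reduction of $X$, and Lemma \ref{11.2} upgrades this to semi-stable reduction of $(X,E)$. When $g=0$ (so $r\geq 3$ by hyperbolicity), three of the $K$-rational points supplied by $E(K)=E(K^{\text{sep}})$ trivialize $X\cong \mathbb{P}^1_K$, giving good reduction directly. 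For the tree condition, I identify the identity component of the N\'{e}ron model of $J_X$ at $s$ with the identity component of $\text{Pic}^0(\mathfrak{X}_{\overline{s}})$, a semi-abelian variety whose toric part has dimension equal to the first Betti number of the dual graph of $\mathfrak{X}_{\overline{s}}$; good reduction of $J_X$ forces this toric part to vanish, so the dual graph is acyclic, i.e., a tree.

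The hard part will be the invocation of the equivalence ``$X$ has semi-stable reduction $\Leftrightarrow$ $J_X$ has semi-stable reduction'': as the Remark preceding the lemma explicitly flags, this is precisely the step where the classical argument in \cite{Ta1997} needs $\kappa(s)$ algebraically closed, not merely separably closed. The perfectness hypothesis in Lemma \ref{11.3} rules out the pathological case, so the classical Deligne--Mumford statement applies directly; the remaining ingredients (N\'{e}ron--Ogg--Shafarevich, triviality of unipotent torsion, and the computation of the toric rank of $\text{Pic}^0$ of a semi-stable curve) are all standard.
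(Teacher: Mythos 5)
Your proposal is correct in substance and follows essentially the same route as the paper: the splitting of $\mathcal{E}$ over the strictly henselian base (the paper's Lemma \ref{11.2}), the equivalence between semi-stable reduction of $X$ and of $J_X$ (where perfectness makes $\kappa(s)$ algebraically closed), the SGA\,7 unipotence criterion together with ``finite $+$ unipotent $\Rightarrow$ trivial'', N\'eron--Ogg--Shafarevich for good reduction of $J_X$, and the vanishing of the toric rank to get the tree condition (the paper simply cites \cite{Liu2002} Remark 10.3.18 for this last point, and organizes the implications as (b)$\Leftrightarrow$(c) plus (b)$\Rightarrow$(a) rather than your cycle, but the ingredients are identical). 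One caveat: your blanket appeal to the ``classical equivalence'' for all $g\geq 1$, justified only by $\kappa(s)$ being algebraically closed, is not accurate for $g=1$ --- Deligne--Mumford's theorem is a statement for $g\geq 2$, and for genus-one curves without a rational point the implication ``$J_X$ has good reduction $\Rightarrow$ $X$ has semi-stable reduction'' genuinely fails (multiple fibres of torsors). In your situation this is immediately repaired, since hyperbolicity forces $r\geq 1$ and you have already shown $E(K)=E(K^{\sep})\neq\emptyset$, so $X$ is an elliptic curve and $X\cong J_X$; the paper makes exactly this point by recording $E(K)\neq\emptyset$ and citing \cite{DR1973} for the $g=1$ case, and you should do likewise.
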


\begin{proof} 
First, we show that  (b)$\Leftrightarrow$(c). Note  that  $E(K^{\sep})=E(K)$ if and only if   the image of $I(=G_{K})$ in  $\text{Aut}_{\text{set}}(E(K^{\sep}))$ is trivial. Hence, by Lemma \ref{11.2},   the following follows.
\begin{equation*}\label{eq11}
\text{$(X,E)$ has semi-stable reduction at $s$} \iff \left\{
\begin{array}{lll}
 \text{The image of $I$ in  $\text{Aut}_{\text{set}}(E(K^{\sep}))$ is trivial, and }\\
 \text{$X$ has  semi-stable reduction}\\
\end{array}\right.
\end{equation*}
\noindent In particular, when $E(K^{\text{sep}})\neq \emptyset $ (i.e., $r>0$),  $E(K)\neq \emptyset $ follows from both (b) and (c) by Lemma \ref{wflemma}. Thus,   $X$ has semi-stable reduction at $s$ if and only if $ J_{X}$ has  semi-stable reduction at $s$. (This equivalence follows from \cite{DM1969} Theorem (2.4) when $g\geq 2$. We need the assumption that $\kappa(s)$ is a perfect field here. When $g=1$, this equivalence follows from \cite{DR1973}. When $g=0$, the equivalence is trivial, since $J_{X}$ is trivial.) 
Note that    $ J_{X}$ has semi-stable reduction at $s$ if and only if  the image of  $I $ in $\Aut(\overline{\Pi}_{X}^{1,\pro\ell})$ is unipotent (\cite{SGA7-1} Expo\'{s}e IX Proposition 3.5), and that  a subgroup of $\text{Aut}(\overline{\Pi}^{1,\text{pro-}\ell}_{X})$ is finite and unipotent if and only if  it is  trivial. Hence, by Lemma \ref{wflemma}, (b)$\Leftrightarrow$(c) follows.
 (a)$\Rightarrow$(b) is clear.  Finally, we show that  (b)$\Rightarrow$(a).  If (b) holds, then   $J_{X}$ has good reduction at $s$  by (b)$\Rightarrow(\text{c})$  and  the N\'{e}ron\text{-}Ogg\text{-}Shafarevich good reduction criterion (see \cite{ST1968}). Thus,   the dual graph of  $\mathfrak{X}_{s}$ is a tree by \cite{Liu2002} Remark 10.3.18.  
\end{proof}

The author's original proof  of the $m$-step solvable version of the Oda-Tamagawa good reduction criterion (Theorem \ref{2goodreductiontheorem}(c)$\Rightarrow$(a) below) required the extra assumption that $\kappa(s)$ is perfect. However, by using the following lemma given by  Ippei Nagamachi,  we can also  prove  this $m$-step solvable version  when $\kappa(s)$ is not necessarily perfect. 

\begin{lemma}\label{lemmanag}
Assume that $g\geq 2$.  If $J_{X}$ has good reduction at $s$,  then $X$ has stable reduction at $s$. If, moreover, $X$ has potentially good reduction at $s$, then  $X$ has good reduction at $s$.
\end{lemma}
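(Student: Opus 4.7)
The plan is to reduce to the case of perfect residue field by a faithfully flat base change, apply the classical Deligne--Mumford stable reduction criterion, and then descend. Since replacing $R$ by its completion $\widehat{R}$ affects none of the hypotheses or conclusions, we may assume $R$ is complete; by Cohen's structure theorem there is then a faithfully flat local extension $R \hookrightarrow R'$ of complete discrete valuation rings whose residue field $\kappa(R')$ is the perfect closure of $\kappa(s)$. Let $K' := \operatorname{Frac}(R')$.

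For the first assertion, good reduction of $J_X$ is preserved by the base change $R \to R'$, so $J_{X_{K'}}$ has good reduction over $R'$; since $\kappa(R')$ is perfect and $g \geq 2$, the classical Deligne--Mumford theorem (\cite{DM1969} Theorem 2.4) produces a unique stable model $\mathfrak{X}'$ of $X_{K'}$ over $R'$. One then descends $\mathfrak{X}'$ along $R \hookrightarrow R'$: the two pullbacks $p_1^{*}\mathfrak{X}'$ and $p_2^{*}\mathfrak{X}'$ to $\operatorname{Spec}(R' \otimes_R R')$ are stable curves, and their generic fibres coincide canonically with $X \times_K (K' \otimes_K K')$; since stable curves of genus $\geq 2$ have \'etale automorphism schemes (equivalently, $H^{0}(\mathfrak{X}', \mathcal{H}om(\Omega^{1}_{\mathfrak{X}'/R'}, \mathcal{O}_{\mathfrak{X}'})) = 0$, a reformulation of the stability condition), the canonical generic isomorphism extends uniquely to an isomorphism of stable curves over $R' \otimes_R R'$, and the cocycle condition on the triple fibre product holds by the same uniqueness. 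Faithfully flat descent then yields a stable model $\mathfrak{X}/R$ of $X$.

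For the second assertion, let $\mathfrak{X}/R$ denote this stable model and let $R \hookrightarrow R''$ be a finite (hence faithfully flat) extension of discrete valuation rings over which $X_{K''}$ admits a smooth proper model $\mathfrak{Y}$. For $g \geq 2$, every smooth proper connected curve is automatically stable, so $\mathfrak{Y}$ is the stable model of $X_{K''}$ over $R''$; by uniqueness, $\mathfrak{X} \times_R R'' \cong \mathfrak{Y}$, and in particular $\mathfrak{X} \times_R R''$ is smooth over $R''$. Smoothness descends along the faithfully flat morphism $R \to R''$, so $\mathfrak{X}/R$ is smooth, i.e., $X$ has good reduction at $s$.

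The technical heart of the argument is the descent in the first assertion. The ring $R' \otimes_R R'$ is typically neither local nor reduced (the purely inseparable extension $\kappa(s) \subset \kappa(s)^{\operatorname{perf}}$ produces genuine nilpotents in mixed and positive characteristic), so isomorphisms over it cannot be checked on DVR-valued points alone. What makes the descent go through is the \'etaleness of the automorphism scheme of a stable curve of genus $\geq 2$, which rigidifies the canonical generic identification across nilpotent thickenings and automatically supplies the cocycle datum; the hypothesis $g \geq 2$ enters precisely here.
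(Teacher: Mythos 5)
There is a genuine gap at the heart of your argument, namely the assertion that ``the canonical generic isomorphism extends uniquely to an isomorphism of stable curves over $R'\otimes_R R'$'' because stable curves of genus $\geq 2$ have \emph{\'etale} automorphism schemes. By \cite{DM1969} Theorem (1.11) the scheme $\underline{\text{Isom}}(p_1^{*}\mathfrak{X}',p_2^{*}\mathfrak{X}')$ is finite and \emph{unramified} over the base, but it is not flat (hence not \'etale) in general: the vanishing of infinitesimal automorphisms rules out nontrivial deformations of an isomorphism, so unramifiedness gives at most \emph{uniqueness} of an extension, never \emph{existence}. A section of a finite unramified $B$-scheme defined over a schematically dense open of $B$ need not extend to $B$ (the normalization of a nodal curve is finite, unramified and an isomorphism over a dense open, yet not an isomorphism), and over the non-noetherian, non-reduced ring $R'\otimes_R R'$ there is no valuative or properness argument to fall back on --- as you yourself note, this is exactly where the difficulty sits. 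Notice also that your descent step makes no use of the hypothesis that $J_X$ has good reduction, whereas this hypothesis is precisely what rescues the situation: in the paper's proof the N\'eron model $\mathfrak{J}$ of $J_X$ is an abelian scheme over the \emph{original} base, so its two pullbacks to the double product are canonically identified; by \cite{DM1969} Theorem (1.13) the map $\underline{\text{Pic}}^{0}$ from the Isom-scheme of the stable models to the Isom-scheme of these abelian schemes is radicial, and the fibre product of the Isom-scheme with the identity section is then finite, unramified and radicial, hence a closed immersion through which the schematically dense generic locus factors --- forcing it to be the whole base and thereby \emph{producing} the descent datum. Without this (or an equivalent) input, the extension you assert is unjustified, and the lemma is exactly the statement that was missing from the literature for imperfect residue fields, so it cannot be waved through.

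Two further, more minor points. First, applying \cite{DM1969} Theorem (2.4) over your $R'$ is not immediate either: its residue field is perfect but not separably closed, so you must pass to a strict henselization and descend the stable model back along that (ind-\'etale) extension; this is standard (uniqueness of stable models over discrete valuation rings plus effectivity of descent via the canonical polarization $\omega^{\otimes 3}$), but it should be said. Second, your treatment of the ``potentially good $\Rightarrow$ good'' part is fine once a stable model over $R$ exists. Note that the paper's route is different from yours in a structural way: it descends along the \emph{finite} extension furnished by potential stable reduction (\cite{Liu2002} Chapter 10 Theorem 4.3) rather than along the residue-field perfection, and it spends the good-reduction hypothesis entirely on the rigidification of the descent datum just described; if you repair your descent step you will essentially be reproducing that argument.
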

\begin{proof}
By \cite{Liu2002} Chapter 10 Theorem 4.3, $X$ has potentially stable reduction at $s$. Hence we assume that $X_{K'}$ has stable reduction at $s'\in\text{Spec}(R')^{\text{cl}}$, where  $K'$ is a finite extension of $K$ and  $R'$  is a localization of   the  integral closure $\tilde{R}$ of $R$ in $K'$ at a  maximal ideal of $\tilde{R}$.  Let  $S':=\text{Spec}(R')$, $S'':=S'\times_{S} S'$, $S''':=S'\times_{S}S'\times_{S} S'$,  $\eta':=\text{Spec}(K')$, $\eta'':=\eta'\times_{\eta}\eta'$, and $\eta''':=\eta'\times_{\eta}\eta'\times_{\eta}\eta'$.  Let $t: S'\rightarrow S$ be the natural  morphism, $\text{pr}_{1},\text{pr}_{2}: S''\rightarrow S'$ the first and second  projections, respectively, and $q:=t\circ\text{pr}_{1}=t\circ\text{pr}_{2}$.  Let   $\mathfrak{X}'$ be a stable model of $X_{K'}$ over  $S'$. Then we have the following natural commutative diagram.
\[
\xymatrix{
\text{pr}_{1}^{*}\mathfrak{X}',\text{pr}_{2}^{*}\mathfrak{X}'\ar@<0.5ex>[r] \ar@<-0.5ex>[r]\ar[d]&\mathfrak{X}'\ar[d]&\\
 S''\ar@<0.5ex>[r]^{\text{pr}_{1}} \ar@<-0.5ex>[r]_{\text{pr}_{2}}&S'\ar[r]^{t}&S
&
}
\]
Let $\underline{\text{Isom}}(X_{\eta''},X_{\eta''})\rightarrow \eta''$ (resp. $\underline{\text{Isom}}(\text{pr}_{1}^{*}\mathfrak{X}',\text{pr}_{2}^{*}\mathfrak{X}')\rightarrow S''$) be the Isom-scheme of proper, smooth curves $X_{\eta''}$, $X_{\eta''}$ over $\eta''$  (resp. stable curves $\text{pr}_{1}^{*}\mathfrak{X}',\text{pr}_{2}^{*}\mathfrak{X}'$ over $S''$). Let   $\phi_{\eta''}:\eta''\rightarrow \underline{\text{Isom}}(X_{\eta''},X_{\eta''})$ be the morphism induced by the identity morphism of $X_{\eta''}$.  Let $\mathfrak{J}$ be the N\'{e}ron model of $J_{X}$ over $S$, which is an abelian scheme over $S$. (Note that we use the assumption ``$J_{X}$ has good reduction at $s$'' here.)  Let $\underline{\text{Isom}}(J_{X_{\eta''}},J_{X_{\eta''}})\rightarrow\eta''$ (resp. $\underline{\text{Isom}}(q^{*}\mathfrak{J},q^{*}\mathfrak{J})\rightarrow S''$) be the Isom-scheme of abelian schemes $J_{X_{\eta''}},J_{X_{\eta''}}$ over $\eta''$  (resp. $q^{*}\mathfrak{J}$, $q^{*}\mathfrak{J}$ over $S''$ ). (For the existence and properties of  Isom-schemes, see  \cite{Fa2005} Chapter 5, especially  Theorem 5.23.)   We have that $\underline{\text{Pic}}^{0}(\mathfrak{X}'/S')=t^{*}\mathfrak{J}$ and  $\underline{\text{Pic}}^{0}(\text{pr}_{i}^{*}\mathfrak{X}'/S'')=q^{*}\mathfrak{J}$  for $i=1,2$ (\cite{Bo1990} Chapter 9.5 Theorem 1). Thus, we get the following commutative diagram.
\[
\xymatrix{
\eta''\ar[d]\ar[r]^-{\phi_{\eta''}} & \underline{\text{Isom}}(X_{\eta''},X_{\eta''})\ar[r] \ar[d]& \underline{\text{Isom}}(J_{X_{\eta''}},J_{X_{\eta''}})\ar[d]\ar[r]&\eta''\ar[d]\\
S''& \underline{\text{Isom}}(\text{pr}_{1}^{*}\mathfrak{X}',\text{pr}_{2}^{*}\mathfrak{X}') \ar[r]^{\underline{\text{Pic}}^{0}}& \underline{\text{Isom}}(q^{*}\mathfrak{J},q^{*}\mathfrak{J})\ar[r]&S''
}
\]
Let $S''\rightarrow\underline{\text{Isom}}(q^{*}\mathfrak{J},q^{*}\mathfrak{J}) $ be the morphism  induced by the identity morphism of  $q^{*}\mathfrak{J}$. Set 
\[
T:=S''\times_{ \underline{\text{Isom}}(q^{*}\mathfrak{J},q^{*}\mathfrak{J})}\underline{\text{Isom}}(\text{pr}_{1}^{*}\mathfrak{X}',\text{pr}_{2}^{*}\mathfrak{X}'). 
\]
By \cite{DM1969} Theorem (1.11), $\underline{\text{Isom}}(\text{pr}_{1}^{*}\mathfrak{X}',\text{pr}_{2}^{*}\mathfrak{X}') $ is finite and unramified over $S''$. Hence we get that $T$ is finite and  unramified over $S''$, since  $\underline{\text{Isom}}(q^{*}\mathfrak{J},q^{*}\mathfrak{J})\rightarrow S''$ is separated (see \cite{Fa2005} Chapter 5). Moreover, for any geometric point $\overline{x}$ of $S''$, the map $\text{Isom}_{\kappa(\overline{x})}((\text{pr}_{1}\mathfrak{X}')_{\overline{x}},(\text{pr}_{2}\mathfrak{X}')_{\overline{x}})\rightarrow\text{Aut}_{\kappa(\overline{x})}(q^{*}\mathfrak{J}_{\overline{x}})$ is injective by \cite{DM1969} Theorem (1.13). In particular, we get that the morphism $\underline{\text{Pic}}^{0}$ is radicial  by \cite{Fu2015} Proposition 1.7.1.  Thus,  $T$ is also radicial over $S''$. Since   $T\rightarrow S''$ is finite, unramified, and radicial,   $T\rightarrow S''$ is a closed immersion.  Since $S''\rightarrow S$ is flat and $\eta$ is scheme-theoretically dense in $S$, $\eta''$ is also scheme-theoretically dense in $S''$.  Since $\eta''\rightarrow S''$ factors through $T\hookrightarrow S''$, we have that $T=S''$ and  $\phi_{\eta''}$ uniquely extends to a morphism $\phi_{S''}:S''= T\hookrightarrow \underline{\text{Isom}}(\text{pr}_{1}^{*}\mathfrak{X}',\text{pr}_{2}^{*}\mathfrak{X}')$. Further, $\phi_{S''}$ satisfies the cocycle condition because $\phi_{\eta''}$ satisfies the cocycle condition and $\eta'''\rightarrow S'''$ is scheme-theoretically dense.   By descent theory, $\mathfrak{X}'$ descends to a  (an automatically stable) model of $X$ over $S$.  If $X$ has potentially good reduction at $s$, then this stable model of $X$ over $S$ must be smooth. Therefore, the assertion follows. 
\end{proof}

When $2g+r-2>0$, we write  $\mathcal{M}_{g,r}$ for  the moduli stack  of proper, smooth curves of genus $g$ with $r$ disjoint ordered sections (cf. \cite{Kn1983}). The moduli stack  $\mathcal{M}_{g,r}$ is a  Deligne-Mumford stack  separated over $\text{Spec}(\mathbb{Z})$ by \cite{Kn1983}. (In \cite{DM1969} Definition (4.6), Deligne and Mumford defined ``algebraic stack''. In this paper, we call  ``algebraic stack'' by Deligne and Mumford  ``Deligne-Mumford stack''.)  
We have that the symmetric group $S_{r}$ acts on  $\mathcal{M}_{g,r}$ via the  permutation of  the ordered  sections.  By \cite{MR2005} Theorem 4.1 and Theorem 5.1, there exists a Deligne-Mumford stack    $\mathcal{M}_{g,[r]}:=\mathcal{M}_{g,r}/S_{r}$ separated over $\text{Spec}(\mathbb{Z})$, which turns out to be the moduli stack  of smooth curves of type $(g,r)$. We write $R^{\text{sh}}$ for  a strict henselization of $R$.

\begin{lemma}\label{goodtamagawanag}
Define $\epsilon$ as $0$ (resp. $1$, resp. $3$) when $g\geq 2$ (resp. $g=1$, resp. $g=0$).  Let  $s^{\text{sh}}$ be the closed point of $\text{Spec}(R^{\text{sh}})$.  Let $W$ be a subscheme of $E_{K(R^{\text{sh}})}$ satisfying the  degree of $W$ over $K(R^{\text{sh}})$ is greater than or equal to $\epsilon$.   Assume that $(X_{K(R^{\text{sh}})},E_{K(R^{\text{sh}})})$  has potentially good reduction at $s^{\text{sh}}$ and  $(X_{R^{\text{sh}}},W)$ has good reduction at $s^{\text{sh}}$.  Then $(X,E)$ has a good reduction at $s$.
\end{lemma}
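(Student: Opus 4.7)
My plan is to combine a moduli-theoretic argument over $R^{\text{sh}}$ with fpqc descent back to $R$. The key inputs are the separatedness over $\text{Spec}(\mathbb{Z})$ of the Deligne--Mumford stacks $\mathcal{M}_{g,r'}$ (and $\mathcal{M}_{g,[r']}$) whenever $2g+r'-2>0$: for $g\geq 2$ any $r'\geq 0$ suffices, for $g=1$ any $r'\geq 1$, and for $g=0$ any $r'\geq 3$ --- exactly the range ensured by $|W|\geq\epsilon$. After fixing an ordering of $E_{K(R^{\text{sh}})}$ making $W$ the first $|W|$ marked points, I will use the natural forgetful morphism $\mathcal{M}_{g,r}\to\mathcal{M}_{g,|W|}$.

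The first step is to show that $(X_{K'},E_{K'})$ has good reduction over $R^{\text{sh}}$, where $K':=K(R^{\text{sh}})$. By the potential good reduction hypothesis, choose a finite extension $L/K'$ such that $(X_L,E_L)$ admits a smooth model $(\mathfrak{X}_L,\mathfrak{E}_L)$ over the DVR $R_L$ (the integral closure of $R^{\text{sh}}$ in $L$, localized at its unique maximal ideal above $s^{\text{sh}}$; henselianness of $R^{\text{sh}}$ makes this a DVR, finite free over $R^{\text{sh}}$, hence faithfully flat). The smooth model $(\mathfrak{X}',\mathfrak{W})$ classifies a morphism $\phi\colon\text{Spec}(R^{\text{sh}})\to\mathcal{M}_{g,|W|}$, and composing the classifying map of $(\mathfrak{X}_L,\mathfrak{E}_L)$ with the forgetful map yields $\psi\colon\text{Spec}(R_L)\to\mathcal{M}_{g,|W|}$ agreeing with $\phi$ on the generic point $\text{Spec}(L)$. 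Separatedness of $\mathcal{M}_{g,|W|}$ and the valuative criterion then force $\psi$ to equal the base change of $\phi$, producing an isomorphism $\mathfrak{X}_L\cong\mathfrak{X}'\times_{R^{\text{sh}}}R_L$ carrying $\mathfrak{W}\times_{R^{\text{sh}}}R_L$ onto the sections of $\mathfrak{E}_L$ indexed by $W$. Let $\mathfrak{E}\subset\mathfrak{X}'$ be the (flat) scheme-theoretic closure of $E_{K'}$; since $R^{\text{sh}}\to R_L$ is flat and closures commute with flat base change, $\mathfrak{E}\times_{R^{\text{sh}}}R_L$ is a flat closed subscheme of $\mathfrak{X}_L$ with generic fiber $E_L$, hence coincides with $\mathfrak{E}_L$ by uniqueness of the flat closure. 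As $\mathfrak{E}_L$ is étale over $R_L$ and étaleness is fpqc-local on the base, $\mathfrak{E}$ is étale over $R^{\text{sh}}$, so $(\mathfrak{X}',\mathfrak{E})$ is a smooth model of $(X_{K'},E_{K'})$ over $R^{\text{sh}}$.

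The second step is to descend this smooth model to $R$. Writing $R^{\text{sh}}=\varinjlim R''$ over étale local $R$-algebras and using finite presentation, $(\mathfrak{X}',\mathfrak{E})$ descends to a smooth curve $(\mathfrak{X}'',\mathfrak{E}'')$ over some such $R''$, with generic fiber $(X,E)\times_K K''$ for $K'':=\text{Frac}(R'')$. The map $R\to R''$ is faithfully flat étale, and along the two projections $p_1,p_2\colon\text{Spec}(R''\otimes_R R'')\to\text{Spec}(R'')$, the pullbacks $p_i^*(\mathfrak{X}'',\mathfrak{E}'')$ are canonically isomorphic on the generic fiber (both equal to $(X,E)\times_K(K''\otimes_K K'')$). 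Separatedness of $\mathcal{M}_{g,[r]}$ makes the Isom-scheme of the two pullbacks finite and unramified over $R''\otimes_R R''$, so the valuative criterion applied at each localization at a maximal ideal uniquely extends the canonical generic section to a global section; the cocycle condition over $R''\otimes_R R''\otimes_R R''$ then follows by the same uniqueness. Faithfully flat descent along $R\to R''$ finally produces a smooth model $(\mathfrak{X},\mathfrak{E})$ of $(X,E)$ over $R$. The main delicate point is Step 1, where one must invoke separatedness of $\mathcal{M}_{g,|W|}$ at exactly the right moment to identify the two $R_L$-models, and then use fpqc descent of étaleness to lift the smooth model of the smaller marking $W$ to a smooth model of the full $E$; Step 2 is then essentially standard faithfully flat descent made rigid by separatedness of $\mathcal{M}_{g,[r]}$.
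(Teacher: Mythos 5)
Your proposal is correct and follows essentially the same route as the paper's proof: you upgrade the smooth model of $(X,W)$ to one of $(X,E)$ by invoking separatedness of the moduli stack of curves with the $W$-marking (uniqueness of smooth models over the extension realizing potential good reduction) to transport the scheme-theoretic closure of $E$ and deduce its \'{e}taleness by flat descent, and then descend to $R$ via finiteness of the Isom-scheme, normality, and scheme-theoretic density of the generic fibers. The only differences are cosmetic bookkeeping: you work over $R^{\text{sh}}$ first and spread out to a finite \'{e}tale level afterwards (the paper spreads out to $R'$ at the start), and you phrase the uniqueness via classifying maps to the ordered stack $\mathcal{M}_{g,|W|}$ and the forgetful morphism rather than via $\mathcal{M}_{g,[\epsilon]}$ directly.
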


\begin{proof}
Note that $W\subset E(K(R^{\text{sh}}))$. By  assumption, there exists a discrete valuation ring  $R'$ such that $R'$ is  etale over $R$, that  $[K(R'):K(R)]<\infty$, that $(X_{K(R')},E_{K(R')})$  has potentially good reduction at $s'$,  that $W'\subset E(K(R'))$ (hence  $W'_{K(R^{\text{sh}})}=W$), and  that  $(X_{K(R')},W')$ has good reduction at $s'$, where $s'$ stands for the closed point of $\text{Spec}(R')$ and $W'$ is  the image of $W$ in $E_{K(R')}$. We set $S:=\text{Spec}(R)$ and $S':=\text{Spec}(R')$.  Let  $\eta'=\text{Spec}(K(R'))$ be   the generic point of $\text{Spec}(R')$.  Let $(\mathfrak{X}',\mathfrak{W}')$ be a smooth model of $(X_{\eta'},W')$ over $S'$ and $\mathfrak{E}'$ the scheme-theoretic closure of $E_{\eta'}$ in $\mathfrak{X}'$.    Since  $(X_{\eta'},E_{\eta'})$ has  potentially good reduction at $s'$,  there exist an extension $T\rightarrow S'$ of spectra of discrete valuation rings with $[K(T):K(S')]<\infty$  and  a smooth  model   $(\mathcal{X}',\mathcal{E}')$ of $(X_{K(T)},E_{K(T)})$ over $T$. Let  $\mathcal{W}'$ be the scheme-theoretic closure of $W'_{T}$ in $\mathcal{X}'$.   The separatedness of $\mathcal{M}_{g,[\epsilon]}$  implies that a smooth model of $(X_{K(T)},W'_{K(T)})$ over $T$ is unique. Hence we have an isomorphism $(\mathcal{X}',\mathcal{W}')\xrightarrow{\sim}(\mathfrak{X}'_{T}, \mathfrak{W}'_{T})$  over $T$, which  induces an isomorphism  $\mathcal{E}'\xrightarrow{\sim}\mathfrak{E}'_{T}$. In particular, $\mathfrak{E}'$ is finite  \'{e}tale over $S'$. Thus, $(\mathfrak{X}',\mathfrak{E}')$ is a smooth model of $(X_{\eta'},E_{\eta'})$ over $S'$. \par
We set  $S'':=S'\times_{S}S'$, $S''':=S'\times_{S}S'\times_{S}S'$, $\eta'':=\eta'\times_{\eta}\eta'$, and $\eta''':=\eta'\times_{\eta} \eta'\times_{\eta}\eta'$. Let  $\text{pr}_{1}, \text{pr}_{2}: S''\rightarrow S'$ be the first and second projections, respectively.   Let $\underline{\text{Isom}}_{\eta''}((X_{\eta''},E_{\eta''}),(X_{\eta''},E_{\eta''}))\rightarrow \eta''$ (resp. $\underline{\text{Isom}}_{S''}(\text{pr}_{1}^{*}(\mathfrak{X}',\mathfrak{E}'), \text{pr}_{2}^{*}(\mathfrak{X}',\mathfrak{E}'))\rightarrow S''$) be the Isom-scheme of   smooth curves $(X_{\eta''},E_{\eta''})$, $(X_{\eta''},E_{\eta''})$ over $\eta''$  (resp.  $\text{pr}_{1}^{*}(\mathfrak{X}',\mathfrak{E}'),\text{pr}_{2}^{*}(\mathfrak{X}',\mathfrak{E}')$ over $S''$).  Then we have the following  diagram.
\begin{equation*}
\xymatrix{
\eta''\ar[d]\ar[r]^-{\phi_{\eta''}} &\underline{\text{Isom}}_{\eta''}((X_{\eta''},E_{\eta''}),(X_{\eta''},E_{\eta''}))\ar[d]\ar[r]&\eta''\ar[d]\\
S''& \underline{\text{Isom}}_{S''}(\text{pr}_{1}^{*}(\mathfrak{X}',\mathfrak{E}'), \text{pr}_{2}^{*}(\mathfrak{X}',\mathfrak{E}'))\ar[r]&S'',
}
\end{equation*}
where $\phi_{\eta''}$ is the morphism induced by the identity morphism of $X_{\eta''}$. Since $  \underline{\text{Isom}}_{S''}(\text{pr}_{1}^{*}(\mathfrak{X}',\mathfrak{E}'), \text{pr}_{2}^{*}(\mathfrak{X}',\mathfrak{E}'))\rightarrow S'',$ is finite and   $S''$ is nomal, the morphism $\eta''\rightarrow\underline{\text{Isom}}_{\eta''}((X_{\eta''},E_{\eta''}),(X_{\eta''},E_{\eta''}))$ extends to the morphism  $\phi_{S''}: S''\rightarrow\underline{\text{Isom}}_{S''}(\text{pr}_{1}^{*}(\mathfrak{X}',\mathfrak{E}'), \text{pr}_{2}^{*}(\mathfrak{X}',\mathfrak{E}'))$.  Further, $\phi_{S''}$ satisfies the cocycle condition because $\phi_{\eta''}$ satisfies the cocycle condition and $\eta'''\rightarrow S'''$ is scheme-theoretically dense.   By descent theory, $(\mathfrak{X}',\mathfrak{E}')$ descends to a  (an automatically smooth) model of $(X,E)$ over $S$. Thus, the assertion follows.
\end{proof}

\begin{theorem}\label{2goodreductiontheorem}
Assume that $(X,E)$ is hyperbolic, and that  $m\geq2$. Then the following conditions (a)-(c) are equivalent. 
\begin{enumerate}[(a)]
\item\label{gooda} $(X,E)$ has good reduction at $s$.
\item\label{goodc} The image of $I $ in $\Out(\overline{\Pi}_{U}^{m,\pro p'})$ is trivial.
\item\label{goodb} The image of $I $ in $\Out(\overline{\Pi}_{U}^{m,\pro\ell})$ is trivial.
\end{enumerate}
Here, $\overline{\Pi}^{m,\pro 0'}_{U}$ is defined as $\overline{\Pi}^{m}_{U}$.
\end{theorem}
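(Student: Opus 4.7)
The plan is to prove the three implications as follows. The directions (a) $\Rightarrow$ (b) and (a) $\Rightarrow$ (c) follow immediately from Theorem \ref{otgoodthm}: a smooth model of $(X,E)$ forces the outer Galois action on $\overline{\Pi}_U^{\pro p'}$ (resp.\ on $\overline{\Pi}_U^{\pro \ell}$) to factor through $G_{\kappa(s)}$ via the specialization isomorphism of tame fundamental groups, so the inertia group $I$ acts trivially; this triviality then descends to any characteristic quotient, in particular to the $m$-step solvable ones $\overline{\Pi}_U^{m,\pro p'}$ and $\overline{\Pi}_U^{m,\pro \ell}$. The implication (b) $\Rightarrow$ (c) is formal: since $\ell \neq p$, the $I$-equivariant surjection $\overline{\Pi}_U^{m,\pro p'} \twoheadrightarrow \overline{\Pi}_U^{m,\pro \ell}$ (quotient by a characteristic subgroup) induces a compatible map $\Out(\overline{\Pi}_U^{m,\pro p'}) \to \Out(\overline{\Pi}_U^{m,\pro \ell})$, under which triviality of the image of $I$ is preserved.

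For the main direction (c) $\Rightarrow$ (a), I first reduce to the case where $R$ is strictly henselian: the hypothesis (c) is stable under replacing $R$ by $R^{\text{sh}}$ (the inertia group is essentially unchanged), while Lemma \ref{goodtamagawanag} will ultimately allow the descent of good reduction from $R^{\text{sh}}$ back down to $R$. With $R$ strictly henselian, I invoke the known $m_0$-step solvable good reduction criterion of \cite{Ta1997} Remark (5.4) combined with \cite{AMO1992}, taking $m_0 = 2$ when $r < 2$ and $m_0 = 3$ when $r \geq 2$. Since the surjection $\overline{\Pi}_U^{m,\pro \ell} \twoheadrightarrow \overline{\Pi}_U^{m_0,\pro \ell}$ is $I$-equivariant and $m \geq m_0$ (with an auxiliary small-$m$ argument drawn from the same sources to cover any boundary case), the image of $I$ in $\Out(\overline{\Pi}_U^{m_0,\pro \ell})$ is trivial; and when $\kappa(s)$ is perfect, the $m_0$-step criterion directly produces a smooth model of $(X,E)$ over $R^{\text{sh}}$ (via Lemma \ref{11.3} and its analogues for $(X,E)$ rather than $X$ alone).

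The case of imperfect $\kappa(s)$ is the principal obstacle and is bridged by Nagamachi's Lemma \ref{lemmanag}. By Lemma \ref{wflemma}, $T_\ell(J_X)$ appears as a weight $-1$ subquotient of $\overline{\Pi}_U^{1,\pro \ell}$, so triviality of the $I$-action on $\overline{\Pi}_U^{1,\pro \ell}$ (obtained from (c) via the natural quotient $\overline{\Pi}_U^{m,\pro \ell} \twoheadrightarrow \overline{\Pi}_U^{1,\pro \ell}$) combined with N\'eron--Ogg--Shafarevich gives good reduction of $J_X$ over $R^{\text{sh}}$. To obtain potential good reduction of $X$ over $R^{\text{sh}}$, I base change to an extension of $R^{\text{sh}}$ with perfect residue field (for instance, the completion of the maximal unramified extension) and apply the perfect-residue-field argument of the previous paragraph. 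Lemma \ref{lemmanag} then promotes these two inputs -- good reduction of $J_X$ together with potential good reduction of $X$ -- into genuine good reduction of $X$ over $R^{\text{sh}}$. The hypothesis (c) likewise forces triviality of the $I$-action on the inertia subgroups within $\overline{\Pi}_U^{1,\pro \ell}$, which by Lemma \ref{wflemma} translates to $E(\overline{K}) = E(K(R^{\text{sh}}))$, so $E$ extends to a finite \'etale closed subscheme of the smooth model of $X$, yielding good reduction of $(X,E)$ over $R^{\text{sh}}$. Finally, Lemma \ref{goodtamagawanag} descends this to good reduction of $(X,E)$ over $R$. The entire plan hinges on Lemma \ref{lemmanag} to bypass the classical perfectness hypothesis in the Deligne--Mumford-type criterion relating good reduction of $J_X$ to that of $X$.
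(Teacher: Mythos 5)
There is a genuine gap in your main direction (c)$\Rightarrow$(a). You outsource the heart of the argument to ``the known $m_0$-step solvable good reduction criterion of \cite{Ta1997} Remark (5.4) combined with \cite{AMO1992}, taking $m_0=2$ when $r<2$ and $m_0=3$ when $r\geq 2$.'' But the theorem asserts the criterion for every $m\geq 2$ with no restriction on $r$, so in the case $m=2$, $r\geq 2$ you would need the $2$-step criterion, which the cited sources do not give (they give only the $3$-step version when $r\geq 2$), and no surjection from $\overline{\Pi}_U^{2,\pro\ell}$ onto $\overline{\Pi}_U^{3,\pro\ell}$ exists to transfer the hypothesis; your parenthetical ``auxiliary small-$m$ argument'' is precisely the missing content. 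Moreover the paper itself only \emph{remarks} that those references yield the $2$-/$3$-step versions and defers a proof to a forthcoming paper, so this cannot be used as a black box here. The paper's actual proof supplies exactly this missing step: in the strictly henselian, perfect-residue-field case, condition (c) plus Lemma \ref{wflemma} and Lemma \ref{11.3} give only a semi-stable model of $(X,E)$ whose special fiber has tree dual graph; the passage from ``semi-stable with tree dual graph'' to ``smooth'' is then achieved by a covering trick that genuinely uses $m\geq 2$: one constructs an open normal subgroup $H'\subset \Pi_U^{(m,\pro\ell)}$ (the kernel of a $\mathbb{Z}/\ell$ quotient factoring through $\Pi_{\mathfrak{U}_s}^{\text{ab},\pro\ell}\cong\prod_i\Pi_{W_i}^{\text{ab},\pro\ell}$, surjective on each nontrivial factor) containing both $(\overline{\Pi}_U^{\pro\ell})^{[m-1]}/(\overline{\Pi}_U^{\pro\ell})^{[m]}$ and the kernel of specialization, shows via the Out-triviality that $(X_{H'},E_{H'})$ again has semi-stable reduction with tree dual graph, and concludes smoothness by the final combinatorial paragraph of Tamagawa's proof of \cite{Ta1997} Theorem (5.3). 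Nothing in your proposal replaces this, and your claim that the $m_0$-step criterion ``directly produces a smooth model via Lemma \ref{11.3}'' is not correct: Lemma \ref{11.3} yields semi-stability with tree dual graph, not smoothness.

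Two secondary problems: your treatment of the imperfect residue field case is close in spirit to the paper's (Néron--Ogg--Shafarevich plus Lemma \ref{lemmanag} plus Lemma \ref{goodtamagawanag}), but Lemma \ref{lemmanag} is stated only for $g\geq 2$, so your route says nothing for $g\leq 1$, which the paper handles by a separate direct argument ($X_{K^{\text{sh}}}\cong\mathbb{P}^1$ when $g=0$, and the elliptic case via NOS applied to $(X_{K^{\text{sh}}},P)$) before invoking Lemma \ref{goodtamagawanag}. Also, your proposed auxiliary base change ``the completion of the maximal unramified extension'' does not have perfect residue field when $\kappa(s)$ is imperfect (its residue field is only separably closed); the paper instead uses \cite{Liu2002} Lemma 10.3.32 to pass to a henselian discrete valuation ring with residue field $\overline{\kappa(s)}$. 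Finally, the step ``$E(K^{\text{sep}})=E(K^{\text{sh}})$, so $E$ extends to a finite \'etale closed subscheme of the smooth model, yielding good reduction of the pair'' is not automatic: distinct sections may collide in the special fiber, and ruling this out (or circumventing it, as the paper does through the semi-stable model of the pair and Lemma \ref{goodtamagawanag}) requires an argument you have not given.
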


\begin{proof}
The implication (\ref{gooda})$\Rightarrow$(\ref{goodc}) follows from \cite{SGA1} Expo\'{s}e XIII.  The implication  (\ref{goodc})$\Rightarrow$(\ref{goodb}) is clear.  First, we show that  $(\ref{goodb})\Rightarrow (\ref{gooda})$ under the assumption  that $R$ is strictly henselian  and $\kappa(s)$ is perfect.  We have that (c) implies the condition (c) in Lemma \ref{11.3} by Lemma \ref{wflemma}. Hence, by Lemma \ref{11.3}(c)$\Rightarrow$(a), we get  that $(X,E)$ has a semi-stable model $(\mathfrak{X},\mathfrak{E})$ and the   dual graph of the closed fiber of $(\mathfrak{X}, \mathfrak{E})$ is a  tree. Set $\mathfrak{U}:=\mathfrak{X}-\mathfrak{E}$.   We consider the specialization homomorphism $\Pi_{U}^{(m,\pro\ell)}\twoheadrightarrow (\Pi_{\mathfrak{U}}^{(m,\text{pro-}\ell)}\xleftarrow{\sim})\Pi_{\mathfrak{U}_{s}}^{(m,\pro\ell)}$ (see \cite{SGA1} Expo\'{s}e X Corollary 2.3). Let $H$ be an open normal subgroup of $\Pi_{U}^{(m,\pro\ell)}$  that  satisfies: (i)  $(\overline{\Pi}_{U}^{\pro\ell})^{[m-1]}/(\overline{\Pi}_{U}^{\pro\ell})^{[m]}\subset H$, and  (ii) $\text{Ker}( \Pi_{U}^{(m,\pro\ell)}\twoheadrightarrow \Pi_{\mathfrak{U}_{s}}^{(m,\pro\ell)})\subset H$. (Note that,  by (ii), $H$ also satisfies:   (iii) the composite of  $H\hookrightarrow  \Pi_{U}^{(m,\pro\ell)}\twoheadrightarrow I$ is surjective, since $R$ is strictly henselian.) Let   $(X_{H},E_{H})$ be the covering of $(X,E)$ corresponding to $H$, $\mathfrak{U}_{H}$  the covering of $\mathfrak{U}$  corresponding to the image of $H$  in $\Pi^{(m,\text{pro-}\ell)}_{\mathfrak{U}}$, $\mathfrak{X}_{H}$ the nomalization of $\mathfrak{X}$ in the function field of $\mathfrak{U}_{H}$, and $\mathfrak{E}_{H}:=\mathfrak{X}_{H}-\mathfrak{U}_{H}$.  By Abhyankar's lemma, (ii) implies that $(\mathfrak{X}_{H}, \mathfrak{E}_{H})$ is a semi-stable model of  $(X_{H},E_{H})$. First, we claim that the   dual graph of the closed fiber of $(\mathfrak{X}_{H}, \mathfrak{E}_{H})$ is a  tree. Indeed,  we have the following diagram (see (\ref{outeraction})).
\begin{equation}\vcenter{\xymatrix{
1\ar[r] &\overline{\Pi}_{U}^{m,\pro\ell}\ar[r]\ar[d] & \Pi_{U}^{(m,\pro\ell)}\ar[r]\ar[d]& I\ar[d]^{\text{trivial}}\ar[r] &1\\
1\ar[r] & \text{Inn}(\overline{\Pi}_{U}^{m,\pro\ell})\ar[r]&\Aut(\overline{\Pi}_{U}^{m,\pro\ell})\ar[r] &\Out(\overline{\Pi}_{U}^{m,\pro\ell})\ar[r]&1 
}}
\end{equation}
Set $J:=\text{ker}(\Pi_{U}^{(m,\pro\ell)}\rightarrow\Aut(\overline{\Pi}_{U}^{m,\pro\ell}))$.  By (\ref{goodb}), the natural map  $p: J\twoheadrightarrow I$ is surjective.   By definition of $J$, the map  $H\cap J\rightarrow \text{Aut}(\overline{H})\rightarrow \text{Aut}(\overline{H}^{1})$  is trivial.  We obtain that $p(H\cap J)\overset{\text{op}}\subset I$ as $H\cap J\overset{\text{op}}\subset J$. In particular, the image of $I$ in  $\text{Aut}(\overline{H}^{1})$ is finite, where $I\rightarrow \text{Aut}(\overline{H}^{1})$ is induced by (iii). The condition  (i) implies that $\overline{H}^{1}\xleftarrow{\sim}\overline{\Pi}_{U_{H}}^{1}$.  Hence $(X_{H},E_{H})$  satisfies the condition (b) of Lemma \ref{11.3}. Therefore,  by Lemma \ref{11.3} (b) $\Rightarrow$ (a), the claim follows.  Next, we construct an open normal subgroup of $\Pi_{U}^{(m,\pro\ell)}$  that  satisfies (i)-(iii). Let $\{Z_{i}\}_{i=1,\cdots,j}$ be the set of irreducible components of $\mathfrak{X}_{s}$, and set $W_{i}:=Z_{i}-\mathfrak{E}$. Then $W_{i}$ is smooth, and  
\[
\Pi_{\mathfrak{U}_{s}}^{\text{ab},\pro\ell}\cong \prod_{i=1}^{j} \Pi_{W_{i}}^{\text{ab},\pro\ell},
\]
since the dual graph is tree. We can construct  a quotient of $\Pi_{U}^{(m,\pro\ell)}$ which factors through $\Pi_{\mathfrak{U}_{s}}^{\text{ab},\pro\ell}$ and is isomorphic to  $\mathbb{Z}/\ell\mathbb{Z}$ such that $\Pi_{W_{i}}^{\text{ab},\pro\ell}$ is surjectively mapped onto the quotient for each $i=1,\cdots, j$ with $\Pi_{W_{i}}^{\text{ab},\pro\ell}\neq \{1\}$.  We define $H' \subset \Pi_{U}^{(m,\pro\ell)}$ as the kernel of the surjection $\Pi_{U}^{(m,\pro\ell)}\twoheadrightarrow \mathbb{Z}/\ell\mathbb{Z}$.  $H'$ satisfies the above  conditions (i)-(iii) by the construction. Hence  $(\mathfrak{X}_{H'}, \mathfrak{E}_{H'})$ is a semi-stable model of  $(X_{H'},E_{H'})$ and the   dual graph of the closed fiber of $(\mathfrak{X}_{H'}, \mathfrak{E}_{H'})$ is a  tree. Since the  dual graphs of the closed fibers of the semi-stable models of   $(X,E)$ and  $(X_{H'},E_{H'})$ are trees,  $(X,E)$ has a good reduction at $s$ by  the last paragraph of the  proof of \cite{Ta1997} Theorem (5.3) (d)$\Rightarrow$(a).\par 
Finally, we show that $(\ref{goodb})\Rightarrow (\ref{gooda})$  in general.   By \cite{Liu2002} Lemma 10.3.32, there exists a  henselian discrete valuation ring $R_{1}$ containing $R$ such that    a  uniformizer of  $R$ is a uniformizer of $R_{1}$ and   the residue field of $R_{1}$ is $\overline{\kappa(s)}$.  By the discussion so far and this, we may assume that $(X,E)$ has potentially good reduction at $s$.  When $g\geq 2$, $X$ has good reduction at $s$ by  (c), the N\'{e}ron\text{-}Ogg\text{-}Shafarevich criterion, and Lemma \ref{lemmanag}. Thus, when $g\geq 2$,  $(X,E)$ has good reduction at $s$ by Lemma \ref{goodtamagawanag}. Next, we consider the case that  $g\leq 1$.  We define  $\epsilon$ as $3$ (resp. $1$) when $g=0$ (resp. $g=1$).  The hyperbolicity of $U$ and (c) implies that $|E(K^{\text{sep}})|=|E(K^{\text{sh}})|\geq \epsilon$. When $g=0$,  $(X_{K^{\text{sh}}},P_{1},P_{2},P_{2})$ has good reduction at $s'$ for any $P_{1}$, $P_{2}$, $P_{3}\in E_{K^{\text{sh}}}(K^{\text{sh}})$, since $X_{K^{\text{sh}}}$ is isomorphic to $\mathbb{P}_{K^{\text{sh}}}^{1}$.  When $g=1$,  by (c) and the N\'{e}ron\text{-}Ogg\text{-}Shafarevich criterion,  $(X_{K^{\text{sh}}},P)$ has good reduction at $s'$ for any $P\in E_{K^{\text{sh}}}(K^{\text{sh}})$. Thus, when $g\leq 1$,  $(X,E)$ has good reduction at $s$ by Lemma \ref{goodtamagawanag}. Therefore, the assertion follows.
\end{proof}

\begin{lemma}\label{lemma20.3} Let $R^{\dag}$ be a regular local ring, and  $(\mathfrak{X}^{\dag},\mathfrak{E}^{\dag})$ a smooth curve over $\text{Spec}(R^{\dag})$. Set $\mathfrak{U}^{\dag}:=\mathfrak{X}^{\dag}-\mathfrak{E}^{\dag}$.  Let $\rho: \mathfrak{U}^{\dag}\rightarrow \text{Spec}(R^{\dag})$ be the structure morphism, and  $v\in\mathfrak{U}^{\dag}$. Then $v$ is of codimension one in $\mathfrak{U}^{\dag}$ if and only if $v$ satisfies  one of the following conditions (i)-(ii).
\begin{enumerate}[(i)]
\item $\rho(v)$ is of  codimension one in $\text{Spec}(R^{\dag})$ and $v$ is the generic point of $\mathfrak{U}^{\dag}_{\rho(v)}$.
\item $\rho(v)$ is the generic point of $\text{Spec}(R^{\dag})$ and $\kappa(v)/\kappa(\rho(v))$ is finite. 
\end{enumerate}
\end{lemma}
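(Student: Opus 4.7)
The plan is to apply the classical dimension formula for a flat morphism of locally Noetherian schemes (EGA IV$_2$ Proposition 6.1.2):
$$
\dim \mathcal{O}_{\mathfrak{U}^{\dag},v} \;=\; \dim \mathcal{O}_{\text{Spec}(R^{\dag}),\rho(v)} \;+\; \dim \mathcal{O}_{\mathfrak{U}^{\dag}_{\rho(v)},v}.
$$
First I would observe that $\rho$ is flat because it is smooth, so the formula applies. Note also that $\mathfrak{U}^{\dag}$ is integral: $R^{\dag}$ is a domain (being regular local), and each geometric fiber of $\mathfrak{X}^{\dag}\to\mathrm{Spec}(R^{\dag})$ is a smooth, connected curve (hence integral); therefore $\mathfrak{X}^{\dag}$ is integral (flat over an integral base with geometrically integral fibers), and $\mathfrak{U}^{\dag}\subset\mathfrak{X}^{\dag}$ is open and nonempty, hence integral. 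In particular the codimension of $v$ in $\mathfrak{U}^{\dag}$ equals $\dim \mathcal{O}_{\mathfrak{U}^{\dag},v}$.

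Now the codimension-one condition $\dim \mathcal{O}_{\mathfrak{U}^{\dag},v}=1$ forces exactly one of the two non-negative summands on the right-hand side to vanish. The two cases correspond precisely to (i) and (ii): if $\dim\mathcal{O}_{R^{\dag},\rho(v)}=1$ and $\dim\mathcal{O}_{\mathfrak{U}^{\dag}_{\rho(v)},v}=0$, then $\rho(v)$ has codimension one in $\mathrm{Spec}(R^{\dag})$ and $v$ is a generic point of the fiber; since the fiber $\mathfrak{U}^{\dag}_{\rho(v)}$ is a smooth connected curve over $\kappa(\rho(v))$ and hence integral, it has a unique generic point, giving (i). If instead $\dim\mathcal{O}_{R^{\dag},\rho(v)}=0$ and $\dim\mathcal{O}_{\mathfrak{U}^{\dag}_{\rho(v)},v}=1$, then $\rho(v)$ is the generic point of $\mathrm{Spec}(R^{\dag})$, and $v$ is a closed point of the one-dimensional finite-type $\kappa(\rho(v))$-scheme $\mathfrak{U}^{\dag}_{\rho(v)}$; by the Nullstellensatz, $\kappa(v)/\kappa(\rho(v))$ is a finite extension, giving (ii).

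Conversely, (i) immediately yields $\dim\mathcal{O}_{\mathfrak{U}^{\dag},v}=1+0=1$; and (ii) yields $0+1=1$, since a point $v$ of the generic fiber $\mathfrak{U}^{\dag}_{\rho(v)}$ with $\kappa(v)/\kappa(\rho(v))$ finite is closed in that one-dimensional integral finite-type scheme and therefore has local dimension one there. The only mildly subtle step is justifying the flat dimension formula and the integrality of $\mathfrak{U}^{\dag}$; once these are in place, the rest is a direct case analysis on the two summands, so I do not anticipate any serious obstacle.
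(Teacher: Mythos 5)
Your proof is correct, and it reaches the statement by a slightly different route than the paper. The paper also starts from flatness of $\rho$, but uses it only to obtain going-down, hence $\mathrm{codim}(\rho(v))\leq \mathrm{codim}(v)$, and then invokes universal catenarity of the regular local ring $R^{\dag}$ to apply the dimension formula
$\mathrm{codim}(v)=\mathrm{codim}(\rho(v))+\mathrm{tr.deg}_{K(R^{\dag})}(K(\mathfrak{U}^{\dag}))-\mathrm{tr.deg}_{\kappa(\rho(v))}(\kappa(v))$,
finishing by reading off whether $\mathrm{tr.deg}_{\kappa(\rho(v))}(\kappa(v))$ is $0$ or $1$. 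You instead use flatness for the local dimension equality $\dim\mathcal{O}_{\mathfrak{U}^{\dag},v}=\dim\mathcal{O}_{R^{\dag},\rho(v)}+\dim\mathcal{O}_{\mathfrak{U}^{\dag}_{\rho(v)},v}$ and then argue fiberwise, using that the fibers are integral curves (unique generic point in case (i)) and Zariski's lemma for the finiteness of $\kappa(v)/\kappa(\rho(v))$ in case (ii). The two arguments are comparable in length and depth: yours trades the catenarity input and transcendence-degree bookkeeping for the flat dimension formula together with elementary dimension theory of finite-type schemes over a field, and it only needs integrality of the fibers rather than a global dimension formula for dominant morphisms of integral schemes. (Your preliminary claim that $\mathfrak{U}^{\dag}$ is integral is correct and standard, though in this setting it also follows at once from smoothness over the regular ring $R^{\dag}$ plus connectedness of the fibers; it is not strictly needed beyond the fibers.) Both proofs then conclude by matching conditions (i) and (ii) with the two ways of splitting $1$ as a sum of the two non-negative contributions, so the endgame is the same.
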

\begin{proof}
Since $\rho$ is flat, the going-down theorem holds for $\text{Spec}(O_{\mathfrak{U}^{\dag},v})\rightarrow \text{Spec}(O_{\text{Spec}(R^{\dag}),\rho(v)})$. In particular, we get $\text{codim}(\rho(v))\leq \text{codim}(v)$.   Hence $\text{codim}(\rho(v))=0$ or $1$. Since $R^{\dag}$ is a regular local ring, $R^{\dag}$ is universally catenary. Thus, we get the  dimension formula.
\[
\text{codim}(v)=\text{codim}(\rho(v))+\text{tr.deg}_{K(\text{Spec}(R^{\dag}))}(K(\mathfrak{U}^{\dag}))-\text{tr.deg}_{\kappa(\rho(v))}(\kappa(v))
\]
We have $\text{tr.deg}_{K(\text{Spec}(R^{\dag}))}(K(\mathfrak{U}^{\dag}))=1$.   When $\text{codim}(\rho(v))=0$ (resp.  $\text{codim}(\rho(v))=1$), we get $\text{tr.deg}_{\kappa(\rho(v))}(\kappa(v))=0$ (resp. $\text{tr.deg}_{\kappa(\rho(v))}(\kappa(v))=1$) by the  dimension formula. Thus, the assertion follows.
\end{proof}

\begin{corollary}\label{goodcor}
Assume that $m\geq 3$. Let $n\in\mathbb{Z}_{\geq 2}$ be an integer satisfying $m>n$. Let $R^{\dag}$ be a henselian regular local ring,  $K^{\dag}:=K(R^{\dag})$,  $s^{\dag}\in \text{Spec}(R^{\dag})$ the  closed point,    $\eta^{\dag}\in \text{Spec}(R^{\dag})$ the generic point, and  $p^{\dag}$ $(\geq 0)$  the characteristic of $\kappa(s^{\dag})$.   Let $(X^{\dag},E^{\dag})$ be a  hyperbolic curve of type $(g^{\dag},r^{\dag})$  over $K^{\dag}$. Set $U^{\dag}:=X^{\dag}-E^{\dag}$.   Let $(\mathfrak{X}^{\dag},\mathfrak{E}^{\dag})$ be a smooth curve over $\text{Spec}(R^{\dag})$  such that the  generic fiber $(\mathfrak{X}_{\eta^{\dag}}^{\dag},\mathfrak{E}^{\dag}_{\eta^{\dag}})$ is isomorphic to $(X^{\dag},E^{\dag})$ over $K^{\dag}$. Set  $\mathfrak{U}^{\dag}:=\mathfrak{X}^{\dag}-\mathfrak{E}^{\dag}$.   Let $H$ be an open normal subgroup of $\Pi_{U^{\dag}}^{(m)}$ containing $\overline{\Pi}_{U^{\dag}}^{[m-n]}/\overline{\Pi}_{U^{\dag}}^{[m]}$. Let $I^{\dag}\subset G_{K^{\dag}}$ be the inertia group at $s^{\dag}$ and $\ell^{\dag}$ a prime different from $p^{\dag}$.   Then the following conditions (a)-(c) are equivalent.
\begin{enumerate}[(a)]
\item $H$ contains the kernel of the specialization homomorphism $\Pi_{U^{\dag}}^{(m)}\twoheadrightarrow \Pi_{\mathfrak{U}^{\dag}_{s^{\dag}}}^{(m)}$.
\item 
\begin{enumerate}[(i)]
\item The image of $H$ in $G_{K^{\dag}}$ contains $I^{\dag} $.
\item The image of $I^{\dag} $ in $\Out(\overline{H}^{n,\pro (p^{\dag})'})$ is trivial.
\end{enumerate}
\item
\begin{enumerate}[(i)]
\item The image of $H$ in $G_{K^{\dag}}$ contains $I^{\dag} $.
\item The image of $I^{\dag} $ in $\Out(\overline{H}^{n,\pro \ell^{\dag}})$ is trivial.
\end{enumerate}
\end{enumerate}
Here, $\overline{H}^{n,\pro 0'}$ is defined as $\overline{H}^{n}$. In particular, we obtain that $\Pi_{\mathfrak{U}^{\dag}_{s}}^{(m-n)}=\plim{H}\Pi_{U^{\dag}}^{(m)}/H$, where $H$ runs over  all open normal subgroups of $\Pi_{U^{\dag}}^{(m)}$  satisfying $\overline{\Pi}_{U^{\dag}}^{[m-n]}/\overline{\Pi}_{U^{\dag}}^{[m]}\subset H$ and (b) (or equivalently (c)).
\end{corollary}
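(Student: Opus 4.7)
The overall strategy is to reduce this corollary to the discrete-valuation case handled by Theorem \ref{2goodreductiontheorem}, by working on the regular scheme $\mathfrak{U}^{\dag}$ and invoking Zariski--Nagata purity of the branch locus. Let $\tilde{H}\subset\Pi_{U^{\dag}}$ be the preimage of $H$, let $V:=U^{\dag}_{\tilde{H}}$ be the associated tame \'{e}tale cover of $U^{\dag}$, and let $\mathfrak{V}$ be the normalization of $\mathfrak{U}^{\dag}$ in the function field of $V$. The reformulation I will use is that condition (a) is equivalent to the geometric statement that $\mathfrak{V}\to\mathfrak{U}^{\dag}$ is \'{e}tale (and extends to a tame cover $\mathfrak{X}^{\dag}_{H}\to\mathfrak{X}^{\dag}$ along $\mathfrak{E}^{\dag}$), which by Zariski--Nagata purity further reduces to checking \'{e}tale-ness at each codimension-one point of $\mathfrak{U}^{\dag}$. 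By Lemma \ref{lemma20.3} these points are either closed points of $U^{\dag}$, where \'{e}tale-ness is automatic since $V\to U^{\dag}$ is \'{e}tale, or generic points $\eta_{\mathfrak{p}}$ of the fibers $\mathfrak{U}^{\dag}_{\mathfrak{p}}$ at height-one primes $\mathfrak{p}\subset R^{\dag}$.

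The implication (a)$\Rightarrow$(b),(c) follows immediately from this reformulation: if $\mathfrak{V}\to\mathfrak{U}^{\dag}$ is \'{e}tale, then (b)(i)/(c)(i) holds, and the tame specialization sequence for $\mathfrak{V}$ over the henselian base $R^{\dag}$ forces the outer $G_{K^{\dag}}$-action on $\overline{\tilde{H}}$ to factor through $G_{K^{\dag}}\twoheadrightarrow G_{\kappa(s^{\dag})}$, so $I^{\dag}$ has trivial image in $\Out(\overline{\tilde{H}})$ and a fortiori trivial image in $\Out(\overline{H}^{n,\pro (p^{\dag})'})$ and $\Out(\overline{H}^{n,\pro \ell^{\dag}})$.

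For the main implication (c)$\Rightarrow$(a) (the argument for (b)$\Rightarrow$(a) being identical with $\ell^{\dag}$ replaced by $p^{\dag}$), fix a height-one prime $\mathfrak{p}\subset R^{\dag}$. Any finite extension of $K^{\dag}$ unramified at $s^{\dag}$ is the fraction field of an $R^{\dag}$-\'{e}tale extension, which is a fortiori $R^{\dag}_{\mathfrak{p}}$-\'{e}tale, so the inertia group $I_{\mathfrak{p}}\subset G_{K^{\dag}}$ at $\mathfrak{p}$ is contained in a conjugate of $I^{\dag}$; by (c)(i)--(ii) it therefore lies in the image of $H$ in $G_{K^{\dag}}$ and has trivial image in $\Out(\overline{H}^{n,\pro \ell^{\dag}})$. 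Since $(X^{\dag}_{H},E^{\dag}_{H})$ is hyperbolic (being a tame \'{e}tale cover of the hyperbolic $(X^{\dag},E^{\dag})$) and Lemma \ref{quotientlemma1.1} identifies $\overline{H}^{n}$ with $\overline{\tilde{H}}^{n}=\overline{\Pi}_{V}^{n}$, applying Theorem \ref{2goodreductiontheorem} (with its ``$m$'' in the role of our $n\geq 2$) to $(X^{\dag}_{H},E^{\dag}_{H})$ base-changed to the DVR $R^{\dag}_{\mathfrak{p}}$ yields good reduction at $\mathfrak{p}$. A uniqueness-of-smooth-models argument via the separatedness of $\mathcal{M}_{g,[r]}$, in the style of the descent step of Lemma \ref{goodtamagawanag}, then identifies the resulting smooth model over $R^{\dag}_{\mathfrak{p}}$ with the natural extension of the covering, forcing $\mathfrak{V}\to\mathfrak{U}^{\dag}$ to be tame \'{e}tale at $\eta_{\mathfrak{p}}$. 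Running $\mathfrak{p}$ through all height-one primes and invoking Zariski--Nagata yields (a), and the final ``in particular'' statement follows formally. The main obstacle will be this last descent step: transferring good reduction of the single curve $(X^{\dag}_{H},E^{\dag}_{H})$ at $\mathfrak{p}$ into \'{e}tale-ness of the covering $\mathfrak{V}\to\mathfrak{U}^{\dag}$ at $\eta_{\mathfrak{p}}$, while keeping careful track of the boundary $\mathfrak{E}^{\dag}$ and the tameness of the extension $\mathfrak{X}^{\dag}_{H}\to\mathfrak{X}^{\dag}$.
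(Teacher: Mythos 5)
Your overall architecture --- reduce via Zariski--Nagata purity and Lemma \ref{lemma20.3} to the height-one primes of $R^{\dag}$, and apply Theorem \ref{2goodreductiontheorem} there --- is the same as the paper's, which first settles the case $\dim(R^{\dag})=1$ (reducing to the strictly henselian case and quoting \cite{Ta1997} Lemma (5.5) to translate condition (a) into ``the coefficient field of $(X^{\dag}_{H},E^{\dag}_{H})$ is $K^{\dag}$ and $(X^{\dag}_{H},E^{\dag}_{H})$ has good reduction'') and then globalizes by purity, using the completions of the localizations at the height-one primes. The genuine gap in your proposal is exactly the step you yourself flag as ``the main obstacle'': passing from good reduction of the covering curve $(X^{\dag}_{H},E^{\dag}_{H})$ at $\mathfrak{p}$ (plus unramifiedness of its coefficient field) to \'{e}taleness of $\mathfrak{V}\rightarrow\mathfrak{U}^{\dag}$ at $\eta_{\mathfrak{p}}$, i.e.\ to $H\supset\Ker(\mathrm{sp}_{\mathfrak{p}})$. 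Your proposed substitute --- separatedness of $\mathcal{M}_{g,[r]}$ plus a descent argument ``in the style of Lemma \ref{goodtamagawanag}'' --- does not do this job: those tools compare two \emph{smooth models of one and the same curve}, whereas what must be shown here is that the integral closure of the smooth model of the base in $K(V)$ is itself \'{e}tale over $\mathfrak{U}^{\dag}$ (and tame along $\mathfrak{E}^{\dag}$), i.e.\ that the covering \emph{map}, not merely the covering curve, extends over the model. That implication is a genuine theorem --- it is precisely \cite{Ta1997} Lemma (5.5), stated over a strictly henselian discrete valuation ring --- and it needs either a citation or a proof; nothing in your sketch supplies it. Note also that the paper invokes this lemma only after passing to the strict henselization, which removes the coefficient-field issue; working directly over the localization $R^{\dag}_{\mathfrak{p}}$ as you do makes the missing step harder, not easier, to formulate.

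A secondary, fixable inaccuracy: in (a)$\Rightarrow$(b),(c) you claim that the outer action of $I^{\dag}$ on the full group $\overline{\tilde{H}}$ is trivial. Good reduction only controls the pro-prime-to-$p^{\dag}$ part, since the (tame) specialization map is an isomorphism only on prime-to-$p^{\dag}$ quotients; fortunately (b)(ii) and (c)(ii) only concern $\Out(\overline{H}^{n,\pro (p^{\dag})'})$ and $\Out(\overline{H}^{n,\pro \ell^{\dag}})$, which are outer automorphism groups of characteristic quotients of $\overline{\Pi}_{V}^{\pro (p^{\dag})'}$, so the needed weaker statement does follow by the standard argument (this is the easy direction of Theorem \ref{2goodreductiontheorem}). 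Your observations that $I_{\mathfrak{p}}$ lies in (a conjugate of) $I^{\dag}$ and that Lemma \ref{quotientlemma1.1} identifies $\overline{H}^{n}$ with $\overline{\Pi}_{V}^{n}$ are correct and are implicitly used in the paper's argument as well; the proof would be complete once the bridging step above is repaired, most economically by citing \cite{Ta1997} Lemma (5.5) after reducing to the strictly henselian discrete valuation ring case as the paper does.
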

\begin{proof}
First, we show the assertion when $\text{dim}(R^{\dag})=1$.  Let $R^{\dag\text{sh}}$ be the strictly henselization of $R^{\dag}$. Since $\Pi^{(m)}_{U^{\dag}_{K(R^{\dag\text{sh}})}}$ coincides with the inverse image of $I^{\dag}$ by $\Pi^{(m)}_{U^{\dag}}\rightarrow G_{K^{\dag}}$, we get $\Pi^{(m)}_{U^{\dag}_{K(R^{\dag\text{sh}})}}\twoheadrightarrow \Pi^{(m)}_{\mathfrak{U}^{\dag}_{\overline{s^{\dag}}}}(= \overline{\Pi}^{m}_{\mathfrak{U}^{\dag}_{s^{\dag}}})$ and  $\text{Ker}(\Pi_{U^{\dag}}^{(m)}\twoheadrightarrow \Pi_{\mathfrak{U}^{\dag}_{s^{\dag}}}^{(m)})=\text{Ker}(\Pi^{(m)}_{U^{\dag}_{K(R^{\dag\text{sh}})}}\twoheadrightarrow \Pi^{(m)}_{\mathfrak{U}^{\dag}_{\overline{s}^{\dag}}})$.  Thus, we may assume that $R^{\dag}$ is strictly henselian. By \cite{Ta1997} Lemma (5.5), we have that
\[
(\text{a})\Leftrightarrow \text{``The coefficient field of $(X^{\dag}_{H},E^{\dag}_{H})$ is $K^{\dag}$ and $(X^{\dag}_{H},E^{\dag}_{H})$ has good reduction at $s^{\dag}$"}
\] 
 Thus, (a)$\Leftrightarrow$(b)$\Leftrightarrow$(c)  follows from Theorem \ref{2goodreductiontheorem}. \par
Next, we consider the general case. Let $\rho: \mathfrak{U}^{\dag}\rightarrow \text{Spec}(R^{\dag})$ be the structure morphism.  By  the purity of Zariski-Nagata (\cite{SGA1} Expo\'{s}e X num\'{e}ro 3), the condition (a) holds if and only if  $H$ contains the kernel of the specialization homomorphism $\Pi_{U^{\dag}_{K^{\dag}_{\rho(v)}}}^{(m)}\twoheadrightarrow \Pi_{\mathfrak{U}^{\dag}_{\kappa(\rho(v))}}^{(m)}$ for any $v\in\mathfrak{U}^{\dag}$ satisfying   (i) in Lemma \ref{lemma20.3}, where $K^{\dag}_{\rho(v)}$ stands for the field of fractions of the completion of the localization of  $R^{\dag}$ at $\rho(v)$. Moreover, by  the purity of Zariski-Nagata,   the condition (b) (resp. (c))  holds if and only if the image of $H\cap \Pi_{U^{\dag}_{K^{\dag}_{\rho(v)}}}^{(m)}$ in $G_{K^{\dag}_{\rho(v)}}$ contains $I_{\rho(v), G_{K^{\dag}_{\rho(v)}}}$ and the image of $I_{\rho(v), G_{K^{\dag}_{\rho(v)}}}$ in $\text{Out}(\overline{H}^{n\text{,pro-}(p^{\dag})'})$ (resp. $\text{Out}(\overline{H}^{n\text{,pro-}\ell^{\dag}})$) is trivial  for any $v\in\mathfrak{U}^{\dag}$ satisfying   (i) in Lemma \ref{lemma20.3}.  
Hence, by the case that $\text{dim}(R^{\dag})=1$, (a)$\Leftrightarrow$ (b) (resp.  (a)$\Leftrightarrow$ (c)) follows.  The second assertion follows from the first assertion.
\end{proof}


\section{The case of  finitely generated fields}\label{sectionfinitelygene}

\hspace{\parindent}In this section,  we show  the (weak bi-anabelian  and strong bi-anabelian) $m$-step solvable Grothendieck conjecture(s) for  affine hyperbolic curves over  a  field finitely generated over the prime field (Theorem \ref{fingeneGCweak} and Theorem \ref{fingeneGCstrong}).   In subsection \ref{subsection4.1}, we define the localization of the category of geometrically reduced schemes over $k$ with respect to relative Frobenius morphisms when $p>0$. In subsections \ref{subsectionfingeneweak}, \ref{subsectionfingenestrong}, we show the main results of this section.   \\\ \\
{\bf Notation of section \ref{sectionfinitelygene} } In this section, we use  the following notation in addition to Notation (see Introduction).  
 \begin{itemize}
\item  Let  $k$ be a field of characteristic $p$ $(\geq 0)$.
\item   For $i=1,$ $2$, let $(X_{i},E_{i})$ (resp. $(X,E)$) be a smooth  curve of type $(g_{i},r_{i})$ (resp. $(g,r)$) over $k$  and set  $U_{i}:=X_{i}-E_{i}$ (resp. $U:=X-E$).
\end{itemize}


\subsection{The category $\text{Sch}_{k}^{\text{geo.red.}}$}\label{subsection4.1}
\hspace{\parindent} In this subsection, we define and investigate the localization of the category  of geometrically reduced schemes over $k$ with respect to relative Frobenius morphisms. This generalizes  the contents of \cite{St2002P} Appendix B. In the rest of this  subsection,  \textbf{we assume that $p>0$.} We write Sch (resp. $\text{Sch}_{k}$, resp.  $\text{Sch}^{\text{red.}}$) for the category of schemes (resp. $k$-schemes, resp. reduced schemes). We define  $\text{Sch}_{k}^{\text{geo.red.}}$ as the full subcategory of $\text{Sch}_{k}$ consisting of   all geometrically  reduced schemes over $k$.  

\begin{lemma}\label{epimono}
\begin{enumerate}[(1)]
\item  Let $Z$ be a reduced scheme over $\mathbb{F}_{p}$. Then $\text{Fr}_{Z}$ is  an epimorphism in $\text{Sch}$ and  a monomorphism in $\text{Sch}^{\text{red}}$. 
\item Let $Z$ be a geometrically reduced scheme over $k$. Then $\text{Fr}_{Z/k}$ is an epimorphism in $\text{Sch}$ and  a monomorphism in $\text{Sch}^{\text{red}}$. In particular, $\text{Fr}_{Z/k}$  is  an epimorphism and   a monomorphism in $\text{Sch}_{k}^{\text{geo.red.}}$.
\end{enumerate}
\end{lemma}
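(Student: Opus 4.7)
The strategy is to verify the categorical properties directly, using the factorization $\text{Fr}_{Z} = \text{pr}_{Z} \circ \text{Fr}_{Z/k}$ (with $\text{pr}_{Z}\colon Z(1) \to Z$ the first projection) from the Notation, together with the elementary fact that the $p$-th power map is injective on any reduced $\mathbb{F}_{p}$-algebra: if $a^{p}=b^{p}$ then $(a-b)^{p}=0$, and reducedness gives $a=b$.

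Part~(1) is essentially immediate. $\text{Fr}_{Z}$ is the identity on the underlying space $|Z|$ and the $p$-th power endomorphism on $\mathcal{O}_{Z}$. For the epimorphism claim, if $f, g \colon Z \to Y$ satisfy $f \circ \text{Fr}_{Z} = g \circ \text{Fr}_{Z}$, then $|f|=|g|$ and $(f^{\sharp}(a))^{p}=(g^{\sharp}(a))^{p}$ in the reduced stalks of $\mathcal{O}_{Z}$, forcing $f^{\sharp}=g^{\sharp}$. The monomorphism claim in $\text{Sch}^{\text{red}}$ is dual, with the reducedness hypothesis now used on the source $W$. In part~(2), the monomorphism in $\text{Sch}^{\text{red}}$ reduces at once to~(1) via the factorization: if $\text{Fr}_{Z/k} \circ f = \text{Fr}_{Z/k} \circ g$ for $f, g \colon W \to Z$ with $W$ reduced, composing with $\text{pr}_{Z}$ gives $\text{Fr}_{Z} \circ f = \text{Fr}_{Z} \circ g$, and~(1) applies (note $Z$ is reduced, being geometrically reduced).

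The one substantive step is the epimorphism property of $\text{Fr}_{Z/k}$ in $\text{Sch}$, which I plan to establish through two sub-claims: (a)~$|\text{Fr}_{Z/k}|$ is a homeomorphism, and (b)~$\text{Fr}_{Z/k}^{\sharp}$ is injective. For (a), the projection $\text{pr}_{Z}\colon Z(1) \to Z$ is the base change of $\text{Fr}_{\text{Spec}(k)}$ along the structure map; $\text{Fr}_{\text{Spec}(k)}$ is a universal homeomorphism (being integral, radicial, and surjective), and hence so is $\text{pr}_{Z}$. Combined with $|\text{pr}_{Z}| \circ |\text{Fr}_{Z/k}| = |\text{Fr}_{Z}| = \text{id}$, this identifies $|\text{Fr}_{Z/k}|$ with $|\text{pr}_{Z}|^{-1}$. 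For (b), the ring map $\text{Fr}_{k}\colon k \to k_{(1)}$ is an injective map of fields, so $k_{(1)}$ is a field extension of $k$; consequently $Z(1) = Z \times_{k,\text{Fr}_{k}} \text{Spec}(k)$ is reduced by the geometric reducedness of $Z/k$. Since $\text{Fr}_{Z/k}$ is topologically surjective by (a) and $Z(1)$ is reduced, $\ker(\text{Fr}_{Z/k}^{\sharp})$ is a quasi-coherent ideal sheaf whose vanishing locus equals all of $Z(1)$ topologically, hence is contained in the nilradical $\sqrt{0}=0$. These two sub-claims combine in the standard way: for $f, g \colon Z(1) \to Y$ with $f \circ \text{Fr}_{Z/k} = g \circ \text{Fr}_{Z/k}$, (a) gives $|f|=|g|$, and stalk-wise injectivity of $\text{Fr}_{Z/k}^{\sharp}$ allows cancellation to conclude $f^{\sharp}=g^{\sharp}$.

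The ``in particular'' clause is formal, since $\text{Sch}_{k}^{\text{geo.red.}}$ is a full subcategory of both $\text{Sch}$ and $\text{Sch}^{\text{red}}$, so epimorphisms and monomorphisms from the ambient categories restrict. I expect the main point requiring care to be sub-claim~(b): one must recognize $\text{Fr}_{k}\colon k \to k_{(1)}$ as a genuine injection of fields, so that the geometric reducedness hypothesis on $Z/k$ can be invoked directly to ensure that $Z(1)$ is reduced.
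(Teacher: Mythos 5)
Your proof is correct, but the decisive step---injectivity of $\text{Fr}_{Z/k}^{\#}\colon \mathcal{O}_{Z(1)}\to(\text{Fr}_{Z/k})_{*}\mathcal{O}_{Z}$---is handled by a genuinely different route than the paper's. The paper first reduces, by a limit argument, to $Z=\text{Spec}(A)$ with $A$ a geometrically reduced, finitely generated $k$-algebra and then quotes \cite{Du1995} Theorem 3, using flatness of $Z\to \text{Spec}(k)$; you instead observe that the $k$-algebra $(k, a\mapsto a^{p})$ is a field extension of $k$ isomorphic to $k^{1/p}$, so that $Z(1)\cong Z\otimes_{k}k^{1/p}$ is reduced by geometric reducedness, and then invoke the general fact that a morphism which is surjective on underlying spaces and has reduced target induces an injection on structure sheaves (check images in residue fields). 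Your route is more elementary and self-contained (no limit argument, no flatness, no external citation); what the paper's citation buys is mainly brevity. Two small remarks: your appeal to quasi-coherence of $\ker(\text{Fr}_{Z/k}^{\#})$ is unnecessary and, as stated, unjustified (it does hold, e.g.\ because $\text{Fr}_{Z/k}$ is affine, but the vanishing already follows by restricting sections of the kernel to affine opens of the reduced scheme $Z(1)$); and for the monomorphism statements the paper uses the naturality trick $f\circ\text{Fr}_{Z'}=\text{Fr}_{Z}\circ f$ to deduce that $\text{Fr}_{Z}$ is a monomorphism in $\text{Sch}^{\text{red}}$ from the epimorphism property of $\text{Fr}_{Z'}$ on the reduced source, whereas you argue directly via uniqueness of $p$-th roots in reduced stalks; both are fine, as is your reduction of the monomorphism claim for $\text{Fr}_{Z/k}$ to (1) by composing with the projection $Z(1)\to Z$.
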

\begin{proof}
(1)  Since $Z$ is reduced, the  $p$-th power endomorphism $\text{Fr}_{Z}^{\#}:O_{Z}\rightarrow (\text{Fr}_{Z})_{*}O_{Z}$ is  clearly injective. Moreover, by definition,   $\text{Fr}_{Z}$ is surjective. Hence $\text{Fr}_{Z}$ is an epimorphism in $\text{Sch}$. Next, we show that $\text{Fr}_{Z}$ is  a monomorphism in $\text{Sch}^{\text{red}}$.  Let $Z'$ be a reduced scheme  and $f,g\in \text{Hom}_{\text{Sch}^{\text{red}}}(Z',Z)$ with $\text{Fr}_{Z}\circ f=\text{Fr}_{Z}\circ g$. Since  $f\circ \text{Fr}_{Z'}=\text{Fr}_{Z}\circ f=\text{Fr}_{Z}\circ g=g\circ \text{Fr}_{Z'}$ and $\text{Fr}_{Z'}$ is epimorphism in $\text{Sch}$, we get $f=g$.\\
(2) Since $\text{Fr}_{Z}$ is a monomorphism in $\text{Sch}^{\text{red}}$ by (1), $\text{Fr}_{Z/k}$ is also   a monomorphism in $\text{Sch}^{\text{red}}$.  Next, we show that $\text{Fr}_{Z/k}$ is  an epimorphism in Sch. Since absolute Frobenius morphisms $\text{Fr}_{Z}$ and $\text{Fr}_{\text{Spec}(k)}$ are universally homeomorphisms, $\text{Fr}_{Z/k}$ is surjective.  Thus,  it is sufficient  to show that  $\text{Fr}_{Z/k}^{\#}:O_{Z(1)}\rightarrow (\text{Fr}_{Z/k})_{*}O_{Z}$ is injective. By the standard limit argument,   we may assume that $Z$ is the spectrum of   a  geometrically reduced, finitely generated  $k$-algebra $A$.  Then the  injectivity follows from \cite{Du1995} Theorem 3(a)$\Rightarrow $(d), since  $Z\rightarrow  \text{Spec}(k)$ is flat. The second assertion follows from the first assertion.
\end{proof}

  We write  \textbf{Fr}  for the class consisting of  all isomorphism, all relative Frobenius morphisms of  geometrically reduced schemes over $k$, and their composites. We  define $\text{Sch}_{k,\textbf{Fr}^{-1}}^{\text{geo.red.}}$ as the category  obtained by localizing  $\text{Sch}_{k}^{\text{geo.red.}}$ with respect to \textbf{Fr} and  write $\mathcal{Q}_{k}: \text{Sch}_{k}^{\text{geo.red.}}\rightarrow \text{Sch}_{k,\textbf{Fr}^{-1}}^{\text{geo.red.}}$ for the localization functor.   For any  objects $Z_{1}$, $Z_{2}$ in $\text{Sch}_{k}^{\text{geo.red.}}$,   we write $\text{Hom}_{k}(\mathcal{Q}_{k}(Z_{1}),\mathcal{Q}_{k}(Z_{2})):=\text{Hom}_{\text{Sch}_{k,\textbf{Fr}^{-1}}^{\text{geo.red.}}}(\mathcal{Q}_{k}(Z_{1}),\mathcal{Q}_{k}(Z_{2}))$.  

\begin{remark}\label{18.1.1}
\begin{enumerate}[(i)]
\item  Let  $Z_{1}$, $Z_{2}$ be elements in $\text{Sch}_{k}^{\text{geo.red.}}$, and $n_{1}$, $n_{2}$ non-negative integers. Then we have the natural  map $\text{Hom}_{k}(Z_{1}(n_{1}),Z_{2}(n_{2}))\rightarrow \text{Hom}_{k}(\mathcal{Q}_{k}(Z_{1}),\mathcal{Q}_{k}(Z_{2}))\ f\mapsto\mathcal{Q}_{k}(\text{Fr}^{n_{2}}_{Z_{2}/k})^{-1}\circ  \mathcal{Q}_{k}(f)\circ \mathcal{Q}_{k}(\text{Fr}^{n_{1}}_{Z_{1}/k})$. By Lemma \ref{epimono}(2), \textbf{Fr} forms a right multiplicative system, see \cite{KS2006} Definition 7.1.5. In particular, by \cite{KS2006} Theorem 7.1.16, we obtain that the natural map 
\begin{equation*}\label{eq11.3.4}
\ilim{n}\text{Hom}_{k}(Z_{1},Z_{2}(n))\rightarrow \text{Hom}_{k}(\mathcal{Q}_{k}(Z_{1}),\mathcal{Q}_{k}(Z_{2}))
\end{equation*}
is bijective, where $n$ runs over all non-negative integers and transfer morphisms are defined as the  left composite of the  relative Frobenius morphisms. In particular, the functor $\mathcal{Q}_{k}$ is faithful by Lemma \ref{epimono}(2).   

\item Let $L$ be a  separable algebraic extension  of $k$.  Then  the forgetful (faithful) functor $\text{Sch}_{L}\rightarrow \text{Sch}_{k}$ induces a faithful functor  $\tilde{u}_{L/k}: \text{Sch}_{L}^{\text{geo.red.}}\rightarrow \text{Sch}_{k}^{\text{geo.red.}}$.   We claim that $\tilde{u}_{L/k}$ induces a  faithful functor $u_{L/k}:\text{Sch}_{L,\textbf{Fr}^{-1}}^{\text{geo.red.}}\rightarrow \text{Sch}_{k.\textbf{Fr}^{-1}}^{\text{geo.red.}}$. Indeed,  to show that $\tilde{u}_{L/k}$ induces $u_{L/k}$, it is sufficient  to show that $\text{Fr}_{Y/L}$ is identified with $\text{Fr}_{Y/k}$ for any $Y\in\text{Sch}_{L}^{\text{geo.red.}}$.  Note that $k^{\frac{1}{p}}$ and $ L$ are linearly disjoint over $k$, since $k^{\frac{1}{p}}/k$ is a purely inseparable extension and $L/k$ is a separable extension.  Further,  we have that $Lk^{\frac{1}{p}}=L^{\frac{1}{p}}$, since  $L^{\frac{1}{p}}/Lk^{\frac{1}{p}}/L$ is a purely inseparable extension and $L^{\frac{1}{p}}/Lk^{\frac{1}{p}}/k^{\frac{1}{p}}$ is a separable extension.  Hence the homomorphism  $\phi: L\otimes_{k}k^{\frac{1}{p}} \rightarrow L^{\frac{1}{p}}$ is an isomorphism. Thus,   $\text{Fr}_{L/k}$ is an isomorphism. This implies that $\text{Fr}_{Y/L}$ is identified with $\text{Fr}_{Y/k}$.  The faithfulness of $u_{L/k}$ follows from that  $\ilim{n}\text{Hom}_{L}(Y_{1},Y_{2}(n))\rightarrow\ilim{n}\text{Hom}_{k}(Y_{1},Y_{2}(n))$ is injective for any $Y_{1}$, $Y_{2}\in\text{Sch}_{L}^{\text{geo.red.}}$.
\end{enumerate}
\end{remark}

For any   separable algebraic extension $L$ of $k$,  any objects $Z_{1}$, $Z_{2}$  in $\text{Sch}_{k}^{\text{geo.red.}}$,  any objects $Y_{1}$, $Y_{2}$  in $\text{Sch}_{L}^{\text{geo.red.}}$, and  any   morphism  $s_{1}: u_{L/k}\circ\mathcal{Q}_{L}(Y_{1})\rightarrow \mathcal{Q}_{k}(Z_{1})$, $s_{2}: u_{L/k}\circ\mathcal{Q}_{L}(Y_{2})\rightarrow \mathcal{Q}_{k}(Z_{2})$  in $\text{Sch}^{\text{geo.red.}}_{k,\textbf{Fr}^{-1}}$,   we define  $\text{Isom}_{L/k}(\mathcal{Q}_{L}(Y_{1})/\mathcal{Q}_{k}(Z_{1}),\mathcal{Q}_{L}(Y_{2})/\mathcal{Q}_{k}(Z_{2}))$ as the set
\[
\left\{(f_{Y},f_{Z})\in \Isom_{L}(\mathcal{Q}_{L}(Y_{1}),\mathcal{Q}_{L}(Y_{2}))\times  \Isom_{k}(\mathcal{Q}_{k}(Z_{1}),\mathcal{Q}_{k}(Z_{2}))\middle|
s_{2}\circ u_{L/k}(f_{Y})=f_{Z}\circ s_{1}\text{ in }\text{Sch}_{k,\textbf{Fr}^{-1}}^{\text{geo.red.}}.
\right\}.
\]
When $Y_{1}=Y_{2}$ and $Z_{1}=Z_{2}$, we define  $\text{Aut}_{L/k}(\mathcal{Q}_{L}(Y_{1})/\mathcal{Q}_{k}(Z_{1})):=\text{Isom}_{L/k}(\mathcal{Q}_{L}(Y_{1})/\mathcal{Q}_{k}(Z_{1}),\mathcal{Q}_{L}(Y_{2})/\mathcal{Q}_{k}(Z_{2}))$. Next, we investigate isomorphisms in $\text{Sch}_{k,\textbf{Fr}^{-1}}^{\text{geo.red.}}$.   

\begin{lemma}\label{example4.7}
Assume that $U_{1}$ is   hyperbolic and  $U_{1,\overline{k}}$ does not  descend to a curve over  $\overline{\mathbb{F}}_{p}$  (``non-isotrivial'' in the sense of \cite{St2002P}). 
\begin{enumerate}[(1)]
\item   There exists an integer  $\delta_{U_{1},U_{2}}\in\mathbb{Z}$ such that the map
\[
\ilim{n}\text{Isom}_{k}(U_{1}(n),U_{2}(n+\delta_{U_{1},U_{2}}))\rightarrow \text{Isom}_{k}(\mathcal{Q}_{k}(U_{1}),\mathcal{Q}_{k}(U_{2}))
\]
is bijective, where $n$ runs over all integers satisfying  $n\geq 0$ and $n+\delta_{U_{1},U_{2}}\geq0$ and the transfer maps are defined as  relative Frobenius twists $f\mapsto f(a)$ ($a\in\mathbb{Z}_{\geq 0}$). If, moreover, $\text{Isom}_{k}(\mathcal{Q}_{k}(U_{1}),\mathcal{Q}_{k}(U_{2}))\neq \emptyset$, then $\delta_{U_{1},U_{2}}$ is unique.
\item Let $L$ be a finite separable extension of $k$. Let     $s_{i}: V_{i}\twoheadrightarrow U_{i}$  be a connected finite \'{e}tale covering which is tame outside of $U_{i}$ . Assume that  the coefficient field of $V_{i}$ coincide with  $L$.   Then $V_{1,\overline{L}}$ does not  descend to a curve over  $\overline{\mathbb{F}}_{p}$. 
\item Let the  assumption and the notation be as in (2). Assume that  $\text{Isom}_{L/k}(\mathcal{Q}_{L}(V_{1})/\mathcal{Q}_{k}(U_{1}),\mathcal{Q}_{L}(V_{2})/\mathcal{Q}_{k}(U_{2}))\neq \emptyset$, then the natural  map  
\begin{equation*}\label{4.12q}
\ilim{n}\text{Isom}_{L/k}(V_{1}(n)/U_{1}(n),V_{2}(n+\delta_{U_{1},U_{2}})/U_{2}(n+\delta_{U_{1},U_{2}}))\rightarrow \text{Isom}_{L/k}(\mathcal{Q}_{L}(V_{1})/\mathcal{Q}_{k}(U_{1}),\mathcal{Q}_{L}(V_{2})/\mathcal{Q}_{k}(U_{2}))
\end{equation*}
is bijective, where $n$ runs over all integers satisfying  $n\geq 0$ and $n+\delta_{U_{1},U_{2}}\geq0$ and the transfer maps are defined as  relative Frobenius twists $(f_{V},f_{U})\mapsto (f_{V}(a),f_{U}(a))$ ($a\in\mathbb{Z}_{\geq 0}$).   In particular,  $\delta_{U_{1},U_{2}}=\delta_{V_{1},V_{2}}$ holds.
\end{enumerate}
\end{lemma}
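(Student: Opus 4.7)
The plan is to handle the three parts in sequence, drawing throughout on the colimit description of Hom-sets from Remark~\ref{18.1.1}(i) and the monomorphism property of relative Frobenius (Lemma~\ref{epimono}(2)).

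For Part~(1), I would first dispense with the case $\Isom_{k}(\mathcal{Q}_{k}(U_{1}),\mathcal{Q}_{k}(U_{2}))=\emptyset$ trivially. Otherwise, pick an isomorphism $\phi$ in the target and lift it by Remark~\ref{18.1.1}(i) to a $k$-morphism $\tilde f\colon U_{1}\to U_{2}(n_{f})$, and $\phi^{-1}$ to $\tilde g\colon U_{2}\to U_{1}(n_{g})$. The colimit identities $\phi^{-1}\phi=\mathrm{id}$ and $\phi\phi^{-1}=\mathrm{id}$, combined with cancellation of further Frobenius twists afforded by Lemma~\ref{epimono}(2), yield honest equalities
\[
\tilde g(n_{f})\circ\tilde f=\text{Fr}^{n_{f}+n_{g}}_{U_{1}/k},\qquad\tilde f(n_{g})\circ\tilde g=\text{Fr}^{n_{f}+n_{g}}_{U_{2}/k}
\]
of $k$-morphisms. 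Translating to function fields, $\tilde f^{*}K(U_{2}(n_{f}))$ is squeezed between $K(U_{1})^{p^{n_{f}+n_{g}}}$ and $K(U_{1})$, and the corresponding purely inseparable intermediate position (analyzed jointly with $\tilde g$) identifies $\tilde f$ uniquely with a composite $f_{0}\circ\text{Fr}^{a}_{U_{1}/k}$, where $f_{0}\colon U_{1}(a)\to U_{2}(n_{f})$ is a $k$-isomorphism. Setting $\delta_{U_{1},U_{2}}:=n_{f}-a$ gives a lift of $\phi$ in $\Isom_{k}(U_{1}(a),U_{2}(a+\delta_{U_{1},U_{2}}))$. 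Uniqueness of $\delta$ rests on non-isotriviality of $U_{2}$ (inherited from $U_{1}$ via $\phi$), which forbids any $k$-isomorphism $U_{2}(c)\cong U_{2}(c+d)$ with $d\neq 0$: such an iso would pin the moduli point of $U_{2}$ to a Frobenius-periodic orbit over $k$, hence to $\mathcal{M}_{g,[r]}(\overline{\mathbb{F}}_{p})$. Injectivity of the map in the statement is the monomorphism property of Frobenius applied in $\Isom$ rather than $\Hom$.

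For Part~(2), I would argue by contradiction. Suppose $V_{1,\overline{L}}$ descends to some $V_{1}^{(0)}$ over $\overline{\mathbb{F}}_{p}$; since $L/k$ is separable algebraic, $\overline{L}=\overline{k}$, so $V_{1,\overline{k}}$ is isotrivial. The connected finite \'etale cover $s_{1,\overline{k}}\colon V_{1,\overline{k}}\to U_{1,\overline{k}}$, tame outside of $U_{1,\overline{k}}$, induces a surjective norm map between generalized Jacobians, exhibiting the generalized Jacobian of $U_{1,\overline{k}}$ as an isogeny quotient of an isotrivial semi-abelian variety; by Poincar\'e complete reducibility for semi-abelian varieties it too is isotrivial. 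A Torelli-type recovery of the smooth compactification together with the configuration of cusps then descends this isotriviality to $U_{1,\overline{k}}$, contradicting the standing hypothesis.

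For Part~(3), given $(f_{V},f_{U})$ in the Isom set, I would apply Part~(1) over $k$ to lift $f_{U}$ to a $k$-isomorphism $F_{U}\colon U_{1}(m)\to U_{2}(m+\delta_{U_{1},U_{2}})$, and Part~(1) over $L$ (legitimate thanks to Part~(2)) to lift $f_{V}$ to an $L$-isomorphism $F_{V}\colon V_{1}(n)\to V_{2}(n+\delta_{V_{1},V_{2}})$. After a common Frobenius twist, assume $m=n=N$. The compatibility $s_{2}\circ u_{L/k}(f_{V})=f_{U}\circ s_{1}$ in $\text{Sch}_{k,\textbf{Fr}^{-1}}^{\text{geo.red.}}$, together with the monomorphism property of Frobenius at a sufficiently large common level, upgrades to the strict equality $s_{2}(N+\delta_{V_{1},V_{2}})\circ F_{V}=F_{U}\circ s_{1}(N)$ of $k$-morphisms; matching target-ambient Frobenius twists then forces $\delta_{U_{1},U_{2}}=\delta_{V_{1},V_{2}}$, and $(F_{V},F_{U})$ realizes $(f_{V},f_{U})$ at level $N$, yielding surjectivity. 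Injectivity reduces to Part~(1) applied separately to each component. The main obstacle is the factorization step in Part~(1), namely extracting from $\tilde f$ a unique purely inseparable factor expressible as a power of \emph{relative} (not absolute) Frobenius, so that the residual $k$-morphism $f_{0}$ is genuinely an isomorphism; non-isotriviality enters precisely here to rigidify $\delta$, since otherwise iterated Frobenius twists could manufacture alternate lifts with different shifts.
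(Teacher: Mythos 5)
Your treatments of parts (1) and (3) follow essentially the paper's route: in (1) you produce a two-sided Frobenius inverse, deduce that the lifted morphism is finite with purely inseparable function-field extension, factor it as an isomorphism after a power of the relative Frobenius (the paper cites \cite{Liu2002} Proposition 4.21 for this), and rigidify the shift $\delta_{U_{1},U_{2}}$ by non-isotriviality (the paper cites \cite{St2002P} Corollary B.2.4, which is the precise form of your ``no isomorphism $U_{2}(c)\cong U_{2}(c+d)$ with $d\neq 0$'' claim). In (3) your plan is also the paper's, but note two points you gloss over: you cannot assert the ``strict equality'' $s_{2}(N+\delta_{V_{1},V_{2}})\circ F_{V}=F_{U}\circ s_{1}(N)$ before knowing $\delta_{U_{1},U_{2}}=\delta_{V_{1},V_{2}}$, since otherwise the two sides have different targets; the paper first equates the two composites into a common $U_{2}(M)$ and then extracts $\delta_{U_{1},U_{2}}=\delta_{V_{1},V_{2}}$ by comparing inseparable degrees, which crucially uses that the tame \'etale covers $s_{i}$ give separable function-field extensions, an input absent from your sketch. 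Only after that does cancellation of Frobenii (Lemma \ref{epimono}) yield the commuting square.

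The genuine gap is in part (2). Your argument via generalized Jacobians does not work. First, the norm map $J(V_{1,\overline{k}})\rightarrow J(U_{1,\overline{k}})$ is not an isogeny quotient: its kernel has positive dimension whenever the cover is nontrivial, and that kernel (or a complement) need not be defined over $\overline{\mathbb{F}}_{p}$; moreover in characteristic $p$ isotriviality is not preserved under isogenies or quotients in general (inseparable isogenies, e.g.\ $\alpha_{p}$-quotients in the supersingular locus, move the isomorphism class off $\overline{\mathbb{F}}_{p}$), so ``Poincar\'e complete reducibility'' does not give isotriviality of $J(U_{1,\overline{k}})$. Second, and decisively, even if $J(U_{1,\overline{k}})$ were isotrivial, no ``Torelli-type recovery'' returns the marked curve: the hypothesis only assumes $U_{1}$ hyperbolic, so $g_{1}=0$ is allowed, and then the generalized Jacobian is a split torus, automatically defined over $\mathbb{F}_{p}$, while $\mathbb{P}^{1}$ minus four or more points with transcendental cross-ratios is non-isotrivial; so your chain of implications cannot produce the desired contradiction. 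The paper's proof of (2) is instead a direct application of \cite{Ta2002} Lemma (1.32): since $V_{1,\overline{L}}\rightarrow U_{1,\overline{k}}$ is dominant, a curve dominated by (equivalently, dominating data involving) a curve and non-isotriviality of $U_{1,\overline{k}}$ forces non-isotriviality of $V_{1,\overline{L}}$; equivalently, if $V_{1,\overline{L}}$ descended to $\overline{\mathbb{F}}_{p}$, then so would $U_{1,\overline{k}}$, contradicting the hypothesis. You would need either to invoke such a statement or to supply a de Franchis/specialization argument for dominant maps of curves; the Jacobian route as written cannot be repaired without excluding exactly the cases the lemma must cover.
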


\begin{proof}
(1) When  $\text{Isom}_{k}(\mathcal{Q}_{k}(U_{1}),\mathcal{Q}_{k}(U_{2}))=\emptyset$, we have that $\text{Isom}_{k}(U_{1}(a),U_{2}(b))= \emptyset$ for any $a$, $b\in\mathbb{Z}_{\geq 0}$, and hence the assertion is clear for any $\delta_{U_{1},U_{2}}$. We assume that $\text{Isom}_{k}(\mathcal{Q}_{k}(U_{1}),\mathcal{Q}_{k}(U_{2}))\neq \emptyset$.  The injectivity follows from Remark \ref{18.1.1}(i). Next, we show the surjectivity. Let $f$ be an element of $\text{Isom}_{k}(\mathcal{Q}_{k}(U_{1}),\mathcal{Q}_{k}(U_{2}))$. Then we can choose   $n_{2}\in\mathbb{Z}_{\geq 1}$ and $\rho_{1}: U_{1}\rightarrow U_{2}(n_{2})$  in $\text{Sch}_{k}^{\text{geo.red.}}$ as a  representative element of $f$ by Remark \ref{18.1.1}(i). Since $f$ is an  isomorphism in $\text{Sch}_{k,{\bf Fr}^{-1}}^{\text{geo.red.}}$, there exist  $N\in\mathbb{Z}_{\geq 0}$ and $\rho_{2}: U_{2}\rightarrow U_{1}(N)$ such that $\text{Fr}_{U_{1}/k}^{N+n_{2}}=\rho_{2}(n_{2})\circ \rho_{1}$.   The equality  implies that  $\rho_{1}$ is finite and $K(U_{1})/K(U_{2}(n_{2}))$ is a purely inseparable extension. Thus, there exists $n_{1}\in\mathbb{Z}_{\geq 0}$ such that $U_{1}(n_{1})\xrightarrow{\sim} U_{2}(n_{2})$ (\cite{Liu2002} Proposition 4.21) and that the isomorphism $U_{1}(n_{1})\xrightarrow{\sim} U_{2}(n_{2})$ represents $f$ (in the sense of Remark \ref{18.1.1}(i)).   Set $\delta_{U_{1},U_{2}}:=n_{2}-n_{1}$. If $U_{1}(n'_{1})\xrightarrow{\sim}U_{2}(n'_{2})$  in $\text{Sch}_{k}^{\text{geo.red.}}$ for some $n'_{1}$, $n'_{2}\in\mathbb{Z}_{\geq 0}$,  then we get $U_{1}(n_{1}+n'_{2})\xrightarrow{\sim}U_{1}(n_{1}'+n_{2})$  in $\text{Sch}_{k}^{\text{geo.red.}}$. By   \cite{St2002P} Corollary B.2.4, we obtain that $n_{1}+n'_{2}=n'_{1}+n_{2}$. In other words,  $n'_{2}-n'_{1}=n_{2}-n_{1}=\delta_{U_{1},U_{2}}$.  Hence the assertion follows.\\
(2) We have that the natural morphism $V_{1,\overline{L}}\rightarrow U_{1,\overline{k}}$ is dominant. Hence, by \cite{Ta2002} Lemma (1.32), the assumption ``$U_{1,\overline{k}}$ does not  descend to a curve over  $\overline{\mathbb{F}}_{p}$'' implies that  $V_{1,\overline{L}}$ does not  descend to a curve over  $\overline{\mathbb{F}}_{p}$.  Thus, the assertion follows.\\
(3) The injectivity follows from Remark \ref{18.1.1}(i). Next, we show the surjectivity. Let $(f_{V},f_{U}) $ be an element of $\text{Isom}_{L/k}(\mathcal{Q}_{L}(V_{1})/\mathcal{Q}_{k}(U_{1}),\mathcal{Q}_{L}(V_{2})/\mathcal{Q}_{k}(U_{2}))$.   By (1), Remark \ref{18.1.1}(i), and the equality $\mathcal{Q}_{k}(s_{2})\circ u_{L/k}(f_{V})=f_{U}\circ \mathcal{Q}_{k}(s_{1})$,  there exist $M$, $N$, $\alpha\in\mathbb{Z}_{\geq 0}$,  $\delta_{U_{1},U_{2}}$, $\delta_{V_{1},V_{2}}\in\mathbb{Z}$, $\phi_{U}: U_{1}(N)\xrightarrow{\sim}U_{2}(N+\delta_{U_{1},U_{2}})$,  $\phi_{V}: V_{1}(N)\xrightarrow{\sim}V_{2}(N+\delta_{V_{1},V_{2}})$ such that the diagram 
\begin{equation*}
\xymatrix@C=40pt@R=18pt{
V_{1}(N)\ar[rr]^-{\sim}_-{\phi_{V}}\ar@{->>}[d]_{s_{1}(N)} &&V_{2}(N+\delta_{V_{1},V_{2}})\ar@{->>}[d]^{s_{2}(N+\delta_{V_{1},V_{2}})}\\
U_{1}(N)\ar[d]^{\text{Fr}_{U_{1}(N)/k}^{\alpha}}&& U_{2}(N+\delta_{V_{1},V_{2}})\ar[d]^-{\text{Fr}_{U_{2}(N+\delta_{V_{1},V_{2}})/k}^{M-(N+\delta_{V_{1},V_{2}})}}\\
U_{1}(N+\alpha)\ar[r]^-{\sim}_-{\phi_{U}(\alpha)} &U_{2}(N+\alpha+\delta_{U_{1},U_{2}})\ar[r]^-{\text{Fr}_{U_{2}(N+\alpha+\delta_{U_{1},U_{2}})/k}^{M-(N+\alpha+\delta_{U_{1},U_{2}})}}&U_{2}(M)
}
\end{equation*}
is commutative in $\text{Sch}_{k}^{\text{geo.red.}}$.  Since the inseparable degree of  the composite of  the maps $V_{1}(N)\xrightarrow{\sim}V_{2}(N+\delta_{V_{1},V_{2}})\xrightarrow{s_{2}(N+\delta_{V_{1},V_{2}})} U_{2}(N+\delta_{V_{1},V_{2}})\xrightarrow{\text{Fr}}U_{2}(M)$  coincides with  the  inseparable degree of  the composite of  the maps $V_{1}(N)\xrightarrow{s_{1}(N)}U_{1}(N)\xrightarrow{\text{Fr}}U_{1}(N+\alpha)\xrightarrow{\sim}U_{2}(N+\alpha+\delta_{U_{1},U_{2}})\xrightarrow{\text{Fr}} U_{2}(M)$, we have  that 
\[
\alpha+(M-N-\alpha-\delta_{U_{1},U_{2}})=\text{log}_{p}([K(V_{1}(N)):K(U_{2}(M))]_{i})=M-N-\delta_{V_{1},V_{2}}.
\]
Thus, we obtain that  $\delta_{U_{1},U_{2}}=\delta_{V_{1},V_{2}}$. Set $n:=N+\alpha$. Then, by Lemma \ref{epimono}, we conclude  the  diagram
\begin{equation*}
\xymatrix{
V_{1}(n)\ar[r]^-{\phi_{V}(\alpha)}\ar@{->>}[d]_{s_{1}(n)} &V_{2}(n+\delta_{U_{1},U_{2}})\ar@{->>}[d]^{s_{2}(n+\delta_{U_{1},U_{2}})}\\
U_{1}(n)\ar[r]^-{\phi_{U}(\alpha)} &U_{2}(n+\delta_{U_{1},U_{2}}).
}
\end{equation*}
commutes. Thus, the assertion follows. 
\end{proof}



\subsection{The weak  bi-anabelian results over finitely generated  fields}\label{subsectionfingeneweak}
\hspace{\parindent}In this subsection, we show the weak bi-anabelian  $m$-step solvable  Grothendieck conjecture for affine hyperbolic curves over a field finitely generated over the prime field.  In subsection \ref{subsection4.1}, we define the category  $\text{Sch}_{k,\textbf{Fr}^{-1}}^{\text{geo.red.}}$ when $p>0$. To consider the case that $p=0$ and $p>0$ at the same time, we define the following definition.


\begin{definition}  
We define $\mathfrak{S}_{k}$ as the category $\text{Sch}_{k}^{\text{geo.red.}}$ (resp.  $\text{Sch}_{k,\textbf{Fr}^{-1}}^{\text{geo.red.}}$)  when $p=0$ (resp, $p>0$).  Let $L$ be an extension of $k$.  Let    $Y_{i}$ be  an object in $\mathfrak{S}_{L}$, $Z_{i}$   an object in $\mathfrak{S}_{k}$,   and    $Y_{i}\rightarrow Z_{i}$  a morphism in $\mathfrak{S}_{k}$ for $i=1,2$.   We  write  $\text{Isom}_{\mathfrak{S}_{L}/\mathfrak{S}_{k}}(Y_{1}/Z_{1},Y_{2}/Z_{2})$ for the set  $\text{Isom}_{L/k}(Y_{1}/Z_{1},Y_{2}/Z_{2})$ (resp.   $\text{Isom}_{L/k}(\mathcal{Q}_{L}(Y_{1})/\mathcal{Q}_{k}(Z_{1}),\mathcal{Q}_{L}(Y_{2})/\mathcal{Q}_{k}(Z_{2}))$). When $Y_{1}=Y_{2}$ and $Z_{1}=Z_{2}$, we define  $\text{Aut}_{\mathfrak{S}_{L}/\mathfrak{S}_{k}}(Y_{1}/Z_{1}):=\text{Isom}_{\mathfrak{S}_{L}/\mathfrak{S}_{k}}(Y_{1}/Z_{1},Y_{2}/Z_{2})$.
\end{definition}
 
  \begin{remark}\label{fndisoms}
If  a morphism $\phi:U_{1}\xrightarrow[k]{}  U_{2}$ is a universal homeomorphism (e.g., $p>0$, $n\in\mathbb{Z}_{\geq 1}$, $U_{2}=U_{1}(n)$, and $\phi=\text{Fr}^{n}_{U_{1}/k}$), then the homomorphism $\Pi_{U_{1}}^{(m)}\xrightarrow[G_{k}]{}\Pi_{U_{2}}^{(m)}$ induced by $\phi$ (up to inner automorphism of $\overline{\Pi}_{U_{2}}^{m})$ is an isomorphism. Hence, by Remark \ref{18.1.1}(1),  we obtain a natural  map $\text{Isom}_{\mathfrak{S}_{k}}(U_{1},U_{2})\rightarrow\text{Isom}_{G_{k}}(\Pi_{U_{1}}^{(m)},\Pi_{U_{2}}^{(m)})/\text{Inn}(\overline{\Pi}_{U_{2}}^{m})$, and  a natural  map $\text{Isom}_{\mathfrak{S}_{k^{\sep}}/\mathfrak{S}_{k}}(\tilde{U}_{1}^{m}/U_{1},\tilde{U}_{2}^{m}/U_{2})\rightarrow \text{Isom}_{G_{k}}(\Pi_{U_{1}}^{(m)},\Pi_{U_{2}}^{(m)})$.
 \end{remark}

\begin{definition}
 Let $S$ be a scheme. Let  $N\in \mathbb{Z}$ be a positive integer that is invertible on $S$.  Let $(\mathcal{X},\mathcal{E})$ be a smooth curve of type ($g,r$) over $S$ and    $p:\mathcal{X}\rightarrow S$ the structure morphism. Set $\mathcal{U}:=\mathcal{X}-\mathcal{E}$.  We call an  isomorphism $\theta:R^{1}p_{*}\mathbb{Z}/N\mathbb{Z}\xrightarrow{\sim}(\mathbb{Z}/N\mathbb{Z})^{2g}$ of \'{e}tale sheaves on $S$ $a$ $level$ $N$ $structure$ on $\mathcal{X}/S$.  (If there is no risk of confusion, we also call it $a$  $level$ $N$ $structure$ on $\mathcal{U}/S$.)
\end{definition}

\begin{remark}\label{rem4.2}\begin{enumerate}[(i)]
\item  (cf. \cite{St2002P} section 7.2.2) Let $f: S'\rightarrow S$ be a morphism. Let  $p':\mathcal{X}'\rightarrow S'$ be the base change  of the proper, smooth curve $p:\mathcal{X}\rightarrow S$ by $f$. By the proper base change theorem for  \'{e}tale cohomology, we obtain a canonical isomorphism $f^{*}R^{1}p_{*}\mathbb{Z}/N\mathbb{Z}\xrightarrow{\sim} R^{1}p'_{*}\mathbb{Z}/N\mathbb{Z}$. Thus, a level $N$ structure on  $\mathcal{X}/S$ induces a level $N$ structure on  $\mathcal{X}'/S'$.  For any  point $s\in S$, we write $\theta_{s}$ for the level $N$ structure on $\mathcal{X}_{s}/\kappa(s)$ induced by a level $N$ structure $\theta$ on $\mathcal{X}/S$.   
\item Let  $\mathcal{U}\rightarrow S$ be a smooth curve. Then there exists a finite,  \'{e}tale covering    $S'\rightarrow S$ such that   the base change $\mathcal{U}' \rightarrow S'$ of $\mathcal{U}\rightarrow S$ by $S'\rightarrow S$ has a level $N$ structure. 
\end{enumerate}
\end{remark}

We write $\mathcal{M}_{g,r}[N]$ for the moduli stack of  proper, smooth curves of genus $g$ equipped with $r$ disjoint ordered sections and  a level $N$ structure over $\text{Spec}(\mathbb{Z}[\frac{1}{N}])$.  We know that the moduli stack $\mathcal{M}_{g,r}$ $(=\mathcal{M}_{g,r}[1])$ is not always a scheme. 

\begin{lemma}\label{moduli4}
Assume that $2g+r-2>0.$ Let $N\geq 3$. Then   $\mathcal{M}_{g,r}[N]$ is a separated  scheme of finite type over $\text{Spec}(\mathbb{Z}[\frac{1}{N}])$.
\end{lemma}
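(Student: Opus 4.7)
The plan is to deduce the statement from two ingredients already present in the paper's context: the moduli stack $\mathcal{M}_{g,r}$ is a separated Deligne-Mumford stack of finite type over $\mathrm{Spec}(\mathbb{Z})$ (cited in the excerpt from \cite{Kn1983}), and adjoining a level $N$ structure with $N \geq 3$ rigidifies the moduli problem by killing all automorphisms, forcing representability by a scheme.

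First I would analyze the forgetful morphism
\[
\pi : \mathcal{M}_{g,r}[N] \longrightarrow \mathcal{M}_{g,r} \times_{\mathrm{Spec}(\mathbb{Z})} \mathrm{Spec}(\mathbb{Z}[\tfrac{1}{N}]).
\]
For any smooth curve $p:(\mathcal{X},\mathcal{E}) \to S$ of type $(g,r)$ with $S$ a $\mathbb{Z}[1/N]$-scheme, the \'{e}tale sheaf $R^1 p_{*}(\mathbb{Z}/N\mathbb{Z})$ is locally constant and locally free of rank $2g$ over $\mathbb{Z}/N\mathbb{Z}$ (by the proper base change theorem and standard computation of curve cohomology). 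The functor parametrizing level $N$ structures on $(\mathcal{X},\mathcal{E})/S$ is then a $\mathrm{GL}_{2g}(\mathbb{Z}/N\mathbb{Z})$-torsor, hence representable by a finite \'{e}tale $S$-scheme. Consequently $\pi$ is representable, finite, and \'{e}tale, so $\mathcal{M}_{g,r}[N]$ inherits from its target the property of being a Deligne-Mumford stack, separated and of finite type over $\mathrm{Spec}(\mathbb{Z}[1/N])$.

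Next I would establish the rigidification: every geometric object $(\mathcal{X},\mathcal{E},\theta)$ of $\mathcal{M}_{g,r}[N]$ has trivial automorphism group when $N \geq 3$. If $g = 0$, the hypothesis $2g+r-2>0$ forces $r\geq 3$; the level $N$ structure is vacuous in this case, but the ordered triple of distinct sections already trivializes the automorphism group of $\mathbb{P}^{1}$. If $g \geq 1$, an automorphism $\varphi$ of $(\mathcal{X},\mathcal{E})$ preserving $\theta$ acts trivially on $R^{1} p_{*}(\mathbb{Z}/N\mathbb{Z})$, hence trivially on the $N$-torsion of the Jacobian $J_{\mathcal{X}}$; the classical rigidity result that an automorphism of a smooth proper curve of positive genus acting trivially on $J_{\mathcal{X}}[N]$ for $N\geq 3$ is trivial (cf. \cite{DM1969}) then forces $\varphi = \mathrm{id}$. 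Thus $\mathcal{M}_{g,r}[N]$ has trivial geometric inertia, so it is an algebraic space.

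Finally, to upgrade from algebraic space to scheme I would exhibit a relatively ample line bundle: for $g\geq 2$, a suitable power of the relative dualizing sheaf of $\mathcal{X}/\mathcal{M}_{g,r}[N]$ twisted by the divisor of marked sections works; for $g = 1$ one uses the marked sections and level structure to produce a projective embedding via theta-type constructions; and for $g=0$, $r\geq 3$, the space is already quasi-affine. The main obstacle will be verifying the rigidity result cleanly in families over bases of characteristic prime to $N$, and assembling the quasi-projective embedding in a uniform way; both are standard and can be extracted from \cite{DM1969} (for the case $r=0$) and \cite{Kn1983} (for general $r$), giving the desired conclusion.
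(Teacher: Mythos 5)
Your first two steps are sound and match standard arguments: the forgetful morphism $\mathcal{M}_{g,r}[N]\rightarrow \mathcal{M}_{g,r}\times_{\mathrm{Spec}(\mathbb{Z})}\mathrm{Spec}(\mathbb{Z}[\frac{1}{N}])$ is indeed representable, finite and \'{e}tale (the Isom-sheaf of trivializations of the locally constant sheaf $R^{1}p_{*}\mathbb{Z}/N\mathbb{Z}$), so $\mathcal{M}_{g,r}[N]$ is a separated Deligne--Mumford stack of finite type; and Serre's rigidity lemma (\cite{Se1960}, used elsewhere in this paper in the proof of Lemma \ref{injelemma4-2}) together with faithfulness of $\mathrm{Aut}(X)\rightarrow\mathrm{Aut}(J_{X})$ for $g\geq 2$, and the ordered sections for $g\leq 1$, kills all automorphisms, so the stack is an algebraic space. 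The genuine gap is in your final step, which is exactly the point where ``algebraic space'' must be upgraded to ``scheme''. A relatively ample line bundle for $\mathcal{X}/\mathcal{M}_{g,r}[N]$ (a power of the relative dualizing sheaf twisted by the sections) is a line bundle on the universal curve, and its fiberwise ampleness only says that $\mathcal{X}$ is projective over the base; it says nothing about $\mathcal{M}_{g,r}[N]$ itself carrying an ample bundle over $\mathbb{Z}[\frac{1}{N}]$. To argue as you propose one needs ampleness of a bundle on the moduli space (e.g.\ a determinant of cohomology/Hodge-type class), and that is a substantial theorem (Mumford's GIT, or the projectivity arguments behind \cite{Kn1983}), not something that can be ``extracted'' in a line; likewise the $g=1$ ``theta-type construction'' is not justified as written. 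Alternative rigorous patches: show $\mathcal{M}_{g,r}[N]$ is separated and quasi-finite over the coarse scheme $M_{g,r}$ and invoke the theorem that a separated, quasi-finite algebraic space over a scheme is a scheme, or simply quote the known representability results with level structure.

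For comparison, the paper avoids this issue entirely by an induction on $r$: since $\mathcal{M}_{g,r+1}\rightarrow\mathcal{M}_{g,r}$ is relatively representable and $\mathcal{M}_{g,r+1}[N]\cong\mathcal{M}_{g,r+1}\times_{\mathcal{M}_{g,r}}\mathcal{M}_{g,r}[N]$, schemeness, separatedness and finite type propagate from $\mathcal{M}_{g,r}[N]$ to $\mathcal{M}_{g,r+1}[N]$, so everything reduces to the base cases $(g,r)=(0,3)$ (trivial), $(1,1)$ (\cite{KatzMazur} Theorem 3.7.1) and $g\geq 2$, $r=0$ (\cite{Se1960}, or \cite{DM1969} (5.14)), where the scheme statement is already in the literature. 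If you replace your third step by this fibration trick, or by one of the patches above, your argument becomes complete; as written, the ample-bundle step is the missing ingredient.
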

\begin{proof}
 $\mathcal{M}_{g,r+1}\rightarrow \mathcal{M}_{g,r}$ is relatively representable and $\mathcal{M}_{g,r+1}[N]\cong \mathcal{M}_{g,r+1}\times_{\mathcal{M}_{g,r}} \mathcal{M}_{g,r}[N]$. Hence  we may assume either $(g,r)=(0,3),(1,1)$ or ``$g\geq 2$ and $r=0$".  When $(g,r)=(0,3)$, the assertion is clear because $\mathcal{M}_{0,3}[N]$ is isomorphic to $\text{Spec}(\mathbb{Z}[\frac{1}{N}])$.  We have that $\mathcal{M}_{1,1}[N]$ is  a separated  scheme of finite type over $\text{Spec}(\mathbb{Z}[\frac{1}{N}])$ by \cite{KatzMazur} Theorem 3.7.1. When  $g\geq 2$,   $\mathcal{M}_{g,0}[N]$ is   a separated  scheme of finite type over $\text{Spec}(\mathbb{Z}[\frac{1}{N}])$  by  \cite{Se1960} Th\'{e}or\`{e}me (or \cite{DM1969} (5.14)).
\end{proof}

\begin{lemma}\label{injelemma4-2}
Assume that $k$ is finitely generated over the prime field and that $U_{1}$ is hyperbolic. Assume that $U_{1,\overline{k}}$ does not  descend to a curve over  $\overline{\mathbb{F}}_{p}$  when $p>0$.   Let $N\in\mathbb{Z}_{\geq 3}$ with $p\nmid N$. Let $\overline{\Pi}_{U_i}^{1}/N$ be  the maximal exponent $N$ quotient of $\overline{\Pi}^{1}_{U_{i}}$.  Then the natural map
\begin{equation}\label{map11.3}
\Isom_{\mathfrak{S}_{k}}(U_1,U_2)\rightarrow \Isom_{G_{k}}(\overline{\Pi}_{U_{1}}^{1}/N,\overline{\Pi}_{U_{2}}^{1}/N),
\end{equation}
is injective, where the map is induced by using Remark \ref{fndisoms}. In particular, the map $\Isom_{\mathfrak{S}_{k}}(U_{1},U_{2}) \rightarrow \Isom_{G_{k}}(\Pi_{U_{1}}^{(m)},\Pi_{U_{2}}^{(m)})/\text{Inn}(\overline{\Pi}_{U_{2}}^{m})$  is also   injective. 
\end{lemma}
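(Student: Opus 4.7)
The plan is as follows. Given $\phi_1,\phi_2\in\Isom_{\mathfrak{S}_k}(U_1,U_2)$ inducing the same $G_k$-equivariant isomorphism $\overline{\Pi}^1_{U_1}/N\to\overline{\Pi}^1_{U_2}/N$, set $\sigma:=\phi_2^{-1}\circ\phi_1\in\Aut_{\mathfrak{S}_k}(U_1)$; the task reduces to showing that any $\sigma\in\Aut_{\mathfrak{S}_k}(U_1)$ acting trivially on $\overline{\Pi}^1_{U_1}/N$ is the identity in $\mathfrak{S}_k$. When $p=0$ such a $\sigma$ is literally a $k$-automorphism of $U_1$. When $p>0$, Lemma \ref{example4.7}(1) applied to the pair $(U_1,U_1)$ forces $\delta_{U_1,U_1}=0$, so $\sigma$ is represented by an honest $k$-automorphism $\tilde\sigma$ of some Frobenius twist $U_1(n)$; the relative Frobenius $U_1\to U_1(n)$ is a universal homeomorphism and hence induces a $G_k$-equivariant identification $\overline{\Pi}^1_{U_1(n)}\cong\overline{\Pi}^1_{U_1}$ intertwining $\tilde\sigma$ with $\sigma$, so $\tilde\sigma$ still acts trivially modulo $N$. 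Replacing $(X_1,E_1)$ by $(X_1(n),E_1(n))$, I may therefore assume $\sigma$ is a $k$-automorphism of $(X_1,E_1)$ acting trivially on $\overline{\Pi}^1_{U_1}/N$.

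Next, I would extract two pieces of geometric information via Lemma \ref{wflemma} reduced modulo $N$ (which is legitimate because the relevant pro-$\Sigma^\dag$ Tate modules are torsion-free when $p\nmid N$, so tensoring with $\mathbb{Z}/N\mathbb{Z}$ preserves the four-term exact sequence). First, the $G_k$-equivariant surjection $\overline{\Pi}^1_{U_1}/N\twoheadrightarrow J_{X_1}[N]$ shows $\sigma$ acts trivially on $J_{X_1}[N]$. Second, when $r_1\geq 1$, the weight $-2$ submodule $W_{-2}(\overline{\Pi}^1_{U_1}/N)$ fits in an exact sequence
\[
0\to(\mathbb{Z}/N\mathbb{Z})(1)\xrightarrow{\text{diag}}(\mathbb{Z}/N\mathbb{Z})[E_1(k^{\sep})](1)\to W_{-2}(\overline{\Pi}^1_{U_1}/N)\to 0,
\]
on which $\sigma$ acts via its induced permutation of $E_1(k^{\sep})$. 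For $N\geq 3$, any nontrivial basis permutation sends some $e_v$ to an $e_w$ with $v\neq w$, and then $e_w-e_v$ is not a scalar multiple of $\sum_{v\in E_1(k^{\sep})}e_v$ in $(\mathbb{Z}/N\mathbb{Z})[E_1(k^{\sep})]$ (the scalar would have to equal $1$, $-1$, and $0$ simultaneously). Hence the trivial action on $W_{-2}(\overline{\Pi}^1_{U_1}/N)$ forces $\sigma$ to fix $E_1(k^{\sep})$ pointwise.

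The final geometric step is Serre's rigidity for hyperbolic curves: for $N\geq 3$ invertible in $k$, any $\sigma\in\Aut_k(X_1,E_1)$ that fixes $E_1(k^{\sep})$ pointwise and acts trivially on $J_{X_1}[N]$ is the identity. For $g_1\geq 2$ this is Serre's classical rigidity lemma for abelian varieties applied to $J_{X_1}$, combined with Torelli (polarization-preserving automorphisms of $J_{X_1}$ come from $X_1$); for $g_1=1$ one identifies $X_1$ with $J_{X_1}$ via a fixed cusp and checks directly that an $\alpha\in\Aut(J_{X_1})$ congruent to $1$ modulo $N\geq 3$ must equal $1$; for $g_1=0$, hyperbolicity forces $r_1\geq 3$ and three fixed points on $\mathbb{P}^1$ already determine the automorphism. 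This gives $\sigma=\mathrm{id}$, proving the first injectivity claim.

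The \emph{In particular} assertion follows because the natural map $\Isom_{\mathfrak{S}_k}(U_1,U_2)\to\Isom_{G_k}(\Pi^{(m)}_{U_1},\Pi^{(m)}_{U_2})/\text{Inn}(\overline{\Pi}^m_{U_2})$ factors through the map just shown to be injective: passing to the abelianization $\overline{\Pi}^1_{U_i}$ kills the inner-automorphism indeterminacy (inner automorphisms act trivially on an abelian quotient), and further reducing modulo $N$ produces the target of the first injection. The main obstacle in this approach is ensuring Serre-style rigidity in the precise uniform form required (simultaneous fixing of cusps and triviality on $J[N]$) across all hyperbolic types $(g_1,r_1)$; the $\mathfrak{S}_k$-bookkeeping via Frobenius twists is then routine via Lemma \ref{example4.7}.
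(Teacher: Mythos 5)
Your proposal is correct and follows essentially the same route as the paper: reduce to an automorphism, use the mod-$N$ weight filtration of Lemma \ref{wflemma} to force the induced permutation of $E_{1}(k^{\sep})$ and the action on $J_{X_{1}}[N]$ to be trivial, conclude via Serre's rigidity theorem (resp. three fixed points on $\mathbb{P}^{1}$ when $g_{1}=0$), handle the case $p>0$ through Lemma \ref{example4.7}(1), and deduce the ``in particular'' statement by factoring through the abelian quotient. (One cosmetic caveat: when $r_{1}=2$ there is no third basis vector, so your parenthetical should instead invoke $1\not\equiv -1 \pmod N$ for $N\geq 3$; the conclusion is unaffected.)
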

\begin{proof}
 If  $\Isom_{k}(U_{1},U_{2})=\emptyset$ , then the assertions are clear.  Hence we may assume that  $ (X_{1},E_{1})=(X_{2},E_{2})$. We write $X$, $E$, $U$, $g$, $r$ instead of $X_{i}$, $E_{i}$, $U_{i}$, $g_{i}$, $r_{i}$, respectively.    First, we show that the natural map $\rho: \text{Aut}_{k}(U)\rightarrow \text{Aut}_{G_{k}}(\overline{\Pi}_{U}^{1}/N)$ is injective.   By Lemma \ref{wflemma}, we get 
\begin{equation*}\label{eq19.1}
0\rightarrow \mathbb{Z}/N(1)\rightarrow \mathbb{Z}/N[E(k^{\sep})]\bigotimes_{\mathbb{Z}/N} \mathbb{Z}/N(1)\xrightarrow{} \overline{\Pi}_{U}^{1}/N\rightarrow J_{X}[N]\rightarrow 0. \ \ \ (r>0)
\end{equation*}
\begin{equation*}\label{eq19.1-2}
 \overline{\Pi}_{U}^{1}/N\xrightarrow{\sim} J_{X}[N]\ \ \ (r=0)
\end{equation*}
Let $f\in \text{Ker}(\Aut_{k}(U)\rightarrow \Aut_{G_{k}}(\overline{\Pi}_{U}^{1}/N))$ and $f^{*}$  the automorphism of $J_{X}$   induced by $f$.  When $r>0$, the isomorphism $\mathbb{Z}/N[E(k^{\sep})]\bigotimes_{\mathbb{Z}/N} \mathbb{Z}/N(1)/(\mathbb{Z}/N(1)) \xrightarrow{\sim}\mathbb{Z}/N[E(k^{\sep})]\bigotimes_{\mathbb{Z}/N}  \mathbb{Z}/N(1)/(\mathbb{Z}/N(1))$ induced by $f$ is  trivial. Hence  the bijection $E(k^{\text{sep}})\xrightarrow{\sim} E(k^\text{sep})$ induced by $f$ is  trivial. Thus, when   $g=0$, we get $f=\text{id}$, since  $|E(k^{\text{sep}})|\geq 3$.   Next, we  consider the case that $g\geq 1$.  We have that  $J_{X}[N]\xrightarrow{\sim}J_{X}[N]$  induced by $f$ is trivial.  $f^{*}$  has  finite order  by  the  hyperbolicity of $(X,E)$.   Thus, we get $f^{*}=\text{id}$ by  \cite{Se1960} Th\'{e}or\`{e}me. Therefore, we get $f=\text{id}$.  Hence the natural map $\rho: \Isom_{k}(U_1,U_2)\rightarrow \Isom_{G_{k}}(\overline{\Pi}_{U_{1}}^{1}/N,\overline{\Pi}_{U_{2}}^{1}/N)$ is injective.  When $p=0$, the first assertion follows.  When $p>0$, the first  assertion follows from the  injectivety of $\rho$ and   Lemma \ref{example4.7}(1).   Observe that we have the maps $\text{Aut}_{k}(U) \rightarrow \text{Aut}_{G_{k}}(\Pi_{U}^{(m)})/\text{Inn}(\overline{\Pi}_{U}^{m})\rightarrow \text{Aut}_{G_{k}}(\Pi_{U}^{(1)}) \rightarrow  \text{Aut}_{G_{k}}(\overline{\Pi}_{U}^{1}/N).$ Hence the second assertion follows from the first assertion.
\end{proof}

\begin{lemma}\label{dzetaslem}
Let $t$ be a finite field of characteristic $p$ and  $V$ an integral scheme of finite type over $t$ satisfying $\text{dim}(V)>0$. For any point $v$, set $d_{v}:=[\kappa(v):\mathbb{F}_{p}]$. Then $\underset{v\in V^{\text{cl}}}\cap d_{v}\hat{\mathbb{Z}}=\{0\}$ holds. 
\end{lemma}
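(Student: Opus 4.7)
The plan is to reinterpret the claim using $\ell$-adic valuations. Since $\hat{\mathbb{Z}} = \prod_\ell \mathbb{Z}_\ell$, for any $d \in \mathbb{Z}_{\geq 1}$ one has $d\hat{\mathbb{Z}} = \prod_\ell \ell^{v_\ell(d)}\mathbb{Z}_\ell$, hence
\[
\bigcap_{v \in V^{\text{cl}}} d_v\hat{\mathbb{Z}} \;=\; \prod_\ell \ell^{s_\ell}\mathbb{Z}_\ell, \qquad \text{with } s_\ell := \sup_{v \in V^{\text{cl}}} v_\ell(d_v) \in \mathbb{Z}_{\geq 0} \cup \{\infty\}.
\]
So the lemma is equivalent to $s_\ell = \infty$ for every prime $\ell$: for every prime $\ell$ and every $M \geq 1$, one must find a closed point $v \in V^{\text{cl}}$ with $\ell^M \mid d_v$. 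Since $d_v = [t:\mathbb{F}_p]\cdot [\kappa(v):t]$, it suffices to arrange $\ell^M \mid [\kappa(v):t]$.

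Next I would reduce to a smooth projective curve. Since $V$ is integral of finite type over $t$ of positive dimension, choose a closed integral curve $C \subset V$ (cut an affine open of $V$ by generic principal divisors until the dimension drops to one). Let $t'$ be the algebraic closure of $t$ in $K(C)$, a finite extension of $t$, and let $\overline{C}$ be the smooth projective model of $K(C)$ over $t'$; then $\overline{C}$ is geometrically integral over $t'$, and the natural morphism $\overline{C} \to C \hookrightarrow V$ induces an isomorphism on residue fields away from a finite subset $\Sigma \subset \overline{C}^{\text{cl}}$ (the points lying over the singular locus of $C$, together with the finitely many ``points at infinity'' added in compactifying). For any $\overline{v} \in \overline{C}^{\text{cl}}\setminus \Sigma$, the image $v \in V$ is a closed point with $\kappa(v) = \kappa(\overline{v})$.

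Finally I would apply Hasse-Weil. Set $q := |t'|$, let $g$ denote the genus of $\overline{C}$, and let $a_n$ be the number of closed points of $\overline{C}$ of degree $n$ over $t'$. The Hasse-Weil bound $|\overline{C}(\mathbb{F}_{q^n})| = q^n + 1 + O(g q^{n/2})$ together with M\"obius inversion yields $a_n \sim q^n/n$ as $n \to \infty$. Hence for $n = \ell^M$ and $M$ sufficiently large, $a_{\ell^M} > |\Sigma|$, so some $\overline{v} \in \overline{C}^{\text{cl}} \setminus \Sigma$ has $[\kappa(\overline{v}):t'] = \ell^M$; its image $v \in V$ then satisfies $[\kappa(v):t] = [t':t]\cdot \ell^M$, which is divisible by $\ell^M$, so $\ell^M \mid d_v$. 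The main (essentially bookkeeping) obstacle is the reduction step: verifying that the passage from $\overline{C}$ back to $V$ alters residue fields only on the finite set $\Sigma$, so that the eventual positivity of $a_{\ell^M}$ is enough to conclude.
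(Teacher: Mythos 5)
Your proof is correct, but it takes a genuinely different route from the paper's. The paper shrinks $V$ to an affine open, applies Noether normalization to obtain a finite surjective $t$-morphism $V \to \mathbb{A}^{\dim V}_t$ (so that the degree of each closed point upstairs is a multiple of the degree of its image, and the intersection can only shrink), and then, for every $n$, exhibits a closed point of $\mathbb{A}^m_t$ whose residue field contains $\mathbb{F}_{p^n}$ simply by using a generator of $\mathbb{F}_{p^n}$ over $\mathbb{F}_p$ as a coordinate; this puts the intersection inside $\bigcap_{n} n\hat{\mathbb{Z}} = \{0\}$ with no point counting whatsoever. You instead go \emph{down} in dimension, to an integral closed curve $C\subset V$, pass to its smooth projective model $\overline{C}$ over the constant field $t'$, and use the Hasse--Weil bound plus M\"obius inversion to find closed points of exact degree $\ell^{M}$ over $t'$ avoiding the finite locus where $\overline{C}\to C$ alters residue fields. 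Your $\ell$-adic reformulation $\bigcap_{v} d_v\hat{\mathbb{Z}} = \prod_{\ell}\ell^{s_\ell}\mathbb{Z}_\ell$ is only a repackaging of the paper's observation that it suffices to find, for each $n$, a point with $n \mid d_v$; the real divergence is in how the points are produced. The paper's construction is shorter and entirely elementary (no normalization, compactification, or Weil bounds), while yours proves something finer than needed --- closed points of $V$ of every sufficiently large \emph{exact} degree over $t'$ outside a finite set, a Lang--Weil-type statement --- at the price of the curve-reduction and residue-field bookkeeping you yourself flag; since only divisibility of $d_v$ is required, that machinery is dispensable, though your argument is sound.
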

\begin{proof}
 By replacing $V$ with a suitable open subscheme  if necessary, we may assume that $V$ is affine. By the Noether normalization lemma (\cite{Liu2002} Lemma 2.1.9), there exists a finite surjective morphism $V\rightarrow \mathbb{A}_{t}^{\text{dim}(V)}$ over $t$. Hence we may assume that $V= \mathbb{A}_{t}^{m}$ for some $m\in\mathbb{Z}_{> 0}$. For any $n\in\mathbb{Z}_{>0}$, we have that $\mathbb{F}_{p^{n}}-\underset{0<a<n}\cup\mathbb{F}_{p^{a}}$ is not empty. Thus, $\underset{v\in (\mathbb{A}_{t}^{m})^{\text{cl}}}\cap d_{v}\hat{\mathbb{Z}}=\underset{n\in\mathbb{Z}_{>0}}\cap n\hat{\mathbb{Z}}=\{0\}$ follows.
\end{proof}

We write  $\rho^{n}_{C}$ for the natural isomorphism $C(n)=C\underset{t,\text{Fr}^{n}_{t}}\times t\rightarrow C$ (not necessary over $t$) for any smooth curve $C$ over any finite field $t$ and any non-negative integer $n\in\mathbb{Z}_{\geq 0}$.  Let us prove the following lemma which is  important  in the proof of  the weak and strong  bi-anabelian  $m$-step solvable  Grothendieck conjectures.

\begin{lemma}\label{stxlemma2fin}
Assume that $k$ is finitely generated over the prime field and that   $U_{1}$ is affine hyperbolic. Assume that $U_{1,\overline{k}}$ does not  descend to a curve over  $\overline{\mathbb{F}}_{p}$ when $p>0$.   Assume that  $m$ satisfies 
\begin{equation*}
\begin{cases}
\ \ m\geq  4 & \ \ (\text{if }r_{1}\geq  3 \text{ and }  (g_{1},r_{1})\neq (0,3), (0,4))\\
\ \ m\geq 5 & \ \ (\text{if }r_{1}<3 \text{ or }  (g_{1},r_{1})= (0,3), (0,4)).
\end{cases}
\end{equation*}
Let  $\Phi: \Pi^{(m)}_{U_{1}}\xrightarrow[G_{k}]{\sim} \Pi^{(m)}_{U_{2}}$ be a $G_{k}$-isomorphism.   Let  $S$ be an integral regular scheme  of finite type over $\text{Spec}(\mathbb{Z})$ with function field $k$ and $\eta$ the generic point of $S$.  Let $N\in\mathbb{Z}_{\geq3}$ be  an integer  which is invertible on $S$. Let $(\mathcal{X}_{i},\mathcal{E}_{i})$ be a smooth curve of type $(g_{i},r_{i})$ over $S$ with generic fiber $(X_{i},E_{i})$ and $\mathcal{U}_{i}:=\mathcal{X}_{i}-\mathcal{E}_{i}$ for $i=1$, $2$.  Then, when $p=0$ (resp. $p>0$),   there exists (resp. exist) a unique isomorphism $f_{\Phi}^{S}: \mathcal{U}_{1}\underset{S}\rightarrow \mathcal{U}_{2}$ (resp. a unique pair  $n_{1}$, $n_{2}\in \mathbb{Z}_{\geq 0}$ with $n_{1} n_{2}=0$ and a unique isomorphism $f_{\Phi}^{S}: \mathcal{U}_{1}(n_{1})\underset{S}\rightarrow \mathcal{U}_{2}(n_{2})$) such that  the following condition ($\dag$) is satisfied for every  $s\in S^{\text{cl}}$. 
\begin{itemize}
 \item[(\dag)]  Let    $f_{\Phi,s}^{S}: \mathcal{U}_{1,s}\xrightarrow[\kappa(s)]{\sim} \mathcal{U}_{2,s}$  (resp. $f_{\Phi,s}^{S}: \mathcal{U}_{1,s}(n_{1})\xrightarrow[\kappa(s)]{\sim} \mathcal{U}_{2,s}(n_{2})$) be the isomorphism  induced by   $f_{\Phi}^{S}$.   Let  $\Phi_{s}$ be the  image of $\Phi$ by the map  $ \Isom_{G_{k}}(\Pi_{U_{1}}^{(m)},\Pi_{U_{2}}^{(m)})\rightarrow \text{Isom}_{G_{\kappa(s)}}(\Pi^{(m-2)}_{\mathcal{U}_{1,s}},\Pi^{(m-2)}_{\mathcal{U}_{2,s}})$ induced by   Corollary \ref{goodcor}. Let $f_{{\Phi}_{s}}$ be  the image of $\Phi_{s}$ by the map  $\text{Isom}_{G_{\kappa(s)}}(\Pi^{(m-2)}_{\mathcal{U}_{1,s}},\Pi^{(m-2)}_{\mathcal{U}_{2,s}})\rightarrow\Isom(\mathcal{U}_{1,s},\mathcal{U}_{2,s})$  induced by  Theorem \ref{finGCweak}.  Then $f_{\Phi,s}^{S}=f_{\Phi_{s}}$ (resp. $f_{\Phi,s}^{S}=(\rho^{n_{2}}_{\mathcal{U}_{2,s}})^{-1} \circ f_{{\Phi}_{s}}\circ \rho^{n_{1}}_{\mathcal{U}_{1,s}}$) holds. 
\end{itemize}
\end{lemma}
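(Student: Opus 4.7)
The plan is to compare the classifying morphisms $\zeta_i:S\to\mathcal{M}_{g,r}[N]$ attached to the two families and invoke Lemma \ref{stxlemma1}. First, by standard Galois descent (the required rigidity being provided by Lemma \ref{injelemma4-2}), I may replace $k$ by a finite separable extension and $S$ by a finite \'etale cover so that each $\mathcal{E}_i$ consists of $r$ disjoint ordered sections of $\mathcal{X}_i/S$ and each $\mathcal{X}_i/S$ carries a level $N$ structure; this defines the morphisms $\zeta_i:S\to\mathcal{M}_{g,r}[N]$, and by Lemma \ref{moduli4} the target is a separated scheme of finite type over $\text{Spec}(\mathbb{Z}[1/N])$.

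The main step is to establish $\zeta_1(s)=\zeta_2(s)$ for every $s\in S^{\text{cl}}$. Passing to the henselization of $\mathcal{O}_{S,s}$ puts us in the setting of Corollary \ref{goodcor}; the smoothness of $(\mathcal{X}_i,\mathcal{E}_i)$ over $S$ verifies its good reduction hypothesis with $n=2$ and produces a canonical specialization $\Phi_s:\Pi^{(m-2)}_{\mathcal{U}_{1,s}}\xrightarrow{\sim}\Pi^{(m-2)}_{\mathcal{U}_{2,s}}$ of $\Phi$. The numerical hypothesis on $m$ is chosen precisely so that $m-2$ satisfies the hypothesis of Theorem \ref{finGCweak}, which applied to $\Phi_s$ yields a scheme isomorphism $f_{\Phi_s}:\mathcal{U}_{1,s}\xrightarrow{\sim}\mathcal{U}_{2,s}$; because $\Phi_s$ is $G_{\kappa(s)}$-equivariant and preserves the Frobenius, $f_{\Phi_s}$ is defined over $\kappa(s)$. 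Moreover, the $G_k$-equivariance of $\Phi$ transports the ordering of cusps and the level $N$ structure—both encoded in $G_k$-equivariant subquotients of $\overline{\Pi}^1_{U_i}$ via Lemma \ref{wflemma}—through $f_{\Phi_s}$ (using items (iii)--(iv) of Theorem \ref{finGCweak} to identify the action of $f_{\Phi_s}$ on $\overline{\Pi}^1$ with that coming from $\Phi_s$). Hence $\zeta_1(s)=\zeta_2(s)$ in $\mathcal{M}_{g,r}[N](\kappa(s))$.

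Now Lemma \ref{stxlemma1} applied to $\zeta_1,\zeta_2:S\to\mathcal{M}_{g,r}[N]$ yields $\zeta_1=\zeta_2$ when $p=0$, and when $p>0$ a unique pair $n_1,n_2\in\mathbb{Z}_{\geq 0}$ with $n_1n_2=0$ and $\zeta_1\circ\text{Fr}_S^{n_1}=\zeta_2\circ\text{Fr}_S^{n_2}$; uniqueness of $(n_1,n_2)$ exploits Lemma \ref{example4.7}(1) together with the non-isotriviality hypothesis on $U_{1,\overline{k}}$, which forces the $\zeta_i$ to be non-constant. Pulling back the universal family then gives an $S$-isomorphism $\mathcal{X}_1(n_1)\xrightarrow{\sim}\mathcal{X}_2(n_2)$ of curves with ordered cusps and level $N$ structure, whose restriction is the sought-after $f^S_\Phi$. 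Galois descent along the reduction of the first step, together with Lemma \ref{injelemma4-2} at the generic fiber, recovers a unique $f^S_\Phi$ over the original $S$ and verifies condition (\dag) by the very construction. The main obstacle is the middle step: ensuring that the fiberwise isomorphism $f_{\Phi_s}$ supplied by Theorem \ref{finGCweak} genuinely preserves the ordering of $\mathcal{E}_{i,s}$ and the level $N$ structure, so that equality of the $\zeta_i$ is obtained on $\mathcal{M}_{g,r}[N]$ itself rather than on a coarser moduli stack.
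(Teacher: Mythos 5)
Your overall strategy (transport the cusp ordering and level $N$ structure through $\Phi$, compare the classifying morphisms $\zeta_{1},\zeta_{2}:S\to\mathcal{M}_{g,r}[N]$ pointwise using Corollary \ref{goodcor} and Theorem \ref{finGCweak}, and conclude with Lemma \ref{stxlemma1}, Lemma \ref{moduli4} and Galois descent) is the same as the paper's. However, there is a genuine gap in positive characteristic, located exactly at your claim that ``because $\Phi_{s}$ is $G_{\kappa(s)}$-equivariant and preserves the Frobenius, $f_{\Phi_{s}}$ is defined over $\kappa(s)$.'' This is false: Theorem \ref{finGCweak} only produces a scheme isomorphism, and a $G_{\kappa(s)}$-equivariant $\Phi_{s}$ can perfectly well be realized by an isomorphism inducing a nontrivial power $\mathrm{Fr}^{\alpha_{s}}$ of Frobenius on $\kappa(s)$ (for instance, if $\Phi$ is induced by a relative Frobenius $U_{1}\to U_{1}(1)$, which is a $G_{k}$-isomorphism on $\Pi^{(m)}$'s, the only geometric realization is $(\rho^{1}_{U_{1}})^{-1}$, which does not lie over $\kappa(s)$ and forces $(n_{1},n_{2})=(0,1)$, not $(0,0)$). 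Consequently one can only say that $f_{\Phi_{s}}$ becomes a $\kappa(s)$-isomorphism after Frobenius twists by some $s$-dependent pair $(n_{1,s},n_{2,s})$, giving $(\zeta_{1}|_{s})\circ\mathrm{Fr}^{n_{1,s}}_{\kappa(s)}=(\zeta_{2}|_{s})\circ\mathrm{Fr}^{n_{2,s}}_{\kappa(s)}$; this still yields the set-theoretic equality $\zeta_{1}(s)=\zeta_{2}(s)$ needed for Lemma \ref{stxlemma1}, so existence of some pair $(n_{1},n_{2})$ with $\zeta_{1}\circ\mathrm{Fr}_{S}^{n_{1}}=\zeta_{2}\circ\mathrm{Fr}_{S}^{n_{2}}$ survives, but your justification of equality of $\kappa(s)$-points does not.

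The more serious consequence is that your final sentence ``verifies condition $(\dag)$ by the very construction'' hides precisely the hard part of the lemma. To prove $(\dag)$ one must show that the globally obtained pair $(n_{1},n_{2})$ is compatible with the local twists at every closed point, i.e.\ $n_{2,s}-n_{1,s}\equiv n_{2}-n_{1}\pmod{[\kappa(s):\mathbb{F}_{p}]}$, so that $(\rho^{n_{2}}_{\mathcal{U}_{2,s}})^{-1}\circ f_{\Phi_{s}}\circ\rho^{n_{1}}_{\mathcal{U}_{1,s}}$ is a $\kappa(s)$-isomorphism and can be compared with $f^{S}_{\Phi,s}$ via the fineness of $\mathcal{M}_{g,r}[N]$. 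The paper does this by showing that the Frobenius exponent $\alpha_{s}$ induced by $f_{\Phi_{s}}$ on $\overline{\kappa(s)}$ is independent of $s$ --- comparing the determinant $\beta_{s}\in(\hat{\mathbb{Z}}^{\mathrm{pro}\text{-}p'})^{\times}$ of the action on $\bigwedge^{2g+r-1}\overline{\Pi}^{1,\mathrm{pro}\text{-}p'}$ (Lemma \ref{wflemma}, Theorem \ref{finGCweak}(iv)) with the one determined by $\Phi$ at the generic point --- and then deducing $n_{2}-n_{1}=\alpha$ from Lemma \ref{dzetaslem} applied to the positive-dimensional constructible image of $\zeta_{1}$, which is where the non-isotriviality hypothesis is actually consumed. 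None of this appears in your proposal, and without it neither $(\dag)$ nor, in effect, the correctly twisted existence statement can be established; this is the ``number of Frobenius twists'' issue flagged in the paper's introduction. (Your opening Galois-descent reduction is also lighter than what is needed: to descend $f^{S'}_{\Phi_{L}}$ one must check $\rho^{-1}\circ f^{S'}_{\Phi_{L}}\circ\rho=f^{S'}_{\Phi_{L}}$ for $\rho\in\mathrm{Aut}(S'/S)$ via preservation of the transported level structure and cusp ordering and fineness of the moduli, not merely invoke Lemma \ref{injelemma4-2}; but this part is repairable along the lines you indicate.)
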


\begin{proof}
By Proposition \ref{severalinvrecoprop}, we obtain that $g_{1}=g_{2}$ and $r_{1}=r_{2}$. In particular, $U_{2}$ is also affine hyperbolic.  We write $g$, $r$, instead of $g_{i}$, $r_{i}$, respectively. 

First, we show the uniqueness of $f_{\Phi}^{S}$ (resp. ($n_{1}, n_{2},f_{\Phi}^{S}$)). Assume that there exists $\tilde{f}_{\Phi}^{S}$ (resp. ($\tilde{n}_{1},\tilde{n}_{2},\tilde{f}_{\Phi}^{S}$)) that satisfies the condition (\dag). Let $s$ be a closed point of  $S$. Then  (\dag) implies that $f_{\Phi,s}^{S}=f_{{\Phi}_{s}}=\tilde{f}_{\Phi,s}^{S}$ when $p=0$.  When $p>0$, we have that $n_{1}-n_{2}=\delta_{U_{1},U_{2}}=\tilde{n}_{1}-\tilde{n}_{2}$ by \cite{St2002P} Corollary B.2.4. Hence $(n_{1},n_{2})=(\tilde{n}_{1},\tilde{n}_{2})$ follows, since $n_{1}n_{2}=\tilde{n}_{1}\tilde{n}_{2}=0$ and $n_{1}$, $n_{2}$, $\tilde{n}_{1}$, $\tilde{n}_{2}$ are non-negative. Thus, (\dag) implies that $f_{\Phi,s}^{S}=(\rho^{n_{2}}_{\mathcal{U}_{2,s}})^{-1} \circ f_{{\Phi}_{s}}\circ \rho^{n_{1}}_{\mathcal{U}_{1,s}}=\tilde{f}_{\Phi,s}^{S}$.  Since  any  closed point $x$ of $\mathcal{U}_{1}$ is contained in some fiber $\mathcal{U}_{1,s'}$ ($s'\in S^{\text{cl}}$),  $f_{\Phi}^{S}=\tilde{f}_{\Phi}^{S}$ follows by Lemma  \ref{stxlemma1}.  (Note that, when $p>0$, the integer $a$ in Lemma  \ref{stxlemma1} is zero in this case, since $f_{\Phi}^{S}$ and $\tilde{f}_{\Phi}^{S}$ are $S$-morphisms.)\par 
  Next, we construct $f_{\Phi}^{S}$ under the two extra assumptions:  ``(i): $\mathcal{E}_{1}$ is a disjoint union  of ordered sections  $(\psi_{1,j}:S\rightarrow \mathcal{E}_{1})_{1\leq j\leq r}$ over $S$'' and ``(ii): there exists a level $N$ structure $\theta_{1}:R^{1}p_{1*}\mathbb{Z}/N\mathbb{Z}\xrightarrow{\sim}(\mathbb{Z}/N\mathbb{Z})^{2g_{1}}$ on $\mathcal{U}_{1}/S$'', where $p_{i}:\mathcal{X}_{i}\twoheadrightarrow S$ stands for  the structure morphism.  By Proposition \ref{inertiareco}(2) (with $(h,m')=(1,0)$ (resp. $(2,0)$) for $m=4$ (resp. $m\geq 5$)), $\Phi$ induces a unique  $G_{k}$-equivariant bijection $\tilde{E}_{1}^{0}\xrightarrow{\sim} \tilde{E}_{2}^{0}$ satisfying that  the diagram (\ref{innercommu}) is commutative.   Hence   $\mathcal{E}_{2}$ is also a  disjoint union  of sections  $(\psi_{2,j}:S\rightarrow \mathcal{E}_{2})_{1\leq j\leq r}$ over $S$ with  the order   induced by  $\Phi$  and  the order of $(\psi_{1,j}:S\rightarrow \mathcal{E}_{1})_{1\leq j\leq r}$. Moreover,  we obtain a level $N$ structure $\theta_{2}$ on $\mathcal{U}_{2}/S$  from  $\Phi$ and $\theta_{1}$.  Let $\zeta_{1},\zeta_{2}: S\rightarrow \mathcal{M}_{g,r}[N]$ be the canonical morphisms  classifying $(\mathcal{X}_{1},(\psi_{1,j}:S\rightarrow \mathcal{E}_{1})_{1\leq j\leq r}, \theta_{1})$, $(\mathcal{X}_{2},(\psi_{2,j}:S\rightarrow \mathcal{E}_{2})_{1\leq j\leq r}, \theta_{2})$, respectively.  
Since $\kappa(s)$ is finite, there exist positive integers $n_{1,s}$, $n_{2,s}$ such that the composite $f_{s}'$  of the morphisms $\mathcal{U}_{1}(n_{1,s})\xrightarrow{\rho^{n_{1,s}}_{\mathcal{U}_{1,s}}}\mathcal{U}_{1,s}\xrightarrow[f_{{\Phi}_{s}}]{\sim}\mathcal{U}_{2,s}\xrightarrow{(\rho^{n_{2,s}}_{\mathcal{U}_{2,s}})^{-1}} \mathcal{U}_{2,s}(n_{2,s})$  is a $\kappa(s)$-isomorphism. 
 By Proposition \ref{inertiareco}(2) (with $(h,m')=(1,0)$ (resp. $(2,0)$) for $m=4$ (resp. $m\geq 5$)), $\Phi_{s}$   induces a unique  $G_{\kappa(s)}$-equivariant bijection $\tilde{\mathcal{E}}_{1,s}^{0}\xrightarrow{\sim} \tilde{\mathcal{E}}_{2,s}^{0}$ satisfying  the diagram (\ref{innercommu}) is commutative. By Theorem \ref{finGCweak}(iv),  $\Phi_{s}$ and $f_{\Phi_{s}}$ induce the same bijection  $\tilde{\mathcal{E}}_{1,s}^{0}\xrightarrow{\sim} \tilde{\mathcal{E}}_{2,s}^{0}$.  Thus,  $f'_{s}$ preserves  the orders of $(\psi_{1,j})_{1\leq j\leq r}$ and $(\psi_{2,j})_{1\leq j\leq r}$.   The level $N$ structures $\theta_{1}$, $\theta_{2}$ on $\mathcal{U}_{1}/S$, $\mathcal{U}_{2}/S$ induce level $N$ structures $\theta_{1,s}$, $\theta_{2,s}$ on $\mathcal{U}_{1,s}/\kappa(s)$, $\mathcal{U}_{2,s}/\kappa(s)$, respectively. By Theorem \ref{finGCweak}(iv), $\Phi_{s}$ and $f_{\Phi_{s}}$ induce the same isomorphism $\overline{\Pi}_{\mathcal{X}_{1,s}}^{1}/N\xrightarrow{\sim}\overline{\Pi}_{\mathcal{X}_{2,s}}^{1}/N$.  Hence  $f_{s}'$ preserves the  level $N$ structures $\theta_{1,s}(n_{1,s})$, $\theta_{2,s}(n_{2,s})$ on $\mathcal{U}_{1,s}(n_{1,s})/\kappa(s)$, $\mathcal{U}_{2,s}(n_{2,s})/\kappa(s)$ induced by $\theta_{1,s}$, $\theta_{2,s}$, respectively. Thus, $ (\zeta_{1}|_{s})\circ\text{Fr}_{\kappa(s)}^{n_{1,s}}= (\zeta_{2}|_{s})\circ\text{Fr}_{\kappa(s)}^{n_{2,s}}$ follows. In particular, $\zeta_{1}(s)=\zeta_{2}(s)$ follows  for any $s\in S^{\text{cl}}$.  Therefore, by Lemma \ref{stxlemma1} and Lemma \ref{moduli4}, $\zeta_{1}=\zeta_{2}$ (resp. there exists a unique pair  $n_{1}$, $n_{2}\in \mathbb{Z}_{\geq 0}$ with $n_{1} n_{2}=0$  such that $\zeta_{1}\circ \text{Fr}_{S}^{n_{1}}= \zeta_{2}\circ \text{Fr}_{S}^{n_{2}}$) follows when $p=0$ (resp. $p>0$). Hence we get a unique isomorphism $(\mathcal{X}_{1},(\psi_{1,j})_{1\leq j\leq r},\theta_{1})\rightarrow  (\mathcal{X}_{2},(\psi_{2,j})_{1\leq j\leq r},\theta_{2}) $ (resp. $(\mathcal{X}_{1},(\psi_{1,j})_{1\leq j\leq r},\theta_{1})(n_{1})\rightarrow  (\mathcal{X}_{2},(\psi_{2,j})_{1\leq j\leq r},\theta_{2})(n_{2})$)  over $S$, which induces an isomorphism $f_{\Phi}^{S}: \mathcal{U}_{1}\rightarrow \mathcal{U}_{2}$ (resp. $f_{\Phi}^{S}: \mathcal{U}_{1}(n_{1})\rightarrow \mathcal{U}_{2}(n_{2})$) over $S$. \par
Next, we show that the isomorphism $f_{\Phi}^{S}$ satisfies (\dag) for every  $s\in S^{\text{cl}}$.  
First, we assume that  $p>0$.   
Let $s$ be an element of    $S^{\text{cl}}$ and $\eta$ the generic point of $S$. 
By Theorem  \ref{finGCweak}(i),   $\Phi_{s}$  induces an isomorphism  $\tilde{f}^{0}_{\Phi_{s}}:\mathcal{U}_{1,s}^{0}\xrightarrow{\sim}\mathcal{U}_{2,s}^{0}$.  
By Lemma \ref{wflemma}, we obtain that $(\overset{2g+r-1}\wedge\overline{\Pi}_{\mathcal{U}_{i,s}}^{1,\pro p' })^{\otimes 2}=\hat{\mathbb{Z}}^{\pro p'}(2(g+r-1))$. 
We write $\beta_{s}$, $\beta_{\eta}$ for the elements of $\text{Aut}(\hat{\mathbb{Z}}^{\pro p'}(2(g+r-1)))=(\hat{\mathbb{Z}}^{\pro p'})^{\times}$ induced by $\tilde{f}^{0}_{\Phi_{s}}$, $\Phi$, respectively. 
Since  $\tilde{f}_{\Phi_{s}}^{0}$ and $\Phi_{s}$ induces the same isomorphism  $\overline{\Pi}^{1,\text{pro-}p'}_{\mathcal{U}_{1,s}}\xrightarrow{\sim}\overline{\Pi}^{1,\text{pro-}p'}_{\mathcal{U}_{2,s}}$ by Theorem  \ref{finGCweak}(iv),  we obtain that $\beta_{s}=\beta_{\eta}$.   We write  $\alpha_{s}$ for an element of $\hat{\mathbb{Z}}$ such that the element of  $G_{\mathbb{F}_{p}}=\text{Aut}(\overline{\kappa(s)})$ induced by $\tilde{f}^{0}_{\Phi_{s}}$ is $\text{Fr}^{\alpha_{s}}_{\mathbb{F}_{p}}$.  Then $(p^{\alpha_{s}})^{2(g+r-1)}=\beta_{s}=\beta_{\eta}=\beta_{t}=(p^{\alpha_{t}})^{2(g+r-1)}$ follows for any $t\in S^{\text{cl}}$.     
Since  the  homomorphism $\hat{\mathbb{Z}}\rightarrow (\hat{\mathbb{Z}}^{\text{pro-}p'})^{\times}$, $\gamma\mapsto p^{\gamma}$ is  injective, we get that $ 2\alpha_{s}(g+r-1)=2\alpha_{t}(g+r-1)$. Since the map $\hat{\mathbb{Z}}\rightarrow \hat{\mathbb{Z}}$ of multiplication by $n$ $(n\in \mathbb{Z}_{\geq 1})$ are injective, we obtain that $\alpha_{s}=\alpha_{t}$. Hence $\alpha_{s}$ (in other words, the isomorphism $\overline{\kappa(s)}\xrightarrow{\sim} \overline{\kappa(s)}$ induced by $\tilde{f}_{\Phi_{s}}^{0}$) does not depend on $s$. 
 We write $\alpha$ instead of $\alpha_{s}$. 
Set $d_{\zeta_{1}(s)}:=[\kappa(\zeta_{1}(s)):\mathbb{F}_{p}](=[\kappa(\zeta_{2}(s)):\mathbb{F}_{p}])$.      
Since  $\zeta_{1}\circ \text{Fr}_{S}^{n_{1}}= \zeta_{2}\circ \text{Fr}_{S}^{n_{2}}$ and $ (\zeta_{1}|_{s})\circ\text{Fr}_{\kappa(s)}^{n_{1,s}}= (\zeta_{2}|_{s})\circ\text{Fr}_{\kappa(s)}^{n_{2,s}}$, we obtain that  $ (\zeta_{1}|_{s})\circ\text{Fr}_{\kappa(s)}^{n_{1}+n_{2,s}}=(\zeta_{2}|_{s})\circ\text{Fr}_{\kappa(s)}^{n_{2}+n_{2,s}}= (\zeta_{1}|_{s})\circ\text{Fr}_{\kappa(s)}^{n_{2}+n_{1,s}}$.  
Hence $ n_{2}-n_{1}\equiv n_{2,s}-n_{1,s}\equiv\alpha$ (mod $d_{\zeta_{1}(s)}$) follows. 
By a theorem of Chevalley, $\zeta_{1}(S)(=\zeta_{2}(S))$ is constructible in $\mathcal{M}_{g,r}[N]$, hence contains a non-empty open subset $T$ of $\overline{\zeta_{1}(S)}$. 
As $S$ is irreducible, so is $T$, and we regard $T$ as a reduced subscheme of $\mathcal{M}_{g,r}[N]$. 
(Note that $\text{dim}(T)>0$, since $U_{1}$  (hence, a fortiori, $(X_{1},(\psi_{1,j,\eta})_{1\leq j\leq r},\theta_{\eta})$) does not descend to a curve over $\overline{\mathbb{F}}_{p}$.) 
Now, applying Lemma \ref{dzetaslem} to this $T$, we obtain that $n_{2}-n_{1}=\alpha$, since $\alpha$ does not depend on $s$. 
 By definition of $\alpha(=\alpha_{s})$, we have  that $n_{2,s}-n_{1,s}\equiv \alpha$ (mod $[\kappa(s):\mathbb{F}_{p}])$. 
Hence $n_{2,s}-n_{1,s}\equiv \alpha\equiv n_{2}-n_{1}$ (mod $[\kappa(s):\mathbb{F}_{p}]$) follows.
Thus, $(\rho^{n_{2}}_{\mathcal{U}_{2,s}})^{-1} \circ f_{{\Phi}_{s}}\circ \rho^{n_{1}}_{\mathcal{U}_{1,s}}$ is a $\kappa(s)$-isomorphism. 
Since  $\mathcal{M}_{g,r}[N]$ is fine by Lemma \ref{moduli4},   there is at most  one element of  the set $\Isom_{\kappa(s)}((\mathcal{X}_{1,s},(\psi_{1,j,s}:\text{Spec}(\kappa(s))\rightarrow \mathcal{E}_{1,s})_{1\leq j\leq r}, \theta_{1,s})(n_{1}),(\mathcal{X}_{2,s},(\psi_{2,j,s}:\text{Spec}(\kappa(s))\rightarrow \mathcal{E}_{2,s})_{1\leq j\leq r}, \theta_{2,s})(n_{2}))$. 
This implies  that $f^{S}_{\Phi,s}= (\rho^{n_{2}}_{\mathcal{U}_{2,s}})^{-1} \circ f_{{\Phi}_{s}}\circ \rho^{n_{1}}_{\mathcal{U}_{1,s}}$. Hence  $f_{\Phi}^{S}$ satisfies (\dag). 
When $p=0$, we can prove that $f_{\Phi}^{S}$ satisfies (\dag) for every  $s\in S^{\text{cl}}$ in a similar way to the case that $p>0$ and  $n_{1}=n_{2}=0$.  
More precisely, let $s\in S^{\text{cl}}$. Take $t\in S^{\text{cl}}$ such that $p_{s}\neq p_{t}$, where $p_{s}:=\text{ch}(\kappa(s))$, $p_{t}:=\text{ch}(\kappa(t))$. Define $\alpha_{s}\in\hat{\mathbb{Z}}$, $\beta_{s}\in(\hat{\mathbb{Z}}^{\text{pro-}p'_{s}})^{\times}$, $\alpha_{t}\in\hat{\mathbb{Z}}$, $\beta_{t}\in(\hat{\mathbb{Z}}^{\text{pro-}p'_{t}})^{\times}$ as in the case that $p>0$. Then we obtain that $(p_{s}^{\alpha_{s}})^{2(g+r-\epsilon)}=\beta_{s}=\beta_{t}=(p_{t}^{\alpha_{t}})^{2(g+r-\epsilon)}$ in $(\hat{\mathbb{Z}}^{\text{pro-}p'_{s},\text{pro-}p'_{t}})^{\times}$, from which $\alpha_{s}(=\alpha_{t})=0$. The rest of the proof for $p>0$ works with $\alpha=0$. (See also the proof of \cite{Ta1997} Claim (6.8).)\par 

Finally, we construct $f_{\Phi}^{S}$ in general. There exists a connected  finite Galois covering $S'$ of $S$  such that  $(\mathcal{X}'_{1}, \mathcal{E}'_{1}):=(\mathcal{X}_{1},\mathcal{E}_{1})\times_{S}S'$ satisfies  the assumptions (i)(ii) above, where  $\mathcal{U}'_{1}:=\mathcal{X}'_{1},-\mathcal{E}'_{1}$.  Let $(\psi'_{1,j}:S'\rightarrow \mathcal{E}'_{1})_{1\leq j\leq r}$ be the  disjoint union  of ordered sections and  $\theta'_{1}$ the level $N$ structure on $\mathcal{U}_{1}'/S'$. Set $(\mathcal{X}'_{2}, \mathcal{E}'_{2}):=(\mathcal{X}_{2},\mathcal{E}_{2})\times_{S}S'$, $ \mathcal{U}'_{2}:=\mathcal{U}_{2}\times_{S}S'$,  $L:=K(S')$, and $\Phi_{L}:=\Phi\mid_{\Pi_{U_{1,L}}^{(m)}}$.  By the arguments in the case that we assume (i)(ii),   $\mathcal{E}'_{2}$ is also a  disjoint union  of sections  $(\psi'_{2,j}:S'\rightarrow \mathcal{E}'_{2})_{1\leq j\leq r}$ over $S'$  and there exists   a level $N$ structure  $\theta'_{2}$ on $\mathcal{U}'_{2}/S'$  such that     $\Phi_{L}$ induces  a unique isomorphism $(\mathcal{X}'_{1},(\psi'_{1,j})_{1\leq j\leq r},\theta'_{1})\rightarrow  (\mathcal{X}'_{2},(\psi'_{2,j})_{1\leq j\leq r},\theta'_{2}) $ (resp. $(\mathcal{X}'_{1},(\psi'_{1,j})_{1\leq j\leq r},\theta'_{1})(n_{1})\rightarrow  (\mathcal{X}'_{2},(\psi'_{2,j})_{1\leq j\leq r},\theta'_{2})(n_{2})$ for some $n_{1}, n_{2}\in\mathbb{Z}_{\geq 0}$ satisfying $n_{1}n_{2}=0$)  over $S'$, which induces an isomorphism $f_{\Phi_{L}}^{S'}: \mathcal{U}'_{1}\rightarrow \mathcal{U}'_{2}$ (resp. $f_{\Phi_{L}}^{S'}: \mathcal{U}'_{1}(n_{1})\rightarrow \mathcal{U}'_{2}(n_{2})$) over $S'$.  
 Let  $\rho$  be an element of $\text{Aut}(S'/S)\ (\xrightarrow{\sim}\text{Gal}(L/k))$.  Since  $(\mathcal{X}'_{1}, \mathcal{E}'_{1})$ satisfies  the assumption (i), the images of $\Phi_{L}$ and $\rho^{-1}\circ\Phi_{L}\circ \rho$ in $\Isom_{G_{L}}(\overline{\Pi}_{X_{1,L}}^{1}/N, \overline{\Pi}_{X_{2,L}}^{1}/N)$  are the same.  Hence $\rho^{-1}\circ f_{\Phi_{L}}^{S'}\circ \rho$ also preserves the level $N$ structures $\theta'_{1}$, $\theta'_{2}$ (resp. $\theta'_{1}(n_{1})$, $\theta'_{2}(n_2)$).  Since $\mathcal{E}'_{i}$ is a  disjoint union  of sections, we obtain that the action $G_{L}\curvearrowright E_{i}(k^{\text{sep}})$ is  trivial. 
Hence   $\rho^{-1}\circ f_{\Phi_{L}}^{S'}\circ \rho$ also preserves the orders of $(\psi'_{1,j})_{1\leq j\leq r}$, $(\psi'_{2,j})_{1\leq j\leq r}$ (resp.  $(\psi'_{1,j}(n_{1}))_{1\leq j\leq r}$, $(\psi'_{2,j}(n_{2}))_{1\leq j\leq r}$).  
Since  $\mathcal{M}_{g,r}[N]$ is fine,  $\rho^{-1}\circ f_{\Phi_{L}}^{S'}\circ \rho= f_{\Phi_{L}}^{S'}$ follows.  Considering all $\rho\in \text{Aut}(S'/S)$, we get an isomorphism  $f_{\Phi}^{S}: \mathcal{U}_{1}\xrightarrow[S]{\sim}\mathcal{U}_{2}$ (resp. $f_{\Phi}^{S}: \mathcal{U}_{1}(n_{1})\rightarrow \mathcal{U}_{2}(n_{2})$) by Galois descent.  This isomorphism $f_{\Phi}^{S}$ satisfies the condition (\dag), since $f_{\Phi_{L}}^{S'}$ satisfies the condition (\dag) and, for any $s'\in S'^{\text{cl}}$,   $f_{{\Phi}_{s}}\circ a_{1}=a_{2}\circ f_{\Phi_{L,s'}}$  follows  by Lemma \ref{prop1.14}, where $s$ stands for the image of $s'$ in $S$, $f_{\Phi_{L,s'}}$ stands for the image of  $\Phi_{L}$ by the composite of the maps  $ \Isom_{G_{L}}(\Pi_{U_{1,L}}^{(m)},\Pi_{U_{2,L}}^{(m)})\rightarrow \text{Isom}_{G_{\kappa(s')}}(\Pi^{(m-2)}_{\mathcal{U}'_{1,s'}},\Pi^{(m-2)}_{\mathcal{U}'_{2,s'}})\rightarrow\Isom(\mathcal{U}'_{1,s'},\mathcal{U}'_{2,s'})$, and $a_{i}:\mathcal{U}'_{i,s'}\rightarrow \mathcal{U}_{i,s}$ (resp. $a_{i}:\mathcal{U}'_{i,s'}(n_{i})\rightarrow \mathcal{U}_{i,s}(n_{i})$) stands for the natural morphism. Thus, $f_{\Phi}^{S}$ is the desired ismomorphism.
 \end{proof}

\begin{theorem}[Relative weak bi-anabelian result over finitely generated fields]\label{fingeneGCweak}
Assume that $k$ is finitely generated over the prime field, and that  $U_{1}$ is affine hyperbolic (see Notation of section \ref{sectionfinitelygene}). Assume that $U_{1,\overline{k}}$ does not  descend to a curve over  $\overline{\mathbb{F}}_{p}$  when $p>0$.   Assume that  $m$ satisfies
\begin{equation*}
\begin{cases}
\ \ m\geq  4 & \ \ (\text{if }r_{1}\geq  3 \text{ and }  (g_{1},r_{1})\neq (0,3), (0,4))\\
\ \ m\geq 5 & \ \ (\text{if }r_{1}<3 \text{ or }  (g_{1},r_{1})= (0,3), (0,4)).
\end{cases}
\end{equation*}
 Then  the following holds.
\begin{equation*}
\Pi^{(m)}_{U_{1}}\xrightarrow[G_{k}]{\sim} \Pi^{(m)}_{U_{2}}\Longleftrightarrow U_{1}\xrightarrow{\sim}U_{2} \text{ in }\mathfrak{S}_{k}
\end{equation*}
\end{theorem}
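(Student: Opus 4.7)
The backward implication $\Leftarrow$ is immediate: by Remark \ref{fndisoms}, an isomorphism $U_1 \xrightarrow{\sim} U_2$ in $\mathfrak{S}_k$ (which in characteristic $p>0$ amounts to a $k$-isomorphism between suitable Frobenius twists) induces a $G_k$-isomorphism $\Pi^{(m)}_{U_1} \xrightarrow{\sim} \Pi^{(m)}_{U_2}$, since relative Frobenius morphisms are universal homeomorphisms and hence induce isomorphisms on \'etale fundamental groups.

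For the forward implication, given a $G_k$-isomorphism $\Phi$, the plan is to spread out to a suitable arithmetic base and then invoke Lemma \ref{stxlemma2fin}. Specifically, fix an integer $N \geq 3$ with $p \nmid N$. Since $k$ is finitely generated over the prime field, standard spreading-out arguments (shrinking repeatedly) yield an integral regular scheme $S$ of finite type over $\mathrm{Spec}(\mathbb{Z})$ with function field $k$ on which $N$ is invertible, together with smooth curves $(\mathcal{X}_i,\mathcal{E}_i)$ of type $(g_i,r_i)$ over $S$ whose generic fibers are identified with $(X_i,E_i)$; all of these conditions can be arranged simultaneously after replacing $S$ by a non-empty open subscheme.

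Applying Lemma \ref{stxlemma2fin} to this data produces, when $p=0$, a unique $S$-isomorphism $f^S_\Phi : \mathcal{U}_1 \xrightarrow{\sim} \mathcal{U}_2$, and when $p>0$, a unique pair of non-negative integers $n_1,n_2$ with $n_1 n_2 = 0$ together with a unique $S$-isomorphism $f^S_\Phi : \mathcal{U}_1(n_1) \xrightarrow{\sim} \mathcal{U}_2(n_2)$. Restricting to the generic fiber $\eta = \mathrm{Spec}(k) \hookrightarrow S$ yields an isomorphism $U_1 \xrightarrow[k]{\sim} U_2$ (respectively $U_1(n_1) \xrightarrow[k]{\sim} U_2(n_2)$), which by definition is an isomorphism $U_1 \xrightarrow{\sim} U_2$ in $\mathfrak{S}_k$, completing the proof.

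The heart of the argument is really packaged in Lemma \ref{stxlemma2fin}, so at this stage no serious new obstacle remains; the present theorem is obtained by a straightforward generic-fiber descent from the spreading-out. The genuine difficulties have already been handled upstream: the finite-field weak bi-anabelian result (Theorem \ref{finGCweak}, whose proof relies on class field theory and Lemma \ref{Talem}) is used fiberwise at closed points of $S$ via the $m$-step solvable good reduction criterion (Corollary \ref{goodcor}), and these fiberwise isomorphisms are then assembled globally through the separatedness of the moduli stack $\mathcal{M}_{g,r}[N]$ (Lemma \ref{moduli4}) together with the Frobenius-descent principle (Lemma \ref{stxlemma1}), the latter being responsible for the appearance of Frobenius twists in positive characteristic.
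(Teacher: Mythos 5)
Your proposal is correct and follows essentially the same route as the paper: the backward implication via Remark \ref{fndisoms}, and the forward implication by spreading out $(X_i,E_i)$ to smooth curves over a suitable integral regular scheme $S$ of finite type over $\mathrm{Spec}(\mathbb{Z})$ and applying Lemma \ref{stxlemma2fin}, then passing to the generic fiber. The extra details you supply (choosing $N\geq 3$ invertible on $S$ and noting that Frobenius twists over $S$ restrict to Frobenius twists over $k$) are consistent with, and slightly more explicit than, the paper's own two-line argument.
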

\begin{proof}
The implication $\Leftarrow$ is clear by Remark \ref{fndisoms}. We show the implication $\Rightarrow$.  Since we can take   a (sufficiently small)  integral regular scheme $S$ of finite type over $\text{Spec}(\mathbb{Z})$ with function field $k$ such  that there exists an affine hyperbolic curve of type $(g_{i},r_{i})$ over $S$ whose generic fiber is isomorphic to (and identified with) $(X_{i},E_{i})$ for $i=1,2 $. Hence the assertion follows from Lemma \ref{stxlemma2fin}.
\end{proof}


\subsection{The strong  bi-anabelian results over finitely generated  fields} \label{subsectionfingenestrong}
\hspace{\parindent}In this subsection, we show the strong bi-anabelian  $m$-step solvable  Grothendieck conjecture for affine hyperbolic curves over a field finitely generated over the prime field.\par

\begin{lemma}\label{4.9lem}
Assume that $U_{1}$ is hyperbolic. Assume that $U_{1,\overline{k}}$ does not  descend to a curve over  $\overline{\mathbb{F}}_{p}$  when $p>0$.   
\begin{enumerate}[(1)]
\item  The natural  map  $u: \Isom_{\mathfrak{S}_{k^{\text{sep}}}/\mathfrak{S}_{k}}(\tilde{U}^{m}_{1}/U_{1},\tilde{U}^{m}_{2}/U_{2})\rightarrow \Isom_{\mathfrak{S}_{k}}(U_{1},U_{2})$  is surjective. 
Further, for $(\tilde{t},t)\in \Isom_{\mathfrak{S}_{k^{\sep}}/\mathfrak{S}_{k}}(\tilde{U}^{m}_{1}/U_{1},\tilde{U}^{m}_{2}/U_{2})$, the equality $u^{-1}u((\tilde{t},t))=\text{Aut}_{\mathfrak{S}_{k^{\sep}}/\mathfrak{S}_{k^{\sep}},\text{id}}(\tilde{U}_{2}^{m}/U_{2,k^{\text{sep}}})\cdot\tilde{t}\ (=\text{Aut}_{\mathfrak{S}_{k^{\sep}}/\mathfrak{S}_{k},\text{id}}(\tilde{U}_{2}^{m}/U_{2})\cdot\tilde{t})$ holds, where 
$\text{Aut}_{\mathfrak{S}_{k^{\sep}}/\mathfrak{S}_{k^{\sep}},\text{id}}(\tilde{U}_{2}^{m}/U_{2,k^{\text{sep}}}) $ and $\text{Aut}_{\mathfrak{S}_{k^{\sep}}/\mathfrak{S}_{k},\text{id}}(\tilde{U}_{2}^{m}/U_{2})$ stand for the kernel of the natural maps $\text{Aut}_{\mathfrak{S}_{k^{\sep}}/\mathfrak{S}_{k^{\sep}}}(\tilde{U}_{2}^{m}/U_{2,k^{\text{sep}}})\rightarrow \text{Aut}_{\mathfrak{S}_{k^{\sep}}}(U_{2,k^{\text{sep}}})$ and  $\text{Aut}_{\mathfrak{S}_{k^{\sep}}/\mathfrak{S}_{k}}(\tilde{U}_{2}^{m}/U_{2})$ $\text{Aut}_{\mathfrak{S}_{k}}(U_{2})$, respectively. 
\item  Let $n\in \mathbb{Z}_{\geq 0}$ be an integer satisfying $m> n$.  Then the image of the natural map 
$\Isom_{\mathfrak{S}_{k^{\text{sep}}}/\mathfrak{S}_{k}}(\tilde{U}^{m-n}_{1}/U_{1},\tilde{U}^{m-n}_{2}/U_{2})$\\
$\rightarrow \Isom_{G_{k}}(\Pi_{U_{1}}^{(m-n)},\Pi_{U_{2}}^{(m-n)})$  (defined in Remark \ref{fndisoms})  is contained in $\Isom^{(m)}_{G_{k}}(\Pi_{U_{1}}^{(m-n)},\Pi_{U_{2}}^{(m-n)})$. 

\item   Let $n\in \mathbb{Z}_{\geq 0}$ be an integer satisfying $m> n$. Consider the following commutative  diagram.
\begin{equation}\label{isomequf}
\vcenter{\xymatrix@R=15pt{
\Isom_{\mathfrak{S}_{k^{\text{sep}}}/\mathfrak{S}_{k}}(\tilde{U}^{m-n}_{1}/U_{1},\tilde{U}^{m-n}_{2}/U_{2})\ar@{->>}[d]^{u}\ar[r] & \Isom^{(m)}_{G_{k}}(\Pi_{U_{1}}^{(m-n)},\Pi_{U_{2}}^{(m-n)})\ar@{->>}[d]\\
\Isom_{\mathfrak{S}_{k}}(U_{1},U_{2}) \ar[r]^-{}& \Isom_{G_{k}}^{(m)}(\Pi_{U_{1}}^{(m-n)},\Pi_{U_{2}}^{(m-n)})/\text{Inn}(\overline{\Pi}_{U_{2}}^{m-n}).
}}\end{equation}
Here, $\text{Inn}(\overline{\Pi}^{m}_{U_{i}})$ is the group of inner automorphisms of  $\Pi_{U_{i}}^{(m)}$ induced by elements of $\overline{\Pi}^{m}_{U_{i}}$.  Then  the upper horizontal map  of (\ref{isomequf}) is injective (resp. surjective) if and only if the lower  horizontal map of (\ref{isomequf}) is injective (resp. surjective).  (Remark that the  lower horizontal map is injective by Lemma \ref{injelemma4-2}.)
\end{enumerate}
\end{lemma}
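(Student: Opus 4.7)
The proof proceeds in three steps, with the main technical ingredients being the universal property of the tower $\tilde{U}_i^m$ and the center-freeness statement Proposition \ref{center}(1).

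For part (1), the plan for surjectivity is to use the universal property of $\tilde{U}_i^m$ as the maximal tamely ramified $m$-step solvable covering of $U_{i,k^{\sep}}$: given $t\in\Isom_{\mathfrak{S}_k}(U_1,U_2)$, the base-change $t_{k^{\sep}}$ pulls back the tower $\tilde{U}_2^m\to U_{2,k^{\sep}}$ to a tower over $U_{1,k^{\sep}}$ satisfying the same universal property, hence canonically isomorphic to $\tilde{U}_1^m$, which produces a lift $\tilde{t}$. (When $p>0$, I would carry this out on a scheme-theoretic representative of $t$ provided by Lemma \ref{example4.7}.) For the fiber description, any two lifts $\tilde{t},\tilde{t}'$ of the same $t$ satisfy that $\tilde{t}'\circ\tilde{t}^{-1}$ is a deck transformation of $\tilde{U}_2^m/U_{2,k^{\sep}}$, giving the group $\text{Aut}_{\mathfrak{S}_{k^{\sep}}/\mathfrak{S}_{k^{\sep}},\text{id}}(\tilde{U}_2^m/U_{2,k^{\sep}})$. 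The equality with $\text{Aut}_{\mathfrak{S}_{k^{\sep}}/\mathfrak{S}_k,\text{id}}(\tilde{U}_2^m/U_2)$ reduces to the observation that any $k^{\sep}$-automorphism of $\tilde{U}_2^m$ covering $\text{id}_{U_2}$ must cover $\text{id}_{U_{2,k^{\sep}}}$, since the structure morphism $\tilde{U}_2^m\to U_2$ factors through $U_{2,k^{\sep}}$ and the $k^{\sep}$-structure is preserved.

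For part (2), given $(\tilde{t},t)$ at the $(m-n)$-step level, I apply (1) with $m$ in place of $m-n$ to lift $t$ to some $(\tilde{s},t)$ at the $m$-step level. The induced isomorphism $\Psi\in\Isom_{G_k}(\Pi^{(m)})$ restricts to an element of $\Isom_{G_k}(\Pi^{(m-n)})$ which, by the fiber formula in (1), differs from the image of $(\tilde{t},t)$ only by an element of $\text{Inn}(\overline{\Pi}_{U_2}^{m-n})$. Since any such inner automorphism lifts to an inner automorphism of $\Pi^{(m)}_{U_2}$ via the surjection $\overline{\Pi}_{U_2}^m\twoheadrightarrow\overline{\Pi}_{U_2}^{m-n}$, the subset $\Isom^{(m)}_{G_k}(\Pi^{(m-n)})$ is stable under $\text{Inn}(\overline{\Pi}_{U_2}^{m-n})$, so the image of $(\tilde{t},t)$ lies in $\Isom^{(m)}_{G_k}$.

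For part (3), the key observation is that the two actions of $\overline{\Pi}_{U_2}^{m-n}$ intertwine: translating $\tilde{t}$ by $\alpha\in\overline{\Pi}_{U_2}^{m-n}=\text{Aut}_{\mathfrak{S}_{k^{\sep}}/\mathfrak{S}_{k^{\sep}},\text{id}}(\tilde{U}_2^{m-n}/U_{2,k^{\sep}})$ conjugates the induced isomorphism on $\Pi^{(m-n)}_{U_2}$ by $\alpha$. The surjectivity equivalence is then straightforward diagram chasing: given $\Phi\in\Isom^{(m)}_{G_k}$, lift its class via the (assumed surjective) lower map to some $t$, then lift $t$ via (1) to $(\tilde{t},t)$; the image differs from $\Phi$ by $\text{Inn}(\alpha)$ for some $\alpha$, which can be absorbed by translating $\tilde{t}$. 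The hard part is the direction "lower injective $\Rightarrow$ upper injective": if $(\tilde{t},t),(\tilde{t}',t')$ have the same image in $\Isom^{(m)}_{G_k}$, then Lemma \ref{injelemma4-2} forces $t=t'$, so by (1) $\tilde{t}'=\alpha\tilde{t}$ for some $\alpha\in\overline{\Pi}_{U_2}^{m-n}$ with $\text{Inn}(\alpha)=\text{id}$, and the conclusion $\alpha=1$ requires $Z(\Pi^{(m-n)}_{U_2})\cap\overline{\Pi}_{U_2}^{m-n}=\{1\}$, which is precisely Proposition \ref{center}(1); the converse direction follows by diagram chasing using Lemma \ref{injelemma4-2} directly.
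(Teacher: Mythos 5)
Your proposal is correct and follows essentially the same route as the paper: part (1) via scheme-theoretic representatives from Lemma \ref{example4.7}(1) and lifting of isomorphisms to the maximal $m$-step solvable covers, and part (3) via the identification $\overline{\Pi}_{U_{2}}^{m-n}\xrightarrow{\sim}\text{Aut}_{\mathfrak{S}_{k^{\sep}}/\mathfrak{S}_{k^{\sep}},\text{id}}(\tilde{U}_{2}^{m-n}/U_{2,k^{\text{sep}}})$ (Lemma \ref{example4.7}(3) when $p>0$) combined with Proposition \ref{center}(1). The only divergence is in (2): the paper lifts the $(m-n)$-level tower isomorphism itself to the $m$-level ``in a similar way to (1)'', whereas you lift only $t$ and then correct by an inner automorphism; your variant is valid, but note that it already uses the deck-group identification (hence non-isotriviality and Lemma \ref{example4.7}(3) when $p>0$) at that stage, a point the paper only needs in (3) and which you should make explicit.
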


\begin{proof}
(1) When $p=0$, the assertion follows from   \cite{Ta1997} Lemma (4.1)(ii). We assume  that $p>0$.  First, we show the surjectivity of $u$. Let $t$ be an element of $\Isom_{k}(\mathcal{Q}_{k}(U_{1}),\mathcal{Q}_{k}(U_{2}))$. Then, by Lemma \ref{example4.7}(1), there exist $n_{1}$, $n_{2}\in \mathbb{Z}_{\geq 0}$ and  $T\in$$\Isom_{k}(U_{1}(n_{1}),U_{2}(n_{2}))$ such that $\mathcal{Q}_{k}(T)=t$.   Since the natural map $\Isom_{k^{\sep}/k}(\widetilde{U_{1}(n_{1})}^{m}/U_{1}(n_{1}),\widetilde{U_{2}(n_{2})}^{m}/U_{2}(n_{2}))$ $\rightarrow\Isom_{k}(U_{1}(n_{1}),U_{2}(n_{2}))$ is clearly surjective, we obtain an element  $(\tilde{T},T)\in$$\Isom_{k^{\sep}/k}(\widetilde{U_{1}(n_{1})}^{m}/U_{1}(n_{1}),\widetilde{U_{2}(n_{2})}^{m}/U_{2}(n_{2}))$.  We know that $\widetilde{U_{i}(n_{i})}^{m}=\tilde{U}_{i}^{m}(n_{i})$. Thus, $(\mathcal{Q}_{k^{\text{sep}}}(\tilde{T}),\mathcal{Q}_{k}(T))\in\text{Isom}_{k^{\text{sep}}/k}(\mathcal{Q}_{k^{\text{sep}}}(\tilde{U}^{m}_{1})/\mathcal{Q}_{k}(U_{1}), \mathcal{Q}_{k^{\text{sep}}}(\tilde{U}^{m}_{2})/\mathcal{Q}_{k}(U_{2})))$ satisfies $u((\mathcal{Q}_{k^{\text{sep}}}(\tilde{T}),\mathcal{Q}_{k}(T))=t$. The second assertion clearly follows from the definition.  \\
(2)  In a similar way to (1), we can prove that the natural map $\Isom_{\mathfrak{S}_{k^{\sep}}/\mathfrak{S}_{k}}(\tilde{U}^{m}_{1}/U_1,\tilde{U}^{m}_{2}/U_2)\rightarrow$\\$ \Isom_{\mathfrak{S}_{k^{\sep}}/\mathfrak{S}_{k}}(\tilde{U}^{m-n}_{1}/U_1,\tilde{U}^{m-n}_{2}/U_2)$ is surjective. This implies that the image of the natural map \\
 $\Isom_{\mathfrak{S}_{k^{\sep}}/\mathfrak{S}_{k}}(\tilde{U}^{m-n}_{1}/U_{1},\tilde{U}^{m-n}_{2}/U_{2})\rightarrow \Isom_{G_{k}}(\Pi_{U_{1}}^{(m-n)},\Pi_{U_{2}}^{(m-n)})$   is contained in $\Isom^{(m)}_{G_{k}}(\Pi_{U_{1}}^{(m-n)},\Pi_{U_{2}}^{(m-n)})$. \\
(3) We may assume that $\Isom_{\mathfrak{S}_{k}}(U_{1},U_{2})\neq \emptyset$.  Let $t$ be an element of $\Isom_{\mathfrak{S}_{k}}(U_{1},U_{2})$. By (1), we have an element  $(\tilde{t},t)\in u^{-1}(t)$  and  the equality $u^{-1}(t)=\text{Aut}_{\mathfrak{S}_{k^{\sep}}/\mathfrak{S}_{k^{\sep}},\text{id}}(\tilde{U}_{2}^{m-n}/U_{2,k^{\text{sep}}})\cdot\tilde{t}$. When $p>0$, since $\Isom_{\mathfrak{S}_{k}}(U_{1},U_{2})\neq \emptyset$, $U_{2,\overline{k}}$ also does not  descend to a curve over  $\overline{\mathbb{F}}_{p}$. Hence  we obtain that   $\overline{\Pi}_{U_{2}}^{m-n}\xleftarrow{\sim}\ilim{a\geq 0}\text{Aut}_{k^{\text{sep}}/k^{\text{sep}}}(\tilde{U}^{m-n}_{2}(a)/U_{2,k^{\text{sep}}}(a))\xrightarrow{\sim}\text{Aut}_{\mathfrak{S}_{k^{\sep}}/\mathfrak{S}_{k^{\sep}},\text{id}}(\tilde{U}_{2}^{m-n}/U_{2,k^{\text{sep}}})$ by Lemma \ref{example4.7}(3).  (Note that  $\delta_{U_{2},U_{2}}=0$.) When $p=0$, we have that $\overline{\Pi}_{U_{2}}^{m-n}\xleftarrow{\sim}\text{Aut}_{\mathfrak{S}_{k^{\sep}}/\mathfrak{S}_{k^{\sep}},\text{id}}(\tilde{U}_{2}^{m-n}/U_{2,k^{\text{sep}}})$.  The  assertion follows from  the isomorphism  $\overline{\Pi}_{U_{2}}^{m-n}\xleftarrow{\sim}\text{Aut}_{\mathfrak{S}_{k^{\sep}}/\mathfrak{S}_{k^{\sep}},\text{id}}(\tilde{U}_{2}^{m-n}/U_{2,k^{\text{sep}}})$ and Proposition \ref{center}(1).
\end{proof}

\begin{lemma}\label{lemmaaaaa}
Assume that $k$ is finitely generated over the prime field,  and  that $U_{1}$ is affine hyperbolic. Assume that  $U_{1,\overline{k}}$ does not  descend to a curve over  $\overline{\mathbb{F}}_{p}$ when $p>0$.   Let $n\in\mathbb{Z}_{\geq 0}$ be an integer satisfying  $m\geq  n$.  Let $H_1$, $H'_{1}$ be open subgroups of  $\Pi^{(m)}_{U_{1}}$ that  satisfy   $\overline{\Pi}_{U_{1}}^{[m-n]}/\overline{\Pi}_{U_{1}}^{[m]} \subset H'_{1}\subset H_{1}$.  We assume that $(n,g(U_{H_{1}}),r(U_{H_{1}}))$ and $(n,g(U_{H'_{1}}),r(U_{H'_{1}}))$ satisfy the assumption for  $(m,g_{1},r_{1})$ in  Theorem \ref{fingeneGCweak}.  Let   $\Phi: \Pi^{(m)}_{U_{1}}\xrightarrow{\sim}\Pi^{(m)}_{U_{2}}$ be an isomorphism, $H_{2}:=\Phi(H_{1})$, and  $H_{2}':=\Phi(H_{1}')$. Then   the  following diagram is commutative in $\mathfrak{S}_{k}$.
\begin{equation}\label{oooo}\vcenter{
\xymatrix@C=60pt@R=15pt{
U_{1,H_{1}} \ar[r]^-{\phi} & U_{2,H_{2}}\\
U_{1,H'_{1}} \ar[r]^{\phi'}\ar[u]  & U_{2,H'_{2}}\ar[u]. 
}
}
\end{equation} 
Here, $\phi$ (resp. $\phi'$) stands for the isomorphism in $\mathfrak{S}_{k}$  induced by the isomorphism $H_{1}^{(n)}\xrightarrow{\sim}H_{2}^{(n)}$ (resp. $H'^{(n)}_{1}\xrightarrow{\sim}H'^{(n)}_{2}$) that is induced by  $\Phi\mid_{H_{1}}$ (resp. $\Phi\mid_{H'_{1}}$) by using Lemma \ref{stxlemma2fin}. 
\end{lemma}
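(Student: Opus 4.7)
The plan is to spread the situation out over an arithmetic base $S$, apply Lemma \ref{stxlemma2fin} twice, reduce the required commutativity to its finite-field analogue Lemma \ref{prop1.14} at each closed fiber, and finally lift to $S$ via Lemma \ref{stxlemma1}. First, by a standard limit argument, I would choose an integral regular scheme $S$ of finite type over $\text{Spec}(\mathbb{Z})$ with function field $k$, over which all four curves $U_{i,H_{i}}$, $U_{i,H_{i}'}$ ($i=1,2$) extend to smooth curves $\mathcal{U}_{i,H_{i}}$, $\mathcal{U}_{i,H_{i}'}$ and such that the natural étale covers $U_{i,H_{i}'}\to U_{i,H_{i}}$ extend to étale covers $\mathcal{U}_{i,H_{i}'}\to \mathcal{U}_{i,H_{i}}$ over $S$. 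Under the hypothesis that $U_{1,\overline{k}}$ does not descend to $\overline{\mathbb{F}}_{p}$, Lemma \ref{example4.7}(2) applied to the tame étale covers $U_{i,H_{i}}\to U_{i}$ and $U_{i,H_{i}'}\to U_{i}$ ensures the same for each of $U_{i,H_{i},\overline{k}}$, $U_{i,H_{i}',\overline{k}}$. Identifying $H_{i}^{(n)}$ and $H_{i}'^{(n)}$ with $\Pi^{(n)}_{U_{i,H_{i}}}$ and $\Pi^{(n)}_{U_{i,H_{i}'}}$ via Lemma \ref{quotientlemma1.1}, Lemma \ref{stxlemma2fin} applied separately to the isomorphisms $H_{1}^{(n)}\xrightarrow{\sim}H_{2}^{(n)}$ and $H_{1}'^{(n)}\xrightarrow{\sim}H_{2}'^{(n)}$ induced by $\Phi$ produces $S$-isomorphisms $f^{S}_{\phi}\colon\mathcal{U}_{1,H_{1}}(a_{1})\xrightarrow{\sim}\mathcal{U}_{2,H_{2}}(a_{2})$ and $f^{S}_{\phi'}\colon\mathcal{U}_{1,H_{1}'}(a_{1}')\xrightarrow{\sim}\mathcal{U}_{2,H_{2}'}(a_{2}')$. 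By Lemma \ref{example4.7}(3) applied to the covers $\mathcal{U}_{i,H_{i}'}\to \mathcal{U}_{i,H_{i}}$, we have $a_{2}-a_{1}=a_{2}'-a_{1}'$, so after composing with suitable relative Frobenius morphisms I may represent $f^{S}_{\phi}$ and $f^{S}_{\phi'}$ by $S$-morphisms $\mathcal{U}_{1,H_{1}}(a)\to \mathcal{U}_{2,H_{2}}(b)$ and $\mathcal{U}_{1,H_{1}'}(a)\to \mathcal{U}_{2,H_{2}'}(b)$ with common exponents $(a,b)$; the natural vertical covers in (\ref{oooo}) then lift by Frobenius twisting to $\mathcal{U}_{i,H_{i}'}(a)\to \mathcal{U}_{i,H_{i}}(a)$ and $\mathcal{U}_{i,H_{i}'}(b)\to \mathcal{U}_{i,H_{i}}(b)$ over $S$.

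Next, I would verify commutativity of the resulting ``twisted'' spread-out diagram fiberwise at each closed point $s\in S^{\text{cl}}$. By the defining property $(\dag)$ of Lemma \ref{stxlemma2fin}, the restriction $f^{S}_{\phi,s}$ is the Frobenius-twisted form of $f_{\Phi_{s}|_{H_{1}}}$, the scheme isomorphism $\mathcal{U}_{1,H_{1},s}\xrightarrow{\sim}\mathcal{U}_{2,H_{2},s}$ furnished by Theorem \ref{finGCweak} from the specialization of $\Phi|_{H_{1}}$ over the finite field $\kappa(s)$ (obtained via Corollary \ref{goodcor}); the analogous statement holds for $f^{S}_{\phi',s}$. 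Since the specializations of $\Phi|_{H_{1}}$ and $\Phi|_{H_{1}'}$ are simply the restrictions of the specialization of $\Phi$ itself, Lemma \ref{prop1.14} applied over $\kappa(s)$ to these two restrictions would yield the commutativity of the diagram of $f_{\Phi_{s}|_{\cdots}}$'s with the natural covers over $\kappa(s)$. Incorporating the Frobenius-twist normalization of the first step, this would give commutativity of the twisted spread-out diagram over every closed fiber of $S$.

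Finally, Lemma \ref{stxlemma1} applied to the two compositions $\mathcal{U}_{1,H_{1}'}(a)\rightrightarrows \mathcal{U}_{2,H_{2}}(b)$ would upgrade the pointwise agreement to agreement as $S$-morphisms (any relative-Frobenius discrepancy being excluded because both compositions are honest $S$-morphisms). Passing to the generic fiber and then to the localization $\mathfrak{S}_{k}$ would yield the desired commutativity of (\ref{oooo}). The main obstacle, as flagged in the sketch in the introduction, is the bookkeeping of Frobenius twists: the two independent applications of Lemma \ref{stxlemma2fin} a priori produce isomorphisms with different pairs of Frobenius exponents, and without the equality $\delta_{U_{1,H_{1}},U_{2,H_{2}}}=\delta_{U_{1,H_{1}'},U_{2,H_{2}'}}$ supplied by Lemma \ref{example4.7}(3), one could not even place the two horizontal morphisms of (\ref{oooo}) into a common spread-out diagram over $S$.
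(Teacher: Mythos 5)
Your overall architecture is the same as the paper's (spread out over an arithmetic base, apply Lemma \ref{stxlemma2fin} to $H_{1}^{(n)}\xrightarrow{\sim}H_{2}^{(n)}$ and to $H_{1}'^{(n)}\xrightarrow{\sim}H_{2}'^{(n)}$, check commutativity fiberwise at closed points via condition $(\dag)$ together with Lemma \ref{prop1.14}, and globalize with Lemma \ref{stxlemma1}), but there is a genuine gap exactly at the step you yourself flag as the crux: the reconciliation of the Frobenius exponents. You invoke Lemma \ref{example4.7}(3) to get $a_{2}-a_{1}=a_{2}'-a_{1}'$, i.e.\ $\delta_{U_{1,H_{1}},U_{2,H_{2}}}=\delta_{U_{1,H_{1}'},U_{2,H_{2}'}}$, before doing the fiberwise argument. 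But the hypothesis of Lemma \ref{example4.7}(3) is that $\Isom_{\mathfrak{S}_{L'}/\mathfrak{S}_{L}}(U_{1,H_{1}'}/U_{1,H_{1}},U_{2,H_{2}'}/U_{2,H_{2}})$ is non-empty, i.e.\ that \emph{some} isomorphism of the covers is compatible with \emph{some} isomorphism of the bases. At that point all you know (from Lemma \ref{stxlemma2fin} applied separately) is that the two bases are isomorphic in $\mathfrak{S}$ and the two covers are isomorphic in $\mathfrak{S}$; the existence of a compatible pair is essentially the statement of the present lemma (a weak form of it), and it only becomes available \emph{after} the fiberwise compatibility has been exploited. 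So as written the appeal to Lemma \ref{example4.7}(3) is circular, and without it your ``common exponents $(a,b)$'' normalization, on which the rest of your second paragraph depends, is unjustified.

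The paper closes this gap without Lemma \ref{example4.7}(3): it keeps the a priori unrelated exponents $(n_{1},n_{2})$ and $(n_{1}',n_{2}')$, uses $(\dag)$ and Lemma \ref{prop1.14} to show that the quadrangle relating $f_{\Phi_{s}|_{H_{1}}}$ and $f_{\Phi_{s}|_{H_{1}'}}$ over the residue field commutes, and then compares the maps on residue fields induced by the two compositions in the resulting diagram at a closed point $s'$; this yields the congruence $n'+n_{1}\equiv n_{2} \pmod{[\kappa(s):\mathbb{F}_{p}]}$, and running over infinitely many closed points (so that the moduli of $[\kappa(s):\mathbb{F}_{p}]$ force the congruence into an equality) gives $n'+n_{1}=n_{2}$, hence $(n_{1},n_{2})=(0,n')$ after the normalization $n_{1}n_{2}=n_{1}'n_{2}'=0$. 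Only then does Lemma \ref{stxlemma1} upgrade fiberwise agreement to agreement of morphisms over the base. In other words, the exponent relation must be \emph{extracted from} the fiberwise commutativity, not assumed beforehand; if you reorganize your argument so that the residue-field bookkeeping at closed fibers produces the equality of twists, the remainder of your outline (including the reduction to $T$, $T'$ over which the covers actually spread out, which you treat a bit too casually since their coefficient fields need not be $k$) goes through as in the paper.
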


\begin{proof}
By Proposition \ref{severalinvrecoprop}, $U_{2}$ is  also affine hyperbolic.   Let $S$ be an   integral regular scheme of finite type over $\text{Spec}(\mathbb{Z})$ with function field $k$. By replacing $S$ with a suitable open subscheme  if necessary, we may assume that, for $i=1,2 $,  there exists a smooth  curve $(\mathcal{X}_{i},\mathcal{E}_{i})$ of type $(g_{i},r_{i})$ over $S$ whose generic fiber is isomorphic to (and identified with) $(X_{i},E_{i})$.  Set $\mathcal{U}_{i}:=\mathcal{X}_{i}-\mathcal{E}_{i}$.  Let $L$ (resp. $L'$) be a finite extension of $k$ corresponding to the open subgroup $\text{Image}(H_{i}\rightarrow G_{k})$ (resp.  $\text{Image}(H'_{i}\rightarrow G_{k})$)  of $G_{k}$ and $T^{\ast}$ (resp. $T'^{\ast}$) the regular locus of the normalization of $S$ in $L$ (resp. $T^{\ast}$ in $L'$).  Let  $\mathcal{X}^{\ast}_{i,H_{i}}$, $\mathcal{U}^{\ast}_{i,H_{i}}$ (resp.  $\mathcal{X}^{\ast}_{i,H'_{i}}$, $\mathcal{U}^{\ast}_{i,H'_{i}}$) are the normalizations of $\mathcal{X}_{i}$, $\mathcal{U}_{i}$  in $K(U_{i,H_{i}})$ (resp. $K(U_{i,H'_{i}})$), respectively.  There exists an open subscheme $T\subset T^{\ast}$ such that the curves over $T$  induced by  the restrictions of $\mathcal{X}^{\ast}_{i,H_{i}}$, $\mathcal{U}^{\ast}_{i,H_{i}}$ over $T$  is smooth.  We write $\mathcal{X}_{i,H_{i}}$, $\mathcal{U}_{i,H_{i}}$ for the curves over $T$ induced by the restrictions of $\mathcal{X}^{\ast}_{i,H_{i}}$, $\mathcal{U}^{\ast}_{i,H_{i}}$  over $T$. Moreover, there exists  an open subscheme $T'\subset T'^{\ast}$ such that $T'^{\ast}\rightarrow T^{\ast}$ induces a morphism $T'\rightarrow T$ and that  the curves over $T'$  induced by  the restrictions of $\mathcal{X}^{\ast}_{i,H'_{i}}$, $\mathcal{U}^{\ast}_{i,H'_{i}}$ over $T'$  is smooth.  We write $\mathcal{X}_{i,H'_{i}}$, $\mathcal{U}_{i,H'_{i}}$ for  the curves over $T'$ induced by the restrictions of $\mathcal{X}^{\ast}_{i,H'_{i}}$, $\mathcal{U}^{\ast}_{i,H'_{i}}$  over $T'$.  Set $\mathcal{E}_{i,H_{i}}:=\mathcal{X}_{i,H_{i}}-\mathcal{U}_{i,H_{i}}$ (resp. $\mathcal{E}_{i,H'_{i}}:=\mathcal{X}_{i,H'_{i}}-\mathcal{U}_{i,H'_{i}}$).  Then,  by Lemma \ref{stxlemma2fin},  there exist isomorphisms $F:\mathcal{U}_{1,H_{1}}\xrightarrow[T]{\sim}  \mathcal{U}_{2,H_{2}}$, $F':\mathcal{U}_{1,H'_{1}}\xrightarrow[T']{\sim}  \mathcal{U}_{2,H'_{2}}$ (resp. non-negative integers $n_{1}$, $n_{1}'$, $n_{2}$, $n_{2}'$ with $n_{1}n_{2}=n_{1}'n_{2}'=0$ and  isomorphisms $F:\mathcal{U}_{1,H_{1}}(n_{1})\xrightarrow[T]{\sim}  \mathcal{U}_{2,H_{2}}(n_{2})$, $F':\mathcal{U}_{1,H'_{1}}(n'_{1})\xrightarrow[T']{\sim}  \mathcal{U}_{2,H'_{2}}(n'_{2})$) when $p=0$ (resp. $p>0$).  First, we assume that $p>0$. By symmetry, we may assume that $n_{1}'=0$ and set $n':=n'_{2}$.  Let $s'$ be a closed pint of $T'$ and $s$ the image of $s'$ by $T'\rightarrow T$.  Let $p_{i}:\mathcal{U}_{i,H'_{i}}\rightarrow \mathcal{U}_{i,H_{i}}\underset{T}\times T'$ be  the morphism induced by  $U_{i,H'_{i}}\rightarrow U_{i,H_{i}}$ By taking the  fiber at $s'$, we obtain the following diagram.
\begin{equation}\label{eq4.15.1}
\vcenter{\xymatrix@R=15pt@C=50pt{
\mathcal{U}_{1,H_{1},s}(n_{1})\ar@/^18pt/[rrr]^{F_{s}}\ar[r]_{\rho_{\mathcal{U}_{1,H_{1},s}}^{n_{1}}}&\mathcal{U}_{1,H_{1},s}\ar[r]^{\sim}\ar@{}[rdd]|{(A)}&\mathcal{U}_{2,H_{2},s}&\mathcal{U}_{2,H_{2},s}(n_{2})\ar[l]^{\rho_{\mathcal{U}_{2,H_{2},s}}^{n_{2}}}&\mathcal{U}_{2,H_{2},s}(N)\ar[l]^{\rho_{\mathcal{U}_{2,H_{2},s}(n_{2})}^{N-n_{2}}}\\
\mathcal{U}_{1,H_{1},s'}(n_{1})\ar[u]&\mathcal{U}_{1,H_{1},s'}\ar[u]&\mathcal{U}_{2,H_{2},s'}\ar[u]&&\mathcal{U}_{2,H_{2},s'}(N)\ar[u]\\
\mathcal{U}_{1,H'_{1},s'}(n_{1})\ar[u]^{p_{1,s'}(n_{1})}\ar[r]^{\rho_{\mathcal{U}_{1,H'_{1},s'}}^{n_{1}}}&\mathcal{U}_{1,H'_{1},s'}\ar[u]^{p_{1,s'}}\ar@/_18pt/[rr]_{F'_{s'}}\ar[r]^{\sim}&\mathcal{U}_{2,H'_{2},s'}\ar[u]_{p_{2,s'}}&\mathcal{U}_{2,H'_{2},s'}(n')\ar[l]_{\rho_{\mathcal{U}_{2,H'_{2},s'}}^{n'}}&\mathcal{U}_{2,H'_{2},s'}(N)\ar[l]_{\rho_{\mathcal{U}_{2,H'_{2},s'}(n')}^{N-n'}}\ar[u]_{p_{2,s'}(N)}\\
}
}
\end{equation}
Here, $N$ is an integer satisfying  $N\geq \text{max}\{n_{2}, n'\}$ and  the upper vertical arrows are natural projections.   By Lemma \ref{prop1.14} and the condition (\dag) in Lemma \ref{stxlemma2fin}, the quadrangle (A) in  (\ref{eq4.15.1}) is commutative.   Hence all morphisms $\mathcal{U}_{2,H'_{2},s'}(N)\rightarrow \mathcal{U}_{1,H_{1},s}(n_{1})$ appearing in  (\ref{eq4.15.1}) induce the same element of $\text{Hom}(\kappa(s),\kappa(s'))$.  In particular, we obtain that $N-n'-n_{1}\equiv N-n_{2}$ (mod $[\kappa(s):\mathbb{F}_{p}]$).  By considering infinitely many  closed points in $T'$, $n'+n_{1}=n_{2}$ follows.  Hence $(n_{1},n_{2})=(0,n')$ holds. We have that  any  closed point  of $\mathcal{U}_{1,H'_{1}}$ is contained in some fiber $\mathcal{U}_{1,H'_{1},s'}$ ($s'\in T'^{\text{cl}}$). Hence, by using  the commutativity of (\ref{eq4.15.1}) and  Lemma \ref{stxlemma1}, the following diagram is commutative.
\[
\xymatrix@C=60pt@R=15pt{
\mathcal{U}_{1,H_{1}} \ar[r]^-{F} & \mathcal{U}_{2,H_{2}}(n')\\
\mathcal{U}_{1,H'_{1}} \ar[r]^{F'}\ar[u]  & \mathcal{U}_{2,H'_{2}}(n')\ar[u].
}
\]
(Note that the integer $a$ in Lemma  \ref{stxlemma1} is zero in this case, since all morphisms are $S$-morphisms.)  Thus, (\ref{oooo}) is commutative  in $\mathfrak{S}_{k}$. When $p=0$, we can prove the assertion in a similar way  to the case that $p>0$ and $n_{1}=n_{2}=n'_{1}=n'_{2}=0$.
\end{proof}
\begin{definition}\label{m+nletter2}
Let $n\in\mathbb{Z}_{\geq 0}$ be an integer satisfying $m\geq n$.   We define  $\Isom_{G_{k}}^{(m)}(\Pi_{U_{1}}^{(m-n)},\Pi_{U_{2}}^{(m-n)})$ as the image of the  map $\Isom_{G_{k}}( \Pi_{U_{1}}^{(m)},\Pi_{U_{2}}^{(m)})\rightarrow \Isom_{G_{k}}(\Pi_{U_{1}}^{(m-n)},\Pi_{U_{2}}^{(m-n)})$.
\end{definition}

\begin{theorem}[Relative strong bi-anabelian result over finitely generated fields]\label{fingeneGCstrong}
Assume that $m\geq 5$, that  $k$ is finitely generated over the prime field, and that   $U_{1}$ is   affine  hyperbolic (see Notation of section \ref{sectionfinitelygene}). Assume that $U_{1,\overline{k}}$ does not  descend to a curve over  $\overline{\mathbb{F}}_{p}$ when $p>0$.  Let $n\in\mathbb{Z}_{\geq 4}$ be an integer satisfying $m> n$. Then  the map
\begin{equation*}
\Isom_{\mathfrak{S}_{k^{\sep}}/\mathfrak{S}_{k}}(\tilde{U}^{m-n}_{1}/U_1,\tilde{U}^{m-n}_{2}/U_2)\rightarrow \Isom_{G_{k}}^{(m)}(\Pi^{(m-n)}_{U_{1}},\Pi^{(m-n)}_{U_2})
\end{equation*}
 (defined in Lemma \ref{4.9lem}(2))  is bijective.
\end{theorem}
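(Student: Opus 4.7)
The plan is to reduce via Lemma \ref{4.9lem}(3) to the bijectivity of the induced quotient map
$$\Isom_{\mathfrak{S}_k}(U_1, U_2) \;\longrightarrow\; \Isom^{(m)}_{G_k}(\Pi^{(m-n)}_{U_1}, \Pi^{(m-n)}_{U_2}) / \text{Inn}(\overline{\Pi}^{m-n}_{U_2}),$$
and then prove this latter map is bijective. Injectivity of the quotient map is immediate from Lemma \ref{injelemma4-2}, since already the further composite to $\Isom_{G_k}(\overline{\Pi}^{1}_{U_1}/N, \overline{\Pi}^{1}_{U_2}/N)$ for some $N \geq 3$ coprime to $p$ is injective.

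For surjectivity, given a class $[\Phi]$ in the right-hand side, I would pick a representative $\Phi \in \Isom^{(m)}_{G_k}(\Pi^{(m-n)}_{U_1}, \Pi^{(m-n)}_{U_2})$ and, by the very definition of $\Isom^{(m)}$, lift it to some $\tilde{\Phi} \in \Isom_{G_k}(\Pi^{(m)}_{U_1}, \Pi^{(m)}_{U_2})$. Since $m \geq 5$ and $U_{1,\overline{k}}$ does not descend to $\overline{\mathbb{F}}_p$ when $p > 0$, Theorem \ref{fingeneGCweak} applies to $\tilde{\Phi}$ and yields an isomorphism $f_{\tilde{\Phi}} : U_1 \xrightarrow{\sim} U_2$ in $\mathfrak{S}_k$; concretely, this is the generic fiber of the $S$-isomorphism produced by Lemma \ref{stxlemma2fin} over a suitable integral regular scheme $S$ of finite type over $\text{Spec}(\mathbb{Z})$ with function field $k$ on which smooth models of the $(X_i, E_i)$ extend.

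The remaining task, which I anticipate as the main technical point, is to check that $f_{\tilde{\Phi}}$ is sent to the class $[\Phi]$. Writing $\Phi^{m-n}$ for the image of $\tilde{\Phi}$ in $\Isom_{G_k}(\Pi^{(m-n)}_{U_1},\Pi^{(m-n)}_{U_2})$, so that $[\Phi^{m-n}] = [\Phi]$ after modding out by $\text{Inn}$, this amounts to verifying that the $\pi_1^{\text{tame}}$-isomorphism functorially associated to $f_{\tilde{\Phi}}$ agrees with $\Phi^{m-n}$ up to an inner automorphism of $\overline{\Pi}^{m-n}_{U_2}$. The plan is to apply Theorem \ref{fingeneGCweak} not only at the top but also to each restriction $\tilde{\Phi}|_H$ for open subgroups $H \subset \Pi^{(m)}_{U_1}$ containing $\overline{\Pi}_{U_1}^{[m-n]}/\overline{\Pi}_{U_1}^{[m]}$ and satisfying the numerical hypothesis of Lemma \ref{lemmaaaaa} on $(n, g(U_{1,H}), r(U_{1,H}))$; such $H$ form a cofinal family in the poset of open subgroups of $\Pi^{(m)}_{U_1}$ containing $\overline{\Pi}_{U_1}^{[m-n]}/\overline{\Pi}_{U_1}^{[m]}$ by Lemma \ref{gandr}. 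Each restriction produces an isomorphism $f_H : U_{1,H} \xrightarrow{\sim} U_{2,\tilde{\Phi}(H)}$ in $\mathfrak{S}_k$, and Lemma \ref{lemmaaaaa} assembles these into a projective system. Taking the projective limit yields a lift $\tilde{f} \in \Isom_{\mathfrak{S}_{k^{\sep}}/\mathfrak{S}_k}(\tilde{U}^{m-n}_1/U_1, \tilde{U}^{m-n}_2/U_2)$ of $f_{\tilde{\Phi}}$; under the identification $\overline{\Pi}^{m-n}_{U_2} \cong \text{Aut}_{\mathfrak{S}_{k^{\sep}}/\mathfrak{S}_{k^{\sep}},\text{id}}(\tilde{U}^{m-n}_2/U_{2,k^{\sep}})$ recalled in the proof of Lemma \ref{4.9lem}, the image of $\tilde{f}$ in $\Isom_{G_k}(\Pi^{(m-n)}_{U_1},\Pi^{(m-n)}_{U_2})$ differs from $\Phi^{m-n}$ by at most an inner automorphism, which is exactly what is needed.

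The main obstacle I expect is the coordination of Frobenius twists when $p > 0$: a priori each $f_H$ could match its target under a distinct pair of Frobenius twists, but the content of Lemma \ref{lemmaaaaa} (built on Lemma \ref{example4.7}) is precisely that a single integer $\delta_{U_1, U_2}$ governs all of them uniformly, so that the projective limit above is well-defined in $\mathfrak{S}_{k^{\sep}}$. Once this uniformity is secured, the rest of the verification is formal.
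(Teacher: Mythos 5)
Your reduction via Lemma \ref{4.9lem}(3), the lifting of $\Phi$ to $\tilde{\Phi}$ at level $m$, and the construction of the compatible system $\{f_H\}$ over the cofinal family via Lemma \ref{stxlemma2fin} and Lemma \ref{lemmaaaaa} all match the paper's construction of its map $\mathcal{F}$, and your injectivity argument via Lemma \ref{injelemma4-2} is fine. The gap is in your final step: you assert that, ``under the identification $\overline{\Pi}^{m-n}_{U_2}\cong\text{Aut}_{\mathfrak{S}_{k^{\sep}}/\mathfrak{S}_{k^{\sep}},\text{id}}(\tilde{U}^{m-n}_2/U_{2,k^{\sep}})$,'' the isomorphism $\Psi:=\Pi^{(m-n)}(\tilde{f})$ differs from $\Phi^{m-n}$ by at most an inner automorphism. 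That identification only shows that the class of $\Psi$ modulo $\text{Inn}(\overline{\Pi}^{m-n}_{U_2})$ is well defined by the base isomorphism $f_{\tilde{\Phi}}$ (it controls the ambiguity among different lifts $\tilde{f}$ of the same $f_{\tilde{\Phi}}$); it says nothing about whether this class agrees with $[\Phi^{m-n}]$. What your construction actually yields is only that $\Psi$ and $\Phi^{m-n}$ send each open subgroup $H$ in the cofinal family to the same open subgroup of $\Pi^{(m-n)}_{U_2}$, and for profinite groups two isomorphisms that agree setwise on a cofinal family of open subgroups need not coincide even up to inner automorphism (already for $\hat{\mathbb{Z}}$ every automorphism preserves every open subgroup while $\text{Inn}$ is trivial). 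Establishing $[\Psi]=[\Phi^{m-n}]$ --- equivalently the commutativity of the triangle relating $\mathcal{F}$, $\Pi^{(m-n)}(\cdot)$ and the projection from level $m$ --- is the heart of the theorem, precisely because $\tilde{f}$ is produced by moduli- and field-theoretic means (closed points, classifying maps, Lemma \ref{stxlemma1}) rather than from the action of $\Phi$ on fundamental groups, so the compatibility is a statement to be proved, not a tautology.

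The paper closes this gap as follows: for every open subgroup $G\subset G_k$ and every section $s\in\Sect(G,\Pi^{(m-n)}_{U_1})$, the subfamily $\mathcal{Q}_s$ of cofinal $H$ containing $s(G)$ satisfies $s(G)=\bigcap_{H\in\mathcal{Q}_s}H$ (using Lemma \ref{gandr}), so $\Psi(s(G))=\Phi^{m-n}(s(G))$; since both are $G_k$-isomorphisms and $s(G)\rightarrow G$ is bijective, they agree pointwise on $s(G)$; and since $k$ is Hilbertian (\cite{Fa2008} Proposition 13.4.1), the images of such sections topologically generate $\Pi^{(m-n)}_{U_1}$, whence $\Psi=\Phi^{m-n}$ exactly (which also gives surjectivity onto $\Isom^{(m)}_{G_k}(\Pi^{(m-n)}_{U_1},\Pi^{(m-n)}_{U_2})$ itself, as the paper proves directly, rather than only onto its quotient by $\text{Inn}$). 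None of this appears in your proposal; moreover your diagnosis of the main obstacle is misplaced: the Frobenius-twist bookkeeping is indeed settled by Lemma \ref{lemmaaaaa}, but the substantive missing ingredient is this section-theoretic and Hilbertianity argument.
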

\begin{proof}
The injectivity follows from Lemma \ref{injelemma4-2} and  Lemma \ref{4.9lem}(3).  We show the  surjectivity.   We may assume that $\Isom_{G_{k}}(\Pi^{(m)}_{U_{1}},\Pi^{(m)}_{U_2})\neq \emptyset$.  Let   $\Phi$ be an element of $ \Isom(\Pi_{U_{1}}^{(m)},\Pi_{U_{2}}^{(m)})$.   Set $\mathcal{Q}_{1}:=\{H\overset{\op}\subset \Pi_{U_{1}}^{(m)} \mid \overline{\Pi}_{U_{1}}^{[m-n]}/\overline{\Pi}_{U_{1}}^{[m]}\subset H, r(U_{1,H})\geq 3\text{ and  } (g(U_{1,H}),r(U_{1,H}))\neq (0,3), (0,4)\}$.   Let   $H_{1}$ be an element of $\mathcal{Q}_{1}$ and  set $H_{2}:=\Phi(H_{1})$.   Let  $H'_{1}$ be an element of $\mathcal{Q}_{1}$ satisfying  $H'_{1}\subset H_{1}$ and set $H'_{2}:=\Phi(H_{1}')$. Then we  obtain the following  commutative diagram  in $\mathfrak{S}_{k}$ by Lemma \ref{lemmaaaaa}.
\begin{equation}\label{18.5.3}\vcenter{
\xymatrix@C=60pt{
U_{1,H_{1}} \ar[r]^-{\phi} & U_{2,H_{2}}\\
U_{1,H'_{1}} \ar[r]^{\phi'}\ar[u]  & U_{2,H'_{2}}\ar[u] 
}
}
\end{equation} 
Here, $\phi$ (resp. $\phi'$) stands for the isomorphism induced by the isomorphism $H_{1}^{(n)}\xrightarrow{\sim}H_{2}^{(n)}$ (resp. $H'^{(n)}_{1}\xrightarrow{\sim}H'^{(n)}_{2}$) that is induced by  $\Phi|_{H_{1}}$ (resp. $\Phi|_{H'_{1}}$) by using   Lemma \ref{stxlemma2fin}. Since $\mathcal{Q}_{i}$ is cofinal in the set of all open subgroups of $\Pi_{U_{1}}^{(m)}$ by Lemma \ref{gandr}, we obtain an isomorphism $\tilde{\mathcal{F}}(\Phi)\in \Isom_{\mathfrak{S}_{k^{\sep}}}(\tilde{U}^{m-n}_{1},\tilde{U}^{m-n}_{2})$.  The assumption ``$m\geq 5$'' implies  that $\Phi$ induces an isomorphism $F(\Phi)\in \Isom_{\mathfrak{S}_{k}}(U_1,U_2)$ by Lemma \ref{gandr}. By Lemma \ref{lemmaaaaa}, we have that $(\tilde{\mathcal{F}}(\Phi), F(\Phi))\in \Isom_{\mathfrak{S}_{k^{\sep}}/\mathfrak{S}_{k}}(\tilde{U}^{m-n}_{1}/U_1,\tilde{U}^{m-n}_{2}/U_2)$. Thus, we obtain a map
 $\mathcal{F}: \Isom_{G_{k}}(\Pi_{U_{1}}^{(m)},\Pi_{U_{2}}^{(m)})\rightarrow \Isom_{\mathfrak{S}_{k^{\sep}}/\mathfrak{S}_{k}}(\tilde{U}^{m-n}_{1}/U_1,\tilde{U}^{m-n}_{2}/U_2)$, and it suffices to show the commutativity of the following diagram.
\begin{equation*}\label{commutativediag3}
\vcenter{
\xymatrix@C=80pt{
 &\Isom_{G_{k}}(\Pi_{U_{1}}^{(m)},\Pi_{U_{2}}^{(m)})\ar[d]\ar[dl]_{\mathcal{F}}\\
\Isom_{\mathfrak{S}_{k^{\sep}}/\mathfrak{S}_{k}}(\tilde{U}^{m-n}_{1}/U_1,\tilde{U}^{m-n}_{2}/U_2)\ar[r]_{\Pi^{(m-n)}(\cdot)} &\Isom_{G_{k}}(\Pi^{(m-n)}_{U_{1}},\Pi^{(m-n)}_{U_2})
}
}
\end{equation*}
 Let   $\Phi^{m-n}$ be the image of $\Phi$ in $\Isom_{G_{k}}(\Pi_{U_{1}}^{(m-n)},\Pi_{U_{2}}^{(m-n)})$.   Let  $G\overset{\text{op}}\subset G_{k}$, $s\in \Sect(G,\Pi_{U_{1}}^{(m-n)})$  and $\mathcal{Q}_{s}:=\{H\overset{\text{op}}\subset \Pi_{U_{1}}^{(m-n)}\mid$ $r(U_{1,H})\geq3$,   $ (g(U_{1,H}),r(U_{1,H}))\neq (0,3), (0,4)$, and $s(G)\subset  H $$\}$.     Fix   $H\in \mathcal{Q}_{s}$.    Since $\mathcal{F}(\Phi)$ maps $U_{1,H}$ to $U_{2,\Phi^{m-n}(H)}$ by  construction of $\mathcal{F}$,  we obtain that $(\Pi^{(m-n)}\circ\mathcal{F})(\Phi)(H)=\Phi^{m-n}(H)$.   By Lemma \ref{gandr}, for any open subgroup $H'$ of $\Pi_{U_{1}}^{(m-n)}$ containing $s(G)$,  we can take an open  characteristic subgroup $\overline{H}''$ of $\overline{\Pi}_{U_{1}}^{m-n}$ that satisfies   $r(U_{1,\overline{H}''})\geq3$,   $(g(U_{1,\overline{H}''}),r(U_{1,\overline{H}''}))\neq (0,3), (0,4)$, and that $\overline{H}''\subset \overline{\Pi}_{U_{1}}^{m-n}\cap H'$. Hence $\mathcal{Q}_{s}$ is cofinal in the set of all open subgroups of $\Pi_{U_{1}}^{(m-n)}$ containing $s(G)$. This implies that  $s(G)=\underset{H\in\mathcal{Q}_{s}}\cap H$. Hence we obtain $(\Pi^{(m-n)}\circ\mathcal{F})(\Phi)(s(G))=\Phi^{m-n}(s(G))$.   Since the isomorphisms $(\Pi^{(m-n)}\circ\mathcal{F})(\Phi)$ and $\Phi^{m-n}$ are   $G_{k}$-isomorphisms, we obtain $(\Pi^{(m-n)}\circ\mathcal{F})(\Phi)(x)=\Phi^{m-n}(x)$ for $x\in s(G)$.   Note that we have $\Pi_{U_{1}}^{(m-n)}=\overline{\langle s(G)\ |\ G\overset{\text{op}}\subset G_{k},\ s\in \Sect(G,\Pi_{U_{1}}^{(m-n)})\rangle}$, since $k$ is a Hilbertian field (\cite{Fa2008} Proposition 13.4.1).   Therefore, we get $(\Pi^{(m-n)}\circ\mathcal{F})(\Phi)=\Phi^{m-n}$, as desired.

\end{proof}

\begin{corollary}
Let the  assumption and the notation be as in   Theorem \ref{fingeneGCstrong}. Then  the subset $\Isom^{(m)}_{G_{k}}(\Pi_{U_{1}}^{(m-n)},\Pi_{U_{2}}^{(m-n)})$ of $\Isom_{G_{k}}(\Pi_{U_{1}}^{(m-n)},\Pi_{U_{2}}^{(m-n)})$ depends only on $m-n$, not $m$.
\end{corollary}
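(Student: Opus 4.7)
The plan is to deduce this statement directly from Theorem \ref{fingeneGCstrong}, in exact analogy with how Corollary \ref{finGCcorn} was obtained from Theorem \ref{finGCstrong} in the finite-field setting. The content of Theorem \ref{fingeneGCstrong} is precisely that the natural map
\[
\Isom_{\mathfrak{S}_{k^{\sep}}/\mathfrak{S}_{k}}(\tilde{U}^{m-n}_{1}/U_1,\tilde{U}^{m-n}_{2}/U_2)\longrightarrow \Isom^{(m)}_{G_{k}}(\Pi^{(m-n)}_{U_{1}},\Pi^{(m-n)}_{U_2})
\]
of Lemma \ref{4.9lem}(2) is bijective for every admissible pair $(m,n)$ with $m\geq5$, $n\geq4$, $m>n$. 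Setting $h:=m-n$, the source of this bijection depends only on $h$, since by Notation (\ref{univcoversms}) the pro-covering $\tilde{U}_{i}^{h}\to U_{i}$ is defined purely in terms of the maximal tamely ramified $h$-step solvable Galois extension of $K(U_{i,k^{\sep}})$. Likewise the map in Lemma \ref{4.9lem}(2) only references the quotient $\Pi^{(h)}_{U_{i}}$ and involves no data depending on the ambient index $m$.

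Consequently, given two admissible pairs $(m,n)$ and $(m',n')$ with $m-n=m'-n'=h$, I would observe that both subsets
\[
\Isom^{(m)}_{G_{k}}(\Pi^{(h)}_{U_{1}},\Pi^{(h)}_{U_2})\qquad\text{and}\qquad\Isom^{(m')}_{G_{k}}(\Pi^{(h)}_{U_{1}},\Pi^{(h)}_{U_2})
\]
of $\Isom_{G_{k}}(\Pi^{(h)}_{U_{1}},\Pi^{(h)}_{U_2})$ are, by Theorem \ref{fingeneGCstrong}, equal to the image of the single, $m$-independent set $\Isom_{\mathfrak{S}_{k^{\sep}}/\mathfrak{S}_{k}}(\tilde{U}^{h}_{1}/U_1,\tilde{U}^{h}_{2}/U_2)$ under the same natural map. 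Therefore they coincide, which is exactly the claim.

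There is no genuine obstacle in this deduction: all the substantive work---the center-freeness used in the injectivity (Proposition \ref{center}), the group-theoretic reconstruction of decomposition data, the good-reduction criterion, and the spreading-out argument of Lemma \ref{stxlemma2fin}---has already been absorbed into the statement of Theorem \ref{fingeneGCstrong}. The corollary is purely a matter of reading off the source of the bijection and noting its independence of $m$.
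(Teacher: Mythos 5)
Your proposal is correct and is exactly the paper's argument: the paper proves this corollary by citing Theorem \ref{fingeneGCstrong} alone, and the detail you supply—that for every admissible $(m,n)$ with fixed $h=m-n$ the subset $\Isom^{(m)}_{G_{k}}(\Pi^{(h)}_{U_{1}},\Pi^{(h)}_{U_{2}})$ is the image of the $m$-independent set $\Isom_{\mathfrak{S}_{k^{\sep}}/\mathfrak{S}_{k}}(\tilde{U}^{h}_{1}/U_1,\tilde{U}^{h}_{2}/U_2)$ under the $m$-independent natural map of Lemma \ref{4.9lem}(2)—is precisely the intended (and parallel to Corollary \ref{finGCcorn}) reading-off of the theorem.
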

\begin{proof}
The assertion follows from  Theorem \ref{fingeneGCstrong}.
\end{proof}

\begin{corollary}\label{fingeneGCcorinn2}
Let the  assumption and the notation be as in  Theorem \ref{fingeneGCstrong}. Then the natural map
\begin{equation*}
\Isom_{\mathfrak{S}_{k}}(U_1,U_2)\rightarrow \Isom^{(m)}_{G_{k}}(\Pi_{U_{1}}^{(m-n)},\Pi_{U_{2}}^{(m-n)})/\text{Inn}(\overline{\Pi}_{U_{2}}^{m-n})
\end{equation*}
  is  bijective.
\end{corollary}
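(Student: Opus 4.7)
The plan is to deduce the corollary from Theorem \ref{fingeneGCstrong} by means of the commutative diagram (\ref{isomequf}) of Lemma \ref{4.9lem}(3), exactly in the style of how Corollary \ref{finGCcorinn} was deduced from Theorem \ref{finGCstrong} in the finite field case. Recall that diagram:
\[
\vcenter{\xymatrix@R=15pt{
\Isom_{\mathfrak{S}_{k^{\text{sep}}}/\mathfrak{S}_{k}}(\tilde{U}^{m-n}_{1}/U_{1},\tilde{U}^{m-n}_{2}/U_{2})\ar@{->>}[d]^{u}\ar[r]^-{\Pi^{(m-n)}(\cdot)} & \Isom^{(m)}_{G_{k}}(\Pi_{U_{1}}^{(m-n)},\Pi_{U_{2}}^{(m-n)})\ar@{->>}[d]\\
\Isom_{\mathfrak{S}_{k}}(U_{1},U_{2}) \ar[r]^-{} & \Isom_{G_{k}}^{(m)}(\Pi_{U_{1}}^{(m-n)},\Pi_{U_{2}}^{(m-n)})/\mathrm{Inn}(\overline{\Pi}_{U_{2}}^{m-n}).
}}
\]
The top horizontal map is bijective by Theorem \ref{fingeneGCstrong}, and the left vertical map $u$ is surjective by Lemma \ref{4.9lem}(1). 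Hence the bottom horizontal map is automatically surjective.

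For injectivity, I will argue via fibers. By Lemma \ref{4.9lem}(1), for any $(\tilde t,t)\in\Isom_{\mathfrak{S}_{k^{\sep}}/\mathfrak{S}_{k}}(\tilde{U}^{m-n}_{1}/U_{1},\tilde{U}^{m-n}_{2}/U_{2})$ the fiber $u^{-1}u((\tilde t,t))$ equals $\mathrm{Aut}_{\mathfrak{S}_{k^{\sep}}/\mathfrak{S}_{k^{\sep}},\mathrm{id}}(\tilde{U}_{2}^{m-n}/U_{2,k^{\text{sep}}})\cdot\tilde{t}$. The key identification — already made in the proof of Lemma \ref{4.9lem}(3) using Lemma \ref{example4.7}(3) and the non-isotriviality assumption on $U_{1,\overline{k}}$ — is
\[
\overline{\Pi}_{U_{2}}^{m-n}\xleftarrow{\sim}\mathrm{Aut}_{\mathfrak{S}_{k^{\sep}}/\mathfrak{S}_{k^{\sep}},\mathrm{id}}(\tilde{U}_{2}^{m-n}/U_{2,k^{\text{sep}}}),
\]
and under this identification, the action on the left-hand set corresponds (via the top horizontal map) to precomposition by inner automorphisms coming from $\overline{\Pi}_{U_{2}}^{m-n}$ on the right-hand set. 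Thus the bijective top map carries $u$-fibers to $\mathrm{Inn}(\overline{\Pi}_{U_{2}}^{m-n})$-orbits, which yields a bijection between the sets of orbits, i.e.\ between $\Isom_{\mathfrak{S}_{k}}(U_{1},U_{2})$ and $\Isom^{(m)}_{G_{k}}(\Pi_{U_{1}}^{(m-n)},\Pi_{U_{2}}^{(m-n)})/\mathrm{Inn}(\overline{\Pi}_{U_{2}}^{m-n})$.

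Concretely, the steps are: (i) invoke Theorem \ref{fingeneGCstrong} to know the top map is bijective; (ii) invoke Lemma \ref{4.9lem}(1) to know $u$ is surjective with the fibers as above; (iii) identify $\mathrm{Aut}_{\mathfrak{S}_{k^{\sep}}/\mathfrak{S}_{k^{\sep}},\mathrm{id}}(\tilde{U}_{2}^{m-n}/U_{2,k^{\text{sep}}})$ with $\overline{\Pi}_{U_{2}}^{m-n}$ and check that the $\Pi^{(m-n)}(\cdot)$ map intertwines this action with $\mathrm{Inn}(\overline{\Pi}_{U_{2}}^{m-n})$ on the target (both actions are by left composition with automorphisms of the universal covering that project to the identity on the base); (iv) pass to the quotient. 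Injectivity of the bottom map is also already known from Lemma \ref{injelemma4-2}, which provides an independent verification of the final bijection.

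The only non-routine point is step (iii), the compatibility of the two group actions under the bijection of Theorem \ref{fingeneGCstrong}; but this compatibility is essentially built into the construction of the map $\Pi^{(m-n)}(\cdot)$ (post-composition by deck transformations of $\tilde{U}_{2}^{m-n}/U_{2,k^{\sep}}$ becomes pre-composition by inner automorphisms of $\Pi^{(m-n)}_{U_2}$ coming from $\overline{\Pi}_{U_2}^{m-n}$), and indeed is implicit in Lemma \ref{4.9lem}(3). Hence no new technical input beyond the results already cited is required.
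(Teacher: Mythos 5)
Your proposal is correct and follows essentially the same route as the paper, which deduces the corollary directly from Theorem \ref{fingeneGCstrong} together with Lemma \ref{4.9lem}(3); you merely unpack the content of Lemma \ref{4.9lem}(3) (the identification $\overline{\Pi}_{U_{2}}^{m-n}\xleftarrow{\sim}\mathrm{Aut}_{\mathfrak{S}_{k^{\sep}}/\mathfrak{S}_{k^{\sep}},\mathrm{id}}(\tilde{U}_{2}^{m-n}/U_{2,k^{\sep}})$, the fiber description from Lemma \ref{4.9lem}(1), and the independent injectivity from Lemma \ref{injelemma4-2}) instead of citing it as a black box. The only imprecision is calling the induced action ``pre-composition'' by inner automorphisms: composing $\tilde t$ with a deck transformation of $\tilde{U}_{2}^{m-n}/U_{2,k^{\sep}}$ corresponds to post-composition with an element of $\text{Inn}(\overline{\Pi}_{U_{2}}^{m-n})$, but since pre-composition by $\text{Inn}(\overline{\Pi}_{U_{1}}^{m-n})$ and post-composition by $\text{Inn}(\overline{\Pi}_{U_{2}}^{m-n})$ give the same orbits, this does not affect the argument.
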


\begin{proof}
The assertion follows from  Lemma \ref{4.9lem}(3) and Theorem \ref{fingeneGCstrong}.
\end{proof}




\end{document}